\newtheorem{theorem}{Theorem}[section]
\newtheorem{proposition}[theorem]{Proposition}
\newtheorem{lemma}[theorem]{Lemma}
\newtheorem{corollary}[theorem]{Corollary}
\newtheorem*{conjecture}{Conjecture}
\theoremstyle{remark}
\newtheorem{remark}[theorem]{Remark}
\newtheorem{example}{Example}
\numberwithin{equation}{section}
\newcommand{\R}{{\mathbb{R}}}
\newcommand{\Q}{{\mathbb{Q}}}
\newcommand{\C}{{\mathbb{C}}}
\newcommand{\Z}{{\mathbb{Z}}}
\newcommand{\N}{{\mathbb{N}}}
\newcommand{\T}{{\mathbb{T}}}
\newcommand{\xbm}{(X,\mathcal{B},\mu)}
\newcommand{\Int}{\operatorname{Int}}
\newcommand{\SL}{\operatorname{SL}}
\let\oldmarginpar\marginpar
\renewcommand\marginpar[1]{\-\oldmarginpar[\raggedleft\footnotesize #1]%
{\raggedright\footnotesize #1}}
\newcommand\reallywidehat[1]{%
\savestack{\tmpbox}{\stretchto{%
  \scaleto{%
    \scalerel*[\widthof{\ensuremath{#1}}]{\kern-.6pt\bigwedge\kern-.6pt}%
    {\rule[-\textheight/2]{1ex}{\textheight}}
  }{\textheight}%
}{0.5ex}}%
\stackon[1pt]{#1}{\tmpbox}%
}
\begin{document}

\title[On ergodicity of foliations on covers of half-translation surfaces]{On ergodicity of foliations on $\Z^d$-covers of half-translation surfaces
 and some applications to periodic systems of Eaton lenses}

\author[K.\ Fr\k{a}czek]{Krzysztof Fr\k{a}czek}
\address{Faculty of Mathematics and Computer Science, Nicolaus
Copernicus University, ul. Chopina 12/18, 87-100 Toru\'n, Poland}
\email{fraczek@mat.umk.pl}
\thanks{The first author was partially supported by the Narodowe Centrum Nauki Grant
2014/13/B/ST1/03153.}

\author[M.\ Schmoll]{Martin Schmoll}\address{Department of Mathematical Sciences, Clemson University, Clemson SC, 29634, USA }\email{schmoll@clemson.edu}
\thanks{The second author was partially supported by Simons Collaboration Grant 318898.}

%
\subjclass[2000]{37A40, 37F40, 37D40}  

\maketitle
\begin{abstract}
We consider the geodesic flow defined by periodic Eaton lens patterns in the plane and discover ergodic ones among those.
The ergodicity result on Eaton lenses is derived from a result for quadratic differentials on the plane that are pull backs
of quadratic differentials on tori.  Ergodicity itself is concluded for $\Z^d$-covers of quadratic differentials on
compact surfaces with vanishing Lyapunov exponents.
\end{abstract}

\section{Introduction}
\subsection{Periodic Eaton lens distributions in the plane}
An \emph{Eaton lens} is a circular lens on the plane $\R^2$
which acts as a perfect retroreflector, i.e.\ so
that each ray of light after passing through the Eaton lens is
directed back toward its source,  see Figure~\ref{eaton}.
\begin{figure}[!htb]
 \centering
\includegraphics[width=0.8\textwidth]{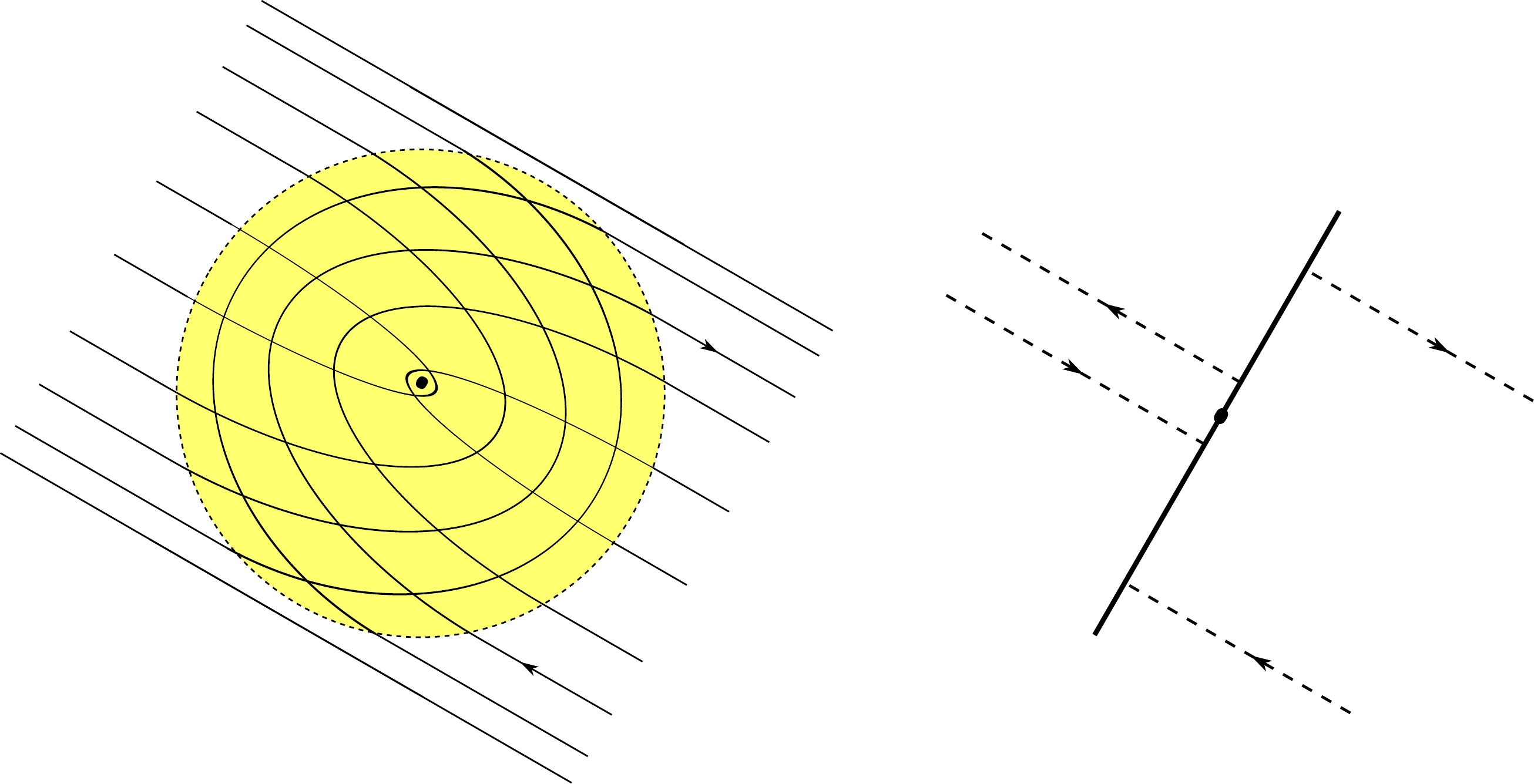}
 \caption{Light rays passing through an Eaton lens and its flat counterpart}
 \label{eaton}
\end{figure}
More precisely, if an Eaton lens is of radius $R>0$ then the refractive index (RI for short) inside lens depends only on the distance from the center $r$
and is given by the formula $n(x,y)=n(r)=\sqrt{{2R}/{r}-1}$. The refractive index $n(x,y)$ is constant and equals $1$ outside the lens.

In this paper we consider dynamics of light rays in periodic Eaton lens distributions in the plane $\R^2 \cong \C$.
As a simple example take a lattice $\Lambda \subset \R^2$ and consider an Eaton lens of radius $R>0$ centered at each
lattice point of $\Lambda$. This configuration of lenses will be denoted by $L(\Lambda,R)$

Let us call an Eaton lens distribution, say $\mathcal{L}$,
in $\R^2$ admissible, if no pair of lenses intersects.
For every admissible Eaton lens configuration $\mathcal{L}$  the dynamics of the light rays can
be considered as a geodesic flow $(\mathfrak{g}^\mathcal{L}_t)_{t\in\R}$ on the unit tangent bundle of $\R^2$ with lens centers removed, see
Section~\ref{system_of_Eaton_lenses} for details. The Riemannian metric inducing the flow is
given by $g_{(x,y)}=  n (x,y)\cdot (dx\otimes dx +dy\otimes dy )$, where $n(x,y)$ is the refractive index at point $(x,y)$.

Since each Eaton lens in $\mathcal{L}$ acts as a perfect retroreflector, for any given slope $\theta \in \R/\pi\Z$ there is an invariant set $\mathscr{P}_{\mathcal{L},\theta}$ in the unit tangent bundle, such that all trajectories on  $\mathscr{P}_{\mathcal{L},\theta}$ have direction $\theta$ or $\theta+\pi$ outside the lenses. The restriction of the geodesic flow $(\mathfrak{g}^\mathcal{L}_t)_{t\in\R}$ to  $\mathscr{P}_{\mathcal{L},\theta}$ will be
denoted by $(\mathfrak{g}^{\mathcal{L},\theta}_t)_{t\in\R}$. Moreover,
$(\mathfrak{g}^{\mathcal{L},\theta}_t)_{t\in\R}$ possesses a natural invariant infinite measure $\mu_{\mathcal{L},\theta}$ equivalent to the Lebesgue measure on $\mathscr{P}_{\mathcal{L},\theta}$,  see
Section~\ref{system_of_Eaton_lenses} for details.
With respect to this setting we consider measure-theoretic questions.

In \cite{Fr-S} for example the authors have shown, that simple periodic Eaton lens configurations, for example  $L(\Lambda,R)$, have the opposite behavior of {\em ergodicity}. More precisely, a light ray in an Eaton lens
configuration is called {\em trapped}, if the ray never leaves a strip parallel to a line in $\R^2$.
The trapping phenomenon observed in \cite{Fr-S} was extended in \cite{Fr-Shi-Ul} to the following result:

\begin{theorem}
If $L(\Lambda,R)$ is an admissible configuration then
for a.e.\ direction $\theta\in \R/\pi\Z$
there exist constants $C=C(\Lambda,R,\theta)>0$
and $v =v(\Lambda,R,\theta) \in \R/\pi\Z$, such that every orbit in $\mathscr{P}_{L(\Lambda,R),\theta}$
is trapped in an infinite band of width $C>0$ in direction $v$.
\end{theorem}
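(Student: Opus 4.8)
The plan is to transfer the question to a \emph{linear} setting and there to recognize the trapping as the boundedness of a suitable real cocycle.

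\textbf{Reduction to a $\Z^2$-cover.} First I would use the flat model of an Eaton lens (its ``flat counterpart'' in Figure~\ref{eaton}): for each direction $\theta$ the flow $(\mathfrak g^{L(\Lambda,R),\theta}_t)_{t\in\R}$ on $\mathscr P_{L(\Lambda,R),\theta}$ is, after a measurable time change that is bounded above and below (and hence preserves the property of an orbit lying in an infinite band), measurably isomorphic to the directional flow in direction $\theta$ on the $\Z^2$-cover $\widetilde M\to M$, where $M$ is the compact half-translation surface obtained from the torus $\T^2=\R^2/\Lambda$ by replacing a single Eaton lens by its flat counterpart, and the deck group is identified with $\Lambda$. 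Writing $\Lambda\otimes\R\cong\R^2$, the position in the plane along a trajectory is, up to an additive error of size $O(R)$, recorded by an additive cocycle $\Phi\colon M\times\R\to\R^2$ over the directional flow on $M$; an orbit is trapped in an infinite band of finite width in a direction $v$ if and only if the real-valued cocycle $t\mapsto\langle\Phi(x,t),v^{\perp}\rangle$ stays bounded along that orbit, $v^{\perp}\perp v$ a unit vector.

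\textbf{Choice of the band direction.} By the Kerckhoff--Masur--Smillie theorem the directional flow on the compact surface $M$ is minimal and uniquely ergodic for a.e.\ $\theta\in\R/\pi\Z$; I restrict to such $\theta$. Unique ergodicity produces the asymptotic (Schwartzman) cycle $\rho=\rho(\Lambda,R,\theta):=\lim_{t\to\infty}t^{-1}\Phi(x,t)\in\R^2$, the convergence being uniform in $x$. Put $v:=v(\Lambda,R,\theta)$ equal to the direction of $\rho$ when $\rho\neq 0$ (and arbitrary otherwise); then $\langle\rho,v^{\perp}\rangle=0$, so the real cocycle $\psi(x,t):=\langle\Phi(x,t),v^{\perp}\rangle$ has vanishing asymptotic cycle, and it remains to show $\psi$ is bounded along every orbit. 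By minimality and the Gottschalk--Hedlund theorem this is the same as $\psi$ being a continuous coboundary, i.e.\ the $\R$-extension of the flow on $M$ by $\psi$ failing to be topologically transitive.

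\textbf{The core estimate.} To prove this I would induce the directional flow on $M$ to its first-return interval exchange transformation $T_\theta$ on a transversal; then $\Phi$ becomes a piecewise constant $\R^2$-valued function whose discontinuities are governed by the separatrices of the single retroreflecting lens, so that its jump data comes in matched pairs of opposite sign. Because $M$ is a torus carrying a \emph{localized} surgery, the Rauzy--Veech renormalization of $T_\theta$ is driven by the continued fraction expansion of the slope associated with $\theta$. The Denjoy--Koksma inequality then bounds $\psi$ by $\var\Phi$ along the sequence $q_k(\theta)$ of continued fraction denominators; combining this with the matched-jump structure of $\Phi$ and the vanishing of the asymptotic cycle of $\psi$, one interpolates between consecutive times $q_k$ and obtains a uniform bound $|\psi|\le C$ with $C=C(\Lambda,R,\theta)$ comparable to $\var\Phi$, which is exactly the assertion of the theorem. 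The main obstacle is precisely this last interpolation: the Denjoy--Koksma inequality controls $\psi$ only at the renormalization times $q_k$, and the vanishing of the asymptotic cycle of $\psi$ gives by itself only $\psi(x,t)=o(t)$; upgrading to a uniform bound genuinely uses the degenerate cohomological structure of the displacement cocycle of a single lens on a lattice --- the very structure that a suitably chosen configuration of several lenses destroys, and whose destruction is exploited by the main theorem of the present paper to produce ergodicity in place of trapping.
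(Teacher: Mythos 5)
This theorem is not proved in the present paper --- it is quoted from \cite{Fr-Shi-Ul} (and \cite{Fr-S}), and the paper's own closely related trapping result is Theorem~\ref{thm:typtrap}, proved in Section~\ref{sec:typtrap} via Corollary~\ref{cor:trap}. Comparing against that machinery: the paper's route is to realize the system as a $\Z^2$-cover of a compact half-translation surface (as you also do), and then to invoke Oseledets theory for the Kontsevich--Zorich cocycle restricted to $H_1^+(M,\R)$: for BOM-generic directions, the intersection numbers $\langle\sigma^\theta_t(x),\xi\rangle$ are uniformly bounded for any $\xi$ in the stable Oseledets subspace $E^-_{r_{\pi/2-\theta}\omega}$ (Theorem~4.4 of \cite{Fr-Hu}), and Forni's criterion (Proposition~\ref{forni:crit}) guarantees $\lambda_{top}(q)>0$, hence $E^-\ne 0$, hence a nonzero $\xi$ and a band direction. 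The crucial input is the \emph{exponential contraction} of the KZ cocycle along the Teichm\"uller geodesic in the stable direction, which is what turns "Birkhoff sums controlled at renormalization times" into a genuine uniform bound.

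Your proposal is essentially different and, as written, has a gap exactly where you flag it. Choosing $v$ as the direction of the asymptotic cycle $\rho$ is necessary but gives you only that $\psi(x,t)=o(t)$; Gottschalk--Hedlund then reduces the problem to boundedness along a single orbit, but to establish that boundedness you appeal to Denjoy--Koksma "driven by the continued fraction of $\theta$." This step does not go through: the Poincar\'e return map to a transversal on the slit torus is a genuine IET on more than two intervals, its Rauzy--Veech dynamics is not governed by a single continued fraction expansion, and in general the underlying Teichm\"uller geodesic is not even recurrent in a fixed compact set. Moreover, even if one had control of the cocycle at renormalization times, the vanishing of the asymptotic cycle does not supply the decay needed to sum the corrections between renormalization scales; what is actually needed --- and what you do not invoke --- is that $\psi$ corresponds to a class in the \emph{stable} Oseledets subspace, so its Birkhoff sums at renormalization times decay exponentially. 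This is the exact content hidden in your phrase "degenerate cohomological structure of the displacement cocycle," but it has to be extracted via the positivity of the top Lyapunov exponent (e.g.\ Forni's criterion, applied as in Lemma~\ref{lem:posexp}), not by the asymptotic-cycle-vanishing alone. In short: your setup and choice of band direction are compatible with the paper, but the core estimate is missing, and the elementary machinery you invoke (Denjoy--Koksma for rotations, Gottschalk--Hedlund with only $o(t)$ input) is not strong enough to close it.
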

Knieper and Glasmachers \cite{Gl-Kn,Gl-Kn1} have trapping results for geodesic flows on Riemannian planes.
Among other things Theorem~2.4 in \cite{Gl-Kn1} says, that for all Riemann metrics on the plane
that are pull backs of Riemann metrics on a torus with vanishing topological entropy,
the geodesics are trapped. Nevertheless the trapping phenomena obtained in \cite{Gl-Kn,Gl-Kn1} and \cite{Fr-S,Fr-Shi-Ul} have different flavors. The former is transient whereas the latter is recurrent.

Let us further mention that Artigiani describes a set of exceptional triples $(\Lambda,R,\theta)$
for which the flow $(\mathfrak{g}^{L(\Lambda,R),\theta}_t)_{t\in\R}$ is ergodic in \cite{Art}.

In this paper we investigate ergodicity and trapping for
more complicated periodic Eaton lens distributions.
In fact, given  a lattice $\Lambda \subset \C$
let us denote a $\Lambda$-periodic distribution of $k$ Eaton lenses with center
$c_i \in \C$ and radius $r_i \geq 0$ for $i=1,\ldots, k$ by $ L(\Lambda, c_1, \ldots, c_k, r_1, \ldots, r_k )$.
Of course, we will only consider admissible configurations.
If the list of Eaton lenses has centrally symmetric pairs, we write $\pm c_i$ for their centers and
list their common radius only once. We adopt the convention that if the radius of a lens is zero then this lens disappears.

For a random choice of admissible parameters in this family of configurations in Section\ref{sec:typtrap} we prove trapping.
\begin{theorem}\label{thm:typtrap}
For every lattice $\Lambda\subset\C$, every vector of centers $\overline{c}\in \C^k$ and almost every $\overline{r}\in\R^k_{>0}$ such that $L(\Lambda,\overline{c},\overline{r})$ is admissible the geodesic flow on $\mathscr{P}_{L(\Lambda,\overline{c},\overline{r}),\theta}$ is trapped for a.e.\ $\theta\in \R/\pi\Z$.
\end{theorem}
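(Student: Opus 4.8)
The plan is to transfer the problem to the Kontsevich--Zorich cocycle of an associated compact half-translation surface and to show that the Lyapunov exponent governing transversal deviation is positive for all but a null set of radii. First I would use the flat model recalled in Section~\ref{system_of_Eaton_lenses} (as in \cite{Fr-S,Fr-Shi-Ul}): replace each Eaton lens by a flat cone point of angle $\pi$ --- equivalently a simple pole of a meromorphic quadratic differential, so that the retroreflection becomes the $\pi$-deflection such a cone point produces --- and identify $(\mathfrak{g}^{L(\Lambda,\overline{c},\overline{r}),\theta}_t)_{t\in\R}$ with the directional flow in direction $\theta$ on an infinite half-translation surface $\widehat X$. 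Here $\widehat X$ is the $\Z^2$-cover, associated with the lattice $\Lambda$, of a compact half-translation surface $X=X(\Lambda,\overline{c},\overline{r},\theta)$, namely the torus $\C/\Lambda$ carrying a quadratic differential with $k$ simple poles near $c_1,\dots,c_k$; the radii $\overline{r}$ enter, after normalization, as period coordinates of $X$. Since changing $\theta$ only rotates the flat structure, the $\SL(2,\R)$-orbit closure of $X$, and hence all the Lyapunov data below, depends on $(\Lambda,\overline{c},\overline{r})$ but not on $\theta$.

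Next, the deck group $\Z^2$ singles out the symplectic $2$-plane $L^{*}\subset H^{1}$ obtained by pulling back $H^{1}(\C/\Lambda;\R)$, and the transversal displacement of a generic directional orbit is controlled by the restriction of the Kontsevich--Zorich cocycle to $L^{*}$. If the top exponent $\lambda^{+}(L^{*})$ is positive, there is a unique dominant Oseledets direction $v$; the displacement after time $T$ then grows like a positive power of $T$ along $v$ and stays bounded transversally, so every orbit of the directional flow is trapped in an infinite band in the direction determined by $v$. If instead $\lambda^{+}(L^{*})=0$, then, $L^{*}$ being symplectic, $L^{*}$ must lie in the neutral Oseledets space $E^{0}$ --- precisely the degenerate case excluded here, in which the main ergodicity result of the paper applies and trapping fails. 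Thus, for a.e.\ $\theta$, Theorem~\ref{thm:typtrap} follows once we know that $\lambda^{+}(L^{*}(\Lambda,\overline{c},\overline{r}))>0$ for a.e.\ admissible $\overline{r}$.

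The remaining task is to show that the set $\{\overline{r}:\lambda^{+}(L^{*})=0\}$ is contained in a proper real-analytic subset of the parameter space, hence is Lebesgue-null. The equality $\lambda^{+}(L^{*})=0$ forces $L^{*}\subseteq E^{0}$, hence forces $X$ into the locus where the Forni form of the Hodge bundle degenerates; by the work of Forni and Filip this locus is a finite union of proper affine invariant submanifolds of the ambient stratum, off which $E^{0}=\{0\}$. Since $\overline{r}\mapsto X(\Lambda,\overline{c},\overline{r},\theta_{0})$ is real-analytic, the bad set of radii is a real-analytic subset of the open set of admissible $\overline{r}$; it is proper because letting $r_{2},\dots,r_{k}\to 0$ degenerates the configuration to a translate of $L(\Lambda,r_{1})$, for which the theorem of \cite{Fr-Shi-Ul} quoted above gives trapping and hence $\lambda^{+}(L^{*})>0$, so the family $\{X(\Lambda,\overline{c},\overline{r},\theta_{0})\}$ is not entirely contained in the degenerate locus. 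A proper real-analytic subset of an open subset of $\R^{k}_{>0}$ has measure zero, which completes the argument.

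I expect this last step to be the main obstacle, in two respects: establishing at the level of the stratum that $\{E^{0}\neq\{0\}\}$ is small --- which requires Forni's criterion together with the algebraicity and classification of affine invariant submanifolds for the strata that actually occur here --- and making rigorous the degeneration $\overline{r}\to 0$, i.e.\ identifying the limiting flat surface with the single-lens one so that \cite{Fr-Shi-Ul} can be invoked, since Lyapunov exponents need not vary continuously across the boundary of a stratum. A more routine but nontrivial point in the first step is to check that admissibility of $L(\Lambda,\overline{c},\overline{r})$ really yields a well-defined half-translation surface with the asserted $\Z^2$-cover structure, uniformly in $\theta$.
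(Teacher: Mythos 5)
Your first step---reducing trapping to positivity of the top Lyapunov exponent of the Kontsevich--Zorich cocycle over the compact torus differential---matches the paper's strategy (Propositions~\ref{prop:trap} and the precursors borrowed from \cite{Fr-Hu,Fr-S}). From there, however, you and the paper diverge fundamentally, and your route has a gap at exactly the point you flag as the main obstacle.

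You propose to show that the set $\{\overline{r}:\lambda_{top}=0\}$ sits inside a proper real-analytic subset of the open admissibility region and is therefore Lebesgue-null. There are two independent problems with this. First, the map $\overline{r}\mapsto\lambda_{top}$ is \emph{not} real-analytic, nor even continuous: the Lyapunov exponent is computed with respect to the affine measure $\nu_{\mathcal{M}}$ on the $SL(2,\R)$-orbit closure $\mathcal{M}=\overline{SL(2,\R)\cdot X}$, and the orbit closure can jump discontinuously as $\overline{r}$ varies. So the set of bad radii is not a priori the zero set of any analytic function, and the ``proper analytic subset $\Rightarrow$ measure zero'' argument does not apply. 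The assertion that $\{E^0\neq\{0\}\}$ is a finite union of proper affine invariant submanifolds of the ambient stratum, off which $E^0$ is trivial, is not an established theorem; the Filip/Forni machinery constrains what $E^0$ can look like \emph{on a fixed affine submanifold}, but does not classify how $E^0$ varies across the one-parameter families you consider. Second, the degeneration $r_2,\dots,r_k\to 0$ you invoke to make the bad set proper exits the stratum (simple poles collide with each other and with ordinary points), and Lyapunov exponents do not vary continuously across strata; invoking the single-lens result of \cite{Fr-Shi-Ul} at the limit therefore says nothing about the surfaces with small but positive $r_j$.

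The paper sidesteps all of this by being constructive. Rather than reasoning about the bad locus abstractly, it shows (Lemma~\ref{lem:posexp}) that if the slit-folds are all parallel to a lattice vector $w$ and satisfy an explicit ``separated by $w$'' condition, then $\lambda_{top}>0$, by exhibiting a cylinder decomposition of the orientation cover meeting the hypotheses of Forni's geometric criterion (Proposition~\ref{forni:crit}). Since separation by a rational vector is open and rational directions in $\Lambda$ are dense, one can always perform a small railed deformation of slit-fold direction to land in a separated configuration whose direction foliation is Whitehead-equivalent to the original one. That deformation, not genericity of the bad set in parameter space, is what proves Theorem~\ref{thm:pertnonerg}, and a Fubini argument over $(\overline{r},\theta)$ then yields Theorem~\ref{thm:typtrap}. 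In short: you need to replace the real-analyticity/degeneration argument with a concrete verification that a nearby configuration satisfies Forni's criterion, or you have no proof.

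A smaller quibble: your statement that $\lambda^{+}(L^*)=0$ is ``precisely the degenerate case \dots in which the main ergodicity result of the paper applies'' overstates Theorem~\ref{thm:mainquadr:intr}, which requires \emph{all} KZ exponents of the compact base to vanish, not merely those of the pulled-back $2$-plane $L^*$; $\lambda^+(L^*)=0$ alone does not yield ergodicity of the cover.
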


\subsection*{An admissible ergodic Eaton lens configuration in the plane}
As a consequence we have that the set of parameters $(\Lambda,\overline{c},\overline{r},\theta)$ for which  $(\mathfrak{g}^{L(\Lambda,\overline{c},\overline{r}),\theta}_t)_{t\in\R}$ is ergodic is very rare.
Despite this, in this paper, we find exceptional one-dimensional ergodic sets (piecewise smooth curves) of parameters such that a random choice inside such a curve provides an ergodic behavior of light rays.
In fact the configurations we found are curves
\[ \theta \longmapsto L(\Lambda_{\theta}, c_1(\theta), \ldots, c_k(\theta), r_1(\theta), \ldots, r_k(\theta) )\]
parameterized with the angle $\theta\in\R/\pi\Z$. We should stress that results of \cite{Fr-Shi-Ul} essentially show, that ergodic curves do not exists when $k=1$.

The simplest curve, described below
is a loop defined for every angle $\theta\in[0,\pi]$.
To start we take the function $l(\theta):= 2-\cot \theta(1-  \cot \theta)$ and consider
the curve of lattices
$$\Lambda_{\theta}= \Z (0,4) \oplus \Z(4,2)\  \text{ for }\  \theta  \bmod \pi \in [-\pi/4, \pi/4]$$
 continued by
$$\Lambda_{\theta} = \Z(0,4) \oplus \Z (2l(\theta), 2) \text{ for }\  \theta  \bmod \pi \in [\pi/4, 3\pi/4]. $$
Both families of lattices agree on the respective boundaries of
their defining intervals and so we obtain a continuous loop of lattices since  $\Lambda_{\pi}=\Lambda_0$.
Next define the curve $\theta\mapsto \gamma_W(\theta)$ of admissible Eaton lens configurations
for every $\theta \in \R/\pi \Z$ as follows:
\begin{equation*}
 \gamma_W(\theta) = \begin{cases}
   L\left(\Lambda_{\theta}, (0,0), \pm (1,1+\tan \theta), 2\sin\theta, \cos \theta \right)
  &  \text{if } \theta  \bmod \pi \in [0, \pi/4]\\
  L(\Lambda_{\theta}, (0,0), \pm (\cot \theta,2), l(\theta)\sin \theta, \cos \theta)
  & \text{if } \theta  \bmod \pi \in [\pi/4, \pi/2] \\
  L(\Lambda_{\theta}, (0,0), \pm (-\cot \theta , 2), l(\theta)\sin \theta, -\cos \theta)
  &  \text{if } \theta  \bmod \pi \in [\pi/2, 3\pi/4] \\
  L\left(\Lambda_{\theta}, (0,0), \pm (-1,1+\tan \theta), 2\sin\theta, -\cos \theta \right)
  &  \text{if } \theta  \bmod \pi \in [3\pi/4, \pi]
  \end{cases}
\end{equation*}
We want to assume, that two Eaton lens configurations in the plane are the same, if they differ
by a translation. After all, that is equivalent to a translation of the origin, preserving dynamical properties. Then
the curve of Eaton lens distribution closes, since $\gamma_W(0)=\gamma_W(\pi)+(0,2)$.
The admissibility of all Eaton lens configurations in the image of $\gamma_W$
is shown in Section~\ref{subsec:adm}. To give a geometric outline of the lens configurations
we add a cartoon showing the configurations at representative angles
in the interval $[0,\pi/4]$ (Figure~\ref{german_animal_1}) and $[\pi/4,\pi/2]$ (Figure~\ref{german_animal_3}).
 \begin{figure}[!htb]
 \centering
 \includegraphics[scale=0.40]{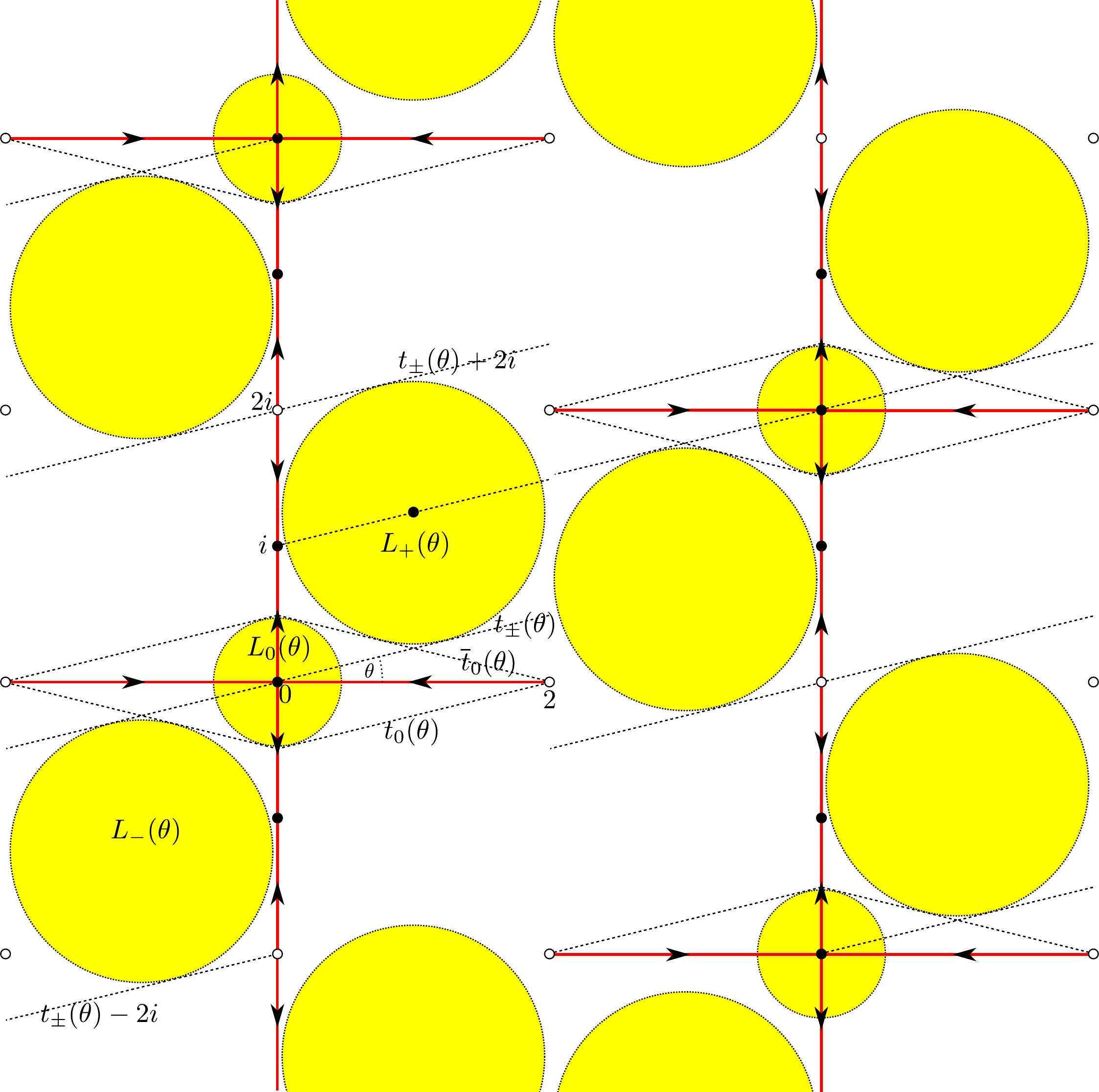}
 \caption{Ergodic curve for angles $|\theta | \leq \pi/4$ and  $|\theta - \pi| \leq \pi/4$}
 \label{german_animal_1}
\end{figure}
\begin{figure}[!htb]
 \centering
 \includegraphics[scale=0.40]{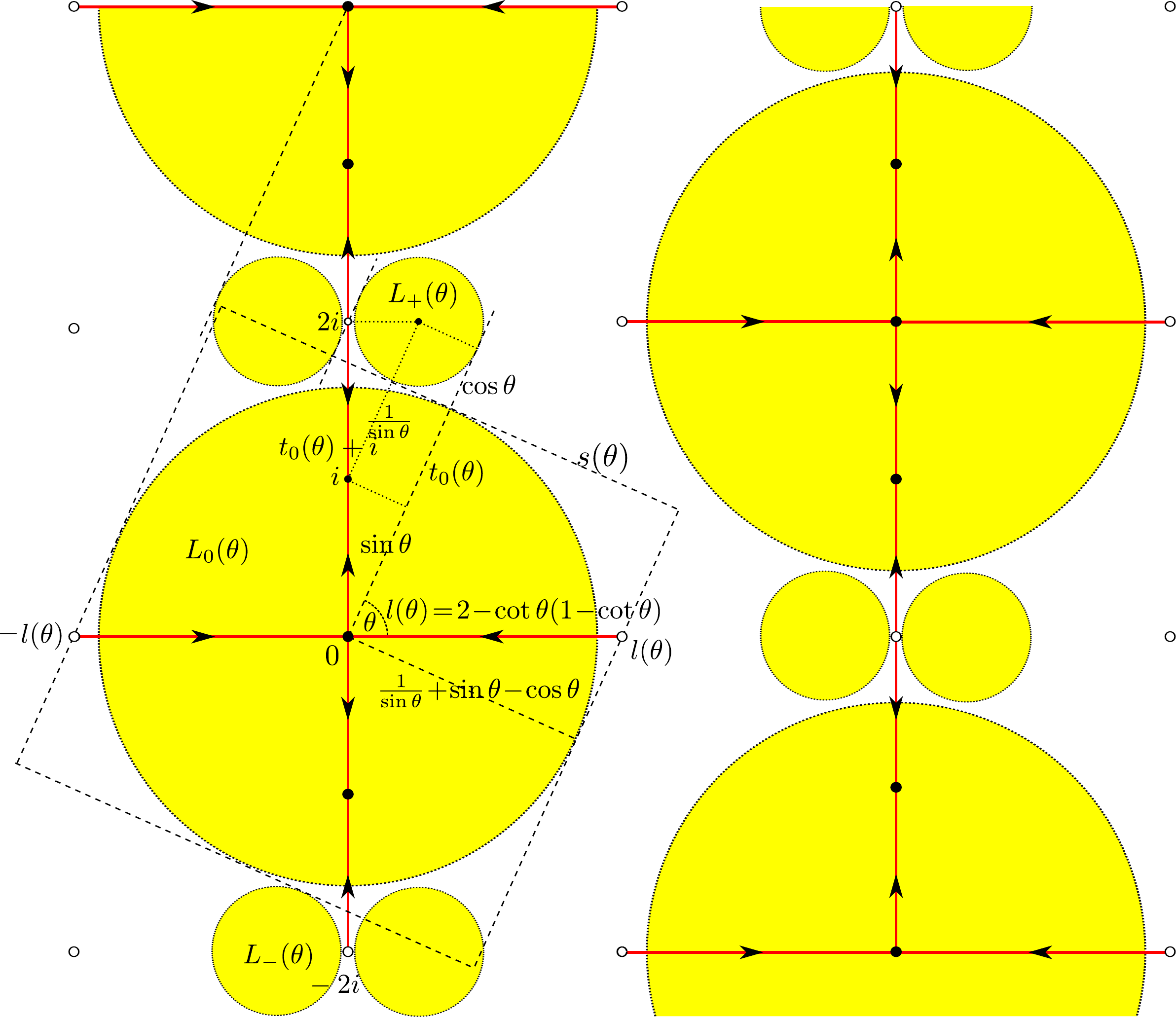}
 \caption{Ergodic curve for the angles $|\theta \pm \pi/2| \leq \pi/4 $ }
 \label{german_animal_3}
\end{figure}
\begin{theorem}\label{thm:erglens}
For almost every $\theta\in\R/\pi\Z$ the geodesic flow $(\mathfrak{g}^{\gamma_W(\theta),\theta}_t)_{t\in\R}$ is ergodic.
\end{theorem}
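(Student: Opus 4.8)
The strategy is to recognise $(\mathfrak{g}^{\gamma_W(\theta),\theta}_t)_{t\in\R}$ as a directional foliation flow on a $\Z^{2}$-cover of a compact half-translation surface and then invoke the ergodicity criterion for $\Z^{d}$-covers established in the body of the paper — the mechanism opposite to the one producing the trapping of Theorem~\ref{thm:typtrap}. First I would straighten the lens dynamics: by the dictionary of \cite{Fr-S} (see also \cite{Fr-Shi-Ul} and Section~\ref{system_of_Eaton_lenses}), for a fixed slope $\theta$ the flow on $\mathscr{P}_{\mathcal{L},\theta}$ with its natural infinite invariant measure is, up to an ergodicity-preserving time change, the foliation flow in direction $\theta$ of a half-translation surface, each Eaton lens being replaced by a cone point of angle $\pi$ — a simple pole of a quadratic differential — obtained by a cut-and-reglue along a slit whose length is governed by the lens radius and whose position by $\theta$. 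For the $\Lambda$-periodic configuration $L(\Lambda,\ov c,\ov r)$ this produces a meromorphic quadratic differential $q_\theta$ on the torus $\C/\Lambda$, with simple poles at the images of the lens centres and compensating zeros, and the plane with the lens metric becomes the $\Z^{2}$-cover obtained by unfolding the $\Lambda$-periodicity, the deck group being $\Lambda\cong\Z^{2}$.

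The role of the elaborate formulas defining $\gamma_W$ is to make this family of flat structures constant up to affine rescaling. I would show that there is a single half-translation surface $(X_{0},q_{0})$ on a torus, with a fixed $\Z^{2}$-cover $(\widehat X_{0},\widehat q_{0})$, and a family $g_\theta\in\mathrm{GL}^{+}(2,\R)$ that is continuous and piecewise $C^{1}$ in $\theta\in\R/\pi\Z$, such that $g_\theta$ carries $(\C/\Lambda_\theta,q_\theta)$ to $(X_{0},q_{0})$ up to scale, respecting the covering data, and carries the direction $\theta$ to a direction $\psi(\theta)$, where $\psi\colon\R/\pi\Z\to\R/\pi\Z$ is a bijection, piecewise $C^{1}$ with nonvanishing derivative away from finitely many points; the functions $\cot\theta$, $\tan\theta$ and $l(\theta)=2-\cot\theta(1-\cot\theta)$ entering $\gamma_W$ are precisely what makes the lens centres and radii move so as to keep the straightened surface fixed while the four regimes glue into such a $\psi$ (and into a loop). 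Since rescaling leaves the orbits of a directional flow unchanged and $\psi,\psi^{-1}$ preserve Lebesgue-null sets, Theorem~\ref{thm:erglens} then reduces to the statement that the direction-$\beta$ flow on the fixed cover $(\widehat X_{0},\widehat q_{0})$ is ergodic for Lebesgue-almost every $\beta\in\R/\pi\Z$.

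This is exactly the output of the $\Z^{d}$-cover ergodicity theorem, once one checks its hypotheses for $(\widehat X_{0},\widehat q_{0})$; the substantive one is that the Lyapunov exponents of the Kontsevich--Zorich cocycle along the relevant orbit closure, restricted to the two-dimensional subspace of absolute homology carrying the covering homomorphism, all vanish. This is where the symmetry built into $\gamma_W$ pays off: the surface $(X_{0},q_{0})$ — or rather its orientation double cover, on which $q_{0}$ becomes the square of an abelian differential — is the highly symmetric flat surface drawn in Figures~\ref{german_animal_1}--\ref{german_animal_3}, of Eierlegende-Wollmilchsau type, whose non-tautological Kontsevich--Zorich cocycle has relatively compact closure; its Lyapunov exponents therefore vanish on the entire non-tautological part, in particular on the cover-defining subspace. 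I expect the main obstacle to be the explicit geometry of the middle step: performing the cut-and-reglue for the three lenses in each of the four regimes, verifying admissibility (Section~\ref{subsec:adm}), checking that all four straightenings yield one and the same surface up to scale together with the sweeping bijection $\psi$, and pinning down the homology subspace cut out by the $\Z^{2}$-cover so that the vanishing of its Lyapunov exponents can be read off from the symmetry of $(X_{0},q_{0})$.
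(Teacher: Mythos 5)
Your high-level strategy matches the paper's: convert the Eaton lens dynamics to a directional foliation on a quadratic differential on the plane, recognize that plane differential as the $\Z^2$-cover of a torus quadratic differential with vanishing Kontsevich--Zorich exponents (the degree-$4$ pillowcase cover whose orientation cover is the Eierlegende Wollmilchsau), and invoke the $\Z^d$-cover ergodicity criterion. That part is sound and is essentially what the paper does via Theorem~\ref{thm:ergodicity:intr}, Corollary~\ref{cor:pillow:intr} and Theorem~\ref{thm:mainquadr:intr}.

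The gap is in the middle step, where you propose a family $g_\theta \in \mathrm{GL}^+(2,\R)$ carrying $(\C/\Lambda_\theta, q_\theta)$ to a fixed $(X_0,q_0)$ up to scale, with an angle reparameterization $\psi(\theta)$. This cannot work. First, for $\theta\in[0,\pi/4]$ the lattice $\Lambda_\theta=\Z 4i\oplus\Z(4+2i)$ is fixed while the flat-lens slit-folds all point in the $\theta$-dependent direction $\theta+\pi/2$; a matrix carrying $\Lambda_\theta$ to a scalar multiple of a fixed $\Lambda_0$ is unique up to scalars and the discrete group $\mathrm{GL}(\Lambda_0)$, so $g_\theta$ would be essentially constant in $\theta$ and could not send the rotating slit-fold direction to a fixed one. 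Second, all three slit-folds of $q_\theta$ are mutually parallel (perpendicular to $\theta$), whereas the Wollmilchsau skeleton $\rangle -2,2\langle\ \cup\ \rangle 0,2i\langle\ \cup\ \rangle 0,-2i\langle$ has both horizontal and vertical slit-folds; a linear map preserves parallelism, so no $g_\theta$ can carry the first to the second. The correct mechanism, used in the paper's Proposition~\ref{prop:adm}, is a \emph{railed deformation} (a composition of Whitehead moves): for the \emph{same} $\theta$, slide each slit-fold endpoint along the leaves of the direction-$\theta$ foliation until the skeleton coincides with the fixed Wollmilchsau skeleton. This is not an affine map; it changes the flat metric and the slit-fold directions but preserves the transverse measure of $\mathcal{F}_\theta$, hence preserves ergodicity. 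There is no angle change $\psi(\theta)$; the direction $\theta$ is preserved throughout, and the reduction is $(\mathfrak{g}^{\gamma_W(\theta),\theta}_t)_{t\in\R}\ \longleftrightarrow\ \mathcal{F}_\theta(\widetilde X_3)$ for each $\theta$. That the railed deformation terminates exactly at the Wollmilchsau skeleton (and that the Eaton configuration is admissible) is what the trigonometric formulas in $\gamma_W$ and the explicit tangent-line computations in the proof of Proposition~\ref{prop:adm} are for.
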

We devote part of the paper showing several curves of ergodic Eaton lens configurations in the plane, see Figures~\ref{Eaton_C_6(3,1)}, \ref{Eaton_C_6(3,2)} and \ref{Eaton_C_3(2,1)}.
For some of those curves we describe admissible Eaton lens configurations only for an interval
of slopes in $\R/ \pi \Z$.

\subsection*{Reduction to quadratic differentials and cyclic pillow case covers}
The dynamical results for periodic Eaton lens distributions in the plane rely on the equivalence
of the Eaton dynamics in a fixed direction, say $\theta$, to the (dynamics on a) direction foliation
$\mathcal{F}_{\theta}(q)$ of a quadratic differential $q$ on the plane. Starting from a (slit-fold) quadratic differential, the connection is made by replacing a {\em slit-fold}, as shown in Figure \ref{slit-fold}, by an Eaton lens. For a given direction the dynamical equivalence of a slit-fold and an Eaton lens is motivated by
Figure~\ref{eaton}. This equivalence is described in detail in Section~\ref{system_of_Eaton_lenses}.
\begin{figure}[!htb]
 \centering
\includegraphics[scale=0.50]{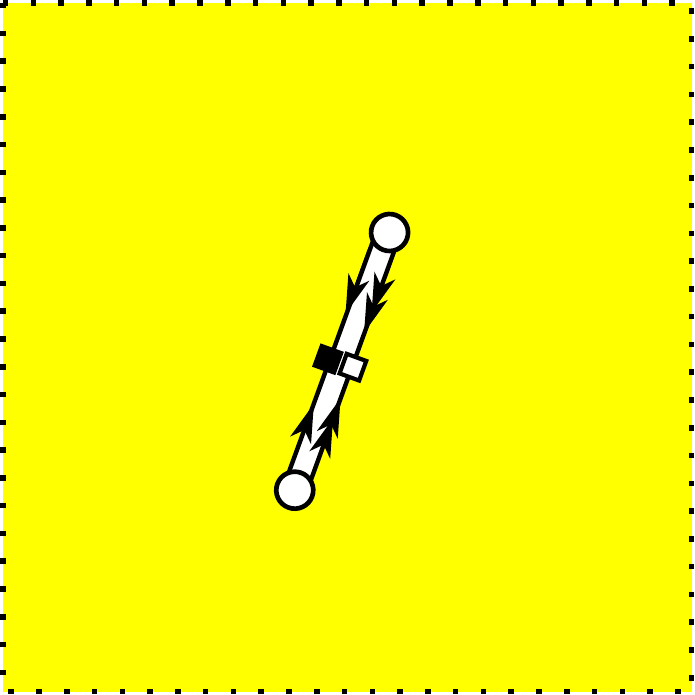}
 \caption{A slit-fold}
 \label{slit-fold}
\end{figure}
We distinguish two objects, a {\em flat lens} is a two-dimensional replacement of an Eaton lens
perpendicular to the light direction, that does not change the future and the past of the light in the
complement of the Eaton lens that is replaced, see Figure \ref{eaton}.
A {\em slit-fold} on the other hand is a flat lens in the language of quadratic differentials.
In fact a slit-fold is constructed by removing a line segment, say $[a,b]$ with $a,b\in \C$, from the plane (or any flat surface), then a closure is taken so that the removed segment is replaced by two parallel and disjoint segments. Then for each segment one identifies those pairs of points, that have equal distance from the segments center point.
Once this is done we obtain a slit-fold that we denote by $\rangle a, b \langle$ on the given surface, see Figure \ref{slit-fold}.
The single slit-fold $\rangle a, b \langle$ defines a quadratic differential on the plane with two singular points located on the (doubled) centers of the segment and a zero at its (identified) endpoints.  Alternatively that quadratic differential on the plane is obtained as quotient of the abelian differential defined by gluing two copies of the slit plane $\C \backslash [ a, b ]$ crosswise along its strands.
Then a quotient is taken with respect to the sheet exchange map that lifts the rotation by $\pi$ around
the center point of $[a,b]$. By adding slit-folds we can construct a variety of quadratic differentials on any flat surface.

For fixed $k \in \N$ the set $\mathcal{S}_k$ of quadratic differentials made of $k$ disjoint slit folds is a subset of $\mathcal{Q}((-1)^{2k}, 2^{k})$,  the vector space of genus one quadratic differentials that have $2k$ singular points and $k$ cone points of order $2$.
Disjoint means, the cone points of different slit-folds do not fall together.
We will use the superset $\overline{\mathcal{S}}_k \supset \mathcal{S}_k$ of the quadratic differentials that
are made of exactly $k$ slit-folds, including the ones with merged cone points.
\begin{figure}[!htb]
 \centering
\includegraphics[width=0.9\textwidth]{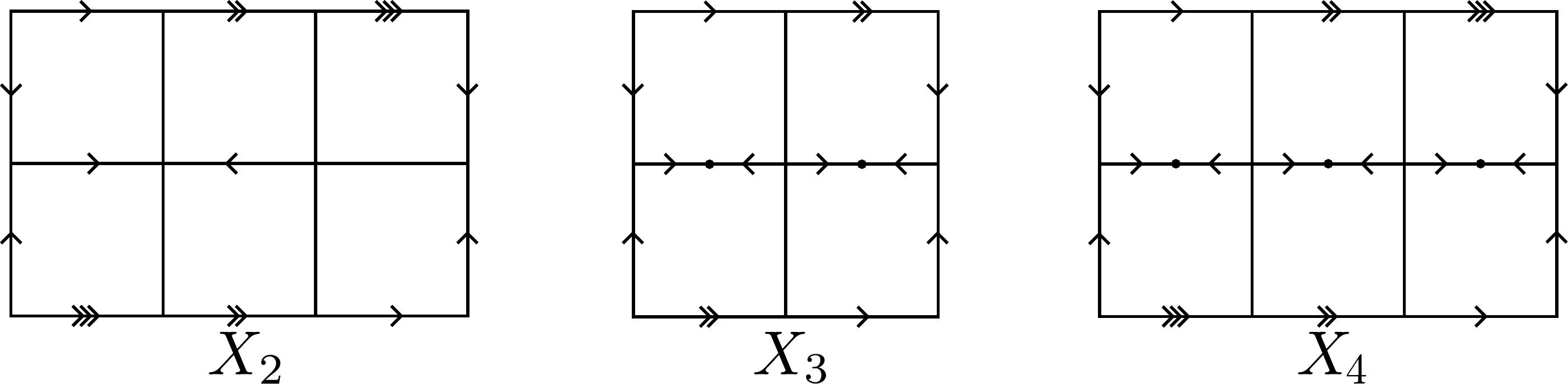}
 \caption{Torus quadratic differentials $X_2$, $X_3$ and $X_4$.}
 \label{3animals}
\end{figure}
Let us consider three special quadratic surfaces $X_2\in \overline{\mathcal{S}}_2$, $X_3\in \overline{\mathcal{S}}_3$ and  $X_4\in \overline{\mathcal{S}}_4$
drawn on Figure~\ref{3animals}.
\begin{theorem}\label{thm:ergodicity:intr}
Let $X=X_k$ for $k=2,3,4$ and denote by $\widetilde{X}$ its universal cover (quadratic differential on the plane).
Then for almost every $\theta \in \R/\pi \Z$ the foliation in the direction $\theta$ on $\widetilde{X}$ is ergodic.
\end{theorem}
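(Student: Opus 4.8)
The plan is to deduce Theorem~\ref{thm:ergodicity:intr} from the general $\Z^d$-cover ergodicity theorem of this paper. Its shape is: given a quadratic differential on a compact surface $X$ together with a connected $\Z^d$-cover $\widehat X\to X$, if the Lyapunov exponents of the Kontsevich--Zorich cocycle (with respect to the $\SL(2,\R)$-invariant measure on the orbit closure of $X$) on the subbundle of the Hodge bundle spanned by the cohomology classes defining $\widehat X\to X$ all vanish, then the foliation in direction $\theta$ on $\widehat X$ is ergodic for a.e.\ $\theta$. So the first step is to cast $\widetilde X_k\to X_k$ in this form. Because $X_k$ has genus one, its universal cover is exactly the $\Z^2$-cover attached to the universal cover of the underlying torus: writing $X_k=\widetilde X_k/\Lambda_k$ for the lattice $\Lambda_k\subset\C$, the deck group is $\Lambda_k\cong\Z^2$, and the two classes of $H^1(X_k;\Z)$ defining this cover span all of $H^1(X_k;\R)\cong\R^2$. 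Passing to the orientation double cover $\widehat X_k$ of $X_k$ (to linearise the cocycle), this $\R^2$ is exactly the invariant part $H^1_+(\widehat X_k;\R)$ for the orientation involution. Hence Theorem~\ref{thm:ergodicity:intr} reduces to: for $k=2,3,4$ the Kontsevich--Zorich cocycle restricted to $H^1_+(\widehat X_k;\R)$ has vanishing Lyapunov exponents. Connectedness of the cover and primitivity of the two defining classes are clear from the explicit pictures in Figure~\ref{3animals}.

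For the exponent computation I would first identify the $\SL(2,\R)$-orbit closure of $X_k$. From the slit-fold description one reads off an extra cyclic symmetry of each $X_k$ realising it as a cyclic cover of the pillowcase branched over the four corner points; hence the orbit closure lies in a locus of such cyclic pillowcase covers, over which the Hodge bundle splits into isotypic subbundles indexed by the characters of the cyclic deck group. Since the orientation involution commutes with the cyclic action, the $2$-dimensional space $H^1_+(\widehat X_k;\R)$ respects the isotypic decomposition; it is a rank-two real variation of Hodge structure over the Teichm\"uller (modular) curve of the pillowcase. One then computes its Lyapunov exponent either from the known explicit formula for Lyapunov exponents of cyclic covers of $\mathbb{P}^1$, or, equivalently, by verifying Forni's criterion: the vanishing on the orbit closure of the second fundamental form of the Hodge filtration restricted to $H^1_+(\widehat X_k;\R)$. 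In the three cases at hand the characters occurring in $H^1_+$ turn out to have finite-order parabolic monodromy around the cusps, so the associated variation of Hodge structure is unitary and the exponents vanish; this is a finite check for the three pieces of branching data attached to $X_2$, $X_3$ and $X_4$.

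The heart of the argument, and the step I expect to be the main obstacle, is precisely this last vanishing. A generic non-square quadratic differential on a torus has $H^1_+$-exponent equal to one, corresponding to the underlying elliptic curve degenerating as the flat structure is stretched toward the cusp; the surfaces $X_2$, $X_3$, $X_4$ are chosen so that on the relevant isotypic component no such degeneration occurs. Making this rigorous requires care with (i) the compatibility of the orientation double cover with the cyclic structure and the resulting decomposition of $H^1$, (ii) identifying exactly which characters contribute to $H^1_+(\widehat X_k;\R)$, and (iii) the cusp--monodromy analysis (equivalently the Forni-form computation) on each of them. Once these three explicit verifications are done, Theorem~\ref{thm:ergodicity:intr} is immediate from the general $\Z^d$-cover ergodicity theorem.
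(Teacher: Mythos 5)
Your proposal follows the same overall logic as the paper: reduce to the general $\Z^d$-cover ergodicity criterion (the paper's Theorem~\ref{thm:mainquadr:intr}, proved via Theorem~\ref{thm:ergV}), identify the relevant subbundle with $H^1_+$ of the orientation cover, and conclude once the Kontsevich--Zorich exponents there are shown to vanish. Where you diverge is in the vanishing step. The paper does not compute or estimate exponents at all; it observes that $X_2$, $X_3$, $X_4$ arise as finite \emph{cyclic} covers of the pillowcase that are branched over \emph{exactly three} of its four singular points (by reading off the cut-and-turn construction in Figure~\ref{cut_turn}), and then invokes the Grivaux--Hubert criterion \cite{Gr-Hu} as a black box: such covers automatically have zero Lyapunov spectrum. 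This is packaged as Corollary~\ref{cor:pillow:intr}, so the proof of Theorem~\ref{thm:ergodicity:intr} is one paragraph. You instead propose to establish the vanishing directly by decomposing the Hodge bundle over the (Teichm\"uller-curve) orbit closure into isotypic pieces for the cyclic deck group and verifying unitary/finite-order cusp monodromy, or equivalently the vanishing of Forni's second fundamental form, on the pieces meeting $H^1_+$. That is in effect a re-derivation of the Grivaux--Hubert criterion in the three special cases; it is not wrong, but it is a significantly heavier verification than what the paper needs, and the citation route is what makes the paper's proof short.

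One genuine inaccuracy: you write that $X_k$ is realised "as a cyclic cover of the pillowcase branched over the four corner points." The condition the paper uses, and the hypothesis of the Grivaux--Hubert criterion, is that the cyclic cover is branched over \emph{exactly three} singular points of $\mathcal{P}$, i.e.\ it is unbranched over one corner. This is not a cosmetic difference: cyclic pillowcase covers branched over all four points generically carry positive exponents on $H^1_+$ (the paper's own trapping results, e.g.\ Lemma~\ref{lem:posexp} and Corollary~\ref{cor:trap}, exhibit such positivity for separated slit-fold differentials on the torus), so the three-point branching is precisely what makes these examples exceptional. Your cusp-monodromy computation, if carried out, would in fact be forced to use the unbranched point; as stated, the branching data is the wrong one and the claimed vanishing would fail for a four-point cyclic cover.

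One smaller issue: you use $\widehat X$ first for the connected $\Z^d$-cover in the statement of the criterion and then for the orientation double cover of $X_k$; these are different objects, and in this paper the orientation cover is denoted $(M,\omega)$ while the $\Z^d$-cover is $(\widetilde X_\gamma,\widetilde q_\gamma)$. The notational conflation does not affect the substance, but it obscures the place where Remark~\ref{rem:erg} is invoked to pass between the two covers.
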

Those ergodic foliations on the plane can be converted into ergodic curves of admissible Eaton lens distributions.

The ergodicity of universal covers of quadratic surfaces in $S_k$ on the other hand is rather exceptional.
If $X\in S_k$ satisfies a separation condition on slit folds (which is an open condition) then  the foliation in the direction $\theta$ on $\widetilde{X}$ is trapped for a.e.\ $\theta\in \R/\pi\Z$,
see Corollary~\ref{cor:trap} for details.

The following more general ergodicity result supplies the key to the proof of Theorem~\ref{thm:ergodicity:intr} and Theorem~\ref{thm:erglens}.
\begin{theorem}\label{thm:mainquadr:intr}
Let $(X,q)$ be a quadratic differential on a compact, connected surface such that all Lyapunov exponents of the Kontsevich-Zorich cocycle of $(X,q)$ are zero. Then for every connected $\Z^d$-cover $(\widetilde{X},\widetilde{q})$, with $d\leq 2g$, almost every directional foliation on  $(\widetilde{X},\widetilde{q})$ is ergodic.
\end{theorem}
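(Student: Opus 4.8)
The plan is to follow the strategy pioneered by Hubert--Weiss and refined in the work on $\Z^d$-covers of translation surfaces, reducing the ergodicity of the directional foliation on $(\widetilde X,\widetilde q)$ to an ergodicity criterion for a cocycle over the straight-line flow on the compact base $(X,q)$. First I would pass to the orientation double cover to work with an abelian differential, so that the $\Z^d$-cover is defined by a surjective homomorphism $\varphi\colon H_1(X;\Z)\to\Z^d$, equivalently by a $\Z^d$-valued cocycle $\psi_\theta$ for the translation flow in direction $\theta$ obtained by integrating a fixed basis of $d$ closed $1$-forms with integer periods along orbit segments. Ergodicity of the foliation on the cover is then equivalent to ergodicity of the skew product $T^\theta_t(x,n)=(g^\theta_t x, n+\psi_\theta(x,t))$ over the base flow, which by the standard Maharam-type argument for $\Z^d$-extensions reduces to: (i) ergodicity of the base directional flow, and (ii) the absence of nontrivial measurable solutions to the cohomological equation, i.e. there is no nonzero $\lambda\in\R^d$ (more precisely no nonzero element of the dual lattice read projectively) and measurable $u\colon X\to\T$ with $e^{2\pi i\langle\lambda,\psi_\theta(x,t)\rangle}=u(g^\theta_tx)/u(x)$.

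For step (i), ergodicity of the straight-line flow in a.e.\ direction on the compact surface $(X,q)$ is classical (Kerckhoff--Masur--Smillie). The heart of the matter is step (ii), and this is where the vanishing of all Lyapunov exponents of the Kontsevich--Zorich cocycle enters. The key step is to show that for a.e.\ $\theta$ the ergodic integrals $\psi_\theta(x,t)\in\Z^d$ grow \emph{sub-polynomially} — in fact slower than any power $t^{\epsilon}$ — along a sequence of times, uniformly enough in $x$; zero Lyapunov exponents give exactly that the cocycle $\psi$ has subexponential, and after the Forni/Bufetov-type deviation estimates actually sub-polynomial, growth. I would invoke the Oseledets/Zorich machinery: the period coordinates of the Teichmüller orbit, pushed by $g_t$ on the base, have all their growth controlled by the top exponent $1$ coming from the tautological plane, while the transverse cocycle $\psi$ lives in the subspace where all exponents vanish, so $\|\psi_\theta(x,t)\|=O(t^\epsilon)$ along the Masur--Veech-typical Teichmüller geodesic. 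Such sublinear (indeed subpolynomial) growth of a $\Z^d$-cocycle is incompatible with a nontrivial measurable coboundary relation unless the character is trivial, by a Fourier/essential-value argument: if $e^{2\pi i\langle\lambda,\psi\rangle}$ were a coboundary, then $\langle\lambda,\psi_\theta(x,t)\rangle$ would stay within bounded distance of a measurable function composed with the flow, forcing $\langle\lambda,\psi_\theta(x,t_n)\rangle\to0$ in $\Z^d$-sense along a rigidity sequence $t_n$, hence $\lambda=0$. This rules out all nontrivial characters, so the essential value group of the $\Z^d$-extension is all of $\Z^d$, giving ergodicity of the skew product and thus of the foliation.

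The main obstacle I anticipate is making step (ii) genuinely rigorous and quantitative: bare sublinearity of $\psi$ along one Teichmüller direction is \emph{not} by itself enough to kill a measurable coboundary, since measurable solutions can be very wild. One needs either (a) a Rokhlin-tower / Denjoy--Koksma argument giving uniform control of $\psi_\theta$ over a full return map to a transversal together with a bound on how much $\psi$ can oscillate, or (b) the Bufetov--Forni type result that the cocycle is a \emph{tight} cocycle (its deviations are governed by the Lyapunov spectrum), which when the spectrum is trivial collapses the Oseledets filtration so dramatically that the only invariant sections are constants — precisely the mechanism forbidding measurable coboundaries for nontrivial characters. I would structure the argument around (b): formulate a lemma that a $\Z^d$-valued cocycle arising from integer $1$-forms over a.e.\ directional flow on $(X,q)$ with zero KZ exponents admits no nontrivial measurable multiplicative coboundary, prove it via the subpolynomial deviation bound plus an ergodic-averaging contradiction, and then assemble (i) and the essential-values dichotomy to conclude. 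The constraint $d\le 2g$ is used only to guarantee that the dual lattice of characters sits inside $H^1$, so that "$\lambda$ pairs with periods of honest $1$-forms" makes sense and the deviation estimates apply verbatim.
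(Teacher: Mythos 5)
Your proposal gets the overall framing right: pass to the orientation double cover, represent the dynamics on the $\Z^d$-cover as a skew product over an IET on the compact base, and exploit the vanishing of all Lyapunov exponents to control the transfer cocycle. But there is a decisive gap at the quantitative core. You propose controlling $\psi_\theta(x,t)$ only by a sub-polynomial bound $O(t^\epsilon)$, and you yourself concede that this is not enough to exclude measurable coboundaries. The paper uses a much stronger input: by Forni--Matheus--Zorich rigidity (quoted as Theorem~\ref{thm:fil}), if all KZ exponents vanish on an $SL(2,\R)$-invariant subbundle $\mathcal{V}$, then the Hodge norm is \emph{constant} along the whole $SL(2,\R)$-orbit for every $\xi\in\mathcal{V}_\omega$, not merely sub-polynomially bounded. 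Since the classes $\gamma_i$ defining the cover lie in such a $\mathcal{V}$ (namely $\mathcal{H}_1^+$), the integer vectors $\langle\gamma,\xi_\alpha(g_{t_k}\omega_0,I_k)\rangle$ produced by the IET renormalization along a Birkhoff-generic Teichm\"uller trajectory are \emph{uniformly bounded} across all scales $k$ (via \eqref{baseuniformerg}); after passing to a subsequence they are eventually constant, and the Conze--Fr\k{a}czek criterion (Proposition~\ref{prop:conze-fraczek}) places each of them directly in the essential-value group $E(\psi)$. Because the $\xi_\alpha$ generate $H_1(M,\Z)$ and the cover is connected, those vectors generate $\Z^d$, so $E(\psi)=\Z^d$ and the skew product is ergodic.

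In short, the missing idea is the upgrade from ``slow growth'' to ``bounded'' via the Hodge-norm isometry, and the closing mechanism is the essential-value criterion on renormalization towers rather than a Fourier/coboundary dichotomy. Your character-theoretic route is dual in principle, but the rigidity-sequence argument you sketch for killing a nontrivial $\lambda$ has exactly the ``wild measurable solution'' problem you flag, and the paper avoids it entirely by going through essential values. You would also need the Chaika--Eskin genericity-in-direction result (Theorem~\ref{thm:esch}) to guarantee that for a.e.\ $\theta$ the Teichm\"uller orbit of $r_\theta\omega$ is Birkhoff generic, which is what Lemma~\ref{lemma:consterg} needs to produce the balanced sequence of intervals $I_k$ and times $t_k$; Masur--Veech ergodicity alone gives this only for a.e.\ point of moduli space, not for a.e.\ direction from a fixed surface.
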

This result is in fact a consequence of the more general Theorem~\ref{thm:ergV} that
provides a criterion on ergodicity for translation flows on $\Z^d$-covers of compact translation surfaces.
We would like to mention that a similar result was obtained independently by Avila, Delecroix, Hubert and Matheus but it was never published (communicated by Pascal Hubert). Some related research was also recently done by Hooper
who studied ergodicity of directional flows on translation surfaces with infinite area, see e.g.\ \cite{Hoop}.

\section{Ergodic slit-fold configurations on planes by cyclic pillowcase covers.}
In this section we outline the strategy to construct the ergodic quadratic differentials on the plane assuming
the validity of Theorem \ref{thm:mainquadr:intr}.
Theorem \ref{thm:mainquadr:intr} reduces the problem of ergodicity from cyclic quadratic differentials in the plane to quadratic differentials $(\mathcal{T}, q)$ on the torus $\mathcal{T}$ with {\em zero Lyapunov exponents}. A recent criterion of Grivaux and Hubert \cite{Gr-Hu} implies
that a cyclic cover of the {\em pillowcase} has zero Lyapunov exponents, if it is
branched at (exactly) three points. Now it turns out that there is a only a short list of those branched cyclic covers
$\mathcal{T} \rightarrow \mathcal{P}$.
\begin{figure}[!htb]
 \centering
\includegraphics[scale=0.50]{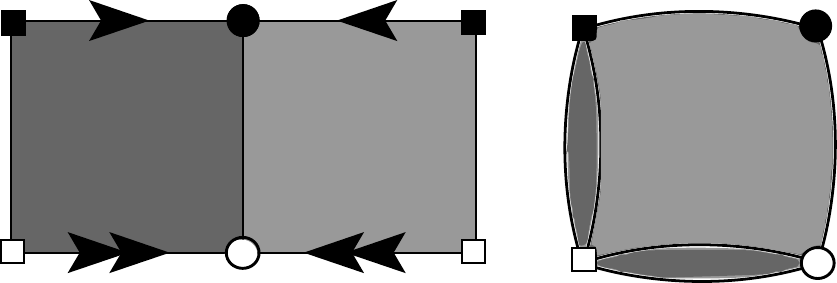}
 \caption{The pillowcase quadratic differential in polygonal representation}
 \label{pillowcase}
\end{figure}
Recall, the {\em pillowcase} $\mathcal{P}$ is a quadratic differential $q_{\mathcal{P}}$ on the sphere $S^2$.
To characterize it, consider the quadratic differential $dz^2$ on $\R^2 \cong \C$. It is invariant under translations
and the central reflection  $-id:\R^2\to\R^2$. Thus it descends to the torus $\T^2:=\R^2/\Z^2$
defining a quadratic differential invariant under the hyperelliptic involution $\varphi: \T^2 \rightarrow \T^2$
induced by the  central reflection of $\R^2$. So it further descends to a quadratic differential $q_{\mathcal{P}}$ on the quotient sphere $S^2=\T^2/ \varphi$.  The pillowcase the pair $\mathcal{P}=(S^2, q_{\mathcal{P}})$, see Figure \ref{pillowcase}.
Putting the result from \cite{Gr-Hu} on cyclic pillowcase covers and Theorem  \ref{thm:mainquadr:intr} together one has:
\begin{corollary}\label{cor:pillow:intr}
Let $\pi : X \rightarrow \mathcal{P}$ be a finite cyclic cover branched over  three of the singular points of $\mathcal{P}$ and let $q = \pi^{\ast}q_{\mathcal{P}}$ be the pull back quadratic differential to $X$. If
$(\widetilde{X},\widetilde{q}) \rightarrow (X,q)$ is a connected $\Z^d$-cover with $d \leq 2g$, then almost every directional foliation on
$(\widetilde{X},\widetilde{q})$ is ergodic.
\end{corollary}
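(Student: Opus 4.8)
The plan is to derive Corollary~\ref{cor:pillow:intr} by verifying the single hypothesis of Theorem~\ref{thm:mainquadr:intr}, namely that all Lyapunov exponents of the Kontsevich--Zorich cocycle of $(X,q)$ vanish, and then quoting that theorem verbatim. So the proof will have two movements: first, identify the geometric/arithmetic situation as one covered by the Grivaux--Hubert criterion \cite{Gr-Hu}; second, transport the vanishing-exponent conclusion through the pullback $q=\pi^\ast q_{\mathcal P}$ and invoke Theorem~\ref{thm:mainquadr:intr} on the $\Z^d$-cover $(\widetilde X,\widetilde q)\to(X,q)$.

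First I would make precise that a finite cyclic cover $\pi:X\to\mathcal P$ branched over exactly three of the four cone points of the pillowcase is an instance of the family to which \cite{Gr-Hu} applies. The pillowcase $\mathcal P=(S^2,q_{\mathcal P})$ has four singular points (the images of the $2$-torsion of $\T^2$), and a cyclic cover of $\mathbb P^1$ branched at three marked points is, up to passing to the associated abelian differential, a cyclic cover of an elliptic curve or of a genus-zero curve in the hyperelliptic sense; in either description the Grivaux--Hubert result states that the nontrivial part of the Hodge bundle over the (finite) Teichm\"uller curve of such a cover carries a flat structure forcing every Lyapunov exponent of the Kontsevich--Zorich cocycle to be zero. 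I would cite the precise statement of \cite{Gr-Hu} here, check that ``branched at three points'' is exactly their hypothesis (as opposed to, say, needing the branch orders to be coprime or the cover to be of a specific degree), and record that $q=\pi^\ast q_{\mathcal P}$ is precisely the quadratic differential whose KZ cocycle is being analyzed.

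Second, with the vanishing of all Lyapunov exponents of $(X,q)$ in hand, I would simply check that the remaining hypotheses of Theorem~\ref{thm:mainquadr:intr} are met: $X$ is compact and connected (a finite connected cover of $S^2$), $q$ is a genuine quadratic differential on it, and the cover $(\widetilde X,\widetilde q)\to(X,q)$ is by hypothesis a connected $\Z^d$-cover with $d\le 2g$, where $g$ is the genus of $X$. Theorem~\ref{thm:mainquadr:intr} then yields directly that almost every directional foliation on $(\widetilde X,\widetilde q)$ is ergodic, which is the assertion of the corollary. This second movement is essentially bookkeeping.

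The main obstacle is the first step: one must be certain that the Grivaux--Hubert criterion really does apply in the stated generality and that ``three branch points'' is the operative condition rather than some more restrictive configuration (for instance, their theorem might be phrased for covers branched over three of the four pillowcase singularities with a constraint on the local monodromy, or might require the third branch point to carry a specific ramification type). A subtlety worth flagging is the genus bound $d\le 2g$: it is vacuous only if $g\ge 1$, and a cyclic cover of the pillowcase branched at exactly three points can in principle have genus $g=1$ (indeed these are exactly the torus cases $X_2,X_3,X_4$ appearing in Theorem~\ref{thm:ergodicity:intr}, for which $2g=2$), so the statement $d\le 2g$ is meaningful and should be kept. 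No genus-zero case arises, since a cyclic cover of $\mathbb P^1$ branched at three points has genus at least one unless the degree is trivial. Once these points are checked against \cite{Gr-Hu}, the corollary follows immediately from Theorem~\ref{thm:mainquadr:intr}.
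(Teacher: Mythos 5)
Your proposal is correct and follows exactly the same route as the paper: the paper states Corollary~\ref{cor:pillow:intr} as an immediate combination of the Grivaux--Hubert criterion (a cyclic pillowcase cover branched over exactly three of the four singular points has vanishing Kontsevich--Zorich exponents) with Theorem~\ref{thm:mainquadr:intr}. The caution you flag about the precise hypothesis of \cite{Gr-Hu} is sensible due diligence but matches how the paper itself invokes that result, so there is no gap.
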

We further present a list of relevant pillow-case covers:
\begin{proposition}\label{prop:pillow:intr} Up to the action of $\text{SL}_2(\Z)$ on covers and up to isomorphy,
there are three cyclic covers $(\T, q) \rightarrow \mathcal{P}$
that are branched over exactly three cone points of $q_{\mathcal{P}}$.
The degree of each such cover is $3$, $4$ or $6$.
\end{proposition}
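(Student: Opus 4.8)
The plan is to translate the statement into a finite combinatorial problem about cyclic branched covers of the sphere. Note first that in $(\T,q)\to\mathcal{P}$ the surface $\T$ is a torus, so the real requirement is that the cover have genus $1$. Label the four cone points of $\mathcal{P}$ as $p_0,p_1,p_2,p_3$, the images of the four $2$-torsion points of $\T^2=\R^2/\Z^2$, with $p_0$ the image of $0$. Under the $\SL_2(\Z)$-action on $\mathcal{P}$ (which lies in its Veech group, since the linear group $\SL_2(\Z)$ commutes with the hyperelliptic involution $\varphi=-\operatorname{I}$ and so descends to $\T^2/\varphi$), the point $p_0$ is fixed and $p_1,p_2,p_3$ are permuted through $\SL_2(\Z)\to\SL_2(\Z/2\Z)\cong S_3$; moreover the three half-translations of $\T^2$ also commute with $\varphi$, descend to automorphisms of $\mathcal{P}$, and realize the three double transpositions of $\{p_0,p_1,p_2,p_3\}$. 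Together these act as the full symmetric group on the four cone points, so up to the $\SL_2(\Z)$-action and up to isomorphy the choice of which three cone points are branched is unique, and I may assume the branch locus is $\{p_1,p_2,p_3\}$ while $p_0$ is unramified.

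Next I would use the standard description of abelian branched covers. A connected cyclic cover $\pi\colon X\to\mathcal{P}$ of degree $n$ branched inside $\{p_1,p_2,p_3\}$ is the same datum as a surjective homomorphism $\pi_1(S^2\setminus\{p_1,p_2,p_3\})\to\Z/n\Z$; writing $a_i\in\Z/n\Z$ for the image of a small loop around $p_i$, this datum is exactly a triple $(a_1,a_2,a_3)$ with each $a_i\neq 0$ (so all three are genuine branch points), with $a_1+a_2+a_3\equiv 0\pmod n$ (so the cover is unramified over $p_0$), and with $\gcd(a_1,a_2,a_3,n)=1$ (connectedness). Two triples define isomorphic covers precisely when they differ by multiplication by a unit of $\Z/n\Z$ (relabelling the deck group) together with a permutation of the entries (relabelling $p_1,p_2,p_3$, realized inside $\SL_2(\Z)$). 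Over $p_i$ the fibre consists of $\gcd(a_i,n)$ points, each of ramification index $e_i:=n/\gcd(a_i,n)$, and over $p_0$ the fibre is $n$ unramified points, so Riemann--Hurwitz gives
\[
2-2g(X)=2n-\sum_{i=1}^{3}\bigl(n-\gcd(a_i,n)\bigr)=\sum_{i=1}^{3}\gcd(a_i,n)-n .
\]
Hence $X$ is a torus if and only if $\gcd(a_1,n)+\gcd(a_2,n)+\gcd(a_3,n)=n$, equivalently $\tfrac{1}{e_1}+\tfrac{1}{e_2}+\tfrac{1}{e_3}=1$ with integers $e_i\ge 2$.

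The proof is then completed by the classical fact that $1=\tfrac1{e_1}+\tfrac1{e_2}+\tfrac1{e_3}$ in integers $e_i\ge2$ has exactly the solutions $(3,3,3)$, $(2,4,4)$ and $(2,3,6)$ (up to order). For each such signature I would pin down the degree: put $m:=n/\operatorname{lcm}(e_1,e_2,e_3)$; from $e_i\mid n$ we get $m\mid n/e_i=\gcd(a_i,n)\mid a_i$ for every $i$, and $m\mid n$, so $m$ divides $\gcd(a_1,a_2,a_3,n)=1$; hence $m=1$ and $n=\operatorname{lcm}(e_1,e_2,e_3)$, which is $3$, $4$, $6$ in the three cases. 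With $n$ and the values $\gcd(a_i,n)$ fixed, a direct check of $a_1+a_2+a_3\equiv 0\pmod n$ shows that there is, up to a unit of $\Z/n\Z$ and a permutation of entries, exactly one admissible triple in each case: $(a_1,a_2,a_3)=(1,1,1)$ for $n=3$, $(1,1,2)$ for $n=4$, $(1,2,3)$ for $n=6$ (in each case the only other solution is the one obtained by $x\mapsto-x$). Since every admissible triple genuinely defines a connected cyclic cover that is a torus by the Riemann--Hurwitz count, and covers of different degrees are not isomorphic, this produces exactly three cyclic covers $(\T,q)\to\mathcal{P}$ branched over three cone points, of degrees $3$, $4$ and $6$.

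I expect the only delicate points to be, first, the reduction in the opening paragraph: one must check that combining the $\SL_2(\Z)$-action with isomorphisms really realizes the full symmetric group on the four cone points — the half-translation automorphisms of $\mathcal{P}$ are the essential ingredient — so that branching over any three of the four is a single case. Second, one must verify that the enumeration of admissible triples for $n=3,4,6$ is exhaustive and that the unit-times-permutation action collapses each list to a single isomorphism class; this is a short case analysis, but it is where an off-by-one slip would hide. The remaining ingredients — the $\pi_1$-description of cyclic covers, the Riemann--Hurwitz computation, and the solution of $1=\sum 1/e_i$ — are entirely standard.
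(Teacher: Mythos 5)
Your proof is correct, and it takes a genuinely different parametrization of the cyclic covers than the paper does, although the arithmetic core is the same. The paper encodes a cyclic branched cover $X_d(w_h,w_v)\to\mathcal{P}$ by a weight pair $(w_h,w_v)$ coming from the relative homology $H_1(\mathcal{P},\Sigma^*;\Z/d\Z)$, computes the genus by an angle-deficit/excess count of the Euler characteristic (Proposition~\ref{prop:gen}), derives the equation $1=\tfrac1a+\tfrac1b+\tfrac1c$ and $d=\operatorname{lcm}(a,b,c)$ (Proposition~\ref{torus_cover_class}), then lists the admissible weight pairs explicitly and shows, via the explicit parabolic action $P_h\cdot X_d(w_h,w_v)=X_d(w_v-w_h,w_v)$, $P_v\cdot X_d(w_h,w_v)=X_d(w_h,w_h-w_v)$, that the relevant torus covers of a fixed degree lie in a single $\SL_2(\Z)$-orbit. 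You instead describe the cover by a monodromy triple $(a_1,a_2,a_3)\in(\Z/n\Z\setminus\{0\})^3$ with $a_1+a_2+a_3\equiv 0$, run Riemann--Hurwitz to get the same $\sum 1/e_i=1$, and pin the degree by the $\gcd$-argument $n=\operatorname{lcm}(e_1,e_2,e_3)$; uniqueness is then a three-line finite check that up to units of $\Z/n\Z$ and permutations of the entries (the latter realized by $\SL_2(\Z)\to\SL_2(\Z/2)\cong S_3$) there is exactly one admissible triple for each $n\in\{3,4,6\}$. Your parametrization is more symmetric in the three branch points, which makes the uniqueness step cleaner than the paper's explicit orbit computation; the paper's weight-pair formulation is less symmetric but leads immediately to the explicit polygonal models of $X_d(w_h,w_v)$ that are exploited in the rest of the paper. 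One thing you make explicit that the paper only asserts by convention ($p_F$ is fixed) is that the choice of which three of the four cone points form the branch locus is immaterial: your observation that the half-lattice translations of $\T^2$ commute with $-\operatorname{I}$, descend to automorphisms of $\mathcal{P}$, and together with $\SL_2(\Z)$ generate the full $S_4$ on the cone points is the correct justification, and it is a useful addition.
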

Figure \ref{three_animal} shows polygonal one strip representations of one cyclic pillowcase cover in each degree.
We note that the quadratic differential on the degree $3$ cover has the {\em Ornithorynque} (see \cite{Fo-Ma} for the description of the surface) as its orientation cover and the quadratic differential on the degree $4$ cover has the {\em Eierlegende Wollmilchsau} (see also \cite{Fo-Ma}) as its orientation cover.
\begin{figure}[!htb]
 \centering
 \includegraphics[scale=0.60]{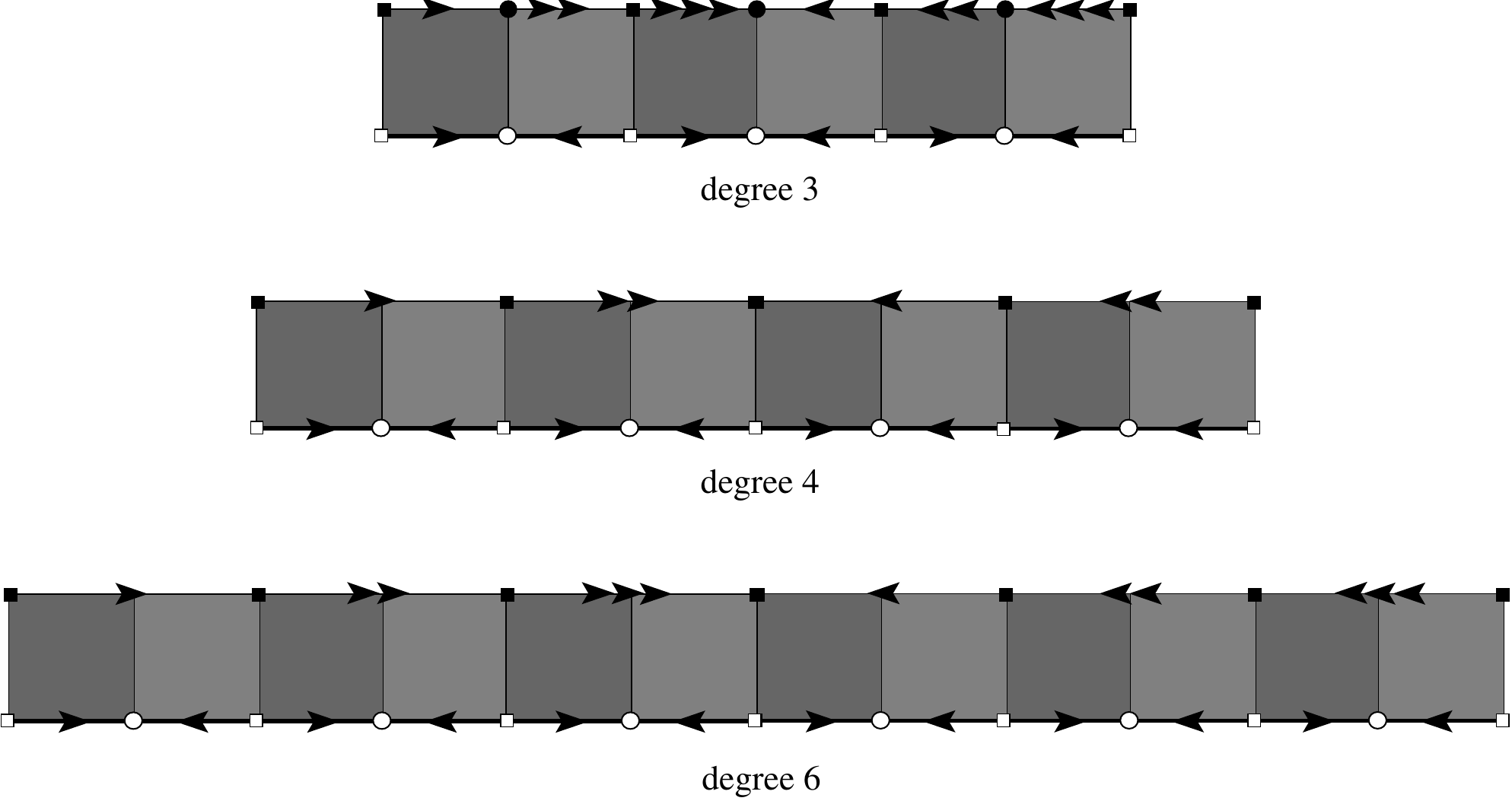}
 \caption{Torus differentials with zero Lyapunov exponents}
 \label{three_animal}
\end{figure}
There are particular questions regarding the conversion of a quadratic differential to an
admissible Eaton lens distribution in the plane. In order to convert the torus differentials
from Proposition \ref{prop:pillow:intr} to Eaton lens distributions one needs a cover
that is a slit-fold differential in the plane. We do this below
for the Eaton curve presented in the introduction.
The construction of some other curves need more
sophisticated geometric arguments which can be found in Appendix~\ref{app:foldsskel}.

\subsection*{Eaton differentials and skeletons}
For a fixed direction the (long term) Eaton lens dynamics on the plane or a torus is equivalent to the dynamics on a particular slit-fold, so we call a quadratic differential  that is given by a union of
slit-folds a {\em pre-Eaton differential}.
The radius of an Eaton lens replacing a slit-fold depends on the angle between the light ray and the slit-fold, a light direction needs to be specified for such a replacement. Recall that a configuration of Eaton lenses is {\em admissible}, if no pair of Eaton lenses intersects.
A pre-Eaton differential $q$  is called an {\em Eaton differential}, if there is a nonempty open interval $I \subset \R$ such that for every (light) direction
$\theta \in I \bmod \pi$ the direction foliation $\mathcal{F}_{\theta}(q)$ is measure equivalent to
the geodesic flow of an admissible Eaton lens configuration, whose lens centers and radii depend
continuously on $\theta \in I$. We further call an Eaton differential {\em maximal}, if
$I \rightarrow \R/\pi \Z$, $ x \mapsto x \bmod \pi$ is onto.
Finally let us call a (pre-)Eaton differential {\em ergodic}, if its direction foliations are ergodic
in almost every direction. Note, that a pre-Eaton differential must be located on a torus, or a plane, since it has no singular points besides the ones of its slit-folds. So it is enough to present a pre-Eaton differential by a union of slit-folds, that we will call {\em skeleton}. Below we introduce and use geometric as well as algebraic presentations of skeletons.
\begin{figure}[!htb]
 \centering
 \includegraphics[scale=0.50]{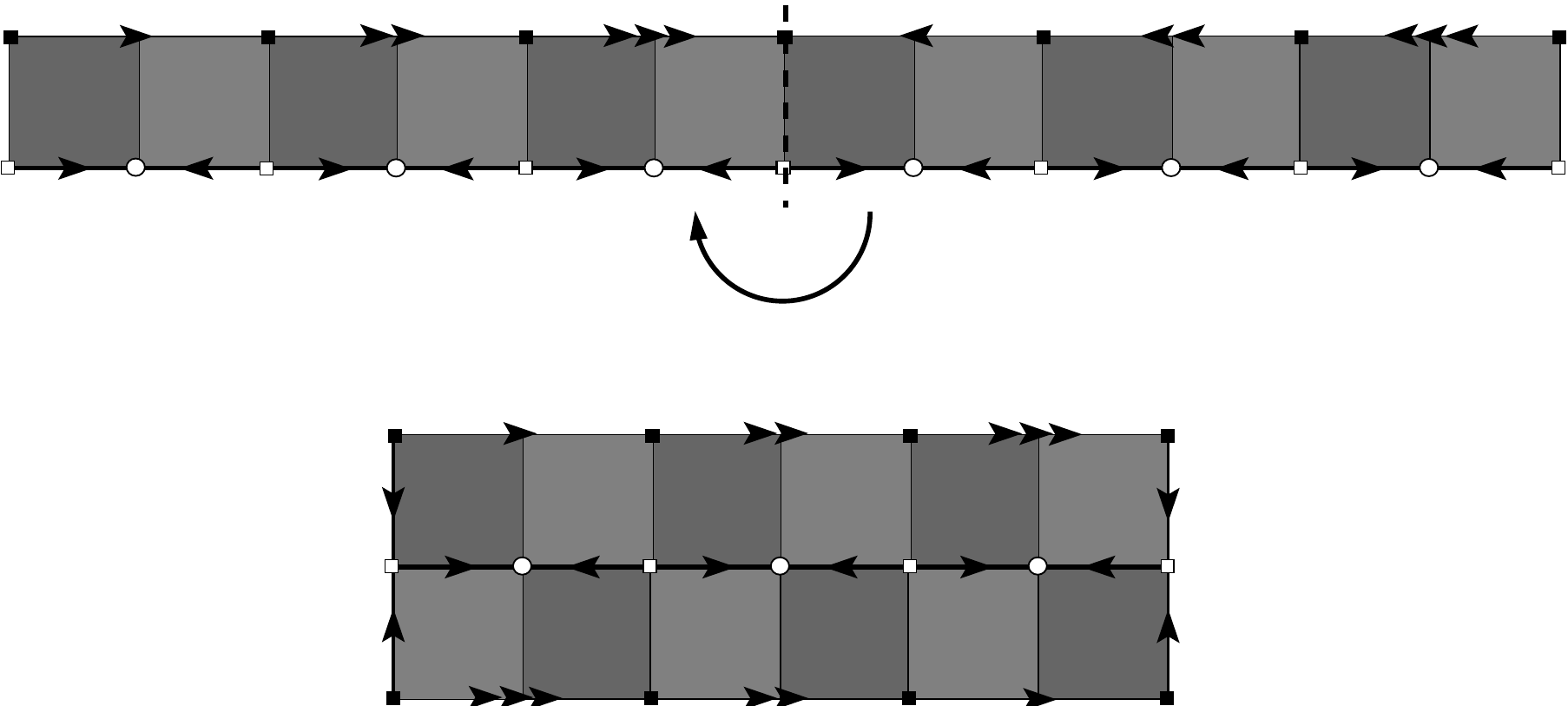}
 \caption{Cutting and turning polygonal pieces gives a pure slit-fold representation modulo absolute homology}
 \label{cut_turn}
\end{figure}
\begin{proof}[Proof of Theorem~\ref{thm:ergodicity:intr}]
Pre-Eaton differentials are obtained from all three torus differentials in Figure \ref{three_animal},
by first cutting vertically through their center and then rotating one of the halfes underneath the other
as in Figure \ref{cut_turn}. Up to rescaling the resulting pre-Eaton differentials are $X_2$ (from the degree $3$ cover), $X_3$ (from the degree $4$ cover) and $X_4$ (from the degree $6$ cover) as shown in Figure~\ref{3animals}. It follows, that $X_2$, $X_3$ and $X_4$ are cyclic covers of the pillowcase and branched over exactly three singularities of $\mathcal{P}$.
Passing to their universal covers we obtain three pre-Eaton differentials $\widetilde{X}_2$, $\widetilde{X}_3$, $\widetilde{X}_4$ on the plane. In view of Corollary~\ref{cor:pillow:intr} almost every directional foliation for every such differential is ergodic.
\end{proof}
Below we call the quadratic differential $\widetilde{X}_3$ on the complex plane obtained
from the degree $4$ pillowcase cover $X_3$ the \emph{Wollmilchsau differential}, see Figure~\ref{flat_german}.
\begin{figure}[!htb]
 \centering
 \includegraphics[width=1\textwidth]{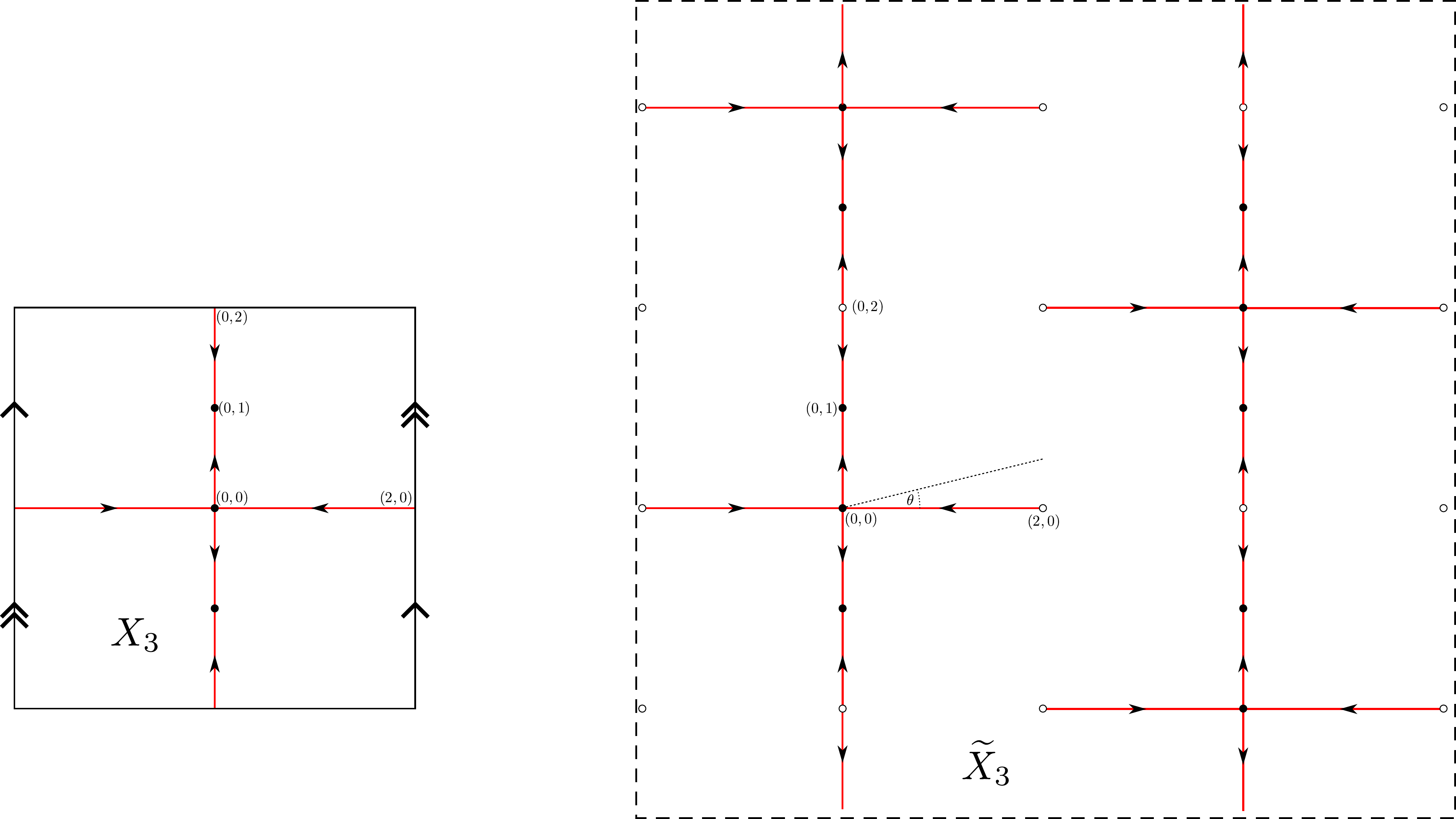}
 \caption{The quadratic surface $X_3$ (rotated by $\pi/2$) and its universal cover $\widetilde{X}_3$ (Wollmilchsau differential)}
 \label{flat_german}
\end{figure}

\begin{theorem}\label{thm:ergodicity:intr1}
The Wollmilchsau differential is an ergodic, maximal Eaton differential.
\end{theorem}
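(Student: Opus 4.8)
The plan is to verify the three properties separately: that $\widetilde{X}_3$ is a pre-Eaton differential, that it is an Eaton differential maximal over all of $\R/\pi\Z$, and that it is ergodic. Ergodicity is the easiest part: by construction $\widetilde{X}_3$ is the universal cover of the degree $4$ pillowcase cover $X_3$, which by Proposition~\ref{prop:pillow:intr} is a cyclic cover of $\mathcal{P}$ branched over exactly three cone points; hence Corollary~\ref{cor:pillow:intr} (applied with the $\Z^2$-cover given by the universal cover of the torus $X_3$) gives that almost every directional foliation on $\widetilde{X}_3$ is ergodic. The pre-Eaton property is essentially definitional: $X_3$ is obtained from the degree $4$ torus differential of Figure~\ref{three_animal} by the cutting-and-turning operation of Figure~\ref{cut_turn}, which produces a presentation as a union of slit-folds, so $\widetilde{X}_3$ is a union of (infinitely many, $\Z^2$-translated copies of the) slit-folds on the plane, and in particular has no singular points other than those coming from its slit-folds.

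The substantive work is to show that $\widetilde{X}_3$ is a \emph{maximal} Eaton differential, i.e.\ that for \emph{every} direction $\theta\in\R/\pi\Z$ the foliation $\mathcal{F}_\theta(\widetilde{X}_3)$ is measure-equivalent to the geodesic flow of an admissible Eaton lens configuration, with centers and radii depending continuously on $\theta$. First I would invoke the slit-fold$\leftrightarrow$Eaton lens equivalence developed in Section~\ref{system_of_Eaton_lenses}: for a fixed light direction $\theta$, a slit-fold $\rangle a,b\langle$ whose segment makes angle $\alpha$ with the light can be replaced, without changing the dynamics in the complement, by an Eaton lens whose center is the midpoint of $[a,b]$ and whose radius is $\tfrac12\|b-a\|\sin(\theta-\alpha)$ (the orthogonal width of the slit as seen by the light). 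Applying this to each of the $\Z^2$-orbits of slit-folds in the skeleton of $\widetilde{X}_3$ produces, for each $\theta$, a $\Z^2$-periodic candidate Eaton configuration $\gamma_W(\theta)$ with explicitly computable centers and radii; this is exactly the curve written down in the introduction, and matching the explicit formulae for $\Lambda_\theta$, the centers $(0,0)$, $\pm(1,1+\tan\theta)$, etc., and the radii $2\sin\theta$, $\cos\theta$ against the slit data of $X_3$ is a direct (if slightly tedious) computation, piecewise over the four intervals $[0,\pi/4]$, $[\pi/4,\pi/2]$, $[\pi/2,3\pi/4]$, $[3\pi/4,\pi]$ dictated by which $\SL_2(\Z)$-normalization of the skeleton is geometrically convenient. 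Continuity in $\theta$ is then visible from these formulae, and the agreement on the interval endpoints (already noted in the introduction, together with $\gamma_W(0)=\gamma_W(\pi)+(0,2)$) shows the replacement is globally consistent.

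The main obstacle — and the step I expect to need the most care — is \textbf{admissibility}: one must check that for every $\theta$ the lenses in $\gamma_W(\theta)$ are pairwise disjoint, including disjointness of a lens from all $\Lambda_\theta$-translates of itself and of the other lenses, and that they stay disjoint uniformly as $\theta$ varies and as the lattice $\Lambda_\theta$ deforms. This is where the geometry of the particular skeleton $X_3$ (and the choice of the deformation $l(\theta)=2-\cot\theta(1-\cot\theta)$ of the lattice) is doing real work: the slit-folds in $X_3$ are disjoint on the torus by construction, but their Eaton-lens replacements are \emph{discs} of radius growing like $\sin\theta$ or $\cos\theta$, which can be much larger than the slits, so disjointness must be re-established by an explicit distance estimate. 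I would reduce this to checking finitely many inequalities — the distance between each pair of lens centers (in a fundamental domain, plus the nearest-neighbour lattice translates) must exceed the sum of the corresponding radii — and verify them piecewise on the four $\theta$-intervals, using the continuity and the boundary-matching to patch. This is precisely the content of Section~\ref{subsec:adm}, to which the detailed verification is deferred. Once admissibility holds on all of $\R/\pi\Z$, the maximality follows, and combined with the ergodicity from Corollary~\ref{cor:pillow:intr} we conclude that the Wollmilchsau differential is an ergodic, maximal Eaton differential.
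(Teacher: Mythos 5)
Your proposal is correct and follows essentially the paper's argument: ergodicity is obtained from Corollary~\ref{cor:pillow:intr} applied to the degree~$4$ pillowcase cover $X_3$ and its universal ($\Z^2$-)cover, and maximality is reduced to the explicit admissibility verification that is the content of Proposition~\ref{prop:adm}. One small inaccuracy worth flagging: the Eaton lenses in $\gamma_W(\theta)$ are \emph{not} placed at the midpoints of the Wollmilchsau skeleton's slit-folds (for instance $L_+(\theta)$ sits at $1+i(1+\tan\theta)$, not at $i$, the midpoint of $\rangle 0,2i\langle$); the conversion from skeleton to lens configuration is a railed deformation along the $\theta$-foliation that slides slit-folds away from each other before substituting lenses, and it is this freedom — not an $\SL_2(\Z)$-normalization of the skeleton — that accounts for the piecewise-in-$\theta$ structure and for why the distance inequalities you anticipate can actually be satisfied.
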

Ergodicity follows because Theorem \ref{thm:mainquadr:intr} applies. To show the other statements of the Theorem we need to describe an Eaton lens configuration depending continuously on $\theta \in \R/\pi\Z$ and show that it is admissible. This is done in Proposition~\ref{prop:adm}, see the comment after that.

Eaton lenses may overlap when placed at slit-fold centers.
To resolve this problem we deform the measured foliation tangential to its direction
$\theta \in \R/ \pi \Z$ to a measure equivalent foliation by moving slit-folds parallel to $\theta$.
More precisely take a direction foliation $\mathcal{F}_{\theta}(q)$ of a quadratic differential $q$ that contains a slit-fold.  Then changing the location of the slit-fold while
keeping its endpoints (and therefore its center points) on the same leaves of $\mathcal{F}_{\theta}(q)$ is called a {\em railed motion}.
Changing a slit-fold skeleton using railed motions is called a {\em railed deformation}. In terms of Teichm\"uller
Theory railed deformations are isotopies, or {\em Whitehead moves} that preserve the transverse measure of a measured foliation. In particular, two measured foliations that differ by railed deformations are Whitehead equivalent.
A Whitehead move is a deformation of a foliated surface that collapses a leaf connecting
two singular points, or it is the inverse of such a deformation, see \cite[page 116]{Papa}. Figure \ref{german_animal_1_transition} shows railed deformations deforming skeletons into disjoint slit-folds.
Each of those consists of several Whitehead moves.
Some railed motions are shown in Figure \ref{slit_conversion} to the left.  After performing a railed deformation, appropriately sized Eaton lenses are placed at the slit-fold centers.
\begin{figure}[!htb]
 \centering
 \includegraphics[width=1\textwidth]{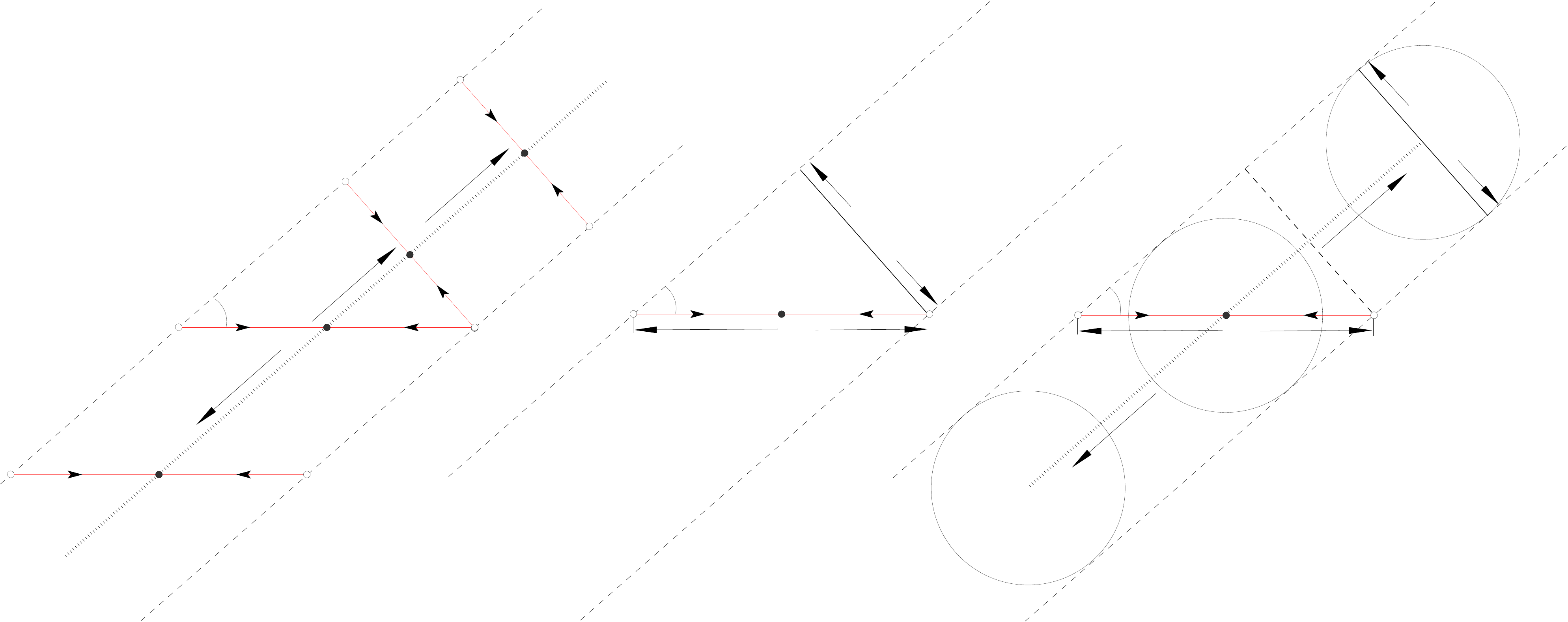}
 \caption{Railed moves of slit-folds and Eaton lenses in direction $\theta$}
 \label{slit_conversion}
\end{figure}

\subsection{The Eaton lens configurations along $\gamma_W$ are admissible}\label{subsec:adm}
The following result together with Theorem~\ref{thm:ergodicity:intr} gives the proof of Theorem \ref{thm:erglens}.
\begin{proposition}\label{prop:adm} The Eaton lens configurations defined by $\gamma_W(\theta)$
are admissible and for all $\theta \in [0,\pi]$ and the ergodicity of the geodesic flow
$(\mathfrak{g}^{\gamma_W(\theta),\theta}_t)_{t\in\R}$ is equivalent to the ergodicity of the
directional foliation generated by the Wollmilchsau differential $\widetilde{X}_3$ in the direction $\theta$.
\end{proposition}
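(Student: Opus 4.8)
The plan is to prove Proposition~\ref{prop:adm} in two independent parts: first the dynamical equivalence (which is essentially formal once the dictionary of Section~\ref{system_of_Eaton_lenses} is in place), and second the admissibility, which is a hands-on geometric verification piecewise in $\theta$. For the equivalence part, I would recall that for a fixed direction $\theta$ the foliation $\mathcal{F}_\theta(q)$ of a pre-Eaton differential $q$ is measure-equivalent to the geodesic flow $(\mathfrak{g}^{\mathcal{L},\theta}_t)$ of the Eaton lens configuration obtained by replacing each slit-fold of $q$ (which is a flat lens perpendicular to $\theta$, after a railed motion) by a round Eaton lens of the appropriate radius; this is exactly the slit-fold $\leftrightarrow$ Eaton lens correspondence depicted in Figures~\ref{eaton} and \ref{slit-fold}. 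Since the Wollmilchsau differential $\widetilde{X}_3$ is the universal cover of the pre-Eaton differential $X_3$ on the torus, and since performing the vertical-cut-and-turn of Figure~\ref{cut_turn} followed by the railed deformations of Figure~\ref{german_animal_1_transition} does not change the Whitehead equivalence class (hence preserves the transverse measure and the measure-theoretic isomorphism type of the directional flow), the geodesic flow on $\mathscr{P}_{\gamma_W(\theta),\theta}$ is measure-theoretically isomorphic to the directional foliation of $\widetilde{X}_3$ in direction $\theta$. One has to check that the explicit lens centers and radii listed in the definition of $\gamma_W(\theta)$ are precisely those produced by this recipe; this is a direct computation with the polygon in Figure~\ref{flat_german}, tracking where the three slit-fold centers land and computing the radius of the circumscribed Eaton lens of a flat lens meeting the light direction at the relevant angle (the radius of the round lens replacing a flat lens of half-width $w$ hit at angle $\alpha$ to its axis is $w/\sin\alpha$ type formula, which reproduces the $2\sin\theta$, $\cos\theta$, $l(\theta)\sin\theta$ expressions).

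For admissibility, I would proceed case by case over the four subintervals $[0,\pi/4]$, $[\pi/4,\pi/2]$, $[\pi/2,3\pi/4]$, $[3\pi/4,\pi]$ appearing in the definition of $\gamma_W$, using the symmetry $\theta \mapsto \pi-\theta$ (reflection) to reduce the last two cases to the first two. In each case one must verify three things: that each Eaton lens fits inside a fundamental domain of $\Lambda_\theta$ without self-overlap under the lattice translations, that the centrally symmetric pair $\pm c_i(\theta)$ does not collide with each other, and that neither collides with the lens at the origin. Concretely, on $[0,\pi/4]$ the lattice is the fixed $\Z(0,4)\oplus\Z(4,2)$ and the lenses have radii $2\sin\theta \le \sqrt 2$ and $\cos\theta \le 1$ centered at $(0,0)$ and $\pm(1,1+\tan\theta)$; one checks that the pairwise distances between centers (including all $\Lambda_\theta$-translates) exceed the sums of the corresponding radii. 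On $[\pi/4,\pi/2]$ the lattice varies through $\Z(0,4)\oplus\Z(2l(\theta),2)$ with $l(\theta)=2-\cot\theta(1-\cot\theta)$, the lens at the origin has radius $l(\theta)\sin\theta$ and the symmetric pair has centers $\pm(\cot\theta,2)$ with radius $\cos\theta$; here the verification is a single-variable inequality in $\theta$ that I would reduce to showing a finite set of smooth functions of $\theta$ stay positive on the closed interval, checked by monotonicity and endpoint evaluation (the worst case being near $\theta=\pi/4$ where two lenses become mutually tangent, giving equality — which is still admissible since admissibility forbids intersection, not tangency; if tangency is disallowed one perturbs or notes that the tangency happens only at the single parameter $\theta = \pi/4$).

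**The main obstacle** I anticipate is the admissibility check on the middle interval $[\pi/4,\pi/2]$: because both the lattice and all three radii vary with $\theta$, the relevant "no two lenses touch" inequalities are genuine transcendental inequalities in $\theta$ (involving $\cot\theta$, $\sin\theta$, $\cos\theta$ and the quadratic $l(\theta)$), and several lens pairs approach tangency at the interval endpoints, so the margins are zero there and one has to argue carefully that the interior of each interval is strictly admissible. I would handle this by writing each "gap function" $g_{ij}(\theta) = \mathrm{dist}(c_i(\theta) - c_j(\theta)) - (r_i(\theta)+r_j(\theta))$ (minimized over lattice translates), simplifying using the defining relations (in particular that $l(\theta)$ was chosen precisely so that certain endpoints of slit-folds lie on prescribed leaves, which forces one of these gaps to vanish identically on a boundary), and verifying $g_{ij}\ge 0$ with equality only at isolated $\theta$; the continuity and the matching of configurations across the four interval boundaries (already needed to make $\gamma_W$ well-defined) then finishes the proof. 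The remaining steps — the $\mathrm{SL}_2(\Z)$-reduction to the Wollmilchsau normal form, the bookkeeping of the cut-and-turn and the railed deformation, and the closing-up relation $\gamma_W(0)=\gamma_W(\pi)+(0,2)$ — are routine given Figures~\ref{cut_turn}, \ref{flat_german}, and \ref{german_animal_1_transition}.
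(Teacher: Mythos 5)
Your two-part plan --- dynamical equivalence via flat-lens replacement and railed deformation, then case-by-case admissibility with the reflection $\theta\mapsto\pi-\theta$ folding the last two subintervals onto the first two --- matches the structure of the paper's proof, and the Whitehead-equivalence mechanism you describe is exactly what is used on the dynamical side.

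The genuine divergence is in how admissibility is verified. You propose to estimate gap functions $g_{ij}(\theta)=\text{dist}(c_i+\Lambda_\theta,c_j+\Lambda_\theta)-(r_i(\theta)+r_j(\theta))$ over lattice translates and argue they stay nonnegative by monotonicity plus endpoint evaluation, worrying (rightly) that these are transcendental inequalities with zero margin at the matching angles. The paper sidesteps this entirely. The data $(\Lambda_\theta, c_i(\theta), r_i(\theta))$ are engineered so that each lens is tangent to explicit lines of slope $\tan\theta$, and disjointness of a potentially colliding pair is certified by exhibiting a \emph{common tangent line} with the two centers on opposite sides. For $\theta\in[0,\pi/4]$: the line $t_\pm(\theta)$ through the origin in direction $\theta$ lies at perpendicular distance $\cos\theta$ from the center $1+i(1+\tan\theta)$ of $L_+(\theta)$, hence is tangent to it; the line $\overline{t}_0(\theta)$ through $2$ in direction $\pi-\theta$ is tangent to $L_0(\theta)$ (radius $2\sin\theta$), and a reflection argument shows it simultaneously touches $L_+(\theta)$, with the centers on opposite sides. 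For $\theta\in[\pi/4,\pi/2]$ the function $l(\theta)$ is chosen precisely so that a single line $s(\theta)$ --- the tangent to $L_0(\theta)$ at the point where $\partial L_0(\theta)$ meets the leaf $t_0(\theta)$ through the origin --- is simultaneously tangent to $L_+(\theta)$, as one verifies by a short right-triangle computation giving distance $\cos\theta$ from the intersection point to the center of $L_+(\theta)$. No transcendental inequality is ever solved: every gap you would need to bound is the margin between a disk and a line tangent to it, nonnegative by construction. This also makes the two halves of the proposition fit together: these very tangent lines are the leaves along which the railed deformations carry the slit-folds $S_0(\theta),S_\pm(\theta)$ to the fixed Wollmilchsau skeleton $\rangle -2,2\langle\,\cup\,\rangle 0,2i\langle\,\cup\,\rangle 0,-2i\langle$, so the admissibility certificate and the Whitehead equivalence are two readings of the same picture rather than two independent verifications. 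Your ``finite set of smooth functions of $\theta$'' plan would probably succeed after considerable algebra, but you would be re-deriving these tangency identities numerically rather than using them structurally.

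One small correction: there is no $\SL_2(\Z)$ reduction inside this proof. The railed deformations take $\gamma_W(\theta)$ directly to the Wollmilchsau skeleton modulo $\Lambda_\theta$; the $\SL_2(\Z)$ classification of pillowcase covers belongs to a different part of the paper.
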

\begin{proof}
For this proof we will use complex coordinates on the plane.
Let us consider the situation for light directions $\theta \in [0,\pi/4]$ first.
For those angles the Eaton lens configurations are periodic with respect to the lattice
$\Lambda:=\Z 4i \oplus \Z(4+2i) $. Therefore it is enough to show that Eaton lenses centered
inside the strip $S = \{z\in\C;|\Re z| \leq 2\}$ are pairwise disjoint and do not leave the strip, i.e.\
do not cross the boundary of the strip.

Modulo the action of $\Lambda$ there are three Eaton lenses on $\gamma_W(\theta)$.
The first one $L_0(\theta)$ has radius $r_0(\theta)=2\sin \theta$ and is centered at the origin.
Then there is a pair of lenses denoted by $L_{\pm}(\theta)$ centered at
$c_{\pm}(\theta)=\pm(1+i(1+\tan \theta))$, both of radius $r_{\pm}(\theta)= \cos \theta$, see Figure~\ref{german_animal_1}.
Since the radius of the Eaton lenses $L_{\pm}(\theta)$ is less then $1$
and the radius of $L_{0}(\theta)$ is bounded by $2$, the lenses in the $\Lambda$ orbit
of any one of those three Eaton lenses are pairwise disjoint.
For the same reason the $\Z 4i$ orbit of all three Eaton lenses lies in the strip $S$.

The line in direction $\theta$ through the point $i$ contains the center of $L_{+}(\theta)$
since its slope is $\tan \theta$. The distance of that line to its parallel through the
origin, denoted by $t_{\pm}(\theta)$, is $\cos \theta$, equaling the radius of $L_{+}(\theta)$.
So the lines $t_{\pm}(\theta)$ and $t_{\pm}(\theta)+2i$
are tangent to $L_{+}(\theta)$. Then by central symmetry the lines $t_{\pm}(\theta)$ and
$t_{\pm}(\theta)-2i$ are tangents to $L_{-}(\theta)$. It follows that $L_{+}(\theta)+4ni$ lies between the lines
$t_{\pm}(\theta)+4ni$ and $t_{\pm}(\theta)+(4n+2)i$ and $L_{-}(\theta)+4ni$ lies between the lines
$t_{\pm}(\theta)+(4n-2)i$ and $t_{\pm}(\theta)+4ni$ for every $n\in\Z$. Therefore, no pair of Eaton lenses in the $\Z 4i$ orbits of $L_{\pm}(\theta)$
intersect. Since the $\Z(4+2i)$ translates of $S$ cover the whole plane, intersecting only in their boundary lines,
we conclude that no pair of Eaton lenses in the $\Lambda$ orbits of $L_{\pm}(\theta)$
intersect.

Since $L_0(\theta)$, the lens in the origin, has radius $2\sin \theta$ the line in direction
$\theta$ through $-2$, denoted by $t_0(\theta)$,
is tangent to it. By reflection symmetry with respect to the vertical
axis, the line through $2$ in direction $\pi - \theta$ is also a tangent to $L_0(\theta)$.
Let us denote this (tangent-)line by $\overline{t}_{0}(\theta)$, we shall see it is
also tangent to $L_{+}(\theta)$. Indeed, the reflection of
$\overline{t}_{0}(\theta)$ with respect to the vertical through the center of
$L_{+}(\theta)$ is the tangent $t_{\pm}(\theta)$.
Since the centers of  $L_{+}(\theta)$ and $L_0(\theta)$ lie on different sides
of their common tangent $\overline{t}_{0}(\theta)$ these lenses do not intersect.
By central symmetry the same is true for $L_{-}(\theta)$ and $L_0(\theta)$.
Since all three lenses $L_{\pm}(\theta)$ and $L_0(\theta)$
in the parallelogram in $S$ bounded by $t_{\pm}(\theta) \pm 2i$ are disjoint
and these parallelograms have a (modulo boundary) disjoint $\Lambda$ orbit,
we conclude that the lens distribution given by $\gamma_W(\theta)$
is disjoint for all $\theta \in [0,\pi/4]$.

\begin{figure}[!htb]
 \centering
 \includegraphics[width=0.9\textwidth]{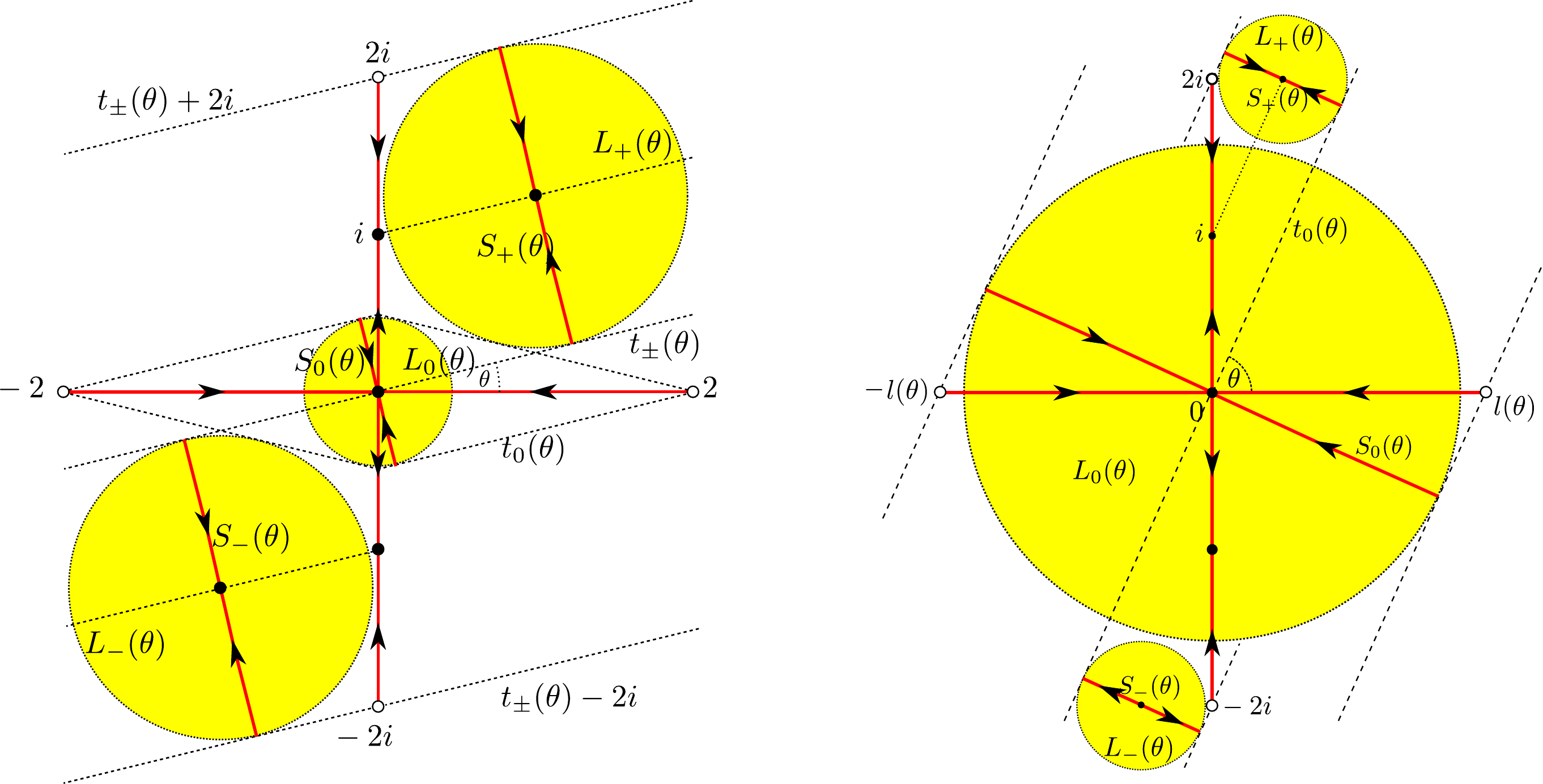}
 \caption{Transitions from Eaton lenses to the Wollmilchsau skeleton}
 \label{german_animal_1_transition}
\end{figure}

For the same interval of angles the geodesic flow $(\mathfrak{g}^{\gamma_W(\theta),\theta}_t)_{t\in\R}$
is measure equivalent to the direction $\theta$ dynamics defined by the surface $\widetilde{X}_3$.
First the results of Appendix~\ref{system_of_Eaton_lenses} imply, that for given $\theta \in [0,\pi/4]$ the ergodicity of the geodesic flow
is equivalent to the ergodicity of the measured foliation defined by the slit-fold distribution obtained
from the flat lens representation of Eaton lenses. That is, for given $\theta \in [0,\pi/4]$ we
replace every Eaton lens by a slit-fold centered at the lens' center, perpendicular to $\theta$
and with length equal to the diameter of the lens. In fact modulo $\Lambda$ we
obtain the slit-folds
\[S_{\pm}(\theta):=\pm \rangle 1+\cos \theta \sin \theta +   i(1+\tan \theta - \cos^2 \theta),
1-\cos \theta \sin \theta +   i(1+\tan \theta + \cos^2 \theta)
\langle\]  through the centers of $L_{\pm}(\theta)$ and
\[S_0(\theta):=\ \rangle -2\sin^2 \theta + 2i \sin \theta \cos \theta,  2\sin^2 \theta - 2i \sin \theta \cos \theta \langle\]
through the origin, see Figure~\ref{german_animal_1_transition}. The endpoints of the slit-fold $S_+(\theta)$ lie on the lines (and direction $\theta$ foliation leaves) $t_{\pm}(\theta)$ and $t_{\pm}(\theta)+2i$. That means we can perform a railed deformation of $S_+(\theta)$ along those leaves
terminating in the slit-fold $\rangle 0,2i  \langle$. By central symmetry there is
a railed deformation of $S_-(\theta)$ to the slit-fold $\rangle 0, -2i  \langle$.
The end points of the slit-fold $S_0(\theta)$ are located on the direction $\theta$ foliation leaves
through the point $\pm 2$, so $S_0(\theta)$ has a railed deformation to the slit-fold
$\rangle -2, 2  \langle$. But that means the skeleton
$S_+ \cup S_- \cup S_0$ is Whitehead equivalent to the skeleton
$ \rangle -2,2 \langle \ \cup \ \rangle 0,2i \langle \ \cup \ \rangle 0,-2i \langle$.
The $\Lambda$ orbit of the latter is the Wollmilchsau skeleton in the plane, showing
the claim on equivalence of ergodicity for angles $\theta \in [0,\pi/4]$.

The strategy we have just used to replace an Eaton lens with a slit-fold is the same for every angle. Let us describe this process for the slit-folds in the Wollmilchsau skeleton: For a fixed direction $\theta \in \R/\pi\Z$ a slit-fold, say $S$, replaces an Eaton lens, say $L$, if the two lines
in direction $\theta$ through the endpoints of $S$ are tangent to $L$. Step by step,
the flat lens equivalent to $L$ is in the quadratic differential interpretation the slit-fold $S_L$
perpendicular to the direction $ \theta$ with diameter and center matching those of $L$.
In that case, the endpoints of $S_L$ lie on the two said tangents to $L$ and therefore there is
a railed deformation of $S_L$ to $S$. If, as in our case, more than one slit-fold is involved
it must be checked that the tangent segments between $S$ and $S_L$ do not cross
another slit-fold. This is illustrated  in Figure \ref{german_animal_1_transition}
for an angle $\theta \in [0,\pi/4]$ (left) and for an angle $\theta \in [\pi/4, \pi/2]$ (right).
This same strategy is applied for the angles $\theta \in [\pi/4, \pi/2]$ below. The tangent
lines necessary to show equivalence to the Wollmilchsau skeleton are also needed to show admissibility.

For the angles $\theta \in [\pi/4, \pi/2]$
the lattice of translation depends on the angle. In fact
$\Lambda_{\theta}:= \Z 4i +\Z(2l(\theta)+2i)$, where $l(\theta)=2-\cot \theta (1-\cot \theta)$.
While $L_{0}(\theta)$ is still centered at the origin, now with radius
$r_0(\theta)=l(\theta)\sin\theta=\frac{1}{\sin \theta} +\sin \theta - \cos \theta $
the other two lenses $L_{\pm}(\theta)$ as before of radius $\cos \theta$
are now centered at $c_{\pm}(\theta)=\pm (\cot \theta +2i)$, see Figure~\ref{german_animal_3}.
In particular the radii of the lenses $L_{\pm}(\theta)$ are bounded by $1<l(\theta)\leq 2$
and the radius of the lens $L_{0}(\theta)$ is bounded by $l(\theta)\leq 2$.
Because the generators of the lattice $\Lambda_{\theta}$ move each lens by
at least twice their diameter there are no pairwise intersections
possible among the lenses in one $\Lambda_{\theta}$ orbit. Moreover
the $\Z4i$ orbit of $L_{-}(\theta)$ lies on the left of the vertical
through the origin while the $\Z4i$ orbit of $L_{+}(\theta)$ lies on the right
of that line. As $0\leq \cot\theta\leq 1$ we have
\[\cot\theta+\cos\theta\leq 2\cot\theta\leq 2-\cot \theta (1-\cot \theta)=l(\theta).\]
Moreover, $r_0(\theta)=l(\theta)\sin\theta\leq l(\theta)$.
It follows that the $\Z4i$ orbits of $L_{\pm}(\theta)$ and $L_0(\theta)$ are contained in the strip $S = \{z\in\C;|\Re z| \leq l(\theta)\}$.
Since the $\Z(4+2i)$ translates of $S$ cover the whole plane, intersecting only in their boundary lines,
we conclude that no pair of Eaton lenses in the $\Lambda_{\theta}$ orbits of $L_{\pm}(\theta)$
intersect.

Restricted to the $\Z4i$ orbit the lens configuration have for all $\theta \in [\pi/4, \pi/2]$
reflection symmetries around the coordinate axes. More precisely
the $\Z4i$ orbit of each lens is invariant under the reflection at the horizontal
while the $\Z4i$ orbits of $L_{\pm}(\theta)$ are interchanged by reflection
at the vertical. 
Given these symmetries, all that remains to be seen is that $L_{0}(\theta)$
does not intersect with $L_{+}(\theta)$.
To do this we find a common tangent to $L_{0}(\theta)$ and $L_{+}(\theta)$
that separates them. Let us consider the tangent line $s(\theta)$ to $L_{0}(\theta)$
at the intersection point of its boundary with the half-line $t_0(\theta)$ in direction $\theta$
through the origin. The the direction of $s(\theta)$ is $\pi/2-\theta$.
The half-line $t_0(\theta)+i$ in direction $\theta$ through
the point $i$  intersects perpendicularly $s(\theta)$ and goes through the center of
$L_{+}(\theta)$. By elementary geometry, see also Figure \ref{german_animal_3}, the distance from $i$
to the center of $L_{+}(\theta)$ is $(\sin \theta)^{-1}$.
The leg of the right triangle with hypothenuse the segment from $0$ to $i$
lying on $t_0(\theta)$ has length $\sin \theta$.
So the intersection point of $s(\theta)$ with $t_{0}(\theta)+i$
must be at distance $r_0(\theta)-\sin \theta = \frac{1}{\sin \theta} - \cos \theta $
from $i$. But then it has distance $\cos \theta$ from the center of $L_{+}(\theta)$
and so the tangent $s(\theta)$ to $L_{0}(\theta)$ is also tangent to $L_{+}(\theta)$.

To show admissibility for one of the remaining angles, say $\theta \in [\pi/2, \pi]$, notice that
$L_{\pm}(\theta)$ are the lenses $L_{\pm}(\pi-\theta)$ reflected at the vertical through the origin.
 We also have $L_0(\theta)=L_0(\pi-\theta)$ and the lattice of translations has the same symmetry
 $\Lambda_{\theta}=\Lambda_{\pi -\theta}$.
 So the $\Lambda_{\theta}$ orbits of these (reflected) lenses match the distribution given in the introduction.
 Since for $\theta=\pi/2$ the lenses $L_{\pm}(\theta)$ are located on the vertical
 coordinate axis, this continuation of $\gamma_W$ is continuous at $\pi/2$.
 Moreover globally the lens distribution for $\theta \in [\pi/2, \pi]$ equals the one
 for $\pi-\theta$ reflected at the vertical coordinate axis. Since a reflection is an isometry,
 it preserves admissibility of lens distributions.
 Finally the Eaton lens configuration at $\theta = \pi$ matches that at $\theta=0$,
 since $\gamma_W(\pi)+2i=\gamma_W(0)$.
\end{proof}
In particular the proof of Proposition \ref{prop:adm} shows that
the Wollmilchsau differential is a maximal Eaton differential.
This, together with the fact that the Wollmilchsau differential
appears as a cyclic pillow case cover branched over exactly three points,
shows Theorem \ref{thm:ergodicity:intr1}.


\section{Quadratic differentials on tori in the determinant locus}\label{sec:quad}

\subsection{Quadratic and Abelian differentials}
In this article quadratic differentials are the fundamental objects. They appear in various presentations,
analytical, polygonal and geometrical. All of those play important roles in different parts of our text.

Consider a Riemann surface $X$, i.e.\ a one dimensional complex manifold, not necessarily compact, and a quadratic differential $q$ on $X$ with poles of order at most one.  A quadratic differential is a tensor that can locally be written as $f(z)\,dz^2=f(z)\,dz \otimes dz$, where $f$ is a meromorphic function with poles of order at most one.
Away from the poles and zeros of $f$ one may use $q$ to define {\em natural coordinates}
on $X$
$$\zeta = \int^z_{z_0} \sqrt{f(z)}\, dz= \int^z_{z_0} \sqrt{q}.$$
If $\zeta_1$ and $\zeta_2$ are local coordinates, then
$d \zeta_1 = \sqrt{f(z)}\,dz = \pm  d \zeta_2$ in the intersection of the coordinate patches,
so $\zeta_1 = \pm  \zeta_2 + c$  for some $c \in \C$. That way the pair $(X, q)$ defines
a maximal atlas made of natural coordinates and is therefore called {\em half-translation surface}.
The maximal atlas is also called {\em half-translation structure}.
The coordinate changes for any two charts from a half-translation structure
are translations combined with half-turns (180 degree rotations)
and this motivates the name half-translation surface.
Similarly to a quadratic differential it is possible to consider an Abelian differential (holomorphic $1$-form) $\omega$ on $X$.
If $\Sigma\subset X$ denotes the set of zeros of $\omega$, as for quadratic differentials, away from $\Sigma$ Abelian differential defines {\em natural coordinates}
on $X$
\[\zeta = \int^z_{z_0} \omega.\]
If $\zeta_1$ and $\zeta_2$ are local coordinates and their coordinate patches intersect
then $\zeta_1 = \zeta_2 + c$  for some $c \in \C$. So the pair $(X, \omega)$ defines
a maximal atlas made of natural coordinates and is called \emph{translation surface}.
Here the maximal atlas is called \emph{translation structure}.

Objects on the plane that are invariant under {\em translations}
pull back via natural charts to $X$ and glue together to give global objects
on the translation surface $(X, \omega)$. Among those objects are
the euclidean metric, the differential $dz$, and constant vector fields
in any given direction. In fact, the pull back of the differential $dz$ recovers $\omega$
on the translation surface $(X, \omega)$. Similarly objects on the plane that are
invariant under {\em translations} and {\em half-turns} define global objects
on the half-translation surface $(X,q)$. Here objects of interest are again
the euclidean metric, the quadratic differential $dz^2$ (recovering $q$),
and any direction foliation by (non-oriented) parallel lines.
Since there is one line foliation on $\C$ for each angle
$\theta \in \R/\pi \Z$ that is tangent to $\pm \exp i \theta$, we denote its pullback to $X$ by
$\mathcal{F}_{\theta}(q)$, or $\mathcal{F}_{\theta}$ if there is no confusion about
the quadratic differential. For a translation surface, say $(X, \omega)$,
the constant unit vector field on $\C$ in direction $\theta \in \R/2\pi\Z $ defines
a directional unit vector field $V_\theta=V^{\omega}_\theta$ on $X\setminus\Sigma$.
Then the corresponding directional flow
$(\varphi^{\theta}_t)_{t\in\R}=(\varphi^{\omega,\theta}_t)_{t\in\R}$ (also known as \emph{translation flow}) on $X\setminus\Sigma$ preserves the area
measure $\mu_\omega$ given by $\mu_\omega(A)=|\int_A\frac{i}{2}\omega\wedge\overline{\omega}|$. If the surface $X$ is compact then the measure $\mu_\omega$ is finite.
We will use the notation $(\varphi^{v}_t)_{t\in\R}$  for the \emph{vertical flow} (corresponding to $\theta = \frac{\pi}{2}$) and
$(\varphi^{h}_t)_{t\in\R}$ for the \emph{horizontal flow} respectively ($\theta = 0$).

For every half-translation surface $(X,q)$ there exists a unique double cover $\pi_o: (\widehat{X}, \widehat{q}) \rightarrow (X,q)$, the \emph{orientation cover}, characterized by the property that it is branched precisely over all singular points with odd order. The pull-back $\widehat{q}= \pi^{\ast}_o q$
is the square $\widehat{q}=\omega^2$ of an abelian differential $\omega \in \Omega(X)$.
If $M=\widehat{X}$ then the translation surface $(M,\omega)$ is called also the orientation cover of the half-translation surface $(X,q)$. The pull-back $\widehat{\mathcal{F}}_{\theta}$ of any direction foliation
$\mathcal{F}_{\theta}$ is orientable. This foliation coincides with the foliations determined by
the directional flows  $(\varphi^{\theta}_t)_{t\in\R}$ and $(\varphi^{\theta+\pi}_t)_{t\in\R}$ on $(M,\omega)$.
Moreover, the ergodicity of the foliation $\mathcal{F}_{\theta}$ is equivalent to the ergodicity of the translation flow
$(\varphi^{\theta}_t)_{t\in\R}$.

\subsection*{Particular representations of half-translation structures.}
The quadratic differential $(dz)^2$ on $\C$ is invariant under translations and rotations
of $180$ degrees, that group generated by those isometries are in the group of
half-translations. Invariance of $(dz)^2$ under that group results in a variety of possible
constructions of quadratic differentials, or equivalently half-translation surfaces.

Most notably a (compact) polygon in $\C$ all of whose edges appear in parallel pairs,
 together with an prescribed identification of edge pairs by half-translations.
It is known, that any quadratic differential on a compact surface can be represented by such
a polygon.
A second way is to take suitable quotients of $\C$ under certain discrete groups of half-translations.
Here any torus $\C/\Lambda$ with a lattice $\Lambda$ of translations is an example.
Our way to built quadratic differentials in the plane $\C \cong \R^2$ and on
a torus is by successively adding (non-intersecting) slit-folds. Since the identifications
of the edges of a slit-fold are half-translations the given quadratic differential defines
a canonical new one on the surface with slit-fold. One important properties of
slit-folds is that they do not change the genus of the half-translation surface to which
they are added. Not only slit-folds have this property of defining quadratic differentials without
changing the genus. In fact more general types of ``folds'' are shown in
Appendix~\ref{app:foldsskel}. They are helpful in the construction of other ergodic curves.

\subsection{Cyclic covers of pillowcases}
In this section we classify those quadratic differentials on tori that arise as pullbacks of the \emph{pillowcase}
along a covering map (cyclic covers) which is unbranched over one point. Two of those examples are quotients
of the well known {\em Ornithorynque} and  {\em Eierlegende Wollmilchsau} under an involution.

Given a Riemann surface $X$ and a finite subset $\Sigma^* \subset X$
it is well known that the elements of $\xi\in H_1(X,\Sigma^*; G)$, $G$ an abelian group, define
a regular cover $\pi:X_\xi\to X$ over $X\backslash \Sigma^*$ branched over $\Sigma^* \subset X$ with
deck transformation group $G$. To describe this cover formally first denote by $\langle\,\cdot\,
,\,\cdot\, \rangle: H_1(X,\Sigma^*;G)\times H_1(X\setminus\Sigma^*;G)\to G$  the algebraic intersection form.
If $\sigma:[t_0, t_1]\to X$  is a closed curve in $X$ and $\sigma_\xi: [t_0, t_1]\to X_\xi$ is any
of its lifts to $X_\xi$  then
$\sigma_\xi(t_1) = \langle \xi, [\sigma] \rangle \cdot \sigma_\xi(t_0)$,
where $\cdot$ denotes the deck group action of $G$ on $X_\xi$.
Here we consider the case when the homology group is a direct sum of cyclic groups
of the kind $H_1(X,\Sigma^*; \Z/m\Z)$.

Let us look at the pillowcase $\mathcal{P}$ with underlying space $X= \mathbb{CP}^1$
and take $\Sigma \subset \mathbb{CP}^1$ to be the pillowcases four singular points.
We are looking for  pillowcase covers with at most three branch points.
That means such a cover is unbranched over at least one singular point of the pillowcase.
Then the result of Hubert and Griveaux \cite{Gr-Hu} implies that the cover
is in the {\em determinant locus}. We now construct those covers.

\subsection{Differentials in the determinant locus}
Take the pillowcase $X=\mathcal{P}$ with named singular points $p_1, p_2, p_3, p_4=p_F \in \mathcal{P}$
put in clockwise order starting from the upper left.
We assume the point $p_F$ is fixed under all automorphisms (and affine maps) of $\mathcal{P}$.
We further assume all branching of covers is restricted to the set $\Sigma^*=\{p_1, p_2, p_3\}$.

Let $\gamma_{12}, \gamma_{23} $ be generators in $H_1(\mathcal{P}, \{p_1,p_2, p_3\};\Z/d\Z)$ so that $\gamma_{12}$ is the class of the oriented horizontal path joining $p_1$ and $p_2$ and
 $\gamma_{23}$ is the class of the oriented vertical path joining $p_2$ and $p_3$.
 Let $\gamma_h, \gamma_v$ be generators in $H_1(\mathcal{P} \setminus \{p_1,p_2, p_3\};\Z/d\Z) $ such that $\gamma_h$ is
the class the horizontal (right oriented) simple loop and $\gamma_v$ is the class of the simple loop around $p_1$ with counterclockwise orientation.
Then
\[\langle \gamma_{12}, \gamma_v\rangle =\langle \gamma_{23}, \gamma_h\rangle = 1\ \text{ and }\langle \gamma_{12}, \gamma_h\rangle =\langle \gamma_{23}, \gamma_v\rangle = 0.\]

Let us consider any cyclic degree $d$ cover $\mathcal{P}_\xi$ of $\mathcal{P}$ branched over $\Sigma^*$ which is defined by a homology class $\xi=w_h \gamma_{12}+w_v \gamma_{23} \in H_1(X, \Sigma^*;\Z/d\Z)$.
Here \[w_h =\langle  \xi,\gamma_v\rangle\in \Z/d\Z\ \text{ and }\ w_v =\langle \xi,\gamma_h\rangle\in \Z/d\Z\]
are called weights of the cover $\mathcal{P}_\xi\to\mathcal{P}$.
Therefore the cover is determined by the triple $(d, w_h, w_v) \in \N \times \Z/d\Z \times \Z/d\Z$
and we will denote it by $X_d(w_h, w_v)\rightarrow \mathcal{P}$. The cover $X_d(w_h, w_v)$ is connected iff $\gcd(d,w_h,w_v)=1$.
The cover defined by those data has a straightforward geometric realization. Namely,
cut the pillowcase along the three line segments joining: $p_1$ with $p_2$, $p_2$ with $p_3$ and $p_3$ with $p_F$.
The resulting surface is isometric to a rectangle of width $2$ and height $1$ in the complex plane.
Let us denote this polygonal presentation of $X$ with cuts by $X^{c}$ and take $d$ labeled copies
$X^{c} \times \{1,\ldots,d\}=X^{c}_1 \sqcup \cdots \sqcup X^{c}_d$. Now identify the
vertical right edge of $X^{c}_i$ with the vertical left edge $X^{c}_{i + w_v \bmod d}$ by a translation. Then identify
the right half of the upper horizontal edge of $X^{c}_i$ with the left half of the  upper horizontal edge of $X^{c}_{i + w_h \bmod d}$ using a half turn
and identify
the right half of the lower horizontal edge of $X^{c}_i$ with the left half of the lower horizontal edge of $X^{c}_{i}$ using a half turn.
This determines $X_d(w_h,w_v)$ because of the covers cyclic nature.
By eventually renaming the decks we may assume that $w_v=\gcd(w_v,d)$ divides $d$.
Indeed, if $A:\Z/d\Z\to\Z/d\Z$ is a group automorphism then using $A$ to rename the decks we obtain $X_d(w_h,w_v)\cong X_d(A^{-1}w_h,A^{-1}w_v)$.
Let $w_v=\gcd(w_v,d)l$ and let $A$ be the multiplication by $l$ on $\Z/d$. Since $\gcd(l,d)=1$, $A$ is an automorphism for which $A(\gcd(w_v,d))=w_v$.
Then  $X_d(w_h,w_v)\cong X_d(A^{-1}w_h, \gcd(w_v,d))$. See \cite{EKZ1} and \cite{Fo-Ma-Zo11} for a more background and  applications of cyclic covers.

We now determine those cyclic covers that are torus differentials, i.e.\ have genus $1$.
To calculate the genus of $X_d(w_h, w_v)$ we note, that the covering has
$\gcd(w_h,d)$ preimages over $p_1$,
$\gcd(|w_h-w_v|,d)$ preimages over $p_2$ and $w_v=\gcd(w_v,d)$
preimages over $p_3$ because it is cyclic.
It follows that the respective branching orders are
$o_1= d/ \gcd(w_h,d)$ at $p_1$, $o_2=d/ \gcd(|w_h-w_v|,d)$ at $p_2$ and
$o_3=d/ \gcd(w_v,d)= d/ w_v$ at $p_3$.
That means we have an angle excess of $(o_i-2)\pi$ around any preimage of $p_i$ for $i=1,2,3$.
\begin{proposition} \label{prop:gen}
The genus $g_{d,w_h,w_v}$ of $X_d(w_h,w_v)$ is given by
\begin{eqnarray*} g_{d,w_h,w_v}-1&=&(d -\gcd(w_h,d)-\gcd(w_v,d)- \gcd(|w_h-w_v|,d))/2\\
&=&(d -w_v -\gcd(w_h,d)- \gcd(|w_h-w_v|,d))/2.
\end{eqnarray*}
\end{proposition}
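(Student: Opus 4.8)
The plan is to obtain the formula from the Riemann--Hurwitz formula applied to the covering map $\pi\colon X_d(w_h,w_v)\to\mathcal{P}$, using the ramification data recorded immediately before the statement. Since the underlying surface of $\mathcal{P}$ is $\mathbb{CP}^1$, whose Euler characteristic is $2$, Riemann--Hurwitz gives
\[
2-2g_{d,w_h,w_v}\;=\;2d\;-\;\sum_{P}\bigl(e_P-1\bigr),
\]
where $P$ runs over the ramification points of $\pi$ and $e_P$ is the ramification index at $P$. As the cover is branched exactly over $\Sigma^*=\{p_1,p_2,p_3\}$ and unbranched over $p_F=p_4$, those ramification points are precisely the preimages of $p_1$, $p_2$ and $p_3$.

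First I would substitute the preimage counts and branching orders established in the paragraph preceding the statement. Over $p_1$ there are $\gcd(w_h,d)$ preimages, each of ramification index $o_1=d/\gcd(w_h,d)$, contributing $\gcd(w_h,d)(o_1-1)=d-\gcd(w_h,d)$ to $\sum_P(e_P-1)$; over $p_2$ the contribution is likewise $d-\gcd(|w_h-w_v|,d)$, and over $p_3$ it is $d-\gcd(w_v,d)=d-w_v$. Hence
\[
2-2g_{d,w_h,w_v}\;=\;2d-\bigl[(d-\gcd(w_h,d))+(d-\gcd(|w_h-w_v|,d))+(d-w_v)\bigr],
\]
and collecting terms on the right-hand side yields
\[
g_{d,w_h,w_v}-1\;=\;\tfrac{1}{2}\bigl(d-\gcd(w_h,d)-\gcd(|w_h-w_v|,d)-w_v\bigr),
\]
which is the first asserted identity; the second is the same expression rewritten with $w_v=\gcd(w_v,d)$, the normalization adopted above.

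Equivalently, one may run the count through the Gauss--Bonnet identity $\sum_{z}\operatorname{ord}_z(\pi^{\ast}q_{\mathcal{P}})=4g_{d,w_h,w_v}-4$ for the pulled-back quadratic differential: over $p_F$ it has $d$ simple poles, while over $p_i$ ($i=1,2,3$) it has $n_i$ zeros of order $o_i-2$, where $n_i$ denotes the preimage count over $p_i$; since $n_i o_i=d$ one gets $\sum_{z}\operatorname{ord}_z(\pi^{\ast}q_{\mathcal{P}})=-d+\sum_{i=1}^{3}n_i(o_i-2)=2d-2(n_1+n_2+n_3)$, which rearranges to the same formula --- this is exactly the ``angle excess $(o_i-2)\pi$'' remark made before the statement. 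I do not expect a genuine obstacle. The one point that really needs care, and which has already been dealt with in the paragraph preceding the statement, is the monodromy computation for the cyclic cover: that the monodromy around $p_1$, $p_2$, $p_3$ has orders $d/\gcd(w_h,d)$, $d/\gcd(|w_h-w_v|,d)$ and $d/\gcd(w_v,d)$ respectively --- this is what legitimizes the preimage counts and branching orders used above, after which the proposition is a one-line consequence of Riemann--Hurwitz.
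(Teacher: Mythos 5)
Your proof is correct and follows essentially the same approach as the paper: you lead with Riemann--Hurwitz, and your ``equivalently'' paragraph via $\sum_z \operatorname{ord}_z(\pi^\ast q_{\mathcal P}) = 4g-4$ is verbatim the angle-deficit/excess Euler-characteristic count the paper carries out, the two being trivially equivalent for a branched cover of $\mathbb{CP}^1$.
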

\begin{proof}
Write down the standard formula expressing the Euler characteristic of quadratic
differentials in terms of total angle deficit for singular points and
total angle excess for cone points:
\begin{align*}
2 \chi(X_d&(w_h, w_v))=d+ \gcd(w_h,d)(2 - d/\gcd(w_h,d))\\
&+\gcd(w_v,d)(2 - d/\gcd(w_v,d)) + \gcd(|w_h-w_v|,d)(2 - d/\gcd(|w_h-w_v|,d))\\
=&2( -d + \gcd(w_h,d) + \gcd(w_v,d)+\gcd(|w_h-w_v|,d)).
\end{align*}
The result follows since $ \chi(X_d(w_1,w_2))=2(1-g_{d,w_h,w_v})$.
\end{proof}
By definition the degree of the pillowcase cover $\pi_d(w_h,w_v)\!:\! X_d(w_h,w_v) \!\rightarrow \!\mathcal{P}$ is
$d$.
\begin{proposition} \label{torus_cover_class}
If $X_d(w_h,w_v)$ has genus $1$, then $d \in \{3,4,6\}$.
\end{proposition}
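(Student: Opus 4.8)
The plan is to use the genus formula from Proposition~\ref{prop:gen} together with the connectedness condition $\gcd(d,w_h,w_v)=1$ to get an arithmetic constraint on $d$. Setting $g_{d,w_h,w_v}=1$ in Proposition~\ref{prop:gen} gives the equation
\[
d = \gcd(w_h,d) + \gcd(w_v,d) + \gcd(|w_h-w_v|,d).
\]
Write $a=\gcd(w_h,d)$, $b=\gcd(w_v,d)$, $c=\gcd(|w_h-w_v|,d)$, so each of $a,b,c$ is a divisor of $d$ and $d=a+b+c$. Since each of $a,b,c$ divides $d$, we have $a,b,c \le d/2$ unless one of them equals $d$; but if, say, $a=d$ then $b+c=0$, impossible since $b,c\ge 1$ (note $w_v\ne 0$ and $|w_h-w_v|\ne 0$ modulo the relevant structure, or else the cover degenerates — this edge case needs a remark). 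So $a,b,c\in\{d/2,d/3,d/4,d/5,\dots\}$, i.e.\ each is $d/e_i$ for some integer $e_i\ge 2$, and the equation becomes $1 = 1/e_1 + 1/e_2 + 1/e_3$ with each $e_i\ge 2$.

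The next step is the elementary classification of solutions of $1/e_1+1/e_2+1/e_3=1$ in integers $e_i\ge 2$: up to permutation these are $(3,3,3)$, $(2,4,4)$, and $(2,3,6)$. In each case $d$ must be divisible by $\mathrm{lcm}(e_1,e_2,e_3)$, giving $d$ a multiple of $3$, $4$, or $6$ respectively. To pin $d$ down to exactly $\{3,4,6\}$ rather than a multiple, I would invoke the connectedness hypothesis $\gcd(d,w_h,w_v)=1$: if $d=3$ then $e_i=3$ for all $i$ forces $3\mid w_h$, $3\mid w_v$ (from $a=b=d/3=1$... wait — $d/e_i = d/3 = 1$ when $d=3$, so the gcd conditions are automatically satisfiable), while if $d$ were a proper multiple, say $d=6$ in the $(3,3,3)$ pattern, then $a=b=c=2$ forces $2\mid w_h$ and $2\mid w_v$, contradicting $\gcd(d,w_h,w_v)=1$. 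Carrying this out case by case, each admissible $e$-pattern forces a common prime divisor of $d,w_h,w_v$ unless $d=\mathrm{lcm}(e_1,e_2,e_3)$ exactly — so $d\in\{3,4,6\}$.

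**Main obstacle.** The genuinely delicate point is handling the degenerate cases and making the gcd bookkeeping airtight — in particular verifying that $a,b,c$ are all genuinely at least $1$ (so none of the three branch points is actually unbranched, which would change the Euler characteristic count) and ruling out $d=a+b+c$ with one summand equal to $d$. One must be careful that $|w_h-w_v|$ is interpreted correctly in $\Z/d\Z$ (as $\gcd(w_h-w_v,d)$), and that the renaming normalization $w_v=\gcd(w_v,d)$ already used in the text does not lose generality here. Once these housekeeping points are dispatched, the rest is the two short elementary arguments above: the Egyptian-fraction classification and the divisibility-versus-connectedness check. I expect the proof to be no more than half a page.
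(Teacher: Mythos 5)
Your proof is correct and follows essentially the same route as the paper: set the genus formula to one, divide by $d$ to get the Egyptian-fraction equation $1 = 1/e_1 + 1/e_2 + 1/e_3$, classify its solutions as $(3,3,3)$, $(2,4,4)$, $(2,3,6)$, and then use the connectedness condition $\gcd(d,w_h,w_v)=1$ to force $d = \mathrm{lcm}(e_1,e_2,e_3)$. The only cosmetic difference is that the paper writes the lcm step in one line via $\gcd(\gcd(w_h,d),\gcd(w_v,d),\gcd(|w_h-w_v|,d))=1 \Rightarrow \mathrm{lcm}(e_1,e_2,e_3)=d$, whereas you argue the same thing by contradiction on proper multiples; and the paper labels the quotients $d/\gcd(\,\cdot\,,d)$ by $a,b,c$ while you reserve $a,b,c$ for the gcd's themselves and use $e_i$ for the quotients.
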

\begin{proof}
A torus has vanishing Euler characteristic, thus from Proposition~\ref{prop:gen}
we directly derive the condition
\[ d = \gcd(w_h,d)+ \gcd(w_v,d)+\gcd(|w_h-w_v|,d).\]
Dividing by $d$, we see that a torus presents a positive integer solution of the problem
\[ 1= \frac{1}{a}+\frac{1}{b}+\frac{1}{c}, \]
where $a,b,c$ represent the natural numbers $d/\gcd(w_h,d)$, $d/\gcd(w_v,d)$, $d/\gcd(|w_h-w_v|,d)$.
Without restriction of generality we may assume that any solution fulfills $c\geq b\geq a >0$. It follows that $2\leq a\leq 3$.

If $a=2$ then $1/b+1/c=1/2$ which gives $b\leq 4$. Therefore we obtain  two possibilities $(b,c)=(3,6)$ or $(4,4)$.

If $a=3$ then $1/b+1/c=2/3$ with $c\geq b\geq 3$. It leads to $(b,c)=(3,3)$. It follows that we get only  $(3,3,3)$, $(2,4,4)$, $(2,3,6)$ as solutions.
Since
\[\gcd\big(\gcd(w_h,d),\gcd(w_v,d),\gcd(|w_h-w_v|,d)\big)=1,\]
we obtain $\operatorname{lcm}(a,b,c)=d$. It follows that $d=3, 4, 6$ respectively.
\end{proof}
\subsection{Branched pillow case covers that are torus differentials}
In spite of Proposition  \ref{torus_cover_class} all we need to do
to exhaust the list of possible of torus covers is to go through a short list of possible cases.
Because $p_F$ is assumed to be fixed the pillowcase has no automorphisms.
For $d=3,4$ and $6$ we need to find the weights $1 \leq w_v, w_h <d$
with $\gcd(w_h,w_v,d)=1$ satisfying the condition
\[ w_v|d\ \text{ and }\ d= \gcd(w_h,d)+ w_v + \gcd(|w_h-w_v|,d). \]
The weights cannot be
$0$ or $d$, because the cover must be branched over all three points $p_1,p_2$ and
$p_3$ to give a surface of genus larger than zero, the genus of the pillowcase.
Thus without loss of generality we can pick the weights $w_h, w_v$ from $\{1,\ldots,d-1\}$.
For $d=6$ we obtain the following weight pairs fulfilling the conditions:
\[
 (w_h,w_v)\in \{(1,3), (3,1), (3,2), (2,3), (4,1), (4,3), (5,2), (5,3)\}
\]
The weights tell us the number of deck changes that occur when we go over either homology
class. By renaming the decks so that deck $k$ becomes deck $d-k$ we obtain the cover
$X_d(d-w_h,d-w_v)$ from $X_d(w_h, w_v)$. Thus those are isomorphic, in particular
for $d=6$ we have $X_6(1,3) \cong X_6(5,3)$ and $X_6(2,3) \cong X_6(4,3)$.
For $d=3$ and $d=4$ the same line of arguments applies and leads to the following list of covers: \\
\begin{center}\label{zoo}
\begin{tabular}{|c |c | c | c | c | c | l | }
\hline
\multicolumn{7}{ |c| }{Torus differentials of degree $d=3,4$ and $6$} \\
\hline
Degree $d$ & $w_h$ & $w_v$ & \# $\pi^{-1}(p_1)$ &  \# $\pi^{-1}(p_2)$ &  \# $\pi^{-1}(p_3)$ & Surface\\
\hline
 $3$ & $2$& $1$ & 1 & 1 & 1 & $X_3(2,1)$\\
\hline
\multirow{3}{*}{$4$}
 & $2$ & $1$ & $2$ & 1 & 1 & $X_4(2,1)$\\
 & $3$ & $1$  & $1$ & 2 & 1 & $X_4(3,1)$\\
 & $3$ & $2$  & $1$ & 1 & 2 & $X_4(3,2)$\\
 \hline
\multirow{6}{*}{$6$}
 & $3$ & 1 & 3 & 2 & 1 & $X_6(3,1)$\\
 & $3$ & 2 & 3 & 1 &  2 & $X_6(3,2)$\\
 & $4$ & 1 & 2 & 3 & 1 & $X_6(4,1)$\\
 & $4$ & 3 & 2 & 1 &  3 & $X_6(4,3)$\\
 & $5$ & 2 & 1 & 3 & 2 & $X_6(5,2)$\\
 & $5$ & 3 & 1 & 2 & 3 & $X_6(5,3)$\\

 \hline

\end{tabular}
 \end{center}
 \vspace*{2mm}
The group $\SL_2(\R)$ acts real linearly on the plane and defines a map on half-translation surfaces
by post composition with local coordinates.
Alternatively one may take a polygon representation of the surface and apply a matrix
$A \in \SL_2(\R)$, viewed as linear map of $\R^2$, to it.
The edges of the polygon are then identified exactly as before
the deformation. That defines an action of $ \SL_2(\R)$ on surfaces with quadratic differential.
We denote by $A \cdot X$ the deformation of $X$ by $A \in \SL_2(\R)$.

Let $X_\xi\to X$ be a  branched $G$-cover over $\Sigma^*\subset X$ and determined by $\xi\in H_1(X,\Sigma^*;G)$.
Then the deformation $A\cdot X_\xi$ is a branched cover determined by $A_*\xi\in H_1(A\cdot X,\Sigma^*;G)$.

The pillowcase is stabilized by all elements of $\SL_2(\Z)$, as one can easily check
on the two (parabolic) generators $P_h:=\left [\begin{smallmatrix}1& 1\\ 0 &1  \end{smallmatrix}\right] \in \SL_2(\Z)$
and $P_v:=\left [\begin{smallmatrix}1& 0\\ 1 &1  \end{smallmatrix}\right] \in \SL_2(\Z)$.
Stabilized means the original pillowcase can be obtained from the deformed pillowcase by successively cutting off polygons, translating and if needed rotating them to another boundary in tune with the edge
identification rules of the pillowcase.

Let us consider any cover $X_d(w_h,w_v)=\mathcal{P}_\xi$ (with $\xi=w_h \gamma_{12}+w_v \gamma_{23}$) and $A\in SL_2(\Z)$. Since $A\cdot\mathcal{P}=\mathcal{P}$, we have
\[A\cdot X_d(w_h,w_v)=A\cdot\big(\mathcal{P}_\xi\big)=\big(A\cdot\mathcal{P}\big)_{A_*\xi}=\mathcal{P}_{A_*\xi}=X_d(\langle  A_*\xi,\gamma_v\rangle,\langle  A_*\xi,\gamma_h\rangle)\]
and $\langle  A_*\xi,\gamma_h\rangle=\langle  \xi,A^{-1}_*\gamma_h\rangle$, $\langle  A_*\xi,\gamma_v\rangle=\langle  \xi,A^{-1}_*\gamma_v\rangle$.
Moreover for the parabolic generators $P_h$ and $P_v$ we have
\[(P_h^{-1})_*\gamma_h=\gamma_h,\quad (P_h^{-1})_*\gamma_v=\gamma_h-\gamma_v,\quad (P_v^{-1})_*\gamma_v=\gamma_v,\quad (P_v^{-1})_*\gamma_h=\gamma_v-\gamma_h,\]
and hence
\begin{align*}
&\langle  \xi,(P_h^{-1})_*\gamma_v\rangle=\langle w_h \gamma_{12}+w_v \gamma_{23},\gamma_h-\gamma_v\rangle=w_v-w_h,\\
&\langle  \xi,(P_h^{-1})_*\gamma_h\rangle=\langle w_h \gamma_{12}+w_v \gamma_{23},\gamma_h\rangle=w_v,\\
&\langle  \xi,(P_v^{-1})_*\gamma_v\rangle=\langle w_h \gamma_{12}+w_v \gamma_{23},\gamma_v\rangle=w_h,\\
&\langle  \xi,(P_v^{-1})_*\gamma_h\rangle=\langle w_h \gamma_{12}+w_v \gamma_{23},\gamma_v-\gamma_h\rangle=w_h-w_v.
\end{align*}
This yields the action of parabolic matrices on degree $d$ pillowcase covers:
\[
P_h \cdot X_d(w_h,w_v) = X_d(w_v -w_h,w_v)\ \text{ and }\ P_v \cdot X_d(w_h,w_v) = X_d(w_h,w_h-w_v).\]
Since the group of maps generated by two involutions $(x,y)\mapsto(x,y-x)$ and $(x,y)\mapsto(y-x,y)$ has exactly $6$ elements, so we obtain the following:
\begin{proposition}
The $\SL_2(\Z)$ orbit of a pillowcase cover is given by
\begin{align*} \SL_2(\Z) \cdot X_d(w_h,w_v)=  \big\{ X_d(w_h,w_v), X_d(w_h,w_h-w_v), X_d(w_v -w_h,w_v)& ,\\
 X_d(-w_v,w_h- w_v), X_d(w_v-w_h, -w_h), X_d(-w_v,-w_h)& \big\}.
\end{align*}
\end{proposition}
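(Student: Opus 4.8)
The plan is to leverage the explicit formulas just obtained for the action of the parabolic generators $P_h$ and $P_v$ on pillowcase covers, together with the classical fact that $P_h$ and $P_v$ generate $\SL_2(\Z)$. First I would observe that, since $A\cdot\mathcal{P}=\mathcal{P}$ for every $A\in\SL_2(\Z)$, the identity $A\cdot X_d(w_h,w_v)=\mathcal{P}_{A_*\xi}=X_d(\langle A_*\xi,\gamma_v\rangle,\langle A_*\xi,\gamma_h\rangle)$ shows that $\SL_2(\Z)$ acts on the set of degree $d$ pillowcase covers; with $d$ fixed this amounts to an action on the weight pair $(w_h,w_v)\in(\Z/d\Z)^2$, under which $P_h$ and $P_v$ act, by the computations above, respectively as
\[a\colon(x,y)\longmapsto(y-x,y)\qquad\text{and}\qquad b\colon(x,y)\longmapsto(x,x-y).\]
One checks at once that $a$ and $b$ are involutions.

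The remaining content is to identify $\langle a,b\rangle$. A short composition gives $ab\colon(x,y)\mapsto(-y,x-y)$, hence $(ab)^3=\mathrm{id}$, so $\langle a,b\rangle$ is a quotient of the order-$6$ group $S_3$; since $a\neq b$ and $ab\neq\mathrm{id}$ it equals $S_3$, with the six elements $\mathrm{id},a,b,ab,ba,aba$. Applying these to $(w_h,w_v)$ yields exactly the six weight pairs $(w_h,w_v)$, $(w_h,w_h-w_v)$, $(w_v-w_h,w_v)$, $(-w_v,w_h-w_v)$, $(w_v-w_h,-w_h)$, $(-w_v,-w_h)$ listed in the statement. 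Since $\SL_2(\Z)=\langle P_h,P_v\rangle$, the $\SL_2(\Z)$-orbit of $X_d(w_h,w_v)$ is the $\langle a,b\rangle$-orbit of its weight pair, i.e.\ the displayed set (with possible coincidences among the six entries for special weights).

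Everything here is routine; the only point deserving care is the bookkeeping — tracking signs modulo $d$ and matching the six representatives above with the normalization $w_v\mid d$ and the deck-relabelling isomorphism $X_d(w_h,w_v)\cong X_d(A^{-1}w_h,A^{-1}w_v)$ used earlier — which I would flag explicitly but do not expect to present a real obstacle.
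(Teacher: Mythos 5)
Your proof is correct and follows essentially the same approach as the paper: both compute the action of the parabolic generators $P_h, P_v$ on weight pairs and observe that the two resulting involutions generate a group of order six. You supply a detail the paper merely asserts, namely that $(ab)^3=\mathrm{id}$ so that $\langle a,b\rangle\cong S_3$; you also correctly use $b\colon(x,y)\mapsto(x,x-y)$ where the paper's prose contains what appears to be a typo writing $(x,y)\mapsto(x,y-x)$, which is not an involution.
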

Note, that for low degree this orbit is even smaller: The orbits of degree
three and four covers contain less than six tori. As can be easily
seen from the proposition, compare the table of surfaces, that the relevant torus differentials
of fixed degree lie on one $\SL_2(\Z)$ orbit.

\subsection*{Orientation covers of some pillow case covers.}
We consider the orientation covers of $X_d(2,1)$, for $d=3,4$ and $X_d(3,1)$ for $d=6$ drawn on Figure~\ref{three_animal}.
Recall that the orientation cover $(\widehat{X}, \omega^2) \rightarrow (X, q)$ of a quadratic differential $(X,q)$ is uniquely characterized as the degree two cover, branched precisely over
the cone points having an odd total angle (in multiples of $\pi$).
There is a sheet exchanging involution $\rho$ on $\widehat{X}$ that has
the preimages of the odd cone points as fixed-points. The involution is locally a rotation by $\pi$, eventually followed by a translation.
\begin{figure}[!htb]
 \centering
  \includegraphics[scale=0.40]{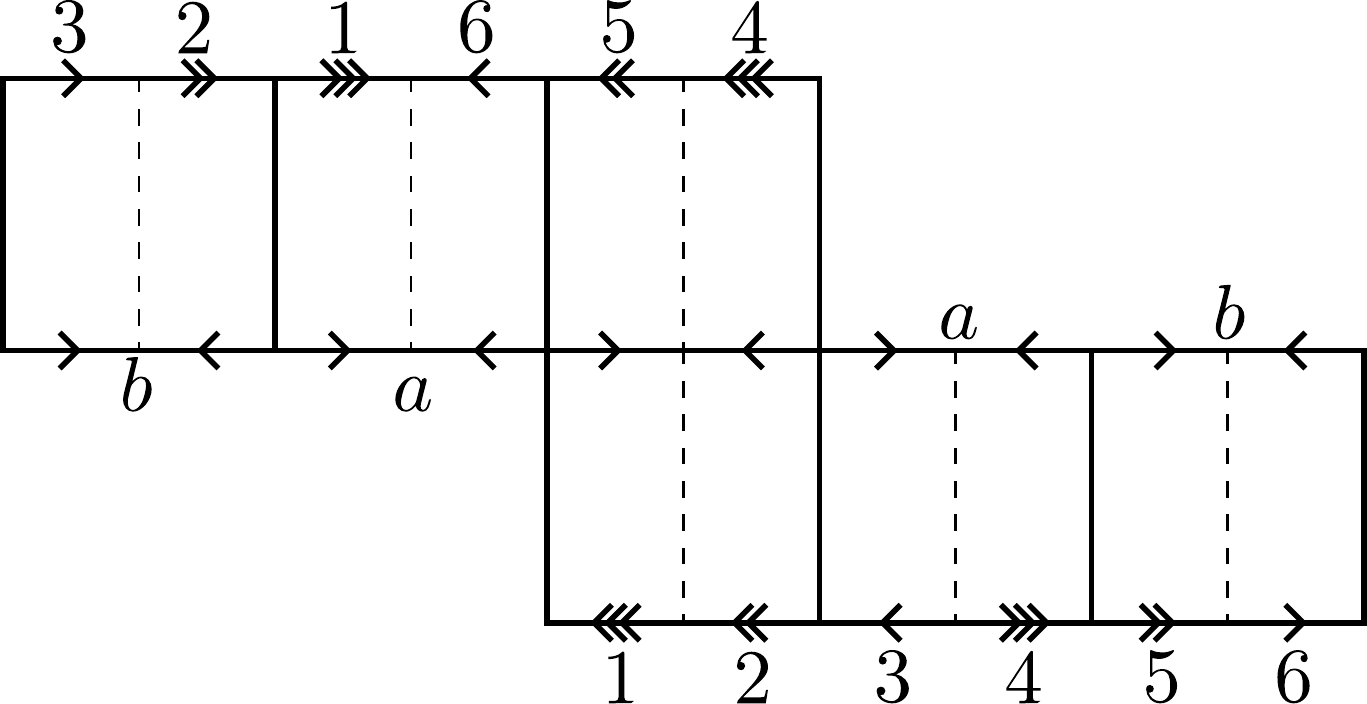}
 \caption{The Ornithorynque as orientation cover of $X_3(2,1)$}
 \label{Ornithorynque}
\end{figure}
\begin{figure}[!htb]
 \centering
 \includegraphics[scale=0.40]{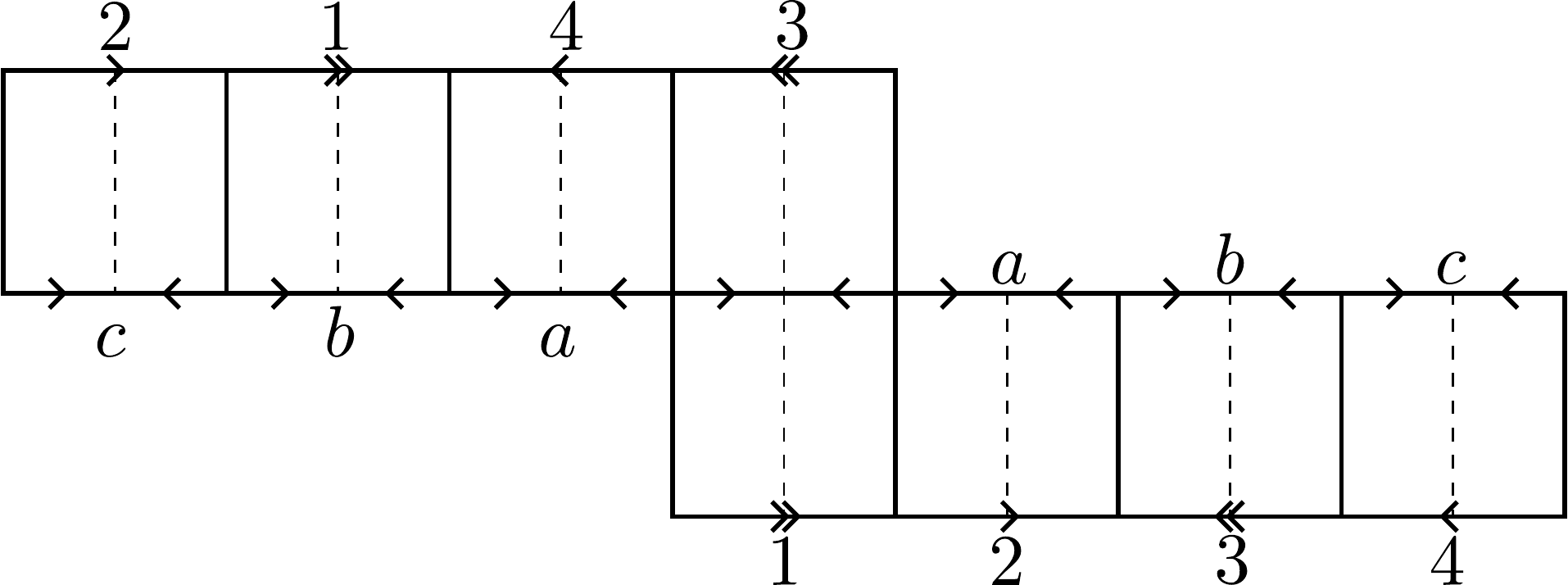}
 \caption{The Eierlegende Wollmilchsau as orientation cover of $X_4(2,1)$}
 \label{Wollmilchsau}
\end{figure}
Using this one may construct orientation covers given a polygonal representation.
One considers two copies of the polygon and whenever two edges were identified by a rotation
on the original polygon, one identifies any of those two edges as before but now to the
corresponding edge of the other copy. Turning any one copy by $180$ degrees
the new identifications become translations and we have a translation surface.
\begin{figure}[!htb]
 \centering
  \includegraphics[scale=0.30]{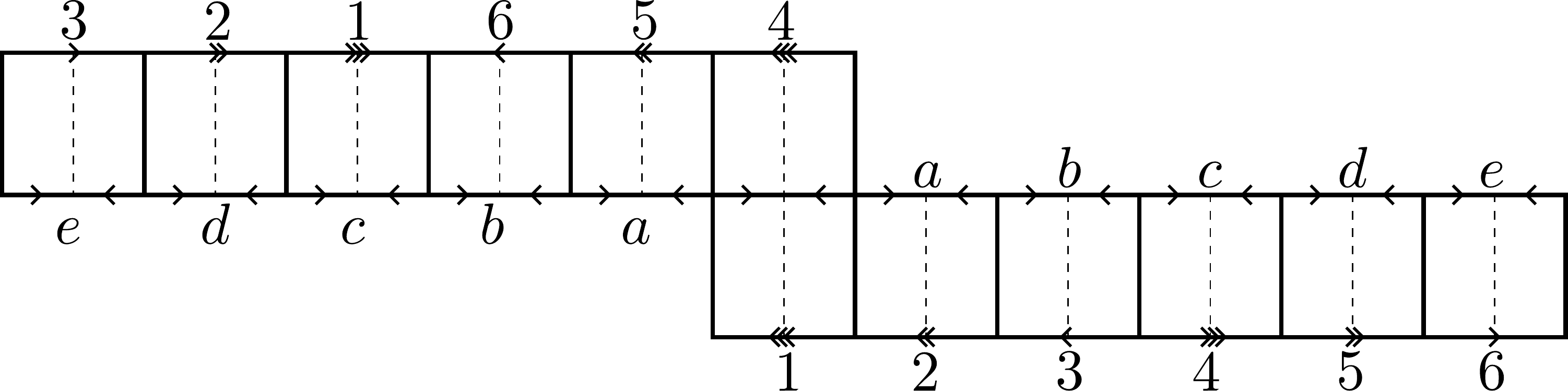}
 \caption{The orientation cover of $X_6(3,1)$}
 \label{polish_animal}
\end{figure}
For the surfaces at hand this procedure is reflected in the following Figures
\ref{Ornithorynque}, \ref{Wollmilchsau} and \ref{polish_animal}.
The first two are splendid specimens in the zoo of square tiled surfaces.
If the name did not immidiately give it away, a look at the figures should explain
the idea of a square tiled surface.
In fact, $\widehat{X}_3(2,1)  \cong \widehat{X}_2 $ is the
{\em Ornithorynque} and $\widehat{X}_4(2,1)  \cong \widehat{X}_3 $
is known as the {\em Eierlegende Wollmilchsau}. Both names
reflect the surfaces multiple rather exceptional properties,
each of them has vanishing Lyapunov exponents.
To our best knowledge the orientation cover of $X_6(3,1)$ is not
such a well studied square tiled surface and we are not able to provide
a direct reason to motivate such research.

\section{Ergodicity of translation flows and measured foliations on infinite covers}
In this section we prove  a useful
criterion on ergodicity for translation flows on $\Z^d$-covers (see Theorem~\ref{thm:ergV}). The key Theorem~\ref{thm:mainquadr:intr}
follows directly from this criterion.

For relevant background material concerning IETs and their relations to translation surfaces, we refer the reader to \cite{Fo-Ma}, \cite{Ma}, \cite{ViB}, \cite{YoLN} and \cite{ZoFlat}.

\subsection{$\Z^d$ covers}
Let $\widetilde{X}$ be a $\Z^d$-cover of a compact connected surface $X$
and let $p:\widetilde{X}\to X$ be the covering map, i.e.\ there
exists a properly discontinuous $\Z^d$-action on $\widetilde{X}$ such that
$\widetilde{X}/\Z^d$ is homeomorphic to $X$. Then $p:\widetilde{X}\to
X$ is the composition of the natural projection
$\widetilde{X}\to\widetilde{X}/\Z^d$ and the homeomorphism.
Denote by $\langle\,\cdot\,
,\,\cdot\, \rangle: H_1(X,\Z)\times H_1(X,\Z)\to \Z$  the algebraic intersection form.
Then any $\Z^d$-cover $\widetilde{X}$ is determined by a
$d$-tuple $\gamma=(\gamma_1,\ldots,\gamma_d)\in H_1(X,\Z)^d$ so
that if $\sigma:[t_0, t_1]\to X$  is a closed curve in $X$ and $\widetilde{\sigma}: [t_0, t_1]\to \widetilde{X}$ is any its lift to $\widetilde{X}$
then
\[ \widetilde{\sigma}(t_1) = \langle \gamma, [\sigma] \rangle \cdot \widetilde{\sigma}(t_0),\]
where
\[\langle \gamma, [\sigma] \rangle=(\langle \gamma_1, [\sigma] \rangle,\ldots,\langle \gamma_d, [\sigma] \rangle ) \in
\mathbb{Z}^d\qquad([\sigma]\in H_1(X,\Z))\]
and $\cdot$ denotes
the action of $\Z^d$ on $\widetilde{X}$. The
$\Z^d$-cover corresponding to $\gamma$ will be denoted by
$\widetilde{X}_\gamma$.

\begin{remark}
Note that the surface $\widetilde{X}_\gamma$ is connected if and only if
the group homomorphism $H_1(X,\Z)\ni\xi\mapsto \langle\gamma,\xi\rangle\in \Z^d$
is surjective.
\end{remark}

If $q$ is a
quadratic differential on $X$ then the pull-back $p^*(q)$ of $q$ by $p$ is also a
quadratic differential on $\widetilde{X}_\gamma$ and will be denoted by
$\widetilde{q}_\gamma$. For any $\theta\in\R/\pi\Z$ we denote by $\widetilde{\mathcal{F}}_\theta=\widetilde{\mathcal{F}}_\theta^\gamma$  the
corresponding measurable foliation on  $(\widetilde{X}_\gamma,\widetilde{q}_\gamma)$.

If $(M,\omega)$ is a compact translation surface and $\gamma\in H_1(M,\Z)^d$ is a $d$-tuple then
the translation flow on the $\Z^d$-cover $(\widetilde{M}_\gamma,\widetilde{\omega}_\gamma)$ in the direction $\theta$ is denoted by
$(\widetilde{\varphi}^\theta_t)_{t\in\R}$.

Let $(X,q)$ be a connected half-translation surface and denote by $(M,\omega)$ its orientation cover which is a translation surface.
Then there exist a branched covering map $\pi:M\to X$ such that $\pi^*(q)=\omega^2$ and an
idempotent $\sigma:X\to X$ such that $\pi\circ \sigma=\pi$ and $\sigma^*(\omega)=-\omega$.

The space $H_1(M,\R)$ has an orthogonal (symplectic) splitting into spaces
$H^+_1(M,\R)$ and $H^-_1(M,\R)$ of $\sigma_*$-invariant
and $\sigma_*$-anti-invariant homology classes, respectively. Moreover, the subspace $H^+_1(M,\R)$
is canonically isomorphic to $H_1(X,\R)$ via the map
$\pi_*:H^+_1(M,\R)\to H_1(X,\R)$, so we identify
both spaces.

Recall that  the measured foliation $\mathcal{F}_\theta$ of $X$ is ergodic for some $\theta \in \R/2\pi\Z$ if and only if
the translation flow $(\varphi_t^\theta)_{t\in \R}$ on $M$ is ergodic with respect to the measure $\mu_{\omega}$ (possibly infinite).

\begin{remark}\label{rem:erg}
Let $\gamma\in (H_1(X,\Z))^d$  be a $d$-tuple such that the $\Z^d$-cover $\widetilde{X}_{\gamma}$ is connected.
Since $H^+_1(M,\Z)$ and  $H_1(X,\Z)$ are identified, we can treat $\gamma$ as a $d$-tuple in
$(H^+_1(M,\Z))^d$.
Let us consider  the corresponding $\Z^d$-cover $\widetilde{M}_\gamma$. Then the maps $\pi:M\to X$ and $\sigma:M\to M$ can be lifted to a branched covering map
$\widetilde{\pi}:\widetilde{M}_\gamma\to \widetilde{X}_{\gamma}$ and an involution $\widetilde{\sigma}:\widetilde{M}_\gamma\to\widetilde{M}_\gamma$ so that
$\widetilde{\pi}\circ\widetilde{\sigma}=\widetilde{\pi}$. Then $\widetilde{\pi}$ establishes
an orientation cover $(\widetilde{M}_{\gamma},\widetilde{\omega}_{\gamma})$ of the half-translation surface $(\widetilde{X}_{\gamma},\widetilde{q}_{\gamma})$.
Therefore, for every $\theta \in \R/2\pi\Z$ the ergodicity of the measured foliation $\widetilde{\mathcal{F}}_\theta$ of $(\widetilde{X}_{\gamma},\widetilde{q}_{\gamma})$
is equivalent to the ergodicity of the translation flow $(\widetilde{\varphi}_t^\theta)_{t\in \R}$ on $(\widetilde{M}_{\gamma},\widetilde{\omega}_{\gamma})$. Note, that the measure
$\mu_{\widetilde{\omega}_{\gamma}}$ is an infinite Radon measure.
\end{remark}

\subsection{The Teichm\"uller flow and the Kontsevich-Zorich cocycle}\label{Teich:sec}
Given  a connected compact oriented surface $M$ of genus $g$,
denote by $\operatorname{Diff}^+(M)$ the
group of orientation-preserving homeomorphisms of $M$. Denote by $\operatorname{Diff}_0^+(M)$ the
subgroup of elements $\operatorname{Diff}^+(M)$ which are
isotopic to the identity. Let us denote by
$\Gamma(M):=\operatorname{Diff}^+(M)/\operatorname{Diff}_0^+(M)$
the {\em mapping-class} group. We will denote by $\mathcal{T}(M)$
(respectively $\mathcal{T}_1(M)$ ) the {\em Teichm\"uller space of
Abelian differentials } (respectively of unit area Abelian
differentials), that is the space of orbits of the natural action
of $\operatorname{Diff}_0^+(M)$ on the space of all
Abelian differentials on $M$ (respectively, the ones with total
area $\mu_\omega(M)=1$). We
will denote by $\mathcal{M}(M)$ ($\mathcal{M}_1(M)$) the {\em
moduli space of (unit area) Abelian differentials}, that is the
space of orbits of the natural action of
$\operatorname{Diff}^+(M)$ on the space of (unit area)
Abelian differentials on $M$. Thus
$\mathcal{M}(M)=\mathcal{T}(M)/\Gamma(M)$ and
$\mathcal{M}_1(M)=\mathcal{T}_1(M)/\Gamma(M)$.

The moduli space $\mathcal{M}(M)$ is stratified according to the number and multiplicity of the holomorphic one-forms zeros and the $SL(2,\R)$-action respects this stratification. Define  the  stratum $\mathcal{M}(\kappa_1,\ldots,\kappa_s)$  as  the  collection of
translations surfaces $(M,\omega)$ such $\omega$ has $s$ zeros and the multiplicity of the zeros of $\omega$ is given by $(\kappa_1,\ldots,\kappa_s)$.
Then $\kappa_1+\ldots+\kappa_s=2g-2$.

Denote by $\mathcal{Q}(X)$ the moduli space of half-translation surfaces which is also naturally stratified by the number and the types of singularities. We denote by $\mathcal{Q}(\kappa_1,\ldots,\kappa_s)$ the stratum of quadratic differentials $(X,q)$ which are not the squares of Abelian
differentials, and which have $s$ singularities and their orders are $(\kappa_1,\ldots,\kappa_s)$, where $\kappa_i\geq -1$.
Then $\kappa_1+\ldots+\kappa_s=4g_X-4$, where $g_X$ is the genus of $X$.

The group $SL(2,\R)$ acts naturally on  $\mathcal{T}_1(M)$ and $\mathcal{M}_1(M)$ as follows.  Given a translation structure $\omega$, consider the charts
given by local primitives of the holomorphic $1$-form. The new charts defined by postcomposition of these charts with an element of $SL(2,\R)$ yield a new
complex structure and a new differential that is Abelian with respect to this new complex structure, thus a new translation structure.
We denote by $g\cdot \omega$ the translation structure  on $M$
obtained acting by $g \in SL(2,\R)$ on a translation structure
$\omega$ on $M$.

The {\em Teichm\"uller flow} $(g_t)_{t\in\R}$ is the restriction
of this action to the diagonal subgroup
$(\operatorname{diag}(e^t,e^{-t}))_{t\in\R}$ of $SL(2,\R)$ on
$\mathcal{T}_1(M)$ and $\mathcal{M}_1(M)$. We will deal also with
the rotations $(r_{\theta})_{\theta\in \R/2\pi\Z }$ that acts on
$\mathcal{T}_1(M)$ and $\mathcal{M}_1(M)$ by
$r_\theta\omega=e^{i\theta}\omega$.

\begin{theorem}[see \cite{Ma0}]\label{thm:masur}
For every Abelian differential $\omega$ on a compact connected surface $M$ for almost all
directions $\theta\in \R/2\pi\Z $ the vertical and horizontal flows on $(M,r_\theta\omega)$ are uniquely ergodic.
\end{theorem}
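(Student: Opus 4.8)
The plan is to deduce this classical theorem of Masur and Veech from recurrence of the Teichm\"uller geodesic flow, via \emph{Masur's criterion}. Since the horizontal flow of $r_\theta\omega$ is the vertical flow of $r_{\theta-\pi/2}\omega$, it suffices to show that for a.e.\ $\theta\in\R/2\pi\Z$ the vertical flow of $(M,r_\theta\omega)$ is uniquely ergodic. Fix the connected component $\mathcal{C}$ of the unit area stratum $\mathcal{M}_1(\kappa_1,\dots,\kappa_s)$ containing $\omega$, and for $\eta\in\mathcal{C}$ write $\operatorname{sys}(\eta)$ for the length of the shortest saddle connection or closed regular geodesic of $\eta$.

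\emph{Step 1 (Masur's criterion).} First I would invoke the following, essentially due to Masur \cite{Ma0}: if there are a compact set $K\subset\mathcal{C}$ and times $t_n\to\infty$ with $g_{t_n}(r_\theta\omega)\in K$, then the vertical flow of $(M,r_\theta\omega)$ is uniquely ergodic. The mechanism is that two distinct ergodic invariant probability measures for the vertical flow produce, through their Schwartzman asymptotic cycles, two non-proportional homology classes that are ``almost vertical'' on $(M,r_\theta\omega)$; since $g_t$ contracts the vertical direction this forces a saddle connection of $g_t(r_\theta\omega)$ of length $O(e^{-ct})$, hence $\operatorname{sys}(g_t(r_\theta\omega))\to0$, i.e.\ $g_t(r_\theta\omega)$ leaves every compact set by the Mumford-type compactness criterion (a subset of $\mathcal{C}$ is relatively compact iff the systole is bounded below on it), contradicting the hypothesis. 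I would quote this rather than reprove it.

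\emph{Step 2 (recurrence for a.e.\ surface).} By the theorem of Masur and Veech the $SL(2,\R)$-invariant, hence $g_t$-invariant, Lebesgue-class measure $\mu$ on $\mathcal{C}$ is finite. Since $\mu$ is finite and $\bigcap_{\varepsilon>0}\{\operatorname{sys}<\varepsilon\}=\emptyset$, fix $\varepsilon>0$ with $\mu(K)>0$ for $K:=\{\eta\in\mathcal{C}:\operatorname{sys}(\eta)\ge\varepsilon\}$, which is compact. By Poincar\'e recurrence applied to $(g_t)_{t\in\R}$ and $K$, for $\mu$-a.e.\ $\eta\in\mathcal{C}$ there is a sequence $t_n\to\infty$ with $g_{t_n}\eta\in K$. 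Hence $\mathcal{N}:=\{\eta\in\mathcal{C}: g_t\eta\text{ leaves every compact set as }t\to+\infty\}$ satisfies $\mu(\mathcal{N})=0$; note that $\mathcal{N}$ is Borel, $g_t$-invariant, and, because forward divergence is preserved under moving a point along a leaf of the stable (contracting) horocycle of $(g_t)$, also invariant under the corresponding Borel subgroup $B\subset SL(2,\R)$. By Step 1, whenever $r_\theta\omega\notin\mathcal{N}$ the vertical flow of $r_\theta\omega$ is uniquely ergodic.

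\emph{Step 3 (a.e.\ surface $\Rightarrow$ a.e.\ direction).} It remains to prove $\operatorname{Leb}\{\theta\in\R/2\pi\Z: r_\theta\omega\in\mathcal{N}\}=0$, and this is the hard part, since the circle $\{r_\theta\omega:\theta\}$ is a $\mu$-null subset of $\mathcal{C}$, so a naive Fubini argument against $\mu$ is unavailable. The idea is to use homogeneity of the $SL(2,\R)$-action together with the $B$-invariance of $\mathcal{N}$: realizing the $SL(2,\R)$-orbit of $\omega$ as $SL(2,\R)/\Gamma$ for the Veech group $\Gamma$ of $\omega$, the set $\mathcal{N}$ meets this orbit in a $B$-invariant set, which descends to a subset $\widehat{\mathcal{N}}$ of the boundary circle $B\backslash SL(2,\R)/\Gamma\cong\partial\mathbb{H}/\Gamma$; via $SL(2,\R)=B\cdot SO(2)$ the circle $\{r_\theta\omega\}$ maps onto this boundary circle by the visual (diffeomorphic, Lebesgue-absolutely-continuous) correspondence $\theta\mapsto Br_\theta\Gamma$, and, $\operatorname{sys}$ being $SO(2)$-invariant, $r_\theta\omega\in\mathcal{N}$ depends only on the image of $\theta$. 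One then checks that $\widehat{\mathcal{N}}$ is Lebesgue-null: the finite $SL(2,\R)$-invariant measure $\mu$ induces a measure in the Lebesgue class on the boundary, against which $\mu(\mathcal{N})=0$ transfers. Pulling back through $\theta\mapsto Br_\theta\Gamma$ gives the claim. This transfer — from ``$\mu$-a.e.\ surface'' to ``Lebesgue-a.e.\ direction through the fixed $\omega$'', with its delicate disintegration over $SL(2,\R)$-orbits — is where the real work lies; a conceptually different route that bypasses it is to represent the vertical flow of $r_\theta\omega$ as a suspension of an interval exchange and run Rauzy--Veech induction, proving directly by a Borel--Cantelli estimate that Lebesgue-a.e.\ interval exchange (equivalently, a.e.\ direction) is uniquely ergodic.
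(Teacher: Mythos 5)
Your Steps~1 and~2 are sound: Masur's criterion (non-divergence of the Teichm\"uller geodesic $\Rightarrow$ unique ergodicity of the vertical flow) together with finiteness of the Masur--Veech measure and Poincar\'e recurrence do give that $\mu$-almost every surface in the stratum has a non-divergent geodesic. Your Step~3, however, has a genuine gap precisely at the point you flag as ``the hard part,'' and the mechanism you propose to close it does not work.

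The $SL(2,\R)/\Gamma$ reduction is only a bijection of $SL(2,\R)$-sets. For a generic $\omega$ the Veech group $\Gamma$ is trivial or at best cyclic, the orbit $SL(2,\R)\cdot\omega$ is not closed, and in every case the orbit is $\mu$-null (it is $3$-dimensional inside a higher-dimensional stratum). Consequently the statement $\mu(\mathcal{N})=0$ carries \emph{no} information about $\mathcal{N}\cap SL(2,\R)\cdot\omega$, and there is no ``finite $SL(2,\R)$-invariant measure induced on the boundary circle'' against which to test $\widehat{\mathcal{N}}$: the orbit simply does not support a finite invariant measure unless $\omega$ is a lattice (Veech) surface, which is the exceptional case. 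The $B$-invariance observation, while true, does not rescue this, because $B$-invariance of $\mathcal{N}$ on an infinite-volume orbit gives no a priori control of its trace on the $SO(2)$-circle. The alternative route you sketch at the end inherits the same difficulty in disguise: ``Lebesgue-a.e.\ interval exchange is uniquely ergodic'' (Masur, Veech, 1982) is a statement about a full-dimensional parameter space of IETs, whereas the IETs obtained from a single fixed $\omega$ by varying $\theta$ form a one-parameter family that is Lebesgue-null there, so ``equivalently, a.e.\ direction'' is not a reformulation but exactly the open problem. Passing from ``a.e.\ IET'' (or ``a.e.\ surface'') to ``a.e.\ direction for \emph{every} fixed surface'' is the content of Kerckhoff--Masur--Smillie (1986), and it requires a genuinely new quantitative estimate -- roughly, a uniform bound showing that the set of directions $\theta$ for which $g_t r_\theta\omega$ stays in the $\varepsilon$-thin part for all $t\in[0,T]$ has measure decaying in $T$ independently of $\omega$, obtained by analysing thin-part excursions geodesic by geodesic rather than by Fubini against a global invariant measure. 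The paper does not reprove this; it simply cites Masur \cite{Ma0}, where the relevant refinement (Hausdorff dimension of the exceptional set) is established, building on the KMS argument. So while your high-level architecture (Masur's criterion $+$ non-divergence) is the right one, the proposal as written does not constitute a proof because the crucial transfer step is neither correct as stated nor replaced by a working substitute.
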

Every  $\theta\in \R/2\pi\Z $ for which the assertion of the
theorem holds is called \emph{Masur generic}.

 The {\em Kontsevich-Zorich (KZ) cocycle}
$(G^{KZ}_t)_{t\in\R}$ is the quotient of the trivial cocycle
\[g_t\times\operatorname{Id}:\mathcal{T}_1(M)\times H_1(M,\R)\to\mathcal{T}_1(M)\times H_1(M,\R)\]
by the action of the mapping-class group
$\Gamma(M)$. The mapping class group acts on
the fiber $H_1(M,\R)$ by induced maps. The cocycle
$(G^{KZ}_t)_{t\in\R}$ acts on the homology vector bundle
\[\mathcal{H}_1(M,\R)=(\mathcal{T}_1(M)\times H_1(M,\R))/\Gamma(M)\]
 over the Teichm\"uller flow
$(g_t)_{t \in \R}$ on the moduli space $\mathcal{M}_1(M)$.

Clearly the fibers of the  bundle $\mathcal{H}_1(M,\R)$ can be
identified with $H_1(M,\R)$.  The space $H_1(M,\R)$ is endowed
with the symplectic form given by the algebraic intersection
number. This symplectic structure  is preserved by the action of
the mapping-class group and hence is invariant under the action of
$SL(2,\R)$.

The standard definition of KZ-cocycle uses the cohomological
bundle. The identification of the homological and cohomological
bundle and the corresponding KZ-cocycles is established by the
Poincar\'e duality $\mathcal{P}:H_1(M,\R)\to H^1(M,\R)$. This
correspondence allows us to define the so called Hodge norm (see
\cite{For-dev} for cohomological bundle) on each fiber of the
bundle $\mathcal{H}_1(M,\R)$. The norm on the fiber $H_1(M,\R)$
over $\omega\in\mathcal{M}_1(M)$ will be denoted by
$\|\,\cdot\,\|_\omega$.

Let  $\omega \in\mathcal{M}_1(M)$ and denote by
$\mathcal{M}=\overline{SL(2,\R)\omega }$ the closure of the $SL(2,\R)$-orbit of $\omega $ in $\mathcal{M}_1(M)$. The celebrated result of Eskin, Mirzakhani and Mohammadi, proved in
\cite{EMM} and \cite{EM}, says that $\mathcal{M}\subset
\mathcal{M}_1(M)$ is an affine $SL(2,\R)$-invariant submanifold.
Denote by $\nu_{\mathcal{M}}$ the corresponding affine
$SL(2,\R)$-invariant probability measure supported on
$\mathcal{M}$. The above results say in addition, that the measure $\nu_{\mathcal{M}}$ is ergodic under
the action of the Teichm\"uller flow. It follows, that $\nu_{\mathcal{M}}$-almost every element of $\mathcal{M}$
is Birkhoff generic, i.e.\ the pointwise ergodic theorem holds for the Teichm\"uller flow and every continuous integrable function on $\mathcal{M}$.
The following recent result is more refined and yields Birkhoff generic elements among $r_\theta\omega $  for $\theta\in \R/2\pi\Z $.

\begin{theorem}[see \cite{Es-Ch}]\label{thm:esch}
For almost all
$\theta\in \R/2\pi\Z $ we have
\begin{equation*}
\lim_{T\to\infty}\frac{1}{T}\int_0^T\phi(g_tr_\theta\omega )\,dt=\int_{\mathcal{M}}\phi\,d\nu_{\mathcal{M}}\ \text{ for every }\ \phi\in C_c(\mathcal{M}_1(M)).
\end{equation*}
\end{theorem}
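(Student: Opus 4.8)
The plan, following \cite{Es-Ch}, is to deduce this pointwise Birkhoff-genericity statement from the much softer \emph{averaged} equidistribution of large circles, upgrading ``average over $\theta$'' to ``almost every $\theta$'' by exploiting that the circle $\{r_\theta\omega:\theta\in\R/2\pi\Z\}$ is expanded by the Teichm\"uller flow. Two deep external inputs are needed. The first is the Eskin--Mirzakhani--Mohammadi package \cite{EMM}, \cite{EM}: $\mathcal{M}=\overline{SL(2,\R)\omega}$ is an affine invariant submanifold carrying a unique affine $SL(2,\R)$-invariant probability measure $\nu_{\mathcal M}$, this measure is ergodic for $(g_t)_{t\in\R}$, and the normalized circle measures $\nu_t:=\frac{1}{2\pi}\int_0^{2\pi}\delta_{g_tr_\theta\omega}\,d\theta$ converge to $\nu_{\mathcal M}$ as $t\to\infty$, with no escape of mass. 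The second is the quantitative non-divergence of the Teichm\"uller flow (Eskin--Masur, in the Minsky--Weiss form): one can bound, uniformly in $t$, the Lebesgue measure of the set of $\theta$ for which $g_tr_\theta\omega$ lies in the $\varepsilon$-thin part of the stratum.

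First I would make ``for every $\phi\in C_c(\mathcal M_1(M))$'' into a countable matter. Since $\mathcal M_1(M)$ is locally compact and second countable, $C_c(\mathcal M_1(M))$ is separable in the sup norm, so fix a countable sup-dense family $\{\phi_n\}$. It suffices to prove, for each fixed $n$, that $\frac1T\int_0^T\phi_n(g_tr_\theta\omega)\,dt\to\int_{\mathcal M}\phi_n\,d\nu_{\mathcal M}$ for Lebesgue-a.e.\ $\theta$; on the full-measure set of $\theta$ where this holds for all $n$, the elementary bound $\big|\frac1T\int_0^T(\phi-\phi_n)(g_tr_\theta\omega)\,dt\big|\le\|\phi-\phi_n\|_\infty$, uniform in $T$, together with $\nu_{\mathcal M}$ being a probability measure, upgrades the conclusion to every $\phi\in C_c(\mathcal M_1(M))$.

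The core step is the a.e.\ convergence of $F_T(\theta):=\frac1T\int_0^T\phi(g_tr_\theta\omega)\,dt$ for a single fixed $\phi\in C_c(\mathcal M_1(M))$ (one may take $\phi\ge0$). Circle equidistribution plus bounded convergence yields only $\int_0^{2\pi}F_T(\theta)\,d\theta\to 2\pi\int_{\mathcal M}\phi\,d\nu_{\mathcal M}$, which is by itself far too weak. The mechanism I would use is the expansion of the circle under $g_t$: from the $KAN$ decomposition of $g_tr_\theta$ one checks that a circle arc of length $\sim e^{-2t}$ around a fixed $\theta_0$ is carried by $g_t$ --- up to a rotation of size $O(e^{-2t})$ and bounded distortion of arclength --- onto a unit-length unstable horocycle arc through $g_tr_{\theta_0}\omega$. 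One then runs a Vitali-type differentiation argument on $\R/2\pi\Z$: at a Lebesgue density point $\theta_0$ of the set where $F_T$ has an atypically large (resp.\ small) $\limsup_T$, the density property combined with the expansion transfers that excess to a positive-measure family of time-$t$ circle arcs; but the average of $\phi$ over all circle arcs at time $t$ is $\nu_t(\phi)\to\nu_{\mathcal M}(\phi)$, and the uniform non-divergence estimate forbids the excess from hiding in the cusp. This pins both $\liminf_T F_T(\theta_0)$ and $\limsup_T F_T(\theta_0)$ to $\int_{\mathcal M}\phi\,d\nu_{\mathcal M}$ for a.e.\ $\theta_0$.

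The hard part is exactly this last step: converting an equidistribution statement valid only \emph{on average over the circle} into one valid \emph{for almost every point of the circle}. The obstacles are to control the error terms in the approximation $g_tr_\theta\approx(\text{small rotation})\cdot(\text{unstable horocycle})\cdot g_t$ coming from the Cartan and stable directions; to make the covering/differentiation argument quantitatively uniform over the non-compact space $\mathcal M_1(M)$, which is precisely where the Eskin--Masur/Minsky--Weiss non-divergence estimates are indispensable (Masur's criterion \cite{Ma0} is in the background, guaranteeing a.e.\ non-divergence of the geodesic); and to synchronize the time average against $g_t$ with the spatial covering of the circle. Everything else --- the existence, uniqueness and ergodicity of $\nu_{\mathcal M}$, the equidistribution of large circles, and the reduction to a countable family of test functions --- is a black-box use of \cite{EMM}, \cite{EM}.
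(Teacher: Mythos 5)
The theorem you are addressing is not proved in the paper at all: it is quoted from Chaika--Eskin \cite{Es-Ch} as a black-box input (along with Theorem~\ref{thm:masur} and the Oseledets-genericity result, these feed into the proof of Theorem~\ref{thm:ergV}). There is therefore no internal proof to compare against. What I can do is assess your sketch as a summary of \cite{Es-Ch}.

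The ingredients you identify are the right ones: countable reduction to a dense family of $\phi_n$, the Eskin--Mirzakhani--Mohammadi equidistribution $\nu_t\to\nu_{\mathcal M}$, quantitative non-divergence in the Eskin--Masur/Minsky--Weiss form to keep mass out of the thin part, and the elementary $KAN$ computation showing that $g_t$ expands a $\sim e^{-2t}$-arc of the circle into a unit unstable horocycle arc up to lower-order stable and Cartan errors. Where the sketch does not yet constitute a proof is precisely the step you yourself flag as ``the hard part.'' The difficulty is a type mismatch: the set $E_\delta=\{\theta:\limsup_T F_T(\theta)>\nu_{\mathcal M}(\phi)+\delta\}$ whose density points you want to examine is defined through the \emph{time} average $F_T(\theta)=\frac1T\int_0^T\phi(g_tr_\theta\omega)\,dt$, whereas $\nu_t(\phi)$ is the \emph{spatial} circle average of $\phi$ at a single time. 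Saying that the density property ``transfers the excess to a positive-measure family of time-$t$ circle arcs'' glosses over the quantitative bridge between the two: one needs to show that for a carefully chosen scale $s<T$, the Birkhoff integrand $\phi(g_tr_\theta\omega)$ is essentially constant in $\theta$ across an $e^{-2s}$-arc for $t$ up to roughly $s$, so that the initial segment of the time average matches across the arc, while the remaining segment $t\in[s,T]$ can be replaced by the circle average. Making that decomposition uniform, with errors controlled by the non-divergence estimates and with no loss at the cusp, is the substance of \cite{Es-Ch}, and your proposal states the plan rather than executes it. So: not a wrong approach, consistent in spirit with the cited source, but with the central quantitative lemma left as an acknowledged gap -- and in any case not something the present paper attempts to reprove.
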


All directions $\theta\in \R/2\pi\Z $ for which the assertion of the
theorem holds are called \emph{Birkhoff generic}.

Let $\mathcal{V}\to \mathcal{M}$ be an $SL(2,\R)$-invariant  subbundle of $\mathcal{H}_1(M,\R)$
which is defined and continuous over $\mathcal{M}$. For every $\omega\in \mathcal{M}$ we denote by $\mathcal{V}_\omega$ its fiber over $\omega$.

Let us consider the KZ-cocycle $(G_t^{\mathcal{V}})_{t\in\R}$
restricted to $\mathcal{V}$. By Oseledets' theorem, there exists Lyapunov
exponents of $(G_t^{\mathcal{V}})_{t\in\R}$ with respect to the
measure $\nu_{\mathcal{M}}$. If additionally, the subbundle $\mathcal{V}$ is symplectic, its Lyapunov exponents
with respect to the measure $\nu_{\mathcal{M}}$ are:
\[\lambda^{\mathcal{V}}_1\geq\lambda^{\mathcal{V}}_2\geq\ldots\geq\lambda^{\mathcal{V}}_d\geq-
\lambda^{\mathcal{V}}_d\geq\ldots\geq-\lambda^{\mathcal{V}}_2\geq-\lambda^{\mathcal{V}}_1.\]

\begin{theorem}[see \cite{Es-Ch}]
Let
$\lambda^{\mathcal{V}}_1=\overline{\lambda}_1>\overline{\lambda}_2>
\ldots>\overline{\lambda}_{s-1}>\overline{\lambda}_s=-\lambda^{\mathcal{V}}_1$
be distinct Lyapunov exponents of
$(G_t^{\mathcal{V}})_{t\in\R}$ with respect to
$\nu_{\mathcal{M}}$. Then for a.e.\ $\theta\in \R/2\pi\Z $ there exists a
direct splitting of the fibre
$\mathcal{V}_{r_\theta\omega}=\bigoplus_{i=1}^s\mathcal{U}^i_{r_\theta\omega}$ such that for
every $\xi\in \mathcal{U}^i_{r_\theta\omega}$ we have
\begin{equation}
\lim_{t\to\infty}\frac{1}{t}\log\|\xi\|_{g_tr_\theta\omega}=\overline{\lambda}_i.
\end{equation}
\end{theorem}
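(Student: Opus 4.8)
The plan is to derive the statement from the Oseledets-genericity theorem of Chaika and Eskin \cite{Es-Ch} --- the same source as Theorem~\ref{thm:esch} --- applied to the full homology bundle $\mathcal{H}_1(M,\R)$, and then to cut the resulting splitting down to the invariant subbundle $\mathcal{V}$ and regroup it according to the distinct exponents $\overline{\lambda}_1>\cdots>\overline{\lambda}_s$.

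First I would recall the relevant background. Since $\nu_{\mathcal{M}}$ is Teichm\"uller-ergodic, Oseledets' multiplicative ergodic theorem applies to $(G^{KZ}_t)_{t\in\R}$ on $\mathcal{H}_1(M,\R)$ and produces a finite list of Lyapunov exponents together with a measurable Oseledets splitting defined for $\nu_{\mathcal{M}}$-a.e.\ base point; restricting to the continuous, $SL(2,\R)$-invariant (in particular $g_t$-invariant) subbundle $\mathcal{V}\to\mathcal{M}$ one reads off the sublist $\lambda^{\mathcal{V}}_1\ge\lambda^{\mathcal{V}}_2\ge\cdots$ displayed before the statement. The genuinely nontrivial input is the following consequence of \cite{Es-Ch}: for a \emph{fixed} $\omega\in\mathcal{M}$ and Lebesgue-a.e.\ $\theta\in\R/2\pi\Z$, the point $r_\theta\omega$ is not only Birkhoff generic but also Oseledets generic, i.e.\ the fibre over $r_\theta\omega$ admits a direct splitting whose pieces grow under $(g_t)_{t\ge 0}$ in Hodge norm at precisely the $\nu_{\mathcal{M}}$-Lyapunov rates. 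This step cannot be obtained from plain Oseledets, because the circle $\{r_\theta\omega : \theta\in\R/2\pi\Z\}$ is in general a $\nu_{\mathcal{M}}$-null set; it relies instead on the equidistribution of the Teichm\"uller orbit of $r_\theta\omega$ (Theorem~\ref{thm:esch}) together with the quantitative recurrence and martingale-type arguments of \cite{Es-Ch}. I would simply invoke this result rather than reprove it.

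It then remains to package the output. For $\theta$ in the full-measure set furnished by \cite{Es-Ch}, the Oseledets splitting of $\mathcal{H}_1$ over $r_\theta\omega$ restricts --- using $g_t$-invariance and continuity of $\mathcal{V}$, so that $\mathcal{V}_{r_\theta\omega}$ is well defined and $g_t$-adapted --- to a splitting of $\mathcal{V}_{r_\theta\omega}$ into subspaces each carrying a definite forward growth exponent. Defining $\mathcal{U}^i_{r_\theta\omega}$ as the direct sum of those subspaces whose exponent equals $\overline{\lambda}_i$, one obtains $\mathcal{V}_{r_\theta\omega}=\bigoplus_{i=1}^s\mathcal{U}^i_{r_\theta\omega}$ with $\lim_{t\to\infty}t^{-1}\log\|\xi\|_{g_tr_\theta\omega}=\overline{\lambda}_i$ for every $\xi\in\mathcal{U}^i_{r_\theta\omega}$, as required; the symplectic structure of $\mathcal{V}$ is not needed here and only serves to organise the list of exponents symmetrically. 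The main obstacle is entirely concentrated in the Chaika--Eskin step: passing from ``$\nu_{\mathcal{M}}$-a.e.\ surface'' to ``every surface in $\mathcal{M}$, almost every direction'', and doing so with a genuine direct-sum splitting rather than merely a Lyapunov filtration --- which is exactly the feature making the homology-class decompositions used later well behaved. Everything around that input is the routine bookkeeping of Oseledets theory sketched above.
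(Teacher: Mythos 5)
The paper does not supply a proof of this statement: it is stated with the attribution ``see \cite{Es-Ch}'' and is imported wholesale from Chaika--Eskin, just like Theorem~\ref{thm:esch} two lines above. Your proposal is therefore not competing with an argument in the paper; it is a reasonable reconstruction of how the cited result specialises to the setting at hand, and the core input you isolate --- that for a \emph{fixed} $\omega$ and Lebesgue-a.e.\ $\theta$ the point $r_\theta\omega$ is Oseledets generic, which cannot follow from plain Oseledets because the circle $\{r_\theta\omega\}$ may be $\nu_{\mathcal{M}}$-null --- is exactly the content being cited. One small remark: rather than first running Chaika--Eskin on all of $\mathcal{H}_1(M,\R)$ and then intersecting the resulting splitting with $\mathcal{V}_{r_\theta\omega}$, it is cleaner (and closer to how the paper uses the result) to apply their Oseledets-genericity statement directly to the restricted cocycle $(G_t^{\mathcal{V}})_{t\in\R}$, since $\mathcal{V}$ is a continuous $SL(2,\R)$-invariant subbundle and their framework accommodates such cocycles. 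Going through the ambient bundle works too, but then the step ``the Oseledets splitting of $\mathcal{H}_1$ over $r_\theta\omega$ restricts to a splitting of $\mathcal{V}_{r_\theta\omega}$'' needs a sentence of justification at the specific point $r_\theta\omega$ (not merely $\nu_{\mathcal{M}}$-a.e.); this follows from characterising the Oseledets subspaces via forward and backward growth rates together with the $g_t$-invariance of $\mathcal{V}$, but you should say so explicitly rather than treat it as automatic.
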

Each $\theta\in \R/2\pi\Z $ for which the assertion of the
theorem holds is called \emph{Oseledets generic}.
Then $\mathcal{V}_{r_\theta\omega}$ has a  direct splitting
\[\mathcal{V}_{r_\theta\omega}=E_{r_\theta\omega}^+\oplus E_{r_\theta\omega}^0\oplus E_{r_\theta\omega}^-\]
into unstable, central and stable subspaces
\begin{align*}
E_{r_\theta\omega}^+&=\Big\{\xi\in \mathcal{V}_{r_\theta\omega}:
\lim_{t\to+\infty}\frac{1}{t}\log\|\xi\|_{g_{-t}r_\theta\omega}<0\Big\},
\label{stabledef}\\
E_{r_\theta\omega}^0&=\Big\{\xi\in\mathcal{V}_{r_\theta\omega}:
\lim_{t\to\infty}\frac{1}{t}\log\|\xi\|_{g_{t}r_\theta\omega}=0\Big\},\nonumber
\\
E_{r_\theta\omega}^-&=\Big\{\xi\in\mathcal{V}_{r_\theta\omega}:
\lim_{t\to+\infty}\frac{1}{t}\log\|\xi\|_{g_{t}r_\theta\omega}<0\Big\}.\nonumber
\end{align*}
The dimensions of $E_{r_\theta\omega}^+$ and $E_{r_\theta\omega}^-$ are equal to the
number of positive Lyapunov exponents of $(G^{\mathcal{V}}_t)_{t\in\R}$.

One of the main objectives of this paper is to prove (in Section~\ref{subsec:proof erg}) the following
criterion on ergodicity for translation flows on $\Z^d$-covers.

\begin{theorem}\label{thm:ergV}
Let $(M,\omega)$ be a compact connected translation surface and let $\mathcal{M}=\overline{SL(2,\R)\omega}$.
Suppose that $\mathcal{V}\to\mathcal{M}$ is a continuous $SL(2,\R)$-invariant subbundle of $\mathcal{H}_1(M,\R)$  such that
all Lyapunov exponents of the KZ-cocycle $(G_t^{\mathcal{V}})_{t\in\R}$ vanish.
Then for every connected $\Z^d$-cover $(\widetilde{M}_\gamma,\widetilde{\omega}_\gamma)$ given
by a $d$-tuple $\gamma=(\gamma_1,\ldots,\gamma_d)\in (\mathcal{V}_\omega\cap H_1(M,\Z))^d$
the directional flow in direction $\theta\in \R/2\pi\Z $ on the translation surface
$(\widetilde{M}_\gamma,\widetilde{\omega}_\gamma)$ is ergodic for
a.e.\ $\theta$.
\end{theorem}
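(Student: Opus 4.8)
The plan is to use the Essen--Chaika-type genericity results (Theorems stated above) together with a Fourier/cocycle-type argument on the $\Z^d$-cover. The standard strategy for proving ergodicity of directional flows on $\Z^d$-covers is: fix a Birkhoff and Oseledets generic direction $\theta$, rescale by the Teichm\"uller flow, and study the invariant functions of the directional flow on $(\widetilde{M}_\gamma,\widetilde{\omega}_\gamma)$ by decomposing them along the characters of the deck group $\Z^d$. For each character $\chi\in\widehat{\Z^d}=\T^d$ one obtains a $\chi$-equivariant function on the base $M$, which is an eigenfunction-type object for the directional flow twisted by the cocycle $\sigma\mapsto\langle\gamma,\sigma\rangle$. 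The vanishing of the Lyapunov exponents on $\mathcal{V}$ should force, via the deviation estimates for the KZ-cocycle (subquadratic, in fact sublinear growth of Birkhoff sums of $\mathcal{V}$-valued cocycles over Masur generic directions), that no nontrivial such equivariant function exists except for the trivial character. Concretely, the cocycle recording the $\Z^d$-displacement of an orbit segment is essentially the integral of the harmonic $1$-forms dual to $\gamma_1,\ldots,\gamma_d$; since $\gamma_i\in\mathcal{V}_\omega$ and all Lyapunov exponents of $\mathcal{V}$ vanish, these Birkhoff sums grow subpolynomially (slower than any $t^\epsilon$) along the rescaled trajectory, which is the hypothesis needed to rule out invariant sets that separate the fibers.

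More precisely, I would proceed as follows. \emph{Step 1.} Reduce to translation flows: this is already done in the excerpt (Remark~\ref{rem:erg}), so ergodicity of the foliation on the half-translation cover is equivalent to ergodicity of $(\widetilde{\varphi}^\theta_t)$. \emph{Step 2.} Fix $\theta$ simultaneously Masur generic, Birkhoff generic and Oseledets generic for the bundle $\mathcal{V}$ over $\mathcal{M}=\overline{SL(2,\R)\omega}$; this is a full-measure set of $\theta$. After applying $r_\theta$ we may assume the vertical flow on $(M,\omega)$ is uniquely ergodic and that the forward Teichm\"uller orbit $g_t\omega$ equidistributes toward $\nu_\mathcal{M}$. \emph{Step 3.} Suppose $f\in L^2(\widetilde{M}_\gamma,\mu_{\widetilde\omega_\gamma})$ is invariant under the vertical flow $(\widetilde\varphi^v_t)$. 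Decompose $f$ over the characters of $\Z^d$: $f=\int_{\T^d} f_\chi\,d\chi$ where $f_\chi$ descends to a section over $M$ satisfying $f_\chi\circ D_\sigma=\chi(\sigma)f_\chi$ for $\sigma\in\Z^d$. Each $f_\chi$ is vertical-flow invariant as a twisted function; rewrite this invariance in terms of the skew-product over the base with cocycle given by the $\gamma$-coordinates of orbit endpoints. \emph{Step 4.} Use the self-similarity: renormalizing by $g_t$, the cocycle values of long vertical orbit segments of $\omega$ correspond to short segments of $g_t\omega$ scaled by the KZ-cocycle $G_t^{\mathcal{V}}$ acting on the classes $\gamma_i$. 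Vanishing of all Lyapunov exponents gives $\|G_t^{\mathcal{V}}\gamma_i\|_{g_t\omega}=e^{o(t)}$, hence the $\Z^d$-displacement of a vertical orbit segment of Birkhoff length $N$ is $o(N^\epsilon)$ for every $\epsilon$. \emph{Step 5.} Plug this sublinear growth into an Anosov--Katok / Veech-type argument (or the "criterion for non-ergodicity fails" argument in the style of Fr\k{a}czek--Ulcigrai or Hooper--Weiss): a nontrivial invariant function for a nontrivial character $\chi$ would force the twisted Birkhoff sums $\sum \chi(\text{displacement})$ to fail to equidistribute, contradicting the growth bound combined with unique ergodicity of the base vertical flow. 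Therefore $f_\chi=0$ for $\chi\neq\mathbf{1}$, so $f$ descends to the base, where unique ergodicity makes it constant.

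\textbf{The main obstacle.} The delicate point is Step 5: passing from "the $\Z^d$-cocycle over the base grows sublinearly" to "there are no nonconstant invariant functions on the cover" is not automatic — sublinear growth of a cocycle does not in general imply ergodicity of the associated skew product (e.g.\ bounded cocycles can be coboundaries or not, and cocycles of zero mean that are not coboundaries can still fail to give ergodic extensions). What saves the argument here is the \emph{rigidity} coming from the Teichm\"uller renormalization: the cocycle is not merely sublinear but \emph{self-similar} with contraction/expansion rates controlled by $G_t^{\mathcal V}$, and the vanishing of \emph{all} exponents (not just positivity failing in top rank) means the renormalized cocycle returns near its starting position infinitely often at a controlled rate. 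One should exploit the Oseledets splitting $\mathcal V_{r_\theta\omega}=E^+\oplus E^0\oplus E^-$ which here is purely central, $E^0=\mathcal V_{r_\theta\omega}$, and run a Rokhlin-tower / approximation argument: build long vertical Rokhlin towers in the base from the Teichm\"uller renormalization (Veech's construction), observe that the $\Z^d$-displacement is almost constant on each level because of the $e^{o(t)}$ bound, and conclude that any invariant function is almost measurable with respect to the base, hence constant by unique ergodicity. Making the "almost constant displacement along towers" quantitative enough to kill the $L^2$ mass of every nontrivial $f_\chi$ — uniformly in $\chi$ away from $\mathbf 1$, so that the integral over $\T^d$ survives — is the technical heart, and is presumably where Theorem~\ref{thm:ergV} spends most of its effort; I would expect to invoke a maximal inequality or a second-moment estimate for the twisted Birkhoff sums, as in the infinite-ergodic-theory literature on $\Z^d$-extensions of IETs.
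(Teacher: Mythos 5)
Your overall scheme — renormalize by the Teichm\"uller flow, build Rokhlin towers from the renormalization, and show the $\Z^d$-displacement is controlled enough to force ergodicity — is the right shape, and your self-critique in ``the main obstacle'' correctly identifies where the pressure lies. But there is a concrete gap in the input you feed into Step~4: you invoke Oseledets genericity to deduce that vanishing Lyapunov exponents give $\|G_t^{\mathcal{V}}\gamma_i\|_{g_t\omega}=e^{o(t)}$, i.e.\ \emph{subexponential} growth. That is strictly too weak. The paper does not use Oseledets at all at this stage; it invokes the rigidity theorem of Forni--Matheus--Zorich (cited as Theorem~\ref{thm:fil}): when all Lyapunov exponents of a continuous $SL(2,\R)$-invariant subbundle vanish with respect to the affine measure $\nu_{\mathcal{M}}$, the Hodge norm on that subbundle is \emph{exactly} $SL(2,\R)$-invariant, so $\|\gamma_i\|_{g\omega}=\|\gamma_i\|_\omega$ for \emph{all} $g$, not just up to $e^{o(t)}$. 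This is essential. In the renormalization argument (Lemma~\ref{lemma:consterg} from \cite{Fr-Hu} combined with Lemmas~\ref{lem:aux1} and \ref{lem:aux2}) one extracts a sequence of times $t_k$ and nested intervals $I_k$ along which the induced-cocycle values $\langle\gamma,\xi_\alpha(g_{t_k}\omega_0,I_k)\rangle$ are comparable to $\|\gamma_i\|_{g_{t_k}\omega_0}$. Since these values live in $\Z^d$, one needs them to be \emph{bounded} so that a subsequence has them \emph{constant}; an $e^{o(t_k)}$ bound does not give this, and the whole tower construction would produce displacements drifting to infinity. The Forni--Matheus--Zorich isometry is precisely what upgrades ``slow growth'' to ``no growth'' and makes the extraction possible. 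Without citing that result, Step~4 as you wrote it does not close.

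There is also a structural difference worth noting, though it is not a gap per se. You propose to decompose a putative invariant $L^2$-function over the characters of $\Z^d$ and kill each $f_\chi$, $\chi\neq\mathbf{1}$, via a twisted equidistribution / second-moment argument; you correctly worry about uniformity in $\chi$. The paper instead works entirely on the cocycle side using Schmidt's essential-value theory: it represents the directional flow on the cover as a skew product $T_\psi$ over a base IET (Proposition~\ref{lem_flow_auto}), then uses the Conze--Fr\k{a}czek criterion (Proposition~\ref{prop:conze-fraczek}) to exhibit essential values from the Rokhlin towers, and finally uses connectedness of the cover (surjectivity of $\xi\mapsto\langle\gamma,\xi\rangle$) to conclude that the essential values generate all of $\Z^d$, hence $E(\psi)=\Z^d$ and $T_\psi$ is ergodic. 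This bypasses the Fourier decomposition entirely and in particular sidesteps the uniformity-in-$\chi$ issue you flag. If you wanted to salvage your Fourier route, you would still need the Forni--Matheus--Zorich isometry as the key input, and you would likely re-derive essential-value theory in disguise.
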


By Theorem~3 in \cite{Fo-Ma-Zo} we have the following result that will be applied  in the proof of Theorem~\ref{thm:ergV}.

\begin{theorem}\label{thm:fil}
Let $\mathcal{V}\to \mathcal{M}$ be a continuous $SL(2,\R)$-invariant subbundle of $\mathcal{H}_1(M,\R)$.
If all Lyapunov exponents of the KZ-cocycle $(G_t^{\mathcal{V}})_{t\in\R}$ vanish then $\|\xi\|_{g\omega}=\|\xi\|_{\omega}$ for all  $\xi\in \mathcal{V}_{\omega}$ and $g\in SL(2,\R)$.
\end{theorem}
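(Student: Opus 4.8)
The plan is to invoke Theorem~3 of \cite{Fo-Ma-Zo}, of which the present statement is (up to restriction to the invariant subbundle $\mathcal{V}$) a direct consequence; for completeness I would recall the mechanism behind it, since the same circle of ideas reappears below. The engine is Forni's second variation formula for the Hodge norm along the Teichm\"uller flow together with the Forni--Matheus--Zorich degeneracy criterion. Recall from \cite{For-dev} that for $\xi$ in the (complexified) Hodge bundle, with $\xi_t$ its Gauss--Manin/Kontsevich--Zorich transport, one has $\frac{d}{dt}\log\|\xi_t\|^2_{g_t\omega}= -2\,\Re B_{g_t\omega}(\xi_t,\xi_t)/\|\xi_t\|^2_{g_t\omega}$, where $B_\omega$ is a bilinear form on the holomorphic part of the fibre over $\omega$, continuous in $\omega$, and satisfying $|B_\omega(\xi,\xi)|\le\|\xi\|^2_\omega$; in particular $|\tfrac{d}{dt}\log\|\xi_t\|^2_{g_t\omega}|\le 2$. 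Since $\mathcal{V}$ is $SL(2,\R)$-invariant all of this restricts to the fibres $\mathcal{V}_\omega$, and the Lyapunov exponents of $(G^{\mathcal V}_t)_{t\in\R}$ are exactly those of the restricted cocycle.

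I would then run the following steps. (i) By the Forni--Matheus--Zorich formula the sum of the non-negative Lyapunov exponents of $(G^{\mathcal V}_t)_{t\in\R}$ with respect to $\nu_{\mathcal M}$ equals $\int_{\mathcal M}\Phi(\omega)\,d\nu_{\mathcal M}(\omega)$ for a continuous, non-negative function $\Phi$ on $\mathcal M$ with $\Phi(\omega)=0$ if and only if $B_\omega|_{\mathcal V_\omega}\equiv 0$. (ii) The hypothesis that all Lyapunov exponents vanish forces $\int_{\mathcal M}\Phi\,d\nu_{\mathcal M}=0$, hence $\Phi=0$ $\nu_{\mathcal M}$-a.e., i.e.\ $B_\omega(\xi,\xi)=0$ for all $\xi\in\mathcal V_\omega$ for $\nu_{\mathcal M}$-a.e.\ $\omega$. (iii) Since $\mathcal V$ and $B$ are continuous and $\nu_{\mathcal M}$ has full support on $\mathcal M$ (being the affine $SL(2,\R)$-invariant measure on the affine invariant manifold $\mathcal M$, by \cite{EMM,EM}), it follows that $B_\omega|_{\mathcal V_\omega}\equiv 0$ for \emph{every} $\omega\in\mathcal M$. (iv) Substituting back into Forni's formula yields $\frac{d}{dt}\|\xi_t\|^2_{g_t\omega}=0$, so the Hodge norm is constant along every Teichm\"uller geodesic inside $\mathcal M$. (v) The Hodge norm is also unchanged by the rotations $r_\theta$, because $r_\theta\omega=e^{i\theta}\omega$ carries the same underlying Riemann surface, hence the same Hodge star and the same harmonic representatives, so $\|\xi\|_{r_\theta\omega}=\|\xi\|_\omega$. (vi) Finally $SL(2,\R)$ is generated by the diagonal subgroup $\{g_t\}$ and the rotations $\{r_\theta\}$; writing $g=r_{\theta_1}g_t r_{\theta_2}$ and chaining (iv) and (v) along this path, using the $SL(2,\R)$-invariance of $\mathcal V$, gives $\|\xi\|_{g\omega}=\|\xi\|_\omega$ for all $g\in SL(2,\R)$ and $\xi\in\mathcal V_\omega$, which is the assertion.

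The genuine content is imported: the precise shape of Forni's second variation formula and, above all, the identity in step (i) that realizes the exponent sum as the integral of the non-negative form $\Phi$ (the heart of \cite{For-dev,Fo-Ma-Zo}). Granting those, the only mildly delicate point is step (iii) --- promoting the $\nu_{\mathcal M}$-a.e.\ vanishing of $B|_{\mathcal V}$ to vanishing everywhere on $\mathcal M$ --- which rests on the continuity of $\mathcal V$ and of $B$ together with the full support of the affine measure; the remaining steps are formal, so I expect no obstacle beyond correctly quoting this analytic input.
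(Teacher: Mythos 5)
Your proposal is correct and matches the paper exactly: the paper gives no proof of this statement but simply cites Theorem~3 of \cite{Fo-Ma-Zo}, and you do the same. The additional sketch you provide of the underlying mechanism (Forni's second variation formula, the $\Phi$-integral for the exponent sum, full support of $\nu_{\mathcal M}$, $KAK$ decomposition of $SL(2,\R)$) is a faithful and correct account of how the cited theorem is established, though it is not part of the paper's own argument.
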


Suppose that $(M,\omega)$ is an orientation cover of a compact half-translation surface $(X,q)$.
Then the $SL(2,\R)$-invariant symplectic subspace $H^+_1(M,\R)$ determines an $SL(2,\R)$-invariant symplectic subbundle of $\mathcal{H}^+_1$
which is defined and continuous over $\mathcal{M}$. The fibers of this bundle can be identified
with the space $H^+_1(M,\R)=H_1(X,\R)$ so the dimension of each fiber is $2g_X$, where $g_X$ is the genus of $X$.
The Lyapunov exponents of the bundle $\mathcal{H}^+_1$ are called the Lyapunov exponents of the half-translation surface $(X,q)$.
We denote by $\lambda_{top}(q)$ the largest exponent.

\begin{proof}[Proof of Theorem~\ref{thm:mainquadr:intr}]
Theorem~\ref{thm:ergV} applied to the subbundle $\mathcal{H}^+_1$ together with  Remark~\ref{rem:erg} completes the proof.
\end{proof}

\subsection{Skew product representation}
Let $\theta\in\R/2\pi\Z$ be a direction such that
the flow $({\varphi}^\theta_t)_{t\in\R}$ on $(M,\omega)$ is ergodic and has no saddle connections. Let $I\subset M\setminus\Sigma$ be an interval transversal to the direction $\theta$ with no
self-intersections. Then the Poincar\'e return map $T:I\to I$ is an ergodic interval exchange transformation (IET) which satisfies the Keane property. Denote by $(I_\alpha)_{\alpha\in\mathcal{A}}$ the family of exchanged
intervals. For every $\alpha\in\mathcal{A}$ we will denote by $\xi_\alpha=\xi_\alpha(\omega,I)\in H_1(M,\Z)$ the homology class of any loop formed by the segment of orbit for
$(\varphi^\theta_t)_{t\in\R}$ starting at any $x\in\Int I_\alpha$ and ending at $Tx$ together with the segment of $I$ that joins $Tx$ and $x$, that we will
denote by $[Tx,x]$.

\begin{proposition}[see \cite{Fr-Ulc:nonerg} for $d=1$]\label{lem_flow_auto}
For every $\gamma\in H_1(M,\Z)^d$ the
directional flow $(\widetilde{\varphi}^\theta_t)_{t\in\R}$  on the
$\Z^d$-cover $(\widetilde{M}_\gamma,\widetilde{\omega}_\gamma)$
has a special representation over the skew product
$T_{\psi_\gamma}:I\times\Z^d\to I\times\Z^d$ of the form
$T_{\psi_\gamma}(x,n)=(Tx,n+\psi_\gamma(x))$, where
$\psi_\gamma:I\to\Z^d$ is a piecewise constant function given by
\begin{equation}\label{eq:psigamma}
\psi_\gamma(x)=\langle \gamma,\xi_\alpha\rangle=\big(\langle \gamma_1,\xi_\alpha\rangle,\ldots,\langle
\gamma_d,\xi_\alpha\rangle\big)  \quad\text{ if }\quad x\in
I_\alpha\quad\text{ for }\alpha\in\mathcal{A}.
\end{equation}
In particular, the ergodicity  of the flow
$(\widetilde{\varphi}^\theta_t)_{t\in\R}$ on
$(\widetilde{M}_\gamma,\widetilde{\omega}_\gamma)$ is equivalent
to the ergodicity  of the skew product $T_{\psi_\gamma}:I\times
\Z^d\to I\times\Z^d$.
\end{proposition}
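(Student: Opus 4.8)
The plan is to build the cross-section for the $\Z^d$-cover directly from the cross-section $I$ downstairs, and to identify the return map with the announced skew product. First I would fix a lift $\widetilde{I}\subset\widetilde{M}_\gamma$ of $I$, that is, a single connected component of $p^{-1}(I)$ where $p:\widetilde{M}_\gamma\to M$ is the covering map; since $I$ is simply connected, $p^{-1}(I)=\bigsqcup_{n\in\Z^d} n\cdot\widetilde{I}$ and we may identify $p^{-1}(I)$ with $I\times\Z^d$ via $(x,n)\mapsto n\cdot\widetilde{x}$, where $\widetilde{x}\in\widetilde{I}$ is the chosen lift of $x$. Because the directional flow $(\widetilde{\varphi}^\theta_t)_{t\in\R}$ on $\widetilde{M}_\gamma$ projects to $(\varphi^\theta_t)_{t\in\R}$ on $M$, the transversal $I\times\Z^d$ meets every orbit (minus the measure-zero set of orbits hitting $\Sigma$ or running into the lift of the at most finitely many orbit segments issuing from $\Sigma$), and since $(\varphi^\theta_t)_{t\in\R}$ has no saddle connections and $T$ has the Keane property, return times are bounded and the first-return map to $I\times\Z^d$ is well defined a.e.

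Next I would compute that first-return map. Take $x\in\Int I_\alpha$ and let $\tau(x)$ be the return time of $x$ to $I$ under $(\varphi^\theta_t)_{t\in\R}$, so $\varphi^\theta_{\tau(x)}(x)=Tx$. Lift the orbit segment $\{\varphi^\theta_t(x):0\le t\le\tau(x)\}$ starting at $n\cdot\widetilde{x}$; it ends at some lift of $Tx$, say $m\cdot\widetilde{Tx}$. Closing this segment with the transversal arc $[Tx,x]\subset I$ produces the loop whose homology class is, by definition, $\xi_\alpha\in H_1(M,\Z)$. The defining property of the $\Z^d$-cover $\widetilde{M}_\gamma$ — namely that a lift of a closed curve $\sigma$ is displaced by $\langle\gamma,[\sigma]\rangle$ — then forces $m=n+\langle\gamma,\xi_\alpha\rangle$ (the contribution of the transversal arc $[Tx,x]$, which lies inside the simply connected set $I$, is trivial for the displacement). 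Hence the first-return map sends $(x,n)\mapsto(Tx,n+\psi_\gamma(x))$ with $\psi_\gamma$ as in \eqref{eq:psigamma}; this is exactly $T_{\psi_\gamma}$, and the representation of $(\widetilde{\varphi}^\theta_t)_{t\in\R}$ as a special (suspension) flow over $T_{\psi_\gamma}$ with roof function $\tau\circ p$ (pulled back from $I$) follows from the standard construction of a flow from its cross-section. For $d=1$ this is precisely the statement recalled from \cite{Fr-Ulc:nonerg}.

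Finally, the equivalence of ergodicity statements is the general principle that a flow built as a special flow over a base transformation with a roof function bounded away from $0$ and $\infty$ is ergodic (with respect to the natural invariant measure, here the product of $\mu_\omega|_I$-type transverse measure with counting measure on $\Z^d$, suspended) if and only if the base map is ergodic; since $\tau$ is bounded above (no saddle connections) and below (positive return time), this applies verbatim. I expect the only genuinely delicate point to be the bookkeeping of the deck-group displacement: one must argue carefully that the transversal arc $[Tx,x]$ contributes nothing to $\langle\gamma,\cdot\rangle$ — which is clear because it sits in the simply connected $I$ so any two of its lifts are in the same sheet — and that the loop one forms is indeed a genuine element of $H_1(M,\Z)$ independent of the auxiliary choices (it is, up to the null-homologous ambiguity killed by pairing with $\gamma$, by the same argument used in \cite{Fr-Ulc:nonerg}). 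Everything else is the routine verification that a cross-section construction behaves well under covers.
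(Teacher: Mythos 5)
Your proposal is correct. The paper itself supplies no proof of this proposition — it simply cites \cite{Fr-Ulc:nonerg} for the case $d=1$ and states the $\Z^d$ version — so there is no in-paper argument to compare against; but your reconstruction is exactly the natural one and is, modulo bookkeeping, the $d$-fold version of the cited argument. The key points are all present and handled correctly: identifying $p^{-1}(I)$ with $I\times\Z^d$ via a fixed lift $\widetilde I$ (legitimate because $I$ is a simply connected transversal, so $p^{-1}(I)$ is a disjoint union of homeomorphic copies indexed by the deck group); recognizing that the deck displacement of the lifted orbit segment from $x$ to $Tx$ is exactly $\langle\gamma,\xi_\alpha\rangle$ because closing it by the arc $[Tx,x]\subset I$ produces precisely the loop defining $\xi_\alpha$, while the closing arc contributes no displacement since it lies entirely inside one sheet; and concluding that the first-return map to $p^{-1}(I)$ is the announced skew product $T_{\psi_\gamma}$, so that $(\widetilde\varphi^\theta_t)$ is the special flow over $T_{\psi_\gamma}$ with roof $\tau\circ p$. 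The ergodicity equivalence via a bounded roof function is also standard and correctly invoked; here one may note $\tau$ is automatically bounded above and below simply because it is piecewise constant on the finitely many exchanged intervals.
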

Since the ergodicity of the flow $(\widetilde{\varphi}^\theta_t)_{t\in\R}$ is equivalent to the ergodicity of
$T_{\psi_\gamma}$, this will allow us to apply the theory of essential values of cocycles to prove Theorem~\ref{thm:ergV} in Section~\ref{subsec:proof erg}.

\subsection{Ergodicity of skew products}
In this subsection we recall some general facts about
cocycles. For relevant background material concerning skew products
and infinite measure-preserving dynamical systems, we refer the
reader to \cite{Sch} and \cite{Aa}.

Let $G$ be a locally compact abelian second countable group. We
denote by $0$ its identity element, by $\mathcal{B}_{G}$ its
$\sigma$-algebra of Borel sets and by $m_{G}$ its Haar measure.
Recall that, for each ergodic automorphism $T:\xbm\to\xbm$ of a
standard Borel probability space, each measurable function
$\psi:X\rightarrow G$ defines a {\it skew product} automorphism
$T_{\psi}$ which preserves the $\sigma$-finite measure
${\mu}\times m_{G}$:
\begin{eqnarray*}
T_{\psi}:(X\times
G,{\mathcal{B}}\times{\mathcal{B}_{G}},{\mu}\times m_{G})
&\rightarrow&
(X\times G,{\mathcal{B}}\times{\mathcal{B}_{G}},{\mu}\times m_{G}), \nonumber \\
T_{\psi}(x,g)&=&(Tx,g+{\psi}(x)),
\end{eqnarray*}
Here we use $G=\Z^d$. The function $\psi:X\rightarrow
G$ determines also a {\em cocycle} $\psi^{(\,\cdot\,)}:\Z\times
X \to G$ for the automorphism $T$ by the formula
\[
\psi^{(n)}(x)=\begin{cases}
\quad\sum_{0\leq j<n}\psi(T^jx) & \text{if }  n\geq0 \\
-\sum_{n\leq j<0}\psi(T^jx) &
\text{if }  n<0.
\end{cases}
\]
Then $T^n_{\psi}(x,g)=(T^nx,g+{\psi}^{(n)}(x))$ for every
$n\in\Z$.

An element $g\in G$ is said to be an {\em essential value} of $\psi$, if for every open neighbourhood $V_g$ of $g$ in $G$ and any set $B\in\mathcal{B}$,
$\mu(B)>0$, there exists $n\in\Z$ such that
\[
\mu(B\cap T^{-n}B\cap\{x\in X:\psi^{(n)}(x)\in V_g\})>0.
\]
The set of essential values of $\psi$ is denoted by
${E}(\psi)$.

\begin{proposition}[see \cite{Sch}]
The set of essential values $E(\psi)$ is a closed subgroup of $G$ and the
skew product $T_{\psi}$ is ergodic if and only if $E(\psi)=G$.
\end{proposition}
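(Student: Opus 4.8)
The plan is to prove the two assertions separately, working throughout with the translation maps $\tau_g\colon X\times G\to X\times G$, $\tau_g(x,h)=(x,h+g)$, which preserve $\mu\times m_G$ and commute with $T_\psi$, and with the cocycle relations $\psi^{(n+m)}(x)=\psi^{(n)}(x)+\psi^{(m)}(T^nx)$ and $\psi^{(-n)}(x)=-\psi^{(n)}(T^{-n}x)$, valid for all $n,m\in\Z$. That $E(\psi)$ is \emph{closed} is immediate: if $g\notin E(\psi)$, witnessed by $B$ with $\mu(B)>0$ and an open neighbourhood $V\ni g$ with $\mu(B\cap T^{-n}B\cap\{\psi^{(n)}\in V\})=0$ for all $n$, then the same $B$ and $V$ witness $g'\notin E(\psi)$ for every $g'\in V$, so $E(\psi)^{c}$ is open. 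That $E(\psi)$ is a \emph{subgroup} I would check in three steps. First $0\in E(\psi)$, since $n=0$ always works. Second, closure under inversion: given $g\in E(\psi)$, a neighbourhood $V\ni-g$ and $B$ with $\mu(B)>0$, apply the essential-value property of $g$ to the neighbourhood $-V$ to obtain a positive-measure set $A=B\cap T^{-n}B\cap\{\psi^{(n)}\in-V\}$; then $T^nA\subseteq B\cap T^{-(-n)}B$ and $\psi^{(-n)}=-\psi^{(n)}\circ T^{-n}\in V$ on $T^nA$, so $-g\in E(\psi)$. Third, closure under addition, by concatenation: given $g_1,g_2\in E(\psi)$, a neighbourhood $V\ni g_1+g_2$ and $B$ with $\mu(B)>0$, choose $V_i\ni g_i$ with $V_1+V_2\subseteq V$, use $g_1$ to produce a positive-measure $A=B\cap T^{-n}B\cap\{\psi^{(n)}\in V_1\}$, then apply the property of $g_2$ to the set $T^nA$ to produce a positive-measure $C\subseteq T^nA\cap T^{-m}(T^nA)$ with $\psi^{(m)}\in V_2$ on $C$, and observe that $T^{-n}C\subseteq A\cap T^{-m}A\subseteq B\cap T^{-(n+m)}B$ while $\psi^{(n+m)}=\psi^{(n)}+\psi^{(m)}\circ T^n\in V_1+V_2\subseteq V$ on $T^{-n}C$, so $g_1+g_2\in E(\psi)$.

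For the ergodicity criterion I would first prove that if $T_\psi$ is ergodic then $E(\psi)=G$, via the contrapositive. Given $g\notin E(\psi)$ with witnesses $B,V$ as above, pick small symmetric neighbourhoods $W_0,W_0'$ of $0$ with $g+W_0'-W_0\subseteq V$ (when $G=\Z^d$ one simply takes $V=\{g\}$ and $W_0=W_0'=\{0\}$) and set $F=\bigcup_{n\in\Z}T_\psi^n(B\times W_0)$. Then $F$ is $T_\psi$-invariant with $(\mu\times m_G)(F)\geq\mu(B)\,m_G(W_0)>0$, whereas unwinding the definition of essential value shows $(\mu\times m_G)\big(F\cap(B\times(g+W_0'))\big)=0$; so $F$ is not conull and $T_\psi$ is not ergodic. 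Conversely, assume $E(\psi)=G$ and let $F$ be $T_\psi$-invariant with $(\mu\times m_G)(F)>0$. The heart of the matter is the claim that $\tau_gF=F$ modulo null sets for every $g\in G$; granting it, $\mathbf 1_F(x,\cdot)$ is translation-invariant on $G$ for a.e.\ $x$, hence a.e.\ constant, so $F=A\times G$ modulo null sets, and since $T_\psi(A\times G)=TA\times G$ the ergodicity of $T$ forces $\mu(A)\in\{0,1\}$; as $\mu(A)>0$ this gives $F=X\times G$ modulo null sets, so $T_\psi$ is ergodic.

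It then remains to justify that claim: for $g\in E(\psi)$ and $F$ invariant, $\tau_gF=F$ modulo null sets; since $-g\in E(\psi)$ too, it suffices to prove $(\mu\times m_G)(F\setminus\tau_gF)=0$. Writing $F_x=\{h:(x,h)\in F\}$, invariance of $F$ reads $F_{T^nx}=\psi^{(n)}(x)+F_x$. For $G=\Z^d$ — the case needed in this paper — the argument is short: $(\mu\times m_G)(F\setminus\tau_gF)=\sum_{w\in\Z^d}\mu(B_w)$, where $B_w=\{x:w\in F_x,\ w-g\notin F_x\}$, and if $\mu(B_w)>0$ for some $w$, then, because $g\in E(\psi)$, there is $n$ with $\mu\big(B_w\cap T^{-n}B_w\cap\{\psi^{(n)}=g\}\big)>0$; for $x$ in that set $w\in F_{T^nx}=g+F_x$ gives $w-g\in F_x$, contradicting $x\in B_w$, so in fact $\mu(B_w)=0$ for every $w$. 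For a general locally compact second countable $G$ the same mechanism works after replacing the exact value $g$ by a small neighbourhood, but then one must first locate a neighbourhood of some point $w$ that occupies a definite proportion of a positive-measure slab of $F$, which requires a Lebesgue-density/approximation step; I expect this density bookkeeping to be the only genuinely delicate part of the proof, and it is vacuous in the discrete case $G=\Z^d$ used throughout.
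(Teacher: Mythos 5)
Your proof is correct, and since the paper itself gives no argument for this proposition (it is cited directly to Schmidt's monograph \cite{Sch}), there is no internal proof to compare against. Your argument is the standard one from the cocycle/skew-product literature: closedness of $E(\psi)$ via openness of the complement; $0\in E(\psi)$ trivially; closure under inversion by pushing forward with $T^{n}$ and using $\psi^{(-n)}=-\psi^{(n)}\circ T^{-n}$; closure under addition by concatenation with $\psi^{(n+m)}=\psi^{(n)}+\psi^{(m)}\circ T^{n}$; the forward implication of the ergodicity criterion via the explicit invariant set $F=\bigcup_n T_\psi^n(B\times W_0)$ of intermediate measure; and the converse via the key lemma that every $g\in E(\psi)$ acts trivially (mod null) on invariant sets, combined with ergodicity of the base. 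You also correctly flag that only the $G=\Z^d$ case is needed here, so the mild Lebesgue-density bookkeeping required for general locally compact $G$ can be skipped; in the discrete case the sections $F_x\subseteq\Z^d$ become literally translation-invariant and hence $\emptyset$ or $\Z^d$, which closes the argument cleanly. No gaps found.
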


\begin{proposition}[see \cite{Co-Fr}]\label{prop:conze-fraczek}
Let $(X,d)$ be a compact metric space, $\mathcal{B}$ the $\sigma$--algebra of  Borel sets and
$\mu$ be a probability Borel measure on $X$.  Suppose that $T:\xbm\to\xbm$ is an ergodic measure--preserving automorphism and there exists an increasing sequence of natural numbers $(h_n)_{n\geq 1
}$ and a sequence of Borel sets $(C_n)_{n\geq 1}$ such that
\[\mu(C_n)\to\alpha>0,\;\;\mu(C_n\triangle T^{-1}C_n)\to 0\;\;\mbox{ and }
\sup_{x\in C_n}d(x,T^{h_n}x)\to 0.\]
If $\psi:X\to G$ is a measurable cocycle such that $\psi^{(h_n)}(x)=g$ for all $x\in C_n$,
then $g\in E(\psi)$.
\end{proposition}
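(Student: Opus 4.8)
The plan is to verify the definition of essential value directly, using the index $m=h_n$ for $n$ large. Fix a Borel set $B$ with $\mu(B)>0$ and an open neighbourhood $V_g$ of $g$. Since by hypothesis $\psi^{(h_n)}\equiv g$ on $C_n$, we have $C_n\subset\{x\in X:\psi^{(h_n)}(x)\in V_g\}$, so it is enough to produce a single index $n$ for which
\[\mu\big(B\cap T^{-h_n}B\cap C_n\big)>0;\]
this already forces $\mu(B\cap T^{-h_n}B\cap\{x:\psi^{(h_n)}(x)\in V_g\})>0$, hence $g\in E(\psi)$.

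The first ingredient I would establish is that an asymptotically invariant sequence of fixed asymptotic mass equidistributes: $\mu(C_n\cap B)\to\alpha\,\mu(B)$ for \emph{every} Borel set $B$. To see this, regard $f_n:=\mathbf 1_{C_n}$ as a bounded sequence in $L^\infty(\mu)=L^1(\mu)^*$. By Banach--Alaoglu together with separability of $L^1(\mu)$ (here $X$ is compact metric), every subsequence of $(f_n)$ has a weak-$*$ convergent sub-subsequence $f_{n_k}\to f$ with $0\le f\le 1$ and $\int f\,d\mu=\lim\mu(C_{n_k})=\alpha$. The hypothesis $\mu(C_n\triangle T^{-1}C_n)\to 0$ reads $\|f_n-f_n\circ T\|_{L^1}\to 0$, so for every $\phi\in L^\infty(\mu)$, using $T$-invariance of $\mu$,
\[\int f_n\,\phi\,d\mu-\int f_n\,(\phi\circ T^{-1})\,d\mu=\int (f_n-f_n\circ T)\,\phi\,d\mu\longrightarrow 0.\]
Letting $k\to\infty$ along the sub-subsequence gives $\int f\,\phi\,d\mu=\int (f\circ T)\,\phi\,d\mu$ for all $\phi$, i.e.\ $f=f\circ T$ a.e.; by ergodicity of $T$, $f\equiv\alpha$. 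Since every sub-subsequential weak-$*$ limit equals the constant $\alpha$, the whole sequence converges weak-$*$ to $\alpha$, whence $\mu(C_n\cap B)=\int_B f_n\,d\mu\to\alpha\,\mu(B)$.

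The second ingredient combines the metric hypothesis with regularity of $\mu$ on $X$. Set $\vep:=\alpha\mu(B)/4$ and, by regularity, choose a compact $K\subset B$ with $\mu(B\setminus K)<\vep$ and an open $U\supset K$ with $\mu(U\setminus K)<\vep$ (we may assume $U\neq X$, the case $\mu(B)=1$ being immediate); let $\delta>0$ be the distance between the compact set $K$ and the closed set $X\setminus U$. For $n$ large enough $\sup_{x\in C_n}d(x,T^{h_n}x)<\delta$, so $x\in C_n\cap K$ implies $T^{h_n}x\in U$, i.e.\ $C_n\cap K\subset T^{-h_n}U$. Therefore
\[\mu\big((C_n\cap K)\setminus T^{-h_n}K\big)\le\mu\big(T^{-h_n}(U\setminus K)\big)=\mu(U\setminus K)<\vep,\]
so $\mu(C_n\cap K\cap T^{-h_n}K)\ge\mu(C_n\cap K)-\vep$. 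By the first ingredient $\mu(C_n\cap K)\to\alpha\mu(K)\ge\alpha(\mu(B)-\vep)>2\vep$, hence for all large $n$ one gets $\mu(C_n\cap K\cap T^{-h_n}K)>0$; since $K\subset B$ this yields $\mu(B\cap T^{-h_n}B\cap C_n)>0$, which by the reduction above proves $g\in E(\psi)$.

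The step I expect to be the main obstacle is the equidistribution claim $\mu(C_n\cap B)\to\alpha\mu(B)$: this is precisely where ergodicity of $T$ enters, and it cannot be handled by norm compactness since the sets $C_n$ need not stabilise, so one is forced into a weak-$*$ (or spectral) argument. Everything else is soft: the reduction to producing one index $n$ with $\mu(B\cap T^{-h_n}B\cap C_n)>0$, and the observation that a point of $C_n$ lying in a compact $K\subset B$ is mapped by $T^{h_n}$ into any prescribed open neighbourhood of $K$ once $\sup_{x\in C_n}d(x,T^{h_n}x)$ is small enough.
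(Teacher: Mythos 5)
Your proof is correct, and since the paper only cites \cite{Co-Fr} for this proposition without reproducing an argument, it is the cited reference's proof (Conze--Fr\k{a}czek, Adv.\ Math.\ 2011) that is the relevant comparison; your argument follows the same canonical two-step structure used there.

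Both steps check out. The reduction to finding a single $n$ with $\mu(B\cap T^{-h_n}B\cap C_n)>0$ is correct since $C_n\subset\{\psi^{(h_n)}\in V_g\}$ by hypothesis. The equidistribution lemma ($\mu(C_n\cap B)\to\alpha\mu(B)$) is exactly where ergodicity must enter, and your weak-$*$ argument is airtight: $L^1(\mu)$ is separable because $X$ is compact metric, Banach--Alaoglu then gives sequential weak-$*$ compactness of the unit ball of $L^\infty$, the asymptotic invariance $\|f_n-f_n\circ T\|_{L^1}\to 0$ transfers to $T$-invariance of any weak-$*$ cluster point $f$, and ergodicity forces $f\equiv\alpha$; uniqueness of subsequential limits then yields full convergence. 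The second step using inner/outer regularity and the uniform closeness $\sup_{x\in C_n}d(x,T^{h_n}x)\to 0$ to get $C_n\cap K\subset T^{-h_n}U$ is the standard way to exploit the compact-metric hypothesis, and the bookkeeping with $\vep=\alpha\mu(B)/4$ gives a strictly positive lower bound for $\mu(C_n\cap K\cap T^{-h_n}K)$ for large $n$. The small aside about arranging $U\neq X$ is unnecessary (if $U=X$ one may simply take $\delta=1$), but harmless. No gaps.
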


\subsection{Prof of Theorem~\ref{thm:ergV}}\label{subsec:proof erg}
In this section we prove the following result. In view of Theorems~\ref{thm:masur},~\ref{thm:esch}~and~\ref{thm:fil}, it proves Theorem~\ref{thm:ergV}.
\begin{theorem}\label{thm:pomocgl}
Let $(M,\omega)$ be a compact connected translation surface and
let $\gamma=(\gamma_1,\ldots,\gamma_d)\in H_1(M,\Z)^d$ be a $d$-tuple such that
the $\Z^d$-cover $\widetilde{M}_\gamma$ is connected and
$\|\gamma_i\|_{g\omega}=\|\gamma_i\|_{\omega}$ for all $1\leq i\leq d$ and $g\in SL(2,\R)$.
If a direction $\pi/2-\theta\in \R/2\pi\Z$ is Birkhoff and Masur generic for $\omega$
then the directional flow in direction $\theta$ on $(\widetilde{M}_\gamma,\widetilde{\omega}_\gamma)$ is ergodic.
\end{theorem}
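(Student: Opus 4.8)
The plan is to reduce the ergodicity of the directional flow on the $\Z^d$-cover to the ergodicity of the skew product $T_{\psi_\gamma}:I\times\Z^d\to I\times\Z^d$ supplied by Proposition~\ref{lem_flow_auto}, and then to show $E(\psi_\gamma)=\Z^d$ using the essential-value criterion of Proposition~\ref{prop:conze-fraczek}. Fix the direction so that $\theta$ (equivalently $\pi/2-\theta$, up to our vertical/horizontal conventions) is Masur generic: by Theorem~\ref{thm:masur} the flow $(\varphi^\theta_t)_{t\in\R}$ on $(M,\omega)$ is uniquely ergodic and has no saddle connections, so $T:I\to I$ is an ergodic Keane IET with exchanged intervals $(I_\alpha)_{\alpha\in\mathcal{A}}$, and $\psi_\gamma$ is the piecewise-constant $\Z^d$-valued function $\psi_\gamma|_{I_\alpha}=\langle\gamma,\xi_\alpha\rangle$. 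Since $\widetilde{M}_\gamma$ is connected the subgroup generated by the $\langle\gamma,\xi_\alpha\rangle$ is all of $\Z^d$, so it suffices to show each standard generator $e_i\in E(\psi_\gamma)$; in fact it is enough to produce, for each homology class $\gamma_i$, a Rauzy-Veech renormalization step along which a column of the IET returns close to itself with prescribed $\Z^d$-translation.

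The engine is the Teichm\"uller/Kontsevich-Zorich machinery combined with the Hodge-norm invariance hypothesis $\|\gamma_i\|_{g\omega}=\|\gamma_i\|_\omega$ for all $g\in SL(2,\R)$ (this is exactly what Theorem~\ref{thm:fil} extracts from the vanishing of the Lyapunov exponents on $\mathcal{V}$, and it is the input that lets us push Theorem~\ref{thm:ergV} through). First I would set up the Rauzy-Veech algorithm for $T$, with the sequence of induced IETs $T^{(n)}$ on nested subintervals $I^{(n)}\subset I$ and the associated transition (Kontsevich-Zorich) matrices $Z(n)\in SL(\mathcal{A},\Z)$; the relative homology classes of the return loops transform by $Z(n)$, so the cocycle $\psi^{(h^{(n)}_\alpha)}$ evaluated on the $\alpha$-column of $T^{(n)}$ equals $\langle\gamma, Z(n)\xi^{(n)}_\alpha\rangle$ where $\xi^{(n)}_\alpha$ is the corresponding return class. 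Because $\pi/2-\theta$ is Birkhoff generic (Theorem~\ref{thm:esch}), the Teichm\"uller orbit of $\omega$ equidistributes in $\mathcal{M}$, so we get infinitely many renormalization times $n_k$ at which the renormalized surface returns to a fixed compact set; along such times the Rauzy matrices $Z(n_k)$ are "balanced" (all column heights $h^{(n_k)}_\alpha$ comparable and comparable to $1/|I^{(n_k)}|$) and the columns $C^{(n_k)}_\alpha$ of $T^{(n_k)}$, viewed as Rokhlin towers over $I^{(n_k)}$, have measure bounded below. Taking $C_n$ to be (a union of) these columns with the appropriate return class, one has $\mu(C_n\triangle T^{-1}C_n)\to 0$ (only the top/bottom floors move) and $\sup_{x\in C_n}d(x,T^{h_n}x)\to 0$ since the tower has shrinking base, so Proposition~\ref{prop:conze-fraczek} applies and yields $\langle\gamma,\xi\rangle\in E(\psi_\gamma)$ for the return class $\xi$ of that column.

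The remaining, and main, obstacle is to show that the return classes $Z(n_k)\xi^{(n_k)}_\alpha$ one obtains this way actually generate all of $\Z^d$ under $\langle\gamma,\cdot\rangle$, equivalently that they are not all trapped in a proper sublattice. This is where the Hodge-norm hypothesis is essential: because $\|\gamma_i\|_{g\omega}=\|\gamma_i\|_\omega$, the classes $\gamma_i$ lie in the "neutral" Oseledets subspace $E^0$ of the KZ-cocycle at every point of the orbit, so projecting the return classes onto the $\gamma_i$-direction (via the intersection pairing) does not decay or blow up exponentially; combined with the equidistribution of $(g_tr_{\pi/2-\theta}\omega)$ in $\mathcal{M}$ and the fact that $\widetilde{M}_\gamma$ is connected, one shows the set $\{\langle\gamma, Z(n_k)\xi^{(n_k)}_\alpha\rangle\}$ cannot be contained in any proper subgroup of $\Z^d$ — any such containment would force a nontrivial linear relation on the $\gamma_i$ that is $SL(2,\R)$-invariant, contradicting connectedness of the cover. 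Carefully quantifying "balanced" renormalization times (a Birkhoff-genericity argument: the Rauzy cocycle is log-integrable and the good times have positive frequency) and controlling the Hodge norm of the non-$E^0$ part of the return classes along those times (to be sure the $\Z^d$-value is the one we predict and not corrupted by other columns) is the technical heart; I would isolate it as a lemma stating that for a.e.\ Birkhoff-and-Masur-generic $\theta$ there is a sequence of balanced renormalization times along which each generator $e_i$ is realized as $\langle\gamma,\cdot\rangle$ of some column's return class. Granting that lemma, Propositions~\ref{lem_flow_auto} and~\ref{prop:conze-fraczek} close the argument.
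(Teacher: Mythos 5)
Your overall strategy is the correct one and matches the paper's: reduce to the skew product via Proposition~\ref{lem_flow_auto}, manufacture Rokhlin-tower–type sets via a balanced renormalization sequence supplied by Birkhoff genericity, feed them into the essential-value criterion of Proposition~\ref{prop:conze-fraczek}, and close with the surjectivity of $\xi\mapsto\langle\gamma,\xi\rangle$ coming from connectedness of the cover. However, you have mislocalized where the Hodge-norm invariance hypothesis is actually used, and this leaves a real gap in the middle of your argument.

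The issue is this. When you apply Proposition~\ref{prop:conze-fraczek}, you must exhibit a single fixed element $g\in\Z^d$ with $\psi^{(h_n)}(x)=g$ on all the sets $C_n$; the value you naturally obtain on the $\alpha$-tower at renormalization time $t_k$ is $\langle\gamma,\xi_\alpha(g_{t_k}\omega_0,I_k)\rangle$, which a priori depends on $k$. You never explain why such a fixed $g$ can be chosen, and your remark about "controlling the Hodge norm of the non-$E^0$ part" is aimed at the wrong target. The hypothesis $\|\gamma_i\|_{g\omega}=\|\gamma_i\|_\omega$ is used exactly here and nowhere else: combining it with the two-sided estimate $c^{-1}\|\gamma_i\|_{g_{t_k}\omega_0}\leq\max_\alpha|\langle\gamma_i,\xi_\alpha(g_{t_k}\omega_0,I_k)\rangle|\leq c\,\|\gamma_i\|_{g_{t_k}\omega_0}$ of Lemma~\ref{lemma:consterg} gives that the sequences $\{\langle\gamma,\xi_\alpha(g_{t_k}\omega_0,I_k)\rangle\}_{k}$ are \emph{bounded} in $\Z^d$, hence have constant subsequences, and that constant value is the $g$ one feeds to Proposition~\ref{prop:conze-fraczek}.

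Conversely, the step you single out as the "main obstacle" and the "technical heart" — showing the resulting essential values generate all of $\Z^d$ rather than a proper sublattice — is actually the easy part and needs no Hodge-theoretic input, no Oseledets decomposition, and no $SL(2,\R)$-invariance argument. At every renormalization time $t_k$, the return classes $\xi_\alpha(g_{t_k}\omega_0,I_k)$, $\alpha\in\mathcal{A}$, generate $H_1(M,\Z)$; this is a standard fact about return loops of an IET to a transversal, independent of the KZ matrices. Since $\widetilde{M}_\gamma$ is connected, the homomorphism $\xi\mapsto\langle\gamma,\xi\rangle$ is surjective onto $\Z^d$, so the images $\langle\gamma,\xi_\alpha\rangle$, $\alpha\in\mathcal{A}$, already generate $\Z^d$ for each fixed $k$. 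Since $E(\psi)$ is a subgroup containing all of them, $E(\psi)=\Z^d$. The proposed "lemma" about realizing each $e_i$ at balanced times, and the claim that failure would yield an $SL(2,\R)$-invariant linear relation on the $\gamma_i$, are superfluous and would not go through as stated — what matters is only the surjectivity on the full generating set at a single good time.

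So: the architecture of your proof is right, but you need to (i) use the Hodge-norm invariance to get boundedness of the $\Z^d$-valued sequences $\langle\gamma,\xi_\alpha(g_{t_k}\omega_0,I_k)\rangle$ and hence a constant subsequence to feed into the essential-value criterion, and (ii) replace your extra lemma about sublattices with the one-line observation that the return classes at any balanced time already generate $H_1(M,\Z)$.
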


Suppose that the directional flow $({\varphi}^\theta_t)_{t\in\R}$
on $(M,\omega)$ in a direction $\theta\in\R/2\pi\Z$ is ergodic and minimal.
Let $I\subset M\setminus\Sigma$ ($\Sigma$ is the set of zeros of
$\omega$) be an interval transversal to the direction $\theta$
with no self-intersections. The Poincar\'e return map $T:I\to I$
is a minimal ergodic IET, denote by $I_\alpha$, $\alpha\in\mathcal{A}$ the intervals exchanged
by $T$. Let $\lambda_\alpha(\omega,I)$ stands for the length of the interval $I_\alpha$.

Denote by $\tau:I\to\R_+$ the map of the first return time to $I$
for the flow $({\varphi}^\theta_t)_{t\in\R}$. Then $\tau$ is constant on each $I_\alpha$
and denote by $\tau_\alpha=\tau_\alpha(\omega,I)>0$ its value on $I_\alpha$ for all $\alpha\in\mathcal{A}$.
Let us denote by $\delta(\omega,I)>0$ the maximal number $\Delta>0$ for which
the set $\{\varphi^\theta_tx:t\in[0,\Delta), x\in I\}$ does not contain any singular point (from $\Sigma$).

Denote by $(\widetilde{\varphi}^\theta_t)_{t\in\R}$ the
directional flow for a  $\Z^d$-cover
$(\widetilde{M}_\gamma,\widetilde{\omega}_\gamma)$ of $(M,\omega)$.

In view of Proposition~\ref{lem_flow_auto},  there exist generators
$\xi_\alpha(I)=\xi_\alpha(\omega,I)$, $\alpha\in\mathcal{A}$  of $H_1(M,\Z)$
such that  the Poincar\'e  return map $\widetilde{T}$ of the flow
$(\widetilde{\varphi}^\theta_t)_{t\in\R}$ to $p^{-1}(I)$ ($p:\widetilde{M}_\gamma\to M$ the covering map) is isomorphic  to the
skew product $T_\psi:I\times\Z^d\to I\times\Z^d$ of the form
$T_\psi(x,n)=(Tx,n+\psi(x))$, where $\psi=\psi_{\gamma,I}:I\to\Z^d$ is
a piecewise constant function given by
\[\psi_{\gamma,I}(x)=\langle \gamma,\xi_\alpha(I)\rangle=\big(\langle \gamma_1,\xi_\alpha(I)\rangle,\ldots,\langle \gamma_d,\xi_\alpha(I)\rangle\big)  \ \text{ if
}\ x\in I_\alpha\ \text{ for each }\alpha\in\mathcal{A}.\]

Suppose that $J\subset I$ is a subinterval. Denote by $S:J\to J$
the Poincar\'e return map to $J$ for the flow
$({\varphi}^\theta_t)_{t\in\R}$. Then $S$ is also an IET and suppose
it exchanges intervals $(J_\alpha)_{\alpha\in\mathcal{A}}$. The IET $S$ is the
induced transformation for $T$ on $J$. Moreover, all elements of $J_\alpha$ have the same
first return time to $J$ for the transformation $T$. Let us
denote this return time by $h_\alpha\geq 0$ for all $\alpha\in\mathcal{A}$.
Then $I$ is the union of disjoint towers $\{T^jJ_\alpha:0\leq j<h_\alpha\}$, $\alpha\in\mathcal{A}$.

\begin{lemma}\label{lem:aux1}
Suppose that $0\leq h\leq \min\{h_\alpha :\alpha\in\mathcal{A}\}$ is a number such that each $T^{j}J$ for $0\leq j<h$ is a subinterval of
some interval $I_\beta$, $\beta\in\mathcal{A}$.
Then for every $\alpha\in\mathcal{A}$ we have
\begin{equation}\label{eq:indcocyc}
\psi_{\gamma,I}^{(h_\alpha )}(x)=\langle \gamma,\xi_\alpha(J) \rangle\text{ and }|T^{h_\alpha}x-x|\leq |J|\text{ for every } x\in C_\alpha:=\bigcup_{0\leq j\leq h}T^jJ_\alpha.
\end{equation}
\end{lemma}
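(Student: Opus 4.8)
\smallskip

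The plan is to track two things simultaneously along the orbit segment of length $h_\alpha$ starting at a point $x\in C_\alpha=\bigcup_{0\le j\le h}T^jJ_\alpha$: the position (to get the estimate $|T^{h_\alpha}x-x|\le|J|$) and the accumulated cocycle value $\psi_{\gamma,I}^{(h_\alpha)}(x)$ (to identify it with $\langle\gamma,\xi_\alpha(J)\rangle$). The key structural observation is that, by the hypothesis on $h$, for each $0\le j<h$ the interval $T^jJ$ lies entirely inside a single exchanged interval $I_{\beta_j}$; since $J_\alpha\subset J$, the first $h$ iterates of every point of $J_\alpha$ are therefore ``synchronized'' with the $T$-tower over $J_\alpha$ in the sense that $T^jx$ and the return of $x$ to $J$ proceed together for these initial steps, and more importantly the orbit does not leave the tower $\{T^jJ_\alpha:0\le j<h_\alpha\}$. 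First I would record the elementary fact that for a point $x\in T^{j_0}J_\alpha$ with $0\le j_0\le h$, the first return time of $x$ to $J$ under $T$ equals $h_\alpha-j_0$, and the point $T^{h_\alpha-j_0}x$ is the image of the corresponding point of $J_\alpha$ under $S$; this is just the definition of the induced transformation together with the tower decomposition $I=\bigsqcup_\alpha\{T^jJ_\alpha:0\le j<h_\alpha\}$ and the assumption $h\le\min_\alpha h_\alpha$.

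\smallskip

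For the position estimate: for $x\in C_\alpha$ write $x=T^{j_0}y$ with $y\in J_\alpha$ and $0\le j_0\le h\le h_\alpha$. Then $T^{h_\alpha}x=T^{h_\alpha+j_0}y=T^{j_0}(T^{h_\alpha}y)=T^{j_0}(Sy)$. Both $y$ and $Sy$ lie in $J$, and $T^{j_0}$ restricted to $J$ is a translation (since $T^jJ\subset I_{\beta_j}$ is a subinterval on which each iterate of $T$ acts by a single translation, for $j<h$; and $j_0\le h$, with the boundary case $j_0=h$ handled by the same argument applied to $T^{h-1}J\subset I_{\beta_{h-1}}$). Hence $T^{h_\alpha}x-x=T^{j_0}(Sy)-T^{j_0}(y)=Sy-y$, and since $Sy,y\in J$ we get $|T^{h_\alpha}x-x|=|Sy-y|\le|J|$, which is \eqref{eq:indcocyc}'s second assertion.

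\smallskip

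For the cocycle computation: the additive cocycle identity gives, with $x=T^{j_0}y$, $y\in J_\alpha$,
\[
\psi_{\gamma,I}^{(h_\alpha)}(x)=\psi_{\gamma,I}^{(h_\alpha)}(T^{j_0}y)
=\psi_{\gamma,I}^{(h_\alpha+j_0)}(y)-\psi_{\gamma,I}^{(j_0)}(y).
\]
Now $\psi_{\gamma,I}^{(h_\alpha)}(y)=\langle\gamma,\xi_\alpha(J)\rangle$ is exactly the content of Proposition~\ref{lem_flow_auto} applied to the induced IET $S$ on $J$: the class $\xi_\alpha(J)$ is the homology class of the loop obtained from the $(\varphi^\theta_t)$-orbit arc from $y$ to $Sy=T^{h_\alpha}y$ closed up by the segment $[T^{h_\alpha}y,y]\subset J$, and this loop is homologous to the concatenation $\sum_{j=0}^{h_\alpha-1}\xi_{\beta_j}(I)$ of the $T$-step loops (the correction segments telescope since consecutive pieces share endpoints $T^jy$), i.e. $\psi_{\gamma,I}^{(h_\alpha)}(y)=\sum_{j=0}^{h_\alpha-1}\langle\gamma,\xi_{\beta_j}(I)\rangle=\langle\gamma,\xi_\alpha(J)\rangle$. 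It then remains to check $\psi_{\gamma,I}^{(h_\alpha+j_0)}(y)-\psi_{\gamma,I}^{(j_0)}(y)=\psi_{\gamma,I}^{(h_\alpha)}(y)$; but $\psi_{\gamma,I}$ is a cocycle over $T$, and since $T^{h_\alpha}y=Sy\in J_{?}$ and in fact $T^{h_\alpha}y$ and $y$ lie in the \emph{same} $I_\beta$ and more: by the hypothesis that $T^jJ\subset I_{\beta_j}$ for $0\le j<h\le h_\alpha$, the functions $j\mapsto\psi_{\gamma,I}(T^jy)$ and $j\mapsto\psi_{\gamma,I}(T^j(T^{h_\alpha}y))=\psi_{\gamma,I}(T^{j+h_\alpha}y)$ agree for $0\le j<h$, so the first $j_0\le h$ terms of $\psi^{(h_\alpha+j_0)}(y)$ beyond index $h_\alpha$ coincide with the first $j_0$ terms $\psi^{(j_0)}(y)$, giving the claimed cancellation. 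Combining, $\psi_{\gamma,I}^{(h_\alpha)}(x)=\langle\gamma,\xi_\alpha(J)\rangle$, uniformly in $x\in C_\alpha$.

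\smallskip

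\textbf{Main obstacle.} The bookkeeping is entirely elementary, but the delicate point is the last paragraph: verifying that shifting the base point from $y$ to $T^{j_0}y$ (i.e., moving up the $J_\alpha$-tower by $j_0\le h$ steps) does not change the accumulated cocycle value. This requires using the hypothesis on $h$ in its full strength — that the first $h$ iterates of \emph{all} of $J$ stay inside single exchanged intervals $I_{\beta_j}$ — so that the cocycle $\psi_{\gamma,I}$ is ``periodic with period $h_\alpha$'' along the relevant initial and terminal stretches of the orbit of $y$. Equivalently, one must see that the homological loop closing up the orbit from $T^{j_0}y$ back to itself is homologous to the one closing up from $y$ to itself; the cleanest way is the telescoping-of-correction-segments argument, taking care that all correction segments $[T^jy,\,\cdot\,]$ used live inside intervals small enough ($\subset J$ for the induced step, $\subset I_{\beta_j}$ for the intermediate steps) that they carry no homology ambiguity. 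Everything else is a direct unwinding of Proposition~\ref{lem_flow_auto} and the tower structure.
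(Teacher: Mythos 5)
Your proof is correct and follows essentially the same route as the paper's: identify $\psi_{\gamma,I}^{(h_\alpha)}$ on $J_\alpha$ with the induced cocycle $\psi_{\gamma,J}$ (hence with $\langle\gamma,\xi_\alpha(J)\rangle$), then propagate up the tower via the telescoping difference $\psi_{\gamma,I}^{(j_0)}(T^{h_\alpha}y)-\psi_{\gamma,I}^{(j_0)}(y)=\sum_{i<j_0}\bigl(\psi_{\gamma,I}(T^iT^{h_\alpha}y)-\psi_{\gamma,I}(T^iy)\bigr)$, which vanishes because $y,Sy\in J$ force $T^iy$ and $T^iT^{h_\alpha}y$ into the same $T^iJ\subset I_{\beta_i}$ for $i<h$. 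The paper writes the same cancellation directly without the intermediate cocycle-identity rearrangement, but the mechanism and the use of the hypothesis on $h$ are identical.
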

\begin{proof}
Let $\psi_{\gamma,J}:J\to\Z^d$ be the cocycle associated to the interval $J$.
Then
\[\psi_{\gamma,J}(x)=\sum_{0\leq j<h_\alpha }\psi_{\gamma,I}(T^jx)=\psi_{\gamma,I}^{(h_\alpha )}(x)\quad{ if }\quad x\in J_\alpha.\]
On the other hand, $\psi_{\gamma,J}(x)=\langle \gamma,\xi_\alpha (J) \rangle$  for $x\in J_\alpha $, so $\psi^{(h_\alpha  )}=\langle \gamma,\xi_\alpha (J) \rangle$ on $J_\alpha $.

If $x\in C_\alpha $ then $x=T^jx_0$ with $x_0\in J_\alpha  $ and $0\leq j\leq h$. Moreover,
\[\psi_{\gamma,I}^{(h_\alpha  )}(x)-\psi_{\gamma,I}^{(h_\alpha  )}(x_0)=\psi_{\gamma,I}^{(h_\alpha  )}(T^jx_0)-\psi_{\gamma,I}^{(h_\alpha  )}(x_0)=\sum_{i=0}^{j-1}(\psi_{\gamma,I}(T^iT^{h_\alpha }x_0)-\psi_{\gamma,I}(T^ix_0)).\]
Since $x_0$ and $T^{h_\alpha  }x_0=Sx_0$ belong to $J$, by assumption, for all $0\leq i<h$ the points $T^iT^{h_\alpha  }x_0$  and $T^ix_0$ belong
the interval $T^iJ\subset I_\beta$ for some $\beta\in\mathcal{A}$.
Therefore,
\[|T^{h_\alpha }x-x|=|T^jT^{h_\alpha  }x_0-T^jx_0|\leq|T^jJ|=|J|\text{ and }\psi_{\gamma,I}(T^iT^{h_\alpha  }x_0)=\psi_{\gamma,I}(T^ix_0)\]
for every $0\leq i<j$. It follows that $\psi_{\gamma,I}^{(h_\alpha  )}(x)=\psi_{\gamma,I}^{(h_\alpha  )}(x_0)=\langle \gamma,\xi_\alpha (J) \rangle$.
\end{proof}

\begin{lemma}\label{lem:aux2}
Let $\Delta>0$ be such that the set $\{\varphi^\theta_tx:t\in[0,\Delta), x\in J\}$ does not contain any singular point.
Let $h=[\Delta/|\tau|]$, where $|\tau|=\max\{\tau_\alpha :\alpha\in\mathcal{A}\}$. Then for every $0\leq j<h$ the set $T^{j}J$ is a subinterval
some interval $I_\beta$, $\beta\in\mathcal{A}$.
\end{lemma}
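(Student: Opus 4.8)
The plan is to run an induction on $j$, establishing for every $0\le j<h$ the slightly strengthened assertion that $T^jJ$ is a single interval, that $T^j$ restricted to $J$ acts as a translation, and that $T^jJ$ is contained in a single exchanged interval $I_\beta$. (When $h=0$ there is nothing to prove.) The only geometric input I would use is the standard description of the discontinuities of the return map $T$ (equivalently of $\tau$): a point $z$ in the interior of $I$ is a discontinuity of $T$ exactly when the $\theta$-orbit issued from $z$ meets $\Sigma$ before returning to $I$; since every first return time is at most $|\tau|$, such a singularity is met at a flow time $s'\in(0,|\tau|)$. Here one uses that the transversal $I$ is chosen in the standard way, with endpoints on singular leaves, so that no other discontinuities occur.

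First I would treat $j=0$: here $T^0J=J$, and were $J$ not contained in a single $I_\beta$ it would contain a discontinuity $z$ of $T$ in its interior, hence a point $\varphi^\theta_{s'}z\in\Sigma$ with $0<s'<|\tau|\le\Delta$ (the last inequality because $h\ge1$). But $\varphi^\theta_{s'}z$ then lies in the flow box $\{\varphi^\theta_tx:t\in[0,\Delta),\ x\in J\}$, contradicting the hypothesis. For the inductive step I assume the assertion for $0,\dots,j$ and suppose $j+1<h$. Since each $T^iJ$ with $i\le j$ lies in a single exchanged interval on which $T$ is a translation, the map $T^{j+1}|_J=T\circ T^j|_J$ is again a translation, so $T^{j+1}J$ is a single interval. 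If $T^{j+1}J$ were not contained in a single $I_\beta$, it would contain a discontinuity $w$ of $T$ in its interior, so there is a unique $x$ in the interior of $J$ with $w=T^{j+1}x=\varphi^\theta_{t_{j+1}(x)}x$, where $t_{j+1}(x)=\sum_{i=0}^{j}\tau(T^ix)\le(j+1)|\tau|$ is the total flow time of the first $j+1$ returns. The orbit of $x$ meets no singularity up to time $t_{j+1}(x)$, because for each $i\le j$ the point $T^ix$ lies in the interior of an exchanged interval and therefore the orbit segment between $T^ix$ and $T^{i+1}x$ avoids $\Sigma$; beyond $w$ the orbit reaches $\Sigma$ after an additional time $s'\in(0,|\tau|)$. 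Hence $\varphi^\theta_{t_{j+1}(x)+s'}x\in\Sigma$, while
\[t_{j+1}(x)+s'<(j+1)|\tau|+|\tau|=(j+2)|\tau|\le h|\tau|\le\Delta,\]
so this singularity again lies in the flow box of height $\Delta$ over $J$ --- a contradiction. This closes the induction, and in particular gives the lemma.

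I expect the only delicate point to be the characterization of the discontinuities of $T$, together with the strict bound $s'<|\tau|$: one must be sure that every discontinuity of the IET genuinely arises from the $\theta$-orbit hitting $\Sigma$ strictly before its return to $I$ (and not from the seam of the transversal), which is exactly why $I$ is normalized to have its endpoints on singular leaves. Everything else is routine bookkeeping with the return times $\tau(T^ix)$ and the definition $h=[\Delta/|\tau|]$, which gives $h|\tau|\le\Delta$.
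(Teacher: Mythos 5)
Your proof is correct and uses essentially the same mechanism as the paper's: assume $T^jJ$ meets an endpoint of some exchanged interval, pull that point back to $x_0\in J$, note that the offending orbit meets $\Sigma$ within an additional time $<|\tau|$, and bound the total flow time by $(j+1)|\tau|\le h|\tau|\le\Delta$ to contradict the hypothesis on the flow box over $J$. The only presentational difference is that you run an explicit induction on $j$ (carrying along that $T^j|_J$ is a translation and $T^jJ$ a single interval), whereas the paper gives a one-shot contradiction for arbitrary $j<h$; that extra structure is implicitly needed in the paper's argument too, so your version is just the same proof written out more carefully. Your closing caveat about the transversal being normalized so that all discontinuities of $T$ come from orbits hitting $\Sigma$ (not from the endpoints of $I$) is exactly the standing convention the paper relies on as well.
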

\begin{proof} Suppose, contrary to our claim, that $T^{j}J$ contains an end $x$ of some interval $I_\beta$. Then
$x=\varphi^\theta_{\tau^{(j)}(x_0)}(x_0)$ for some $x_0\in J$ and there is $0\leq s<\tau(x)$ such that  $\varphi^\theta_{s}x$ is a singular point.
Therefore,  $\varphi^\theta_{\tau^{(j)}(x_0)+s}x_0$ is a singular point and $\tau^{(j)}(x_0)+s<(j+1)|\tau|\leq h|\tau|\leq \Delta$, contrary to the assumption.
\end{proof}

The following result follows directly from Lemmas A.3 and A.4 in \cite{Fr-Hu}.
\begin{lemma}\label{lemma:consterg}
For every $(M,\omega)$ there exist positive
constants $A,C,c>0$ such that if $0\in \R/2\pi\Z $ is Birkhoff and Masur generic
then there exists a a sequence of nested horizontal intervals $(I_k)_{k\geq 0}$ in $(M,\omega)$ and an increasing divergent sequence of  real numbers $(t_k)_{k\geq 0}$
such that $t_0=0$ and  for every $k\geq 0$ we have
\begin{equation}\label{baseuniformerg}
 \frac{1}{c} \| \xi \|_{g_{t_k}\omega}   \leq
\max_{\alpha} \left| \langle\xi_\alpha (g_{t_k}\omega,I_k), \xi \rangle\right|
\leq c\| \xi \|_{g_{t_k}\omega} \quad \text{for every} \quad
 \xi \in H_1(M, \R),
\end{equation}
\begin{equation}\label{balanceerg}
 \lambda_\alpha(g_{t_k}\omega,I_k) \,
\delta(g_{t_k}\omega,I_k)  \geq A \ \text{and}\ \frac{1}{C}\leq
\tau_\alpha(g_{t_k}\omega,I_k)\leq C\ \text{for any}\  \alpha\in\mathcal{A}.
\end{equation}
\end{lemma}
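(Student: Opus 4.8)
The plan is to derive Lemma~\ref{lemma:consterg} by spelling out how \cite[Lemmas~A.3 and~A.4]{Fr-Hu} combine with Masur genericity (Theorem~\ref{thm:masur}) and Birkhoff genericity (Theorem~\ref{thm:esch}); the statement concerns only the base surface $(M,\omega)$, so neither the cover nor the tuple $\gamma$ enters here. Below I describe the reduction in three steps and point to the one I expect to be the real obstacle.

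\emph{Set-up and the nested intervals.} Since $0\in\R/2\pi\Z$ is Masur generic, the vertical flow on $(M,\omega)$ is minimal and uniquely ergodic with no vertical saddle connection, so its first-return map to any horizontal transversal is a minimal IET with the Keane property; in particular the Rauzy--Veech algorithm---equivalently, the Teichm\"uller flow together with its canonical sequence of induced subintervals---is infinitely renormalizable, and the towers used in Lemmas~\ref{lem:aux1} and~\ref{lem:aux2} are well defined. First I would fix an initial horizontal interval $I_0$ adapted to the algorithm; iterating the induction produces a strictly decreasing sequence of horizontal subintervals $I_0\supset I^{(1)}\supset I^{(2)}\supset\cdots$ and renormalization times $0=s_0<s_1<s_2<\cdots\to\infty$ such that $g_{s_n}\omega$ is the renormalized datum carried by $I^{(n)}$, with return loops $\xi_\alpha(g_{s_n}\omega,I^{(n)})$, $\alpha\in\mathcal{A}$, of the corresponding renormalized IET. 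Since $g_t=\operatorname{diag}(e^t,e^{-t})$ fixes the horizontal direction, ``nested'' means the same thing in $(M,\omega)$ as in every $(M,g_t\omega)$, so the family $(I^{(n)})$ will serve once we pass to a suitable subsequence.

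\emph{Choice of the times $t_k$, giving \eqref{balanceerg}.} Next I would fix a compact set $\mathcal{K}\subset\mathcal{M}$ with $\nu_{\mathcal{M}}(\Int\mathcal{K})>0$ and choose $\phi\in C_c(\mathcal{M}_1(M))$ with $0\leq\phi\leq 1$, $\phi$ supported in $\mathcal{K}$ and $\int\phi\,d\nu_{\mathcal{M}}>0$. By Birkhoff genericity of $0$ (Theorem~\ref{thm:esch} with $\theta=0$),
\[\liminf_{T\to\infty}\tfrac1T\,\Leb\{t\in[0,T]:g_t\omega\in\mathcal{K}\}\ \geq\ \lim_{T\to\infty}\tfrac1T\int_0^T\phi(g_t\omega)\,dt\ =\ \int\phi\,d\nu_{\mathcal{M}}\ >\ 0,\]
so $\{t\geq0:g_t\omega\in\mathcal{K}\}$ is unbounded, hence infinitely many renormalization steps $n$ satisfy $g_{s_n}\omega\in\mathcal{K}$. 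Enumerating them as $(n_k)_{k\geq0}$ and setting $t_k:=s_{n_k}$, $I_k:=I^{(n_k)}$ (so $t_0=0$) places us exactly in the situation of \cite[Lemma~A.3]{Fr-Hu}: over the fixed compact set $\mathcal{K}$---shrunk if necessary to a compact subset on which the Rauzy--Veech data are balanced, which only affects the constants---the renormalized lengths $\lambda_\alpha(g_{t_k}\omega,I_k)$, the return times $\tau_\alpha(g_{t_k}\omega,I_k)$ and the injectivity radii $\delta(g_{t_k}\omega,I_k)$ are bounded above and below by constants $A,C>0$ depending only on $\mathcal{K}$, hence only on $(M,\omega)$. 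This yields \eqref{balanceerg}.

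\emph{The spanning estimate \eqref{baseuniformerg} and the main obstacle.} The inequality \eqref{baseuniformerg} is then \cite[Lemma~A.4]{Fr-Hu}. Its upper bound is soft: by \eqref{balanceerg} each return loop has flat length $O(\lambda_\alpha+\tau_\alpha)=O(1)$ in the metric of $g_{t_k}\omega$, and over $\mathcal{K}$ the Hodge norm is uniformly comparable to a fixed background norm on $H_1(M,\R)$ and bounded by flat length up to a uniform constant, so $\max_\alpha\|\xi_\alpha(g_{t_k}\omega,I_k)\|_{g_{t_k}\omega}$ is uniformly bounded and the right-hand inequality follows for every $\xi\in H_1(M,\R)$. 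The hard part, and the step I expect to be the main obstacle, is the lower bound: one must keep the linear map $\xi\mapsto(\langle\xi_\alpha(g_{t_k}\omega,I_k),\xi\rangle)_{\alpha}$ from becoming arbitrarily degenerate along $(t_k)$, uniformly in $k$. Here \eqref{balanceerg} is used essentially---the bound $\lambda_\alpha\,\delta\geq A$ forces the renormalized interval $I_k$ to be macroscopic, so the return loops span $H_1(M,\R)$ with a quantitative lower bound controlled over $\mathcal{K}$; together with the continuity of the Hodge norm on $\mathcal{K}$ and the fact that the Rauzy--Veech cocycle realizes the Kontsevich--Zorich cocycle over a compact piece of the moduli space, this produces the left-hand bound with a uniform constant $c$. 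Granting \cite[Lemmas~A.3 and~A.4]{Fr-Hu}, combining the three steps produces the sequences $(I_k)_{k\geq0}$, $(t_k)_{k\geq0}$ and the constants $A,C,c$ with all asserted properties, which is precisely what is fed into the essential-value argument for Theorem~\ref{thm:pomocgl}.
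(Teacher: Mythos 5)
Your proposal is correct and follows essentially the same route as the paper, which gives no argument of its own but simply states that the lemma ``follows directly from Lemmas A.3 and A.4 in \cite{Fr-Hu}''; your three steps (Masur genericity for the renormalization scheme, Birkhoff genericity via Theorem~\ref{thm:esch} to extract returns to a compact set, and then the balance and spanning estimates of \cite{Fr-Hu}) are exactly the reduction implicit in that citation. The substantive content, in particular the lower bound in \eqref{baseuniformerg}, is deferred to \cite[Lemma A.4]{Fr-Hu} in both your write-up and the paper, so there is nothing missing relative to the paper's own treatment.
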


\begin{proof}[Proof of Theorem~\ref{thm:pomocgl}]
Assume that the total area of $(M,\omega)$ is $1$.
Taking $\omega_0=r_{\pi/2-\theta}\omega$ we have $0\in \R/2\pi\Z $ is Birkhoff and Masur generic for $\omega_0$.
Since the flow $(\widetilde{\varphi}_t^\theta)_{t\in\R}$ on $(\widetilde{M}_\gamma,\widetilde{\omega}_\gamma)$ coincides with
the vertical flow on $(\widetilde{M}_\gamma,\widetilde{(\omega_0)}_\gamma)$, we need to prove the ergodcity of the latter flow.

By Lemma~\ref{lemma:consterg}, there exists a  sequence of nested horizontal intervals $(I_k)_{k\geq 0}$ in $(M,\omega_0)$ and an increasing divergent sequence of real numbers $(t_k)_{k\geq 0}$
such that \eqref{baseuniformerg} and \eqref{balanceerg} hold for $k\geq 0$ and $t_0=0$.

Let $I:=I_{0}$ and for the flow $(\widetilde{\varphi}_t^v)_{t\in\R}$ on $(\widetilde{M}_\gamma,\widetilde{(\omega_0)}_\gamma)$
denote by $T:I\to I$ and  $\psi:I\to\Z^d$ the corresponding IET and cocycle respectively.
For every $k\geq 1$ the first Poincar\'e return map $T_k:I_{k}\to I_{k}$  to $I_k$ for the vertical flow $(\varphi^v_t)_{t\in\R}$ on $(M,\omega_0)$
is an IET exchanging intervals $(I_k)_\alpha$, $\alpha\in\mathcal{A}$ whose length in $(M,\omega_0)$ are equal to $e^{-t_k}\lambda_\alpha(g_{t_k}\omega_0,I_k)$, $\alpha\in\mathcal{A}$, resp.
In view of \eqref{balanceerg}, the length of $I_k$ in $(M,\omega_0)$ is
\begin{equation*}
|I_k|=\sum_{\alpha\in\mathcal{A}}e^{-t_k}\lambda_\alpha(g_{t_k}\omega_0,I_k)\leq Ce^{-t_k}\sum_{\alpha\in\mathcal{A}}\lambda_\alpha(g_{t_k}\omega_0,I_k)\tau_\alpha(g_{t_k}\omega_0,I_k)=Ce^{-t_k}.
\end{equation*}
Moreover, by the definition of $\delta$, the set
\[\big\{\varphi_t^v(x):t\in\big[0,e^{t_k}\delta(g_{t_k}\omega_0,I_k)\big), x\in I_{k}\big\}\]
does not contain any singular point.

Denote by $h^k_\alpha\geq 0$ the first return time of the interval $(I_k)_\alpha$ to $I_k$ for the IET $T$.
Let
\[h_k:=\big[e^{t_k}\delta(g_{t_k}\omega_0,I_k)/|\tau(\omega_0,I)|\big]\text{ and }C_\alpha^k:=\bigcup_{0\leq j\leq h_k}T^j(I_k)_\alpha.\]
Now Lemmas~\ref{lem:aux1}~and~\ref{lem:aux2} applied to $J=I_k$ and  $\Delta=e^{t_k}\delta(g_{t_k}\omega_0)$ give
\begin{equation}\label{eq:indcocyc1}
\psi^{(h_\alpha^k)}(x)=\langle \gamma,\xi_\alpha(g_{t_k}\omega_0,I_k)\rangle\text{ and }|T^{h_\alpha^k}x-x|\leq|I_k|\leq Ce^{-t_k}\ \text{ for }\ x\in C_\alpha^k
\end{equation}
for every $k\geq 1$ and $\alpha\in\mathcal{A}$.
Moreover, by \eqref{balanceerg},
\begin{equation}\label{eq:Ck}
Leb(C_\alpha^k)=
(h_k+1)|(I_k)_\alpha|
\geq \frac{e^{t_k}\delta(g_{t_k}\omega_0,I_k)}{|\tau(\omega_0,I)|}e^{-t_k}\lambda_\alpha(g_{t_k}\omega_0,I_k)\geq \frac{A}{|\tau(\omega_0,I)|}.
\end{equation}
By assumption, in view of \eqref{baseuniformerg}, we have
\[ c^{-1} \| \gamma_i\|_{g_{t_k}\omega_0}\leq\max_{\alpha\in\mathcal{A}}\|\langle \gamma_i,\xi_\alpha(g_{t_k}\omega_0,I_k)\rangle\|\leq c\| \gamma_i\|_{g_{t_k}\omega_0}=c\| \gamma_i\|_{\omega_0}\ \text{ for }\ 1\leq i\leq d.\]
Therefore for every $\alpha\in\mathcal{A}$ the sequence $\{\langle \gamma,\xi_\alpha(g_{t_k}\omega_0,I_k)\rangle\}_{k\geq 1}$ in $\Z^d$
is bounded. Passing to a subsequence, if necessary, we can assume the above sequences are constant.
In view of \eqref{eq:indcocyc1} and \eqref{eq:Ck}, Proposition~\ref{prop:conze-fraczek} gives $\langle \gamma,\xi_\alpha (g_{t_k}\omega_0,I_k)\rangle\in E(\psi)$ for every $\alpha\in\mathcal{A}$ and $k\geq 1$.
Recall that for every $k\geq 1$ the homology classes $\xi_\alpha (g_{t_k}\omega_0,I_k)$, $\alpha\in\mathcal{A}$ generate $H_1(M,\Z)$.
As $\widetilde{M}_\gamma$ is connected, the homomorphism $H_1(M,\Z)\ni\xi\mapsto \langle\gamma,\xi\rangle\in\Z^d$ is surjective.
Therefore, for every $k\geq 1$ the vectors $\langle \gamma,\xi_\alpha (g_{t_k}\omega_0,I_k)\rangle$, $\alpha\in\mathcal{A}$ generate $\Z^d$.
Since $E(\psi)$ is a group and contains all these vectors, we obtain $E(\psi)=\Z^d$, so the skew product $T_\psi$ is ergodic.
In view of Proposition~\ref{lem_flow_auto}, the vertical flow on $(\widetilde{M}_\gamma,\widetilde{(\omega_0)}_\gamma)$ is ergodic,
which completes the proof.
\end{proof}

\subsection{Some comments on Theorem~\ref{thm:ergV}}
Let  $\omega \in\mathcal{M}_1(M)$ and denote by
$\mathcal{M}=\overline{SL(2,\R)\omega }$ the closure of the $SL(2,\R)$-orbit of $\omega $ in $\mathcal{M}_1(M)$. Denote by $\nu_{\mathcal{M}}$ the corresponding affine
$SL(2,\R)$-invariant ergodic probability measure supported on $\mathcal{M}$.
In view of \cite{EM} and \cite{Fil}, for any $SL(2,\R)$-invariant symplectic $\mathcal{V}$ subbundle defined over $\mathcal{M}$ there exists an $SL(2,\R)$-invariant continuous direct decomposition
\[\mathcal{V}=\mathcal{V}^1\oplus\mathcal{V}^2\oplus\ldots\oplus\mathcal{V}^m\]
such that each subbundle $\mathcal{V}^i$ is strongly irreducible. Denote by $\lambda^{\mathcal{V}^i}_{top}$ the maximal Lyapunov exponent of the reduced
Kontsevich-Zorich cocycle $(G_t^{\mathcal{V}^i})_{t\in\R}$ and with respect to the measure $\nu_{\mathcal{M}}$. As a step of the proof of Theorem 1.4 in \cite{Es-Ch} the authors showed also the following result:
\begin{theorem}\label{thm:maxexp}
If  $\xi\in \mathcal{V}^i_\omega$ is non-zero then for a.e.\ $\theta\in \R/2\pi\Z $ we have
\[\lim_{t\to\infty}\frac{1}{|t|}\log\|\xi\|_{g_t r_\theta\omega }=\lambda^{\mathcal{V}^i}_{top}.\]
\end{theorem}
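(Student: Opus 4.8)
The plan is to treat separately the cases $\lambda^{\mathcal{V}^i}_{top}=0$ and $\lambda^{\mathcal{V}^i}_{top}>0$; recall that $\lambda^{\mathcal{V}^i}_{top}\geq 0$ since it is the largest exponent of a symplectic cocycle. If $\lambda^{\mathcal{V}^i}_{top}=0$, then \emph{all} Lyapunov exponents of the symplectic cocycle $(G_t^{\mathcal{V}^i})_{t\in\R}$ vanish, so Theorem~\ref{thm:fil} applied with $\mathcal{V}^i$ in place of $\mathcal{V}$ gives $\|\xi\|_{g\omega}=\|\xi\|_\omega$ for every $g\in SL(2,\R)$; taking $g=g_tr_\theta$ shows $\|\xi\|_{g_tr_\theta\omega}=\|\xi\|_\omega$ for all $t$ and all $\theta$, hence $\frac{1}{|t|}\log\|\xi\|_{g_tr_\theta\omega}\to 0=\lambda^{\mathcal{V}^i}_{top}$, with \emph{no} exceptional $\theta$. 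So the substantive case is $\lambda:=\lambda^{\mathcal{V}^i}_{top}>0$, which is what I address below.

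Assume $\lambda>0$. The first step is to apply the splitting theorem of \cite{Es-Ch} recalled above to the strongly irreducible symplectic subbundle $\mathcal{V}^i$: for a.e.\ $\theta$ (Oseledets generic for $\mathcal{V}^i$) there is an Oseledets splitting $\mathcal{V}^i_{r_\theta\omega}=\bigoplus_{l=1}^{r}\mathcal{U}^l_\theta$ with distinct exponents $\mu_1=\lambda>\mu_2>\dots>\mu_r=-\lambda$, $r\geq 2$. Since the Kontsevich--Zorich monodromy around the circle $\{r_s\omega:s\in\R/2\pi\Z\}$ is trivial (that circle lifts to a loop in $\mathcal{T}_1(M)$, along which the cocycle is trivial), all fibres $\mathcal{V}^i_{r_\theta\omega}$ are identified with the single space $\mathcal{V}^i_\omega\subset H_1(M,\R)$, so the fixed vector $\xi$ makes sense in each of them. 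By the splitting theorem the limit $L(\theta):=\lim_{t\to\infty}\frac1t\log\|\xi\|_{g_tr_\theta\omega}$ then exists for a.e.\ $\theta$, always satisfies $L(\theta)\leq\lambda$, and equals $\lambda$ precisely when $\xi$ has a nonzero component along the top Oseledets subspace $\mathcal{U}^1_\theta$, i.e.\ when $\xi\notin W_\theta:=\bigoplus_{l\geq 2}\mathcal{U}^l_\theta$. Thus the theorem reduces to showing that the ``deficient'' set $N:=\{\theta:\xi\in W_\theta\}$ is Lebesgue null.

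The remaining, and principal, step is to rule out $\mathrm{Leb}(N)>0$, and this is where the strong irreducibility of $\mathcal{V}^i$ is used. By Theorem~\ref{thm:esch} a.e.\ $\theta$ is Birkhoff generic, so $(g_tr_\theta\omega)_{t\geq 0}$ equidistributes towards $\nu_{\mathcal{M}}$; this places us in the setting of a Furstenberg--Kifer-type dichotomy for the Kontsevich--Zorich cocycle along the $SO(2)$-averaged Teichm\"uller geodesic flow, which is the part of \cite{Es-Ch} one would quote. It asserts that either $L(\theta)=\lambda$ for a.e.\ $\theta$, or else the set
\[F:=\Big\{v\in\mathcal{V}^i_\omega:\ \mathrm{Leb}\big\{\theta:\ \lim_{t\to\infty}\tfrac1t\log\|v\|_{g_tr_\theta\omega}<\lambda\big\}>0\Big\}\]
is a proper linear subspace of $\mathcal{V}^i_\omega$ (proper because $\lambda$ is attained, $\mathcal{U}^1_\theta\neq 0$) invariant under the Kontsevich--Zorich monodromy group of $\mathcal{V}^i$; but $\mathcal{V}^i$ is strongly irreducible, so it has no such proper invariant subspace, hence the second alternative cannot occur and $\mathrm{Leb}(N)=0$. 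The limit as $t\to-\infty$ is handled by the same argument applied to the time-reversed flow. I expect the genuinely hard part to be exactly this last paragraph: establishing the Furstenberg--Kifer dichotomy for the Teichm\"uller-flow cocycle (where the ``increments'' are the points $g_tr_\theta\omega$ of a Birkhoff-generic orbit, not an i.i.d.\ sequence of matrices) and controlling excursions into the cusps of the generally non-compact affine invariant manifold $\mathcal{M}$, which is precisely the content of the relevant step in \cite{Es-Ch}.
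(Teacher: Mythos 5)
The paper offers no proof of this theorem; it is stated purely as a citation, attributed to Chaika–Eskin \cite{Es-Ch} as a step in their proof of Theorem~1.4. Your proposal is therefore not in competition with any argument in the paper, but rather an attempt to reconstruct what the cited source contains. The reconstruction correctly identifies the relevant ingredients: the vanishing case $\lambda^{\mathcal{V}^i}_{top}=0$ follows immediately from Theorem~\ref{thm:fil} applied to $\mathcal{V}^i$ (a continuous $SL(2,\R)$-invariant subbundle by the Eskin–Mirzakhani/Filip decomposition, so the hypothesis applies); and the positive case hinges on combining the a.e.-$\theta$ Oseledets splitting from \cite{Es-Ch} with strong irreducibility of $\mathcal{V}^i$ via a Furstenberg–Kifer type dichotomy, which is indeed where the substance of the Chaika–Eskin argument lives. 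You are right to flag that last step as the genuine difficulty.

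One cautionary remark on the dichotomy step as you have written it: the set $F=\{v:\operatorname{Leb}\{\theta:L_\theta(v)<\lambda\}>0\}$ is not obviously a linear subspace. For each fixed $\theta$ the deficient set $W_\theta$ is a subspace, so $v_1+v_2$ is deficient at $\theta$ whenever both $v_1$ and $v_2$ are, but the intersection of two positive-measure sets of $\theta$ can be null, so closure of $F$ under addition does not follow from this alone. The actual Furstenberg-style argument is sharper: it shows that if the top exponent is not attained for a positive-measure set of $\theta$, then the Oseledets filtration subspace $W_\theta$ is (essentially) independent of $\theta$, i.e.\ there is a single proper subspace that works a.e., and \emph{that} deterministic subspace is the one shown to be monodromy-invariant, contradicting strong irreducibility. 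This is the structural content that has to be extracted from \cite{Es-Ch}; as you note, adapting it from the i.i.d.\ matrix-product setting to the Teichm\"uller cocycle along a Birkhoff-generic orbit, with the attendant cusp excursions, is the nontrivial work of that paper. Since you have explicitly located the gap there and attributed it to the same reference the paper cites, I would not call this a flaw in your understanding, but the statement of $F$ should be reformulated if you ever intend this to be a genuine self-contained proof rather than a roadmap.
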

A consequence of this result is the following:
\begin{theorem}
For every $\omega \in\mathcal{M}_1(M)$ and $\xi\in H_1(M,\R)$ there exists $\lambda(\omega ,\xi)\geq 0$ such that
\[\lim_{t\to\infty}\frac{1}{|t|}\log\|\xi\|_{g_t r_\theta\omega }=\lambda(\omega ,\xi)\text{ for a.e.\ $\theta\in \R/2\pi\Z $.}\]
\end{theorem}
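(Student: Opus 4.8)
The plan is to reduce the general case to Theorem~\ref{thm:maxexp} by decomposing $\xi$ along the strongly irreducible factors of the Kontsevich--Zorich cocycle over $\mathcal{M}=\overline{SL(2,\R)\omega}$.

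First I would apply the decomposition recalled above (following \cite{EM} and \cite{Fil}) to $\mathcal{V}=\mathcal{H}_1(M,\R)$ itself, which is symplectic for the intersection form: there is an $SL(2,\R)$-invariant continuous direct sum decomposition $\mathcal{H}_1(M,\R)=\mathcal{V}^1\oplus\cdots\oplus\mathcal{V}^m$ with each $\mathcal{V}^i$ strongly irreducible and symplectic (each factor inherits a polarization). The additional input I would use, taken from Filip's semisimplicity theorem, is that each $\mathcal{V}^i$ underlies a polarized variation of Hodge structure; in particular, for every $\omega'\in\mathcal{M}$ the fibre $\mathcal{V}^i_{\omega'}$ is invariant under the complex structure $J(\omega')$ induced on $H_1(M,\R)$ by the Hodge star at $\omega'$. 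Since the Hodge inner product is $\langle\,\cdot\,,J(\omega')\,\cdot\,\rangle$ with $\langle\,\cdot\,,\,\cdot\,\rangle$ the (fixed) intersection form, and distinct factors are $\langle\,\cdot\,,\,\cdot\,\rangle$-orthogonal, the decomposition $\bigoplus_i\mathcal{V}^i$ is orthogonal with respect to the Hodge norm $\|\,\cdot\,\|_{\omega'}$ at every $\omega'\in\mathcal{M}$, in particular along the whole orbit $\{g_tr_\theta\omega\}$.

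Now fix $\omega$ and $\xi\in H_1(M,\R)$ (the case $\xi=0$ being trivial). Write $\xi=\xi_1+\cdots+\xi_m$ with $\xi_i\in\mathcal{V}^i_\omega$, set $S=\{i:\xi_i\neq 0\}\neq\emptyset$, and define $\lambda(\omega,\xi):=\max_{i\in S}\lambda^{\mathcal{V}^i}_{top}$; since each $\mathcal{V}^i$ is symplectic we have $\lambda^{\mathcal{V}^i}_{top}\geq 0$, hence $\lambda(\omega,\xi)\geq 0$, and this number depends only on $\omega$ and $\xi$. Pick $\theta$ in the (conull) set on which the conclusion of Theorem~\ref{thm:maxexp} holds for all the finitely many $\xi_i$, $i\in S$. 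The upper bound comes from the triangle inequality: $\|\xi\|_{g_tr_\theta\omega}\leq\sum_{i\in S}\|\xi_i\|_{g_tr_\theta\omega}\leq|S|\max_{i\in S}\|\xi_i\|_{g_tr_\theta\omega}$, so $\limsup_{t\to\infty}\tfrac{1}{|t|}\log\|\xi\|_{g_tr_\theta\omega}\leq\max_{i\in S}\lambda^{\mathcal{V}^i}_{top}=\lambda(\omega,\xi)$ by Theorem~\ref{thm:maxexp}. The lower bound comes from the Pythagorean identity provided by the Hodge-orthogonality established above: $\|\xi\|_{g_tr_\theta\omega}^2=\sum_{i\in S}\|\xi_i\|_{g_tr_\theta\omega}^2\geq\max_{i\in S}\|\xi_i\|_{g_tr_\theta\omega}^2$, whence $\liminf_{t\to\infty}\tfrac{1}{|t|}\log\|\xi\|_{g_tr_\theta\omega}\geq\max_{i\in S}\lambda^{\mathcal{V}^i}_{top}=\lambda(\omega,\xi)$, again by Theorem~\ref{thm:maxexp}. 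The two bounds together give the claimed limit for a.e.\ $\theta$.

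The one genuinely delicate step is the orthogonality of the strongly irreducible decomposition with respect to the Hodge metric, and this is exactly where Filip's identification of the factors as sub-variations of Hodge structure is needed; $SL(2,\R)$-invariance and continuity of the decomposition alone do not suffice, since $\mathcal{M}$ need not be compact and the spectral projections onto the $\mathcal{V}^i$ carry no a priori uniform bound along the orbit. With the Hodge-orthogonality in hand, however, the rest is just the triangle inequality and Pythagoras layered on top of Theorem~\ref{thm:maxexp}.
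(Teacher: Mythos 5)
Your argument follows exactly the paper's route: decompose $\mathcal{H}_1(M,\R)$ into strongly irreducible $SL(2,\R)$-invariant factors, write $\xi=\sum_i\xi_i$, and apply Theorem~\ref{thm:maxexp} to each nonzero component to obtain $\lambda(\omega,\xi)=\max\{\lambda^{\mathcal{V}^i}_{top}:\xi_i\neq 0\}$. The only difference is that you explicitly justify the lower bound via Hodge-orthogonality of the factors (invoking Filip's identification of each $\mathcal{V}^i$ as a sub-VHS, hence $J$-invariant), a step the paper's one-line proof leaves implicit; your justification is correct and is indeed what makes the ``max'' formula work on the $\liminf$ side, since without it the components could in principle cancel along the orbit.
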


\begin{proof}
Let us consider the bundle $\mathcal{H}_1(M,\R)$ defined over $\mathcal{M}$. Then there exists a continuous $SL_2(\R)$-invariant splitting
\begin{equation}\label{eq:split}
\mathcal{H}_1(M,\R)=\mathcal{V}^1\oplus\mathcal{V}^2\oplus\ldots\oplus\mathcal{V}^m
\end{equation}
such that each subbundle $\mathcal{V}^i$ is strongly irreducible. Then $\xi=\sum_{i=1}^m \xi_i$
such that $\xi_i\in \mathcal{V}^i_\omega$. Therefore, by Theorem~\ref{thm:maxexp}, for a.e.\ $\theta$ we have
\[\lim_{t\to\infty}\frac{1}{|t|}\log\|\xi\|_{g_t r_\theta\omega }=\max\{\lambda^{\mathcal{V}^i}_{top}: 1\leq i\leq m, \xi_i\neq 0\}\]
which completes the proof.
\end{proof}

The following result is a direct consequence of Theorem~\ref{thm:ergV} and yields some relationship between the value of the Lyapunov exponent $\lambda(\omega,\gamma)$ for $\gamma\in H_1(M,\Z)$
and the ergodic properties of translation flows on the $\Z^d$-cover $(\widetilde{M}_\gamma,\widetilde{\omega}_\gamma)$.

\begin{theorem}\label{thm:zreoabst}
Let $(M,\omega)$ be a compact translation surface and let $\gamma\in H_1(M,\Z)^d$ be such that $\widetilde{M}_\gamma$ is connected and $\lambda(\omega,\gamma_i)=0$ for $1\leq i\leq d$.
Then $(\widetilde{\varphi}^\theta_t)_{t\in \R}$  is ergodic for almost every $\theta\in\R/2\pi\Z$.
\end{theorem}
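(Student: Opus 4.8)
The plan is to deduce the statement directly from Theorem~\ref{thm:ergV}: it suffices to produce a continuous $SL(2,\R)$-invariant subbundle $\mathcal{V}\subset\mathcal{H}_1(M,\R)$ all of whose Kontsevich--Zorich Lyapunov exponents vanish and with $\gamma_i\in\mathcal{V}_\omega$ for $i=1,\ldots,d$; then, since $\widetilde{M}_\gamma$ is connected by hypothesis and each $\gamma_i\in H_1(M,\Z)$, Theorem~\ref{thm:ergV} yields that $(\widetilde{\varphi}^\theta_t)_{t\in\R}$ on $(\widetilde{M}_\gamma,\widetilde{\omega}_\gamma)$ is ergodic for a.e.\ $\theta$. (Note each $\gamma_i\neq 0$, for otherwise $\xi\mapsto\langle\gamma,\xi\rangle$ could not surject onto $\Z^d$ and $\widetilde{M}_\gamma$ would be disconnected.) First I would recall the continuous $SL(2,\R)$-invariant splitting \eqref{eq:split}, $\mathcal{H}_1(M,\R)=\mathcal{V}^1\oplus\cdots\oplus\mathcal{V}^m$ into strongly irreducible subbundles, together with the identity proved inside the theorem preceding Theorem~\ref{thm:zreoabst}: writing $\xi=\sum_j\xi_j$ with $\xi_j\in\mathcal{V}^j_\omega$, one has $\lambda(\omega,\xi)=\max\{\lambda^{\mathcal{V}^j}_{top}:\xi_j\neq 0\}$.

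Then I would set $\mathcal{V}:=\bigoplus_{j:\,\lambda^{\mathcal{V}^j}_{top}=0}\mathcal{V}^j$, which is continuous and $SL(2,\R)$-invariant. Decomposing $\gamma_i=\sum_j\gamma_{i,j}$ with $\gamma_{i,j}\in\mathcal{V}^j_\omega$, the hypothesis $\lambda(\omega,\gamma_i)=0$, the identity above, and $\lambda^{\mathcal{V}^j}_{top}\ge 0$ force $\gamma_{i,j}=0$ whenever $\lambda^{\mathcal{V}^j}_{top}>0$; hence $\gamma_i=\sum_{j:\,\lambda^{\mathcal{V}^j}_{top}=0}\gamma_{i,j}\in\mathcal{V}_\omega$, so $\gamma\in(\mathcal{V}_\omega\cap H_1(M,\Z))^d$. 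It remains to verify that all Lyapunov exponents of $(G_t^{\mathcal{V}})_{t\in\R}$ vanish. The Lyapunov spectrum of $(G_t^{\mathcal{V}})_{t\in\R}$ is the union over the chosen indices $j$ of the spectra of $(G_t^{\mathcal{V}^j})_{t\in\R}$, each of which has top exponent $\lambda^{\mathcal{V}^j}_{top}=0$; so it is enough to know that a strongly irreducible $SL(2,\R)$-invariant subbundle of $\mathcal{H}_1(M,\R)$ with vanishing top exponent has its entire Lyapunov spectrum equal to $0$.

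The hard part will be precisely this last fact. It follows from the structure theory of invariant subbundles of the Hodge bundle: by Filip's semisimplicity theorem the pieces $\mathcal{V}^j$ underlie polarized variations of Hodge structure, and by the criterion of Forni (and Forni--Matheus--Zorich) a strongly irreducible such piece with zero top Lyapunov exponent acts isometrically for the Hodge norm, hence has all of its Lyapunov exponents equal to $0$; equivalently, the spectrum of $(G_t^{\mathcal{V}^j})_{t\in\R}$ with respect to $\nu_{\mathcal{M}}$ is symmetric about $0$ because $\nu_{\mathcal{M}}$ is $SL(2,\R)$-invariant and the rotation $r_{\pi/2}$ conjugates $(g_t)$ to $(g_{-t})$, so that the time-reversed cocycle --- whose Lyapunov exponents are the negatives of those of $(G_t^{\mathcal{V}^j})$ --- is tempered-conjugate to $(G_t^{\mathcal{V}^j})$ itself. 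Granting this, $\mathcal{V}=\bigoplus_{j:\,\lambda^{\mathcal{V}^j}_{top}=0}\mathcal{V}^j$ has vanishing Lyapunov spectrum and contains every $\gamma_i$ in its fibre over $\omega$, so Theorem~\ref{thm:ergV} applies and finishes the proof.
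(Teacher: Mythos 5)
Your proof is correct and takes essentially the same route as the paper: decompose via \eqref{eq:split}, observe using Theorem~\ref{thm:maxexp} that $\lambda(\omega,\gamma_i)=0$ forces the components of $\gamma_i$ to lie only in pieces $\mathcal{V}^j$ with $\lambda^{\mathcal{V}^j}_{top}=0$, assemble a continuous $SL(2,\R)$-invariant subbundle with entirely vanishing Lyapunov spectrum containing $\gamma$, and invoke Theorem~\ref{thm:ergV}. The only cosmetic difference is that the paper takes the smaller bundle $\mathcal{V}^\gamma=\bigoplus\{\mathcal{V}^i:\gamma_i\neq 0\}$ while you take the larger $\bigoplus\{\mathcal{V}^j:\lambda^{\mathcal{V}^j}_{top}=0\}$; both work, and you are slightly more explicit than the paper about why vanishing top exponent on a piece forces its whole spectrum to vanish (the spectral symmetry coming from $r_{\pi/2}$ conjugating $(g_t)$ to $(g_{-t})$), a step the paper asserts without comment.
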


\begin{proof}
We present the arguments of the proof only  for $d=1$. In the higher dimensional case, the proof runs along similar lines.

Let us consider the $SL_2(\R)$-invariant splitting \eqref{eq:split} into strongly irreducible subbundles and let $\gamma=\sum_{i=1}^m\gamma_i$ be such that
$\gamma_i\in \mathcal{V}^i_\omega$. Since $\lambda(\omega,\gamma)=0$, by Theorem~\ref{thm:maxexp}, $\gamma_i\neq 0$   implies $\lambda_{top}^{\mathcal{V}^i}=0$. Let
\[\mathcal{V}^\gamma:=\bigoplus\{\mathcal{V}^i:1\leq i\leq m, \gamma_i\neq 0\}.\]
Then $\mathcal{V}^\gamma$ is a non-zero $SL_2(\R)$-invariant subbundle so that $\gamma\in \mathcal{V}^\gamma_\omega$ and all Lyapunov  exponents of the restricted
KZ-cocycle $(G_t^{\mathcal{V}^\gamma})_{t\in\R}$ with respect to the measure $\nu_{\mathcal{M}}$ vanish. Then Theorem~\ref{thm:ergV} provides the final argument.
\end{proof}

Finally, we can formulate a conjecture which was stated  so far informally in the translation surface community.
It expresses completely the relationship between the value of the Lyapunov exponent
and the ergodic properties of translation flows on the $\Z$-covers on compact surfaces.

\begin{conjecture}
Let $(M,\omega)$ be a compact translation surface and let $(\widetilde{M}_\gamma,\widetilde{\omega}_\gamma)$ be its connected $\Z$-cover given by $\gamma\in H_1(M,\Z)$.
Then
\begin{itemize}
\item[(i)] if $\lambda(\omega,\gamma)=0$ then $(\widetilde{\varphi}^\theta_t)_{t\in \R}$ is ergodic for almost every $\theta\in\R/2\pi\Z$;
\item[(ii)] if $\lambda(\omega,\gamma)>0$ then $(\widetilde{\varphi}^\theta_t)_{t\in \R}$ is non-ergodic for almost every $\theta\in\R/2\pi\Z$.
\end{itemize}
\end{conjecture}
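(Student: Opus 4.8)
Part~(i) of the conjecture is already a theorem: it is exactly the case $d=1$ of Theorem~\ref{thm:zreoabst}. Concretely, if $\lambda(\omega,\gamma)=0$ one writes $\gamma=\sum_i\gamma_i$ along the $SL(2,\R)$-invariant splitting \eqref{eq:split} of $\mathcal{H}_1(M,\R)$ into strongly irreducible subbundles; by Theorem~\ref{thm:maxexp} each summand $\mathcal{V}^i$ with $\gamma_i\neq 0$ has $\lambda^{\mathcal{V}^i}_{top}=0$, so all Lyapunov exponents of the KZ-cocycle $(G_t^{\mathcal{V}^\gamma})_{t\in\R}$ of $\mathcal{V}^\gamma:=\bigoplus\{\mathcal{V}^i:\gamma_i\neq 0\}$ vanish, and Theorem~\ref{thm:ergV} applied to $\mathcal{V}^\gamma$ gives ergodicity of $(\widetilde{\varphi}^\theta_t)_{t\in\R}$ for a.e.\ $\theta$. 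So the actual content of the conjecture is part~(ii), and the plan below concerns that (open) direction.

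First I would reduce part~(ii) to a statement about cocycles. By Proposition~\ref{lem_flow_auto}, for a Masur generic direction $\theta$ the flow $(\widetilde{\varphi}^\theta_t)_{t\in\R}$ on $(\widetilde{M}_\gamma,\widetilde{\omega}_\gamma)$ is a special flow over the $\Z$-valued skew product $T_{\psi_\gamma}(x,n)=(Tx,n+\psi_\gamma(x))$, where $T:I\to I$ is the first-return IET on a transversal $I$ and $\psi_\gamma(x)=\langle\gamma,\xi_\alpha\rangle$ for $x\in I_\alpha$. Hence non-ergodicity of the flow for a.e.\ $\theta$ is equivalent to $E(\psi_\gamma)=\{0\}$ for a.e.\ $\theta$, i.e.\ to showing that $\psi_\gamma$ has no nonzero essential value.

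The heart of the argument would be to convert the positivity $\lambda(\omega,\gamma)>0$ into a Zorich-type deviation estimate with a nontrivial limit shape. Fix $\theta$ Birkhoff, Masur and Oseledets generic and set $\omega_0=r_{\pi/2-\theta}\omega$; then, on the strongly irreducible summand carrying $\gamma$, Theorem~\ref{thm:maxexp} gives $\frac1t\log\|\gamma\|_{g_t\omega_0}\to\lambda:=\lambda(\omega,\gamma)>0$, and $\gamma$ has a nonzero component in the unstable Oseledets space of the restricted cocycle. Feeding this into the renormalization picture of Lemma~\ref{lemma:consterg} (the nested intervals $I_k$ and times $t_k$), one expects that along the Rokhlin towers of $T$ over $I_k$ the Birkhoff sums $\psi_\gamma^{(n)}$ are constant on rungs up to a uniformly bounded error, of magnitude comparable to $e^{\lambda t_k}$, and that after dividing by $e^{\lambda t_k}$ the resulting profiles converge to a \emph{nonconstant} limit shape $\Phi$ attached to the unstable class of $\gamma$. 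One then uses $\Phi$ to rule out nonzero essential values, following the non-ergodicity mechanism of \cite{Fr-Ulc:nonerg}: for $g\neq 0$ and $B$ of positive measure, at the scales where $\{x:\psi_\gamma^{(n)}(x)\in V_g\}$ is non-negligible the limit-shape description forces $\psi_\gamma^{(n)}$ to spread over the diverging range $e^{\lambda t_k}$, so that $\mu(B\cap T^{-n}B\cap\{x:\psi_\gamma^{(n)}(x)\in V_g\})$ stays negligible; equivalently, one builds from $\Phi$ a nonconstant $T_{\psi_\gamma}$-invariant measurable function. Either way $T_{\psi_\gamma}$, and hence the flow, is non-ergodic.

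The main obstacle is this limit-shape step, and with it the deviation estimate in full strength. The existing non-ergodicity results (\cite{Fr-Ulc:nonerg}, \cite{Fr-Hu}) provide the required quantitative deviation-with-remainder for $\psi_\gamma$ only under additional hypotheses on the orbit closure or on the unstable bundle (for instance a Roth-type, non-resonance condition on the Rauzy--Veech cocycle), rather than from mere positivity of $\lambda(\omega,\gamma)$ along almost every direction. I therefore expect part~(ii) to be provable at present only modulo a general limit-shape theorem for $\Z$-valued cocycles over almost every direction in an $SL(2,\R)$-orbit closure, and it is the absence of such a theorem in this generality that keeps (ii) a conjecture.
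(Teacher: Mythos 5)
You have correctly recognized that this is a conjecture, not a proved result: the paper itself states that part~(i) is confirmed by Theorem~\ref{thm:zreoabst} (whose proof you reproduce faithfully, via the strongly irreducible splitting, Theorem~\ref{thm:maxexp}, and Theorem~\ref{thm:ergV}), while part~(ii) remains open and is only ``suggested'' by the weaker non-ergodicity result of \cite{Fr-Ulc:nonerg}. Your reduction of~(ii) to the vanishing of essential values and your diagnosis of the missing ingredient---a general deviation-with-limit-shape theorem for $\psi_\gamma$ valid for a.e.\ direction, not just under Roth-type hypotheses---is consistent with the paper's pointer to \cite{Fr-Ulc:nonerg} and \cite{Fr-Hu}, and correctly identifies why~(ii) has not been upgraded to a theorem.
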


The claim (i) is confirmed by Theorem~\ref{thm:zreoabst}. The truth of the claim (ii) is suggested only
by a much weaker result proved in \cite{Fr-Ulc:nonerg}.

\section{Non-ergodicity and trapping for typical choice of periodic system of Eaton lenses}\label{sec:typtrap}
In this section we present the proof of Theorem~\ref{thm:typtrap}.

Let $\Lambda\subset \C$ be a lattice. For any  quadratic differential  $q$  on the torus $X:=\C/\Lambda$ we denote by $\widetilde{q}$ the pullback of $q$ by the projection map $p:\C\to\C/\Lambda$.
Denote by ${\mathcal{F}}_{\theta}$ and $\widetilde{\mathcal{F}}_{\theta}$ the measured foliations  in a direction $\theta\in\R/\pi\Z$ derived from $(X,q)$ and $(\C,\widetilde{q})$ respectively.
Recall that a foliation $\widetilde{\mathcal{F}}_{\theta}$ \emph{trapped}, if there exists a vector ${v} \in S^1 \subset \C$ and a constant $C$ such that every leaf of $\widetilde{\mathcal{F}}_{\theta}$ is trapped
in an infinite band of width $C$ parallel to ${v}$. Of course, every trapped foliation is highly non-ergodic.

Let $(M,\omega)$ be the orientation  cover of the half-translation torus $(X,q)$ and let $\pi:M\to X$ be the corresponding branched covering map. Then the space $H_1^+(M,\R)\simeq H_1(X,\R)$ of vectors invariant under the deck exchange map on homology
is a two dimensional real space.
Denote by $\gamma_1,\gamma_2\in H_1(X,\Z)\simeq H_1^+(M,\Z)$ two homology elements determining the $\Z^2$-covering $p:\C\to X$. Since $\gamma_1,\gamma_2$ are linearly independent, they span the space
$H_1(X,\R)\simeq H_1^+(M,\R)$.
Let $(\widetilde{M},\widetilde{\omega})$ be the $\Z^2$-cover of $(M,\omega)$
given by the pair $(\gamma_1,\gamma_2)\in H_1^+(M,\Z)^2$.
For every $\theta\in\R/2\pi\Z$ let $M^+_\theta$ be the set of points $x\in M$ such that  the positive semi-orbit $(\varphi^\theta_t(x))_{t\geq 0}$ on $(M,\omega)$ is well defined.

Let $D\subset \widetilde{M}$
be a bounded fundamental domain of the $\Z^2$-cover such that the
interior of $D$ is path-connected and the boundary of $D$ is a
finite union of intervals. For every $x\in M^+_\theta$ and $t>0$
define the element $\sigma^{\theta}_t(x) \in H_1(M,\Z)$ as
the homology class of the loop formed by the segment of the orbit
of $x$ from $x$ to $\varphi^\theta_t(x)$ closed up by the shortest curve
joining $\varphi^\theta_t(x)$ with $x$ that does not cross
$p^{-1}(\partial D)$.

The following result is a more general version of Theorem~3.2 in \cite{Fr-S}.
Since its proof runs essentially as in \cite{Fr-S}, we omit it.

\begin{proposition}
Assume that for a direction $\theta\in\R/2\pi\Z$ there is a non-zero homology class
$\xi\in\ H_1^+(M,\R)=\R\gamma_1+\R\gamma_2$ and $C>0$ such that
\[|\langle\sigma^\theta_t(x),\xi\rangle|\leq C\text{ for every }x\in
M^+_\theta\text{ and }t>0.\]
If the foliation $\mathcal{F}_\theta$ has no vertical saddle connection the lifted foliation $\widetilde{\mathcal{F}}_\theta$ is trapped.
\end{proposition}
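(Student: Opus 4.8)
The plan is to move to the orientation cover, where a leaf becomes a directional orbit whose position in the plane $\C=\widetilde{X}$ is controlled by the classes $\sigma^\theta_t$. By Remark~\ref{rem:erg} the covering $\widetilde{\pi}\colon(\widetilde{M},\widetilde{\omega})\to(\widetilde{X},\widetilde{q})=(\C,\widetilde{q})$ is an orientation cover, so every leaf $L$ of $\widetilde{\mathcal{F}}_\theta$ is the $\widetilde{\pi}$-image of a full orbit of the directional flow $(\widetilde{\varphi}^\theta_t)_{t\in\R}$ on $\widetilde{M}$, the opposite orientation of $L$ corresponding to $(\widetilde{\varphi}^{\theta+\pi}_t)_{t\in\R}$. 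Fix such an orbit through a point $\widetilde{x}\in\widetilde{M}$ lying over some $x\in M^+_\theta$, normalised so that $\widetilde{x}\in D$. The deck group $\Z^2$ of $\C\to X=\C/\Lambda$ acts by translations $n\mapsto n_1w_1+n_2w_2$ for an $\R$-basis $\{w_1,w_2\}$ of $\Lambda$, and counting the signed crossings of $(\widetilde{\varphi}^\theta_s(\widetilde{x}))_{0\le s\le t}$ with the $\Z^2$-translates of $\partial D$ yields, uniformly in $x$ and $t>0$,
\[
\widetilde{\pi}\bigl(\widetilde{\varphi}^\theta_t(\widetilde{x})\bigr)=\langle\gamma_1,\sigma^\theta_t(x)\rangle\,w_1+\langle\gamma_2,\sigma^\theta_t(x)\rangle\,w_2+O(1)\qquad\text{in }\C,
\]
the error term being bounded by a constant depending only on $(M,\omega)$, $\gamma$ and $D$. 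Establishing this uniform bound is the one genuinely technical step, carried out exactly as in the proof of Theorem~3.2 of \cite{Fr-S}.

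Granting the display, write $\xi=a\gamma_1+b\gamma_2$ with $(a,b)\neq(0,0)$, and let $\ell\colon\C\to\R$ be the unique $\R$-linear map with $\ell(w_1)=-a$, $\ell(w_2)=-b$ (possible since $\{w_1,w_2\}$ is an $\R$-basis of $\C$, and $\ell\neq0$ since $\xi\neq0$). By antisymmetry of the intersection form, $\ell\bigl(\langle\gamma_1,\eta\rangle w_1+\langle\gamma_2,\eta\rangle w_2\bigr)=\langle\eta,\xi\rangle$ for every $\eta\in H_1(M,\Z)$. Applying $\ell$ to the display and invoking the hypothesis $|\langle\sigma^\theta_t(x),\xi\rangle|\le C$ gives $\bigl|\ell\bigl(\widetilde{\pi}(\widetilde{\varphi}^\theta_t(\widetilde{x}))\bigr)\bigr|\le C'$ for all $t>0$, with $C'=C+O(1)$ independent of $x$. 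Since $\ell\neq0$, its kernel is a line $\R v$ for a unique $v\in S^1\subset\C$, and $\{z\in\C:|\ell(z)|\le C'\}$ is an infinite band of finite width parallel to $v$; hence the forward half of $L$ lies in this band.

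For the backward half, put $y:=\varphi^\theta_{-t}(x)$. If the orbit of $x$ is bi-infinite and avoids $\Sigma$ — which is the case for every leaf $L$ not meeting $\Sigma$, since $\mathcal{F}_\theta$ has no saddle connection in direction $\theta$ — then $y\in M^+_\theta$, and lifting $y$ into $D$ one finds that $\widetilde{\varphi}^\theta_{-t}(\widetilde{x})$ is the $\langle\gamma,\sigma^\theta_t(y)\rangle$-translate of that lift, so $\ell\bigl(\widetilde{\pi}(\widetilde{\varphi}^\theta_{-t}(\widetilde{x}))\bigr)=\langle\sigma^\theta_t(y),\xi\rangle+O(1)$, again bounded by $C'$. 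Thus every such leaf lies in $\{|\ell|\le C'\}$. The finitely many leaves meeting $\Sigma$ are, in the absence of a saddle connection, single separatrices issuing from a singular point $x^\ast$; applying the same estimate at $\varphi^\theta_\varepsilon(x^\ast)$ for small $\varepsilon>0$ and absorbing the bounded near-singularity segment into $C'$ shows these are trapped in the same band. Therefore $\widetilde{\mathcal{F}}_\theta$ is trapped, with band direction $v$.

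I expect the main obstacle to be the uniform estimate of the $O(1)$ term in the first display: one must show that along every forward orbit the closing-up curves defining $\sigma^\theta_t(x)$ can be chosen of uniformly bounded length, so that $\langle\gamma,\sigma^\theta_t(x)\rangle$ records the deck displacement of $\widetilde{\varphi}^\theta_t(\widetilde{x})$ up to a bounded correction, uniformly in $x\in M^+_\theta$ and $t>0$. This rests on the compactness of $M$ together with the hypotheses on $D$ (bounded, with $\partial D$ a finite union of intervals and connected interior), and is precisely the content imported from \cite{Fr-S}; everything else is the elementary linear-algebra reduction above.
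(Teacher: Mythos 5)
The paper itself omits this proof, saying only that it runs as in Theorem~3.2 of \cite{Fr-S}, so there is no in-paper argument to compare against; still, your reconstruction is clearly the intended one, and the main steps are sound. The key display comparing $\widetilde{\pi}\bigl(\widetilde{\varphi}^\theta_t(\widetilde{x})\bigr)$ with $\langle\gamma_1,\sigma^\theta_t(x)\rangle\,w_1+\langle\gamma_2,\sigma^\theta_t(x)\rangle\,w_2$ holds with an error confined to the bounded set $\widetilde{\pi}(D)$: by definition of $\sigma^\theta_t$ the closing curve does not cross the projected boundary of $D$, so its lift in $\widetilde{M}$ remains in a single $\Z^2$-translate of $D$, the deck element of the lifted loop is therefore exactly $\langle\gamma,\sigma^\theta_t(x)\rangle$, and the error is controlled simply because $D$ is bounded. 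Your linear-algebra step producing $\ell$ with kernel $\R v$ and the band $\{|\ell|\le C'\}$ is correct, including the sign flip coming from antisymmetry.

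The one soft spot is the treatment of singular leaves. You write ``applying the same estimate at $\varphi^\theta_\varepsilon(x^*)$ for small $\varepsilon>0$,'' but this covers only \emph{outgoing} separatrices, for which that point lies in $M^+_\theta$. An \emph{incoming} separatrix terminates at the singular point in forward time; every point $y$ on it lies \emph{outside} $M^+_\theta$, so neither the forward estimate nor your backward argument (which explicitly assumes $y\in M^+_\theta$) applies, and the hypothesis says nothing about $\sigma^\theta_t(y)$ for such $y$. This is easy to repair: any point on an incoming separatrix is a limit of regular points with bi-infinite orbits, each of which you have already confined to the closed band $\{|\ell|\le C'\}$, and continuity of the lifted flow off $\Sigma$ passes the bound to the limit. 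Without some such remark the ``every leaf'' clause in the definition of trapping is not verified for the incoming separatrices.
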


Let $\mathcal{M}$ be the closure of the $SL(2,\R)$-orbit
of $(M,\omega)$ and denote by $\nu_{\mathcal{M}}$ the affine probability measure on $\mathcal{M}$. Let us consider the restriction
of the Konsevich-Zorich cocycle $(G_t^{\mathcal{H}_1^+})_{t\in\R}$ to the subbundle $\mathcal{H}_1^+(M,\R)\to \mathcal{M}$. Recall that
a.e.\ $\theta\in \R/2\pi\Z$ is Oseledets generic for the subbundle. This implies the existence of the stable subspace $E^-_{r_\theta\omega}\subset {H}_1^+(M,\R)$
whose dimension is equal to the number of positive Lyapunov exponents of $(G_t^{\mathcal{H}_1^+})_{t\in\R}$.
Moreover, by  Theorem~4.4 in \cite{Fr-Hu} we have.

\begin{proposition}
Suppose that
$\pi/2-\theta\in \R/2\pi\Z$ is a Birkhoff, Oseledets and Masur (BOM) generic direction for $(M,\omega)$. Then for every $\xi\in
E_{r_{\pi/2-\theta}\omega}^-$ there exists $C>0$ such that $|\langle\sigma^{\theta}_t(x),
\xi\rangle|\leq C$ for all $x\in M^+_\theta$ and $t>0$.
\end{proposition}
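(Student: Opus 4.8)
The plan is to reduce the stated estimate to a uniform bound on the sup‑norm of the Birkhoff sums of a single real cocycle over the return interval exchange map, and then to obtain that bound from the nested Rohlin‑tower structure supplied by Lemma~\ref{lemma:consterg}; this is exactly Theorem~4.4 in \cite{Fr-Hu}, whose argument I would follow. First I would set $\omega_0:=r_{\pi/2-\theta}\omega$, so that the directional flow in direction $\theta$ on $(M,\omega)$ is the vertical flow on $(M,\omega_0)$ and $0\in\R/2\pi\Z$ is a BOM generic direction for $\omega_0$; write $E^-=E^-_{\omega_0}\subset H^+_1(M,\R)$ for the associated stable Oseledets subspace and fix $\xi\in E^-$. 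By Oseledets genericity (the top exponent carried by $E^-$ is strictly negative, cf.\ Theorem~\ref{thm:maxexp}) the Hodge norm $\|\xi\|_{g_t\omega_0}$ decays exponentially as $t\to+\infty$. I would then apply Lemma~\ref{lemma:consterg} to get constants $A,C,c>0$, a nested family of horizontal intervals $(I_k)_{k\ge0}$ with $I:=I_0$, and an increasing divergent sequence $(t_k)_{k\ge0}$, $t_0=0$, satisfying \eqref{baseuniformerg} and \eqref{balanceerg} for all $k$; since Birkhoff genericity forces the returns $t_k$ of the Teichm\"uller geodesic to a suitable compact set to have bounded gaps, $t_k$ grows at least linearly, so $\Sigma(\xi):=\sum_{k\ge0}\|\xi\|_{g_{t_k}\omega_0}<\infty$.

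Next I would pass, following Proposition~\ref{lem_flow_auto} with the integral homology class replaced by $\xi$, to the return interval exchange map $T\colon I\to I$ with exchanged intervals $I_\alpha$ and homology data $\xi_\alpha(\omega_0,I)$, and form the real cocycle $\psi\colon I\to\R$, $\psi(y)=\langle\xi_\alpha(\omega_0,I),\xi\rangle$ for $y\in I_\alpha$. Because the $I$‑segments telescope, $\psi^{(n)}(y)$ is the intersection number of $\xi$ with the class of the loop formed by the orbit from $y$ to $T^ny$ closed up along $I$. For an arbitrary $x\in M^+_\theta$ and $t>0$ the forward orbit of $x$ meets $I$ within bounded time (every point lies in a flow box over $I$ with uniformly bounded return time), the initial and terminal incomplete orbit pieces have uniformly bounded length, and the curve closing up $\sigma^\theta_t(x)$ stays in a bounded region; hence $\sigma^\theta_t(x)$ differs from a loop of the above type by a homology class of uniformly bounded size, and it suffices to show $\sup_{n\ge0}\|\psi^{(n)}\|_{C^0(I)}<\infty$.

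To bound $\|\psi^{(n)}\|_{C^0(I)}$ I would decompose an orbit segment of $T$ through the Rohlin towers over $I_1\supset I_2\supset\cdots$. Exactly as in the proof of Lemma~\ref{lem:aux1} (applied with $J=I_k$, the induced cocycle on $I_k$ being read off from the classes $\xi_\alpha(g_{t_k}\omega_0,I_k)$), the special Birkhoff sum of $\psi$ over one complete level‑$k$ tower equals $\psi^{(h^k_\alpha)}(y)=\langle\xi_\alpha(g_{t_k}\omega_0,I_k),\xi\rangle$ for $y\in(I_k)_\alpha$, whose modulus is $\le c\,\|\xi\|_{g_{t_k}\omega_0}$ by \eqref{baseuniformerg}. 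A general $\psi^{(n)}(y)$ is then a sum of such special Birkhoff sums over a concatenation consisting of a bottom‑level remainder of bounded combinatorial length (contributing $O(1)$) together with, at each level $1\le k\le K$, at most a constant number $C'$ of complete level‑$k$ towers, where $C'$ is uniform in $k$ because \eqref{balanceerg} keeps interval lengths and return times on $I_k$ comparable at every scale while the Teichm\"uller increments $t_{k+1}-t_k$ stay bounded. Summing gives $\|\psi^{(n)}\|_{C^0(I)}\le O(1)+C'c\sum_{k\ge1}\|\xi\|_{g_{t_k}\omega_0}\le O(1)+C'c\,\Sigma(\xi)$, uniformly in $n$, which produces the constant $C=C(M,\omega_0,\xi)>0$ asserted in the Proposition.

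The substantive point is the hierarchical step: one must know that the number of complete level‑$k$ towers needed to fill a level‑$(k+1)$ tower is bounded independently of $k$, and that $\sum_k\|\xi\|_{g_{t_k}\omega_0}$ converges. Both rest on the quantitative renormalisation package behind Lemma~\ref{lemma:consterg} — comparable lengths and return times at all scales and bounded Teichm\"uller increments $t_{k+1}-t_k$, which is where Birkhoff and Masur genericity of $\pi/2-\theta$ are used — together with the exponential contraction of $\|\xi\|_{g_t\omega_0}$ along $E^-$, which is Oseledets genericity. Everything else, namely the telescoping of $\sigma^\theta_t(x)$ into a Birkhoff sum and the special‑Birkhoff‑sum identity, is routine homology bookkeeping of the type already carried out in Lemma~\ref{lem:aux1}.
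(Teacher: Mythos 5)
Your overall strategy is the right one --- and it is the right thing to compare against, since the paper itself does not prove this Proposition but simply cites Theorem~4.4 of \cite{Fr-Hu}. You pass to the return IET on $I$, use Lemma~\ref{lem:aux1} to identify the special Birkhoff sum of $\psi$ over a complete level-$k$ tower with $\langle\xi_\alpha(g_{t_k}\omega_0,I_k),\xi\rangle$, bound that by $c\,\|\xi\|_{g_{t_k}\omega_0}$ via \eqref{baseuniformerg}, and sum over an Ostrowski-type hierarchical decomposition of $\psi^{(n)}$. The gap lies in the hierarchical step: you assert that the number of complete level-$k$ towers appearing in the decomposition is bounded by a constant $C'$ \emph{uniform} in $k$, and you justify this by claiming that Birkhoff genericity forces the increments $t_{k+1}-t_k$ to be bounded. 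That is not a consequence of Birkhoff genericity (returns to a fixed positive-measure compact set have positive frequency, but individual gaps are not uniformly bounded), and Lemma~\ref{lemma:consterg} makes no assertion whatsoever about $t_{k+1}-t_k$. The number of level-$k$ towers fitting inside a level-$(k+1)$ tower is roughly $Q_k\approx e^{t_{k+1}-t_k}$ (by the balance conditions \eqref{balanceerg}), so as stated your $C'$ need not be uniform.

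What is true, and what the argument actually needs, is the weaker estimate $t_{k+1}-t_k\leq\epsilon\,t_k$ for all large $k$, obtained by applying Birkhoff genericity to a compact set $K$ with $\nu_{\mathcal M}(K)$ close to $1$: an excursion from $K$ of length comparable to $t_{k+1}$ would contradict the asymptotic frequency of visits. Then $Q_k\lesssim e^{\epsilon t_k}$, while $\xi\in E^-_{\omega_0}$ and Oseledets genericity give $\|\xi\|_{g_{t_k}\omega_0}\lesssim e^{-(\lambda-\epsilon')t_k}$ for some $\lambda>0$; the product $Q_k\,\|\xi\|_{g_{t_k}\omega_0}\lesssim e^{(\epsilon+\epsilon'-\lambda)t_k}$ is summable once $\epsilon,\epsilon'$ are chosen small relative to $\lambda$ and one uses that $t_k$ grows at least linearly. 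In other words, the Oseledets contraction must be traded off against the subexponential growth of the tower ratios; the two cannot be decoupled as your write-up does by invoking a (false) uniform bound on $t_{k+1}-t_k$. The rest of your argument --- reducing $\langle\sigma^\theta_t(x),\xi\rangle$ to an IET Birkhoff sum up to a uniformly bounded holonomy error, the $O(1)$ bottom-level remainder, and the special Birkhoff sum identity from Lemma~\ref{lem:aux1} --- is sound.
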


Since almost every direction is BOM generic, the previous two results yield the following criterion.
\begin{proposition}\label{prop:trap}
Suppose that the Lyapunov exponent $\lambda_{top}(q)$ of $(\C/\Lambda,q)$ is positive. Then for a.e.\ $\theta\in\R/\pi\Z$
the measured foliation $\widetilde{\mathcal{F}}_\theta$ on $(\C,\widetilde{q})$ is trapped.
\end{proposition}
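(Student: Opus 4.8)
The plan is to combine the three propositions of this section, the only additional ingredient being a rank count for the bundle $\mathcal{H}_1^+$.

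I would first observe that, because $(M,\omega)$ is the orientation cover of the genus-one half-translation surface $(X,q)=(\C/\Lambda,q)$, the subbundle $\mathcal{H}_1^+\to\mathcal{M}$ has rank $2g_X=2$ and is symplectic for the intersection pairing on $H_1(X,\R)\simeq H_1^+(M,\R)$. Hence the Lyapunov exponents of the restricted Kontsevich--Zorich cocycle $(G_t^{\mathcal{H}_1^+})_{t\in\R}$ with respect to $\nu_{\mathcal{M}}$ form the pair $\lambda_{top}(q)\geq-\lambda_{top}(q)$. The hypothesis $\lambda_{top}(q)>0$ thus says that this cocycle has exactly one positive Lyapunov exponent, so that for every Oseledets generic direction the stable subspace is one-dimensional, and in particular non-zero.

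Next I would fix a set of full measure of directions $\theta\in\R/\pi\Z$ for which simultaneously (a) $\pi/2-\theta$ is Birkhoff, Oseledets and Masur generic for $(M,\omega)$ with respect to the subbundle $\mathcal{H}_1^+$, and (b) the directional foliation $\mathcal{F}_\theta$ on $(X,q)$ has no saddle connection in direction $\theta$. Condition (a) fails only on a null set by the genericity results recalled in Section~\ref{Teich:sec}, and condition (b) fails only on a countable set. For such a $\theta$ choose any non-zero class $\xi=\xi(\theta)$ in the one-dimensional stable subspace $E^-_{r_{\pi/2-\theta}\omega}\subset H_1^+(M,\R)=\R\gamma_1+\R\gamma_2$. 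By the second proposition of this section (Theorem~4.4 in \cite{Fr-Hu}) there is then a constant $C=C(\theta)>0$ such that $|\langle\sigma^\theta_t(x),\xi\rangle|\leq C$ for all $x\in M^+_\theta$ and all $t>0$.

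Finally I would feed this into the first proposition of this section: with the non-zero $\xi\in\R\gamma_1+\R\gamma_2$, the uniform bound on $\langle\sigma^\theta_t(x),\xi\rangle$, and the absence of a saddle connection for $\mathcal{F}_\theta$ in direction $\theta$, that proposition yields that the lifted foliation $\widetilde{\mathcal{F}}_\theta$ on $(\C,\widetilde{q})$ is trapped. As this holds for almost every $\theta\in\R/\pi\Z$, the proposition follows. I do not expect a genuine obstacle here, since the analytic content is already packaged in the cited propositions; the one place requiring mild care is the bookkeeping between directions modulo $\pi$ on $(X,q)$ and modulo $2\pi$ on the orientation cover, and checking that the genericity of $\pi/2-\theta$ and the absence of saddle connections in direction $\theta$ can be arranged off a single null set.
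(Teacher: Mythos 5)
Your proof is correct and is exactly the argument the paper intends: it chains the boundedness criterion (the general version of Theorem~3.2 in \cite{Fr-S}), the stable-subspace bound from Theorem~4.4 in \cite{Fr-Hu}, and the non-triviality of $E^-$ coming from $\lambda_{top}(q)>0$, discarding a null set of non-generic $\theta$ and a countable set of saddle-connection directions. The paper merely compresses this into the one-line remark that the previous two results yield the criterion for BOM-generic directions; your version spells out the rank count for $\mathcal{H}^+_1$ and the selection of the full-measure set of $\theta$, which is the right bookkeeping.
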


To show the positivity of the Lyapunov exponents we will use Forni's criterion:

\begin{proposition}[Theorem~1.6 in \cite{Forni2011}]\label{forni:crit}
Let $(M,\omega)$ be a translation surface of genus $g$. Let $\mathcal{M}$ be the closure of the $SL(2,\R)$-orbit
of $(M,\omega)$ and denote by $\nu_{\mathcal{M}}$ the affine probability measure on $\mathcal{M}$. Suppose that all vertical regular orbits on
$(M,\omega)$ are periodic and there are $g$ different periodic orbits $\mathscr{O}_1,\ldots, \mathscr{O}_g$ such that
$M\setminus\{\mathscr{O}_1,\ldots, \mathscr{O}_g\}$ is homeomorphic to the $2g$-holed sphere.
Then all Lyapunov exponents of the Kontsevich-Zorich cocycle with respect to the measure $\nu_{\mathcal{M}}$ are positive.
\end{proposition}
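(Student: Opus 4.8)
The plan is to follow Forni's strategy of controlling the Hodge norm along the Teichm\"uller geodesic flow and combining it with the given cylinder configuration. First I would extract the purely topological consequence of the hypothesis: the curves $\mathscr{O}_1,\ldots,\mathscr{O}_g$ are regular vertical closed orbits of a completely periodic direction, hence pairwise disjoint simple closed curves, so their homology classes have pairwise vanishing algebraic intersection number and $L:=\operatorname{span}_{\R}\{[\mathscr{O}_1],\ldots,[\mathscr{O}_g]\}\subset H_1(M,\R)$ is isotropic; since $M\setminus(\mathscr{O}_1\cup\cdots\cup\mathscr{O}_g)$ is a $2g$-holed sphere, the classes $[\mathscr{O}_i]$ are linearly independent, so $\dim L=g$ and $L$ is a Lagrangian subspace.

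Next I would bring in the analytic input, Forni's formula for the variation of the Hodge norm: for $c\in H_1(M,\R)$ the derivative of $\log\|c\|_{g_t\omega}$ is $-\Re B_\omega(c,c)/\|c\|_\omega^2$, where $B_\omega$ is Forni's second fundamental form of the Hodge bundle, a symmetric bilinear form on the fibre with $|B_\omega(c,c)|\le\|c\|_\omega^2$; in particular $\big|\tfrac{d}{dt}\log\|c\|_{g_t\omega}\big|\le 1$, so no class can be contracted or expanded faster than rate $1$. Then I would run the geodesic $(g_t\omega)_{t\ge 0}$ in the direction that turns the $g$ vertical cylinders into long thin ones. Because the complement of the core curves is planar, the harmonic representatives of classes in $L$ concentrate, up to an exponentially small error, inside the cylinders, and on a flat cylinder of circumference $w$ and height $h$ the core curve has Hodge norm $\asymp\sqrt{w/h}$; since the modulus of each cylinder degenerates like $e^{2t}$ along $(g_t\omega)$, this gives $\|c\|_{g_t\omega}\le C(\omega)\,e^{-t}\|c\|_\omega$ for all $c\in L$ and $t\ge 0$. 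Together with the lower bound this pins the decay at rate exactly $-1$, $\|c\|_{g_t\omega}\asymp e^{-t}\|c\|_\omega$ uniformly on $L$; equivalently $\Re B_{g_t\omega}(c,c)/\|c\|^2_{g_t\omega}\to 1$ on the transported copy of $L$, so $B$ becomes non-degenerate, in fact asymptotically the identity, on $L$.

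The hardest step is to convert this pointwise statement at the $\nu_{\mathcal{M}}$-null surface $(M,\omega)$ into a statement about the Lyapunov spectrum of the Kontsevich--Zorich cocycle with respect to $\nu_{\mathcal{M}}$. I would argue by contradiction: if some exponent vanished, then, by symmetry of the spectrum and $\lambda_1\ge\cdots\ge\lambda_g\ge 0$, the cocycle would admit a nonzero continuous $SL(2,\R)$-invariant isotropic subbundle $\mathcal{N}\subset\mathcal{H}_1(M,\R)$ with all Lyapunov exponents zero; by Forni's analysis of the second fundamental form together with recurrence of the Teichm\"uller flow and ergodicity of $\nu_{\mathcal{M}}$, this forces $B_\omega$ to degenerate on $\mathcal{N}_\omega$ for $\nu_{\mathcal{M}}$-a.e.\ $\omega$, hence, the condition being closed and $SL(2,\R)$-invariant and $\operatorname{supp}\nu_{\mathcal{M}}=\mathcal{M}$, at $(M,\omega)$ as well, and therefore at every $g_t\omega$. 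This is incompatible with the previous step, since a vanishing-exponent direction cannot be contained in the maximally collapsing Lagrangian $L$ on which $B$ is non-degenerate for large $t$. Hence $\lambda_g>0$, and by the ordering all Lyapunov exponents of the cocycle with respect to $\nu_{\mathcal{M}}$ are positive.

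The main obstacle is precisely this last passage: quantifying how a single maximally degenerating cylinder configuration somewhere in $\mathcal{M}$ obstructs the existence of a zero-exponent invariant subbundle. This is where Forni's fine estimates on $B_\omega$ along degenerating families of cylinders are needed, together with the density of the $SL(2,\R)$-orbit of $(M,\omega)$ in $\mathcal{M}$, so that the open conditions verified near $(M,\omega)$ carry positive $\nu_{\mathcal{M}}$-mass.
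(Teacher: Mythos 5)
This proposition is quoted verbatim as Forni's Theorem~1.6 in~\cite{Forni2011}; the paper supplies no proof of it, so there is no in-paper argument to compare your attempt against. Judged on its own, your sketch does correctly reconstruct the geometric input of Forni's criterion: the core curves $[\mathscr{O}_1],\dots,[\mathscr{O}_g]$ span a Lagrangian $L\subset H_1(M,\R)$ precisely because their complement is planar; the Teichm\"uller geodesic $g_t\omega$ degenerates the $g$ vertical cylinders; the Hodge norm of every class in $L$ decays at the extremal rate $e^{-t}$; and this forces the second fundamental form $B$ to become non-degenerate on (the parallel transport of) $L$. All of that is faithful to Forni.

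The gap is in the final step, which you flag as the hard one but do not actually close. Knowing that $B_{g_t\omega}$ vanishes on the zero-exponent subbundle $\mathcal{N}_{g_t\omega}$ while being non-degenerate on $L$ is not by itself a contradiction: a symmetric bilinear form on a $2g$-dimensional space can be non-degenerate on a $g$-dimensional subspace and vanish identically on a transverse one, and an isotropic subbundle $\mathcal{N}$ of dimension at most $g$ can remain transverse to a fixed Lagrangian $L$ for all $t$. You assert an incompatibility (``a vanishing-exponent direction cannot be contained in $L$'') which, if true, would only show $\mathcal{N}\cap L=\{0\}$ --- that is the \emph{absence} of a contradiction, not its presence. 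To make this work you would need a forced nontrivial intersection, a dimension count, or a rank argument, and none is supplied. Forni's actual route avoids the issue: he expresses the partial sums $\lambda_1+\cdots+\lambda_k$ as $\nu_{\mathcal{M}}$-integrals of quantities built from the spectrum of $B_\omega^*B_\omega$, observes that $\operatorname{rank}B_\omega$ is lower semicontinuous and $SL(2,\R)$-invariant, uses the cylinder configuration to produce a point of the $SL(2,\R)$-orbit at which the rank is maximal, and then, since $(M,\omega)\in\operatorname{supp}\nu_{\mathcal{M}}$, concludes that the full-rank locus has positive $\nu_{\mathcal{M}}$-measure, whence $\lambda_g>0$. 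To repair your proof you would have to replace the invariant-subbundle contradiction by some such integral or rank argument.
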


Let $\Lambda\subset\C$ be a lattice and ${w}\in\Lambda$ a non-zero vector.
Let us fix a unit vector ${v}\in S^1 \subset \C$ linearly independent from $w$,
a $k$-tuple $\overline{c}=(c_1,\ldots,c_k)$ of different points on the torus
$\C/\Lambda$ and a $k$-tuple $\overline{r}=(r_1,\ldots,r_k)$ of positive numbers. Denote by $q_{v,\overline{c},\overline{r}}$ the quadratic differential on the torus $\C/\Lambda$ arising from the $k$ slit-folds parallel to $v$, centered at points $c_1,\ldots,c_k\in \C/\Lambda$ and with radii
$r_1,\ldots,r_k$ respectively. If all slit-folds are pairwise disjoint then $q_{v,\overline{c},\overline{r}}\in\mathcal{Q}((-1)^{2k},2^{k})$.

For every $1\leq j\leq k$ denote by $S_j(w)\subset \C/\Lambda$ the shadow of the $j$-th slit in the direction $w$, i.e.\ $S_j(w)=\{c_j+sv+tw: s\in[-r_j,r_j],t\in[0,1]\}$. A quadratic differential $q_{v,\overline{c},\overline{r}}$ is called \emph{separated by the vector ${w}\in \Lambda$}, if
each shadow $S_j(w)$ is a proper cylinder (not the whole torus) and any two different shadows $S_j(w)$, $S_{j'}(w)$ are either pairwise disjoint or
the centers $c_j$, $c_{j'}$ lie on the same linear loop parallel to the vector ${w}\in\Lambda$.

\begin{lemma}\label{lem:posexp}
If $q_{v,\overline{c},\overline{r}}$ is a quadratic differential on $\C/\Lambda$ which is separated by a non-zero vector ${w}\in\Lambda$ then
the Lyapunov exponent  $\lambda_{top}(q_{v,\overline{c},\overline{r}})$ is positive.
\end{lemma}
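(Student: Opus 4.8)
The plan is to pass to the orientation cover of $(X,q):=(\C/\Lambda,q_{v,\overline{c},\overline{r}})$ and to verify Forni's criterion, Proposition~\ref{forni:crit}, for this cover in the direction $w$. Let $(M,\omega)$ be the orientation cover, with branched covering map $\pi:M\to X$. The only odd-order singularities of $q$ are the $2k$ simple poles at the doubled centres of the $k$ slit-folds, so $\pi$ is a double cover branched over exactly these $2k$ points, and since $X$ is a torus Riemann--Hurwitz gives $\chi(M)=-2k$, i.e.\ $M$ has genus $g_M=k+1$. Put $\mathcal{M}=\overline{SL(2,\R)\omega}$. Recall that $\lambda_{top}(q)$ is the largest Lyapunov exponent, with respect to $\nu_{\mathcal{M}}$, of the KZ-cocycle restricted to the $SL(2,\R)$-invariant subbundle $\mathcal{H}_1^+\subset\mathcal{H}_1(M,\R)$; since the Lyapunov spectrum of a continuous sub-cocycle is contained in that of the ambient cocycle, it suffices to prove that \emph{all} Lyapunov exponents of $(G_t^{\mathcal{H}_1})_{t\in\R}$ with respect to $\nu_{\mathcal{M}}$ are positive. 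Replacing $\omega$ by $r_\theta\omega$ for a suitable $\theta$ (which changes neither $\mathcal{M}$ nor the cocycle) we may assume $w$ is the vertical direction.

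First I would show that the vertical foliation $\mathcal{F}_w$ on $(X,q)$ is completely periodic. A regular $w$-leaf meeting no slit is a closed loop of the linear foliation of $\C/\Lambda$ in the lattice direction $w$. If a regular $w$-leaf meets a slit, then flowing backwards along $w$ returns it to that slit, so it is contained in the shadows $S_j(w)$ of the slits it meets; by the separation hypothesis these shadows are embedded cylinders and the slits involved have centres on one common $w$-loop, hence the same $v$-coordinate, so every fold performed by the leaf is the reflection $s\mapsto -s$ in the common centre. Therefore after one turn around $w$ the leaf undergoes a fixed number of such reflections, and closes up after one or two turns. Hence $\mathcal{F}_w$ is completely periodic on $X$; since the vertical foliation of $(M,\omega)$ is $\pi^{\ast}\mathcal{F}_w$ and regular vertical leaves on $M$ project to regular $w$-leaves on $X$, every regular vertical orbit of $(M,\omega)$ is periodic.

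It remains to exhibit $g_M=k+1$ periodic vertical orbits on $M$ whose complement is homeomorphic to the $2(k+1)$-holed sphere. Cutting a closed orientable genus-$g_M$ surface along $g_M$ disjoint simple closed curves produces precisely such a planar surface as soon as the curves remain jointly non-separating, i.e.\ their classes in $H_1(M;\Z/2\Z)$ are linearly independent (this is standard, from the Euler-characteristic count). So it suffices to find $k+1$ disjoint closed vertical orbits on $M$ with linearly independent $\Z/2\Z$-classes. Since the $w$-cylinder decomposition of $(M,\omega)$ contains at least one cylinder over the ``bulk'' of the torus together with at least one cylinder over each slit's shadow, I would take $\mathscr{O}_0$ to be the core of a bulk cylinder and, for $j=1,\dots,k$, take $\mathscr{O}_j$ to be a closed vertical leaf inside the shadow cylinder $S_j(w)$. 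Linear independence of the classes $[\mathscr{O}_0],[\mathscr{O}_1],\dots,[\mathscr{O}_k]$ is then a homology computation on the branched double cover, using intersection numbers with suitable dual curves: a curve dual to the core of the torus, and, for each $j$, the preimage $\widetilde{\delta}_j\subset M$ of a short arc joining the two simple poles of the $j$-th slit-fold, which is crossed by $\mathscr{O}_j$ but missed by $\mathscr{O}_0$ and by all $\mathscr{O}_{j'}$ with $j'\neq j$. With the hypotheses of Proposition~\ref{forni:crit} verified, all Lyapunov exponents of the KZ-cocycle of $(M,\omega)$ are positive, and in particular $\lambda_{top}(q_{v,\overline{c},\overline{r}})>0$.

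The hard part will be the last step: the topological bookkeeping on the orientation cover --- controlling the cylinder decomposition of the completely periodic direction $w$ (notably in the degenerate case where several shadows overlap along a common $w$-loop and the cylinders become nested by radius, so that some core leaves wind around $w$ only once), describing the branch curves $\widetilde{\delta}_j$ precisely, and checking the $\Z/2\Z$-independence of the $k+1$ selected orbits. The Riemann--Hurwitz genus count and the reduction of Forni's criterion to ``$k+1$ jointly non-separating vertical orbits'' are routine.
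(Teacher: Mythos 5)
Your high–level strategy coincides with the paper's: pass to the orientation cover $(M,\omega)$ of $(\C/\Lambda,q_{v,\overline{c},\overline{r}})$, normalise $w$ to the vertical, compute $g_M=k+1$ by Riemann--Hurwitz, observe that the separation hypothesis forces complete periodicity of the vertical foliation, and then invoke Proposition~\ref{forni:crit}. Your reformulation of the last requirement (``$g_M$ disjoint periodic orbits whose complement is a $2g_M$-holed sphere'') as ``$g_M$ disjoint orbits with $\Z/2\Z$-linearly independent homology classes'' is a correct and arguably cleaner way to state the target: for disjoint simple closed curves on a closed orientable genus $g$ surface, connectivity of the complement is equivalent to $\Z/2\Z$-independence, and connectivity plus the Euler-characteristic count forces genus zero. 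So the reduction is sound.

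The gap is in the verification, which you yourself flag as ``the hard part,'' and your proposed orbits and dual curves would not close it as described. A shadow leaf $\mathscr{O}_j$ crosses the $j$-th slit-fold at the two symmetric parameters $\pm s$ in one period (and likewise crosses every other slit-fold of the same clique that shadows it an even number of times). Consequently its mod-$2$ intersection number with any arc $\delta_{j}$ lying in a slit (and hence with the loop $\widetilde{\delta}_j$ double-covering it in $M$) is $0$, so the dual system you sketch cannot detect these classes; in fact $\pi_*[\mathscr{O}_j]$ is twice the $w$-loop, hence $0$ in $H_1(X;\Z/2\Z)$, which warns that the distinctions you need live entirely in the anti-invariant part and require a finer argument. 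There is also a second subtlety you allude to but do not resolve: when a clique contains several slit-folds the shadows need only share the $v$-coordinate of their centres, not be nested by radius, so ``a closed leaf inside the shadow cylinder $S_j(w)$'' may not single out one cylinder per slit-fold, and the count of regular cylinders inside the shadows does not transparently match $k$. The paper sidesteps both issues by choosing a different family of curves --- the two preimages $\mathscr{O}^{\pm}_j$ of the $N$ separating $w$-loops together with the loops $\mathscr{O}_{j,l}$ made of a pair of saddle segments through the branch points (one segment on each sheet) --- and by then directly exhibiting the complement as a chain of punctured annuli, with an explicit case analysis on the parity of the number of cliques $N$ to decide which $k+1$ of the $k+2N$ orbits to remove. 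If you want to keep the homological formulation, you should use orbits of the paper's second type (through the branch points), for which the relevant intersection numbers are odd rather than even, and you will still need to argue separately for the parity of $N$ or otherwise handle the chaining globally.
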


\begin{proof}
Without loss of generality we may assume $\Lambda=\Z^2$, so $w=(0,1)$ and $v=(1,0)$.
This assumption simplifies the argument. Let us divide the slit centers into $N$ cliques ($1\leq N\leq k$). Centers that lie on the same vertical linear loop are in a clique. Denote by $x_1,\ldots,x_N\in \R/\Z$ the horizontal coordinates of the cliques so that $x_1<x_2<\ldots<x_N<x_1+1$. We will also need cliques of the corresponding slit-folds; two slit-folds are in the same clique, if and only if their shadows in the vertical direction intersect, see Figure~\ref{sep_clique}.

Suppose that the $j$-th clique contains $m_j\geq 1$ slit-folds centered at $c_{j,l}:=(x_j,y_{j,l})\in\C/\Lambda$ for $1\leq l\leq m_j$ so that $y_{j,1}<y_{j,2}<\ldots<y_{j,m_j}<y_{j,1}+1$.
Then $\sum_{j=1}^Nm_j=k$.

Since the quadratic differential is separated by the vertical direction, there are exactly $N$ vertical linear loops that separate the cliques of slit-folds.
For $1\leq j\leq N$, denote by $s_j$ a vertical upward-oriented linear loop separating the $j$-th and
$(j+1)$-th cliques of slit-folds, see Figure~\ref{sep_clique}. We adopt throughout the periodicity convention that the $(N+1)$-th clique is the first one, i.e.\ $x_{N+1}=x_1$.

Let $(M_{v,\overline{c},\overline{r}},\omega_{v,\overline{c},\overline{r}})$ be the orientation cover of 
$(\C/\Lambda,q_{v,\overline{c},\overline{r}})$.
Using Forni's criterion we will show that all Lyapunov exponents of $\omega_{v,\overline{c},\overline{r}}$ are positive. This implies the positivity of $\lambda_{top}(q_{v,\overline{c},\overline{r}})$.
Let $\pi: (M_{v,\overline{c},\overline{r}},\omega_{v,\overline{c},\overline{r}})\to (\C/\Lambda,q_{v,\overline{c},\overline{r}})$ be the natural projection.
Then the holomorphic one form $\omega_{v,\overline{c},\overline{r}}$ lies in  $\mathcal{M}(1^{2k})$ and the genus of $M_{v,\overline{c},\overline{r}}$ is $k+1$. More geometrically, $M_{v,\overline{c},\overline{r}}$
is the translation surface made of two copies of a slitted torus $\C/\Lambda$ (denoted by $\T_+$ -- left; and $\T_-$ -- right), where the slits replace the slit-folds on $(\C/\Lambda,q_{v,\overline{c},\overline{r}})$, see Figure~\ref{sep_clique}.
Let $\sigma:M_{v,\overline{c},\overline{r}}\to M_{v,\overline{c},\overline{r}}$ be the involution that exchanges the slitted tori $\T_+$ and $\T_-$ by translation. Finally, each side of any slit on $\T_+$ and $\T_-$
is glued to its $\sigma$-image by a 180 degree rotation. Denote by $\pi^{-1}_{\pm}:\C/\Lambda\to\T_\pm$ the two branches of the inverse of $\pi$.

\begin{figure}[!htb]
\centering
\includegraphics[width=1\textwidth]{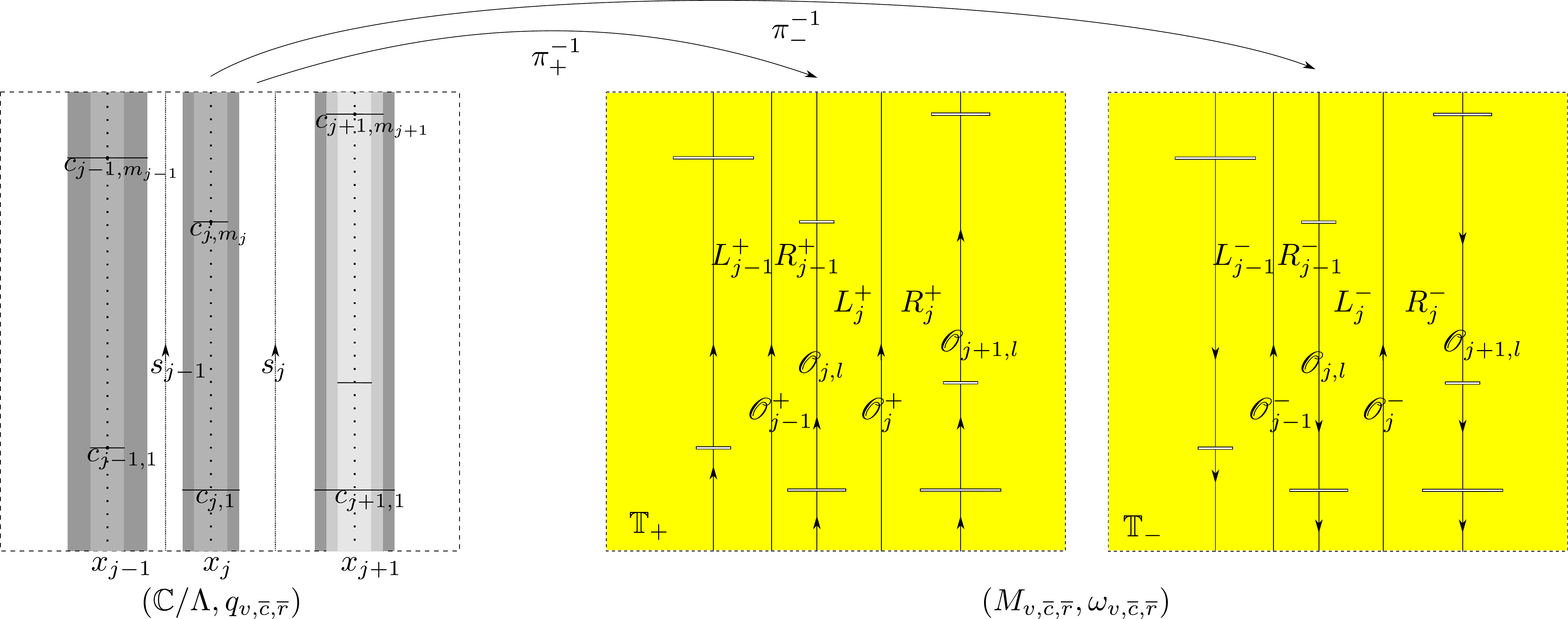}
\caption{The half-translation surface $(\C/\Lambda,q_{v,\overline{c},\overline{r}})$ and its orientation cover $(M_{v,\overline{c},\overline{r}},\omega_{v,\overline{c},\overline{r}})$.}
\label{sep_clique}
\end{figure}

Note that all regular vertical orbits on $(M_{v,\overline{c},\overline{r}},\omega_{v,\overline{c},\overline{r}})$ are periodic. We distinguish $k+2N$ such orbits:
\begin{itemize}
\item for every $1\leq j\leq N$ let $\mathscr{O}^{\pm}_j=\pi^{-1}_{\pm}(s_j)$;
\item for every $1\leq j\leq N$ and $1\leq l\leq m_j$ the orbit $\mathscr{O}_{j,l}$ is made of two vertical segments:
 the first one joins $\pi^{-1}_+(c_{j,l})$ and $\pi^{-1}_+(c_{j,l+1})$ inside $\T_+$
 and the second one joins $\pi^{-1}_-(c_{j,l+1})$ and $\pi^{-1}_-(c_{j,l})$ inside $\T_-$ (we adopt the convention that $c_{j,m_j+1}=c_{j,1}$).
\end{itemize}
Since $\pi^{-1}_+(c_{j,l})=\pi^{-1}_-(c_{j,l})$ in $M_{v,\overline{c},\overline{r}}$, the above two segments together yield a periodic orbit $\mathscr{O}_{j,l}$.

From these $k+2N$ periodic orbits we choose $k+1$, so that the surface obtained after removing the distinguished  $k+1$ orbits from $M_{v,\overline{c},\overline{r}}$ is homeomorphic to the $2(k+1)$-punctured sphere.
The choice of the periodic orbits depends on the parity of $N$. At first let us look at the surface
\[\underline{M}:=M_{v,\overline{c},\overline{r}}\setminus\Big(\bigcup_{j=1}^N\mathscr{O}^{+}_j\cup\bigcup_{j=1}^N\mathscr{O}^{-}_j \cup\bigcup_{j=1}^N\bigcup_{l=1}^{m_j}\mathscr{O}_{j,l}\Big).\]
For every $1\leq j\leq N$ let $R_j^{\pm}$ be the region of $\T_{\pm}$ that is bounded by the orbit $\mathscr{O}^{\pm}_j$ and the union $\bigcup_{l=1}^{m_{j+1}}\mathscr{O}_{j+1,l}$, see Figure~\ref{sep_clique}. Similarly,
$L_j^{\pm}$ is the region of $\T_{\pm}$ bounded by the orbit $\mathscr{O}^{\pm}_j$ and the union $\bigcup_{l=1}^{m_j}\mathscr{O}_{j,l}$. Then $\underline{M}$ is the union of $2N$ connected components and each such component $A_j^{\pm}$ is the union of $L_j^{\pm}\cup R_{j-1}^{\mp}$ for $1\leq j\leq N$; where we adopt the convention that $R_{0}^{\pm}=R_{N}^{\pm}$. The component $A_j^{\pm}$ is homeomorphic to the $m_{j}$-punctured annulus ($(m_{j}+2)$--punctured sphere)
and its boundary consists of orbits $\mathscr{O}^{\pm}_j$, $\mathscr{O}^{\mp}_{j-1}$ and $\mathscr{O}_{j,l}$ for $1\leq l\leq m_{j}$, see Figure~\ref{annulus}.

\begin{figure}[!htb]
\centering
\includegraphics[width=0.3\textwidth]{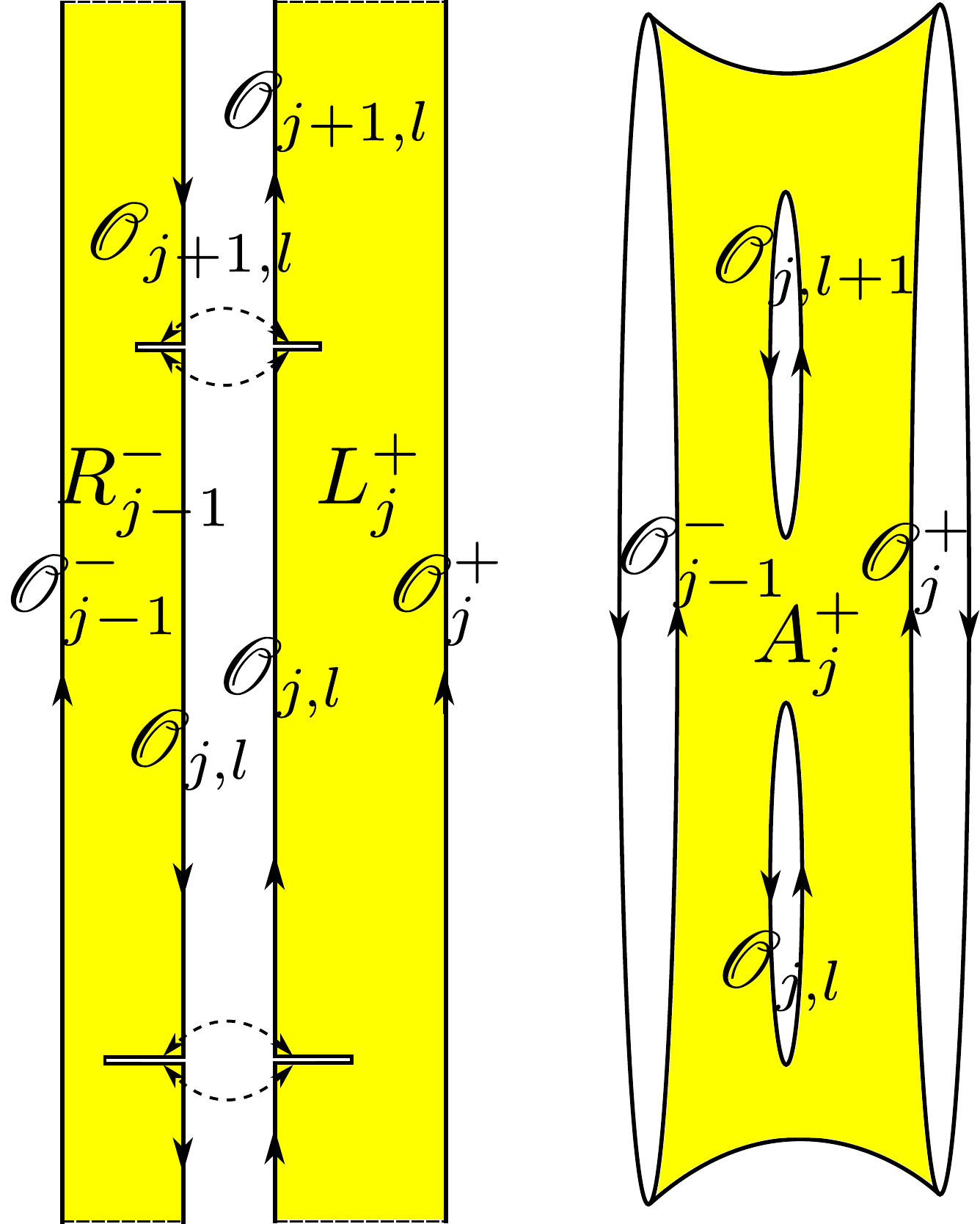}
\caption{The annulus  $A_j^+$.}
\label{annulus}
\end{figure}

\textbf{Odd case.} If $N$ is odd then we take:
$\mathscr{O}^{+}_1$ and $\mathscr{O}_{j,l}$ for $1\leq j\leq N$ and $1\leq l\leq m_{j}$. Since $\sum_{j=1}^Nm_j=k$, this yields
a family of $k+1$ vertical periodic orbits. Then the surface
\[\underline{M}_1:=M_{v,\overline{c},\overline{r}}\setminus\Big(\mathscr{O}^{+}_1\cup\bigcup_{j=1}^N\bigcup_{l=1}^{m_j}\mathscr{O}_{j,l}\Big)\]
is made of the punctured annuli $A^+_j$, $A^-_j$, $1\leq j\leq N$ glued along the loops $\mathscr{O}^{+}_j$ for $2\leq j\leq N$ and $\mathscr{O}^{-}_j$ for $1\leq j\leq N$.
Each such gluing yields a pattern $A^+_{j}, \mathscr{O}^{+}_{j},  A^-_{j+1}$ or $A^-_{j}, \mathscr{O}^{-}_{j},  A^+_{j+1}$; we adopt the convention that $A^\pm_{N+1}=A^{\pm}_1$.
Since $N$ is odd, all such junctures taken together are arranged in the following pattern:
\[ A^-_2,\mathscr{O}^{-}_{2}, A^+_3,  \ldots, A^{-}_{N-1}, \mathscr{O}^{-}_{N-1},  A^+_N, \mathscr{O}^{+}_{N}, A^-_1,\mathscr{O}^{-}_{1}, A^+_2, \ldots,A^{+}_{N-1},  \mathscr{O}^{+}_{N-1},  A^-_N, \mathscr{O}^{-}_{N},  A^+_1.\]
Since each annulus $A_j^{\pm}$ has $m_{j}$ punctures and appears in the above sequence exactly once, it follows that $\underline{M}_1$ is an annulus with $2\sum_{j=1}^N m_j=2k$ punctures. Therefore, $\underline{M}_1$ is homeomorphic to the $2(k+1)$-punctured sphere.

\textbf{Even case.} If $N$ is even then we take $k+1$ vertical periodic orbits: $\mathscr{O}^{+}_1$, $\mathscr{O}^{-}_1$,  $\mathscr{O}_{1,l}$ for  $2\leq l\leq m_{j}$ and $\mathscr{O}_{j,l}$ for $2\leq j\leq N$ and $1\leq l\leq m_{j}$.  Then the surface
\[\underline{M}_2:=M_{v,\overline{c},\overline{r}}\setminus\Big(\mathscr{O}^{+}_1\cup\mathscr{O}^{-}_1\cup\bigcup_{l=2}^{m_1}\mathscr{O}_{1,l}\cup\bigcup_{j=2}^N\bigcup_{l=1}^{m_j}\mathscr{O}_{j,l}\Big)\]
is made of the punctured annuli $A^+_j$, $A^-_j$, $1\leq j\leq N$ glued along the loops $\mathscr{O}^{+}_j$, $\mathscr{O}^{-}_j$ for $2\leq j\leq N$ and $\mathscr{O}_{1,1}$.
Each such gluing yields a pattern $A^+_{j}, \mathscr{O}^{+}_{j},  A^-_{j+1}$ or $A^-_{j}, \mathscr{O}^{-}_{j},  A^+_{j+1}$ or $A^{+}_{1},\mathscr{O}_{1,1},A^{-}_{1}$.
Since $N$ is even, all such junctures together are arranged in the following pattern:
\[ A^-_2,\mathscr{O}^{-}_{2},  A^+_{3},  \ldots,  A^+_{N-1}, \mathscr{O}^{+}_{N-1}, A^-_{N}, \mathscr{O}^{-}_{N},  A^+_1,\mathscr{O}_{1,1},A^-_1,\mathscr{O}^{+}_{N}, A^+_N,\mathscr{O}^{-}_{N-1},  A^+_{N-1},  \ldots \]
\[ \ldots , A^-_{3}, \mathscr{O}^{+}_{2},  A^+_2.\]
Since each annulus $A_j^{\pm}$ has $m_{j}$ punctures and appears in the above sequence exactly once, it follows that $\underline{M}_2$ is an annulus with $2\sum_{j=1}^N m_j=2k$ punctures. Therefore, $\underline{M}_1$ is homeomorphic to the $2(k+1)$-punctured sphere.

Applying Proposition~\ref{forni:crit} to the translation surface $(M_{v,\overline{c},\overline{r}},\omega_{v,\overline{c},\overline{r}})$ then yields the positivity of all Lyapunov exponents of  $\omega_{v,\overline{c},\overline{r}}$, and finally the positivity of  $\lambda_{top}(q_{v,\overline{c},\overline{r}})$.
\end{proof}

Lemma~\ref{lem:posexp} combined with Proposition~\ref{prop:trap} leads to a trapping criterion for slit-folds systems $\widetilde{q}_{v,\overline{c},\overline{r}}$. Recall that
$\widetilde{q}_{v,\overline{c},\overline{r}}$ is the half-translation structure on $\C$ given by the system of slit-folds parallel to the vector $v$, centered at $\{c_1,\ldots,c_k\}+\Lambda$ and whose radii are $r_1,\ldots,r_k$ respectively.
\begin{corollary}\label{cor:trap}
If $q_{v,\overline{c},\overline{r}}$ is a quadratic differential on $\C/\Lambda$ which is separated by a non-zero vector ${w}\in\Lambda$ then
the measured foliation $\widetilde{\mathcal{F}}_{\theta}$ of $(\C,\widetilde{q}_{v,\overline{c},\overline{r}})$ is trapped for almost every $\theta\in\R/\pi\Z$.
\end{corollary}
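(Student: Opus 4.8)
The plan is to obtain this corollary as an immediate combination of Lemma~\ref{lem:posexp} and Proposition~\ref{prop:trap}. First I would note that, by the construction recalled at the beginning of Section~\ref{sec:typtrap}, the half-translation surface $(\C,\widetilde{q}_{v,\overline{c},\overline{r}})$ is precisely the $\Z^2$-cover of the torus half-translation surface $(\C/\Lambda,q_{v,\overline{c},\overline{r}})$ associated with the projection $p:\C\to\C/\Lambda$; in particular it is exactly the object to which Proposition~\ref{prop:trap} refers when that statement is applied to the quadratic differential $q=q_{v,\overline{c},\overline{r}}$ on $\C/\Lambda$.

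Next, since by hypothesis $q_{v,\overline{c},\overline{r}}$ is separated by the non-zero vector $w\in\Lambda$, Lemma~\ref{lem:posexp} applies verbatim and shows that the top Lyapunov exponent $\lambda_{top}(q_{v,\overline{c},\overline{r}})$ of the orientation cover of $(\C/\Lambda,q_{v,\overline{c},\overline{r}})$ is strictly positive. (Internally, Lemma~\ref{lem:posexp} produces on the orientation cover a suitable family of vertical periodic orbits whose complement is a punctured sphere, so that Forni's criterion, Proposition~\ref{forni:crit}, forces all Kontsevich--Zorich exponents to be positive; nothing further needs to be redone here.)

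Finally, I would invoke Proposition~\ref{prop:trap} with $q=q_{v,\overline{c},\overline{r}}$: since $\lambda_{top}(\C/\Lambda,q_{v,\overline{c},\overline{r}})>0$, it gives that for almost every $\theta\in\R/\pi\Z$ the measured foliation $\widetilde{\mathcal{F}}_{\theta}$ on $(\C,\widetilde{q}_{v,\overline{c},\overline{r}})$ is trapped in the sense of Section~\ref{sec:typtrap}, which is exactly the assertion of the corollary. There is no genuine obstacle at this point: all the substantive work has already been carried out in Lemma~\ref{lem:posexp} and in the chain of statements (the two unnumbered propositions together with Proposition~\ref{prop:trap}) that reduce trapping on the $\Z^2$-cover to positivity of $\lambda_{top}$. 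The only thing to verify is the purely bookkeeping point that the separation hypothesis is exactly what Lemma~\ref{lem:posexp} requires, and that the cover appearing in the corollary coincides with the one appearing in Proposition~\ref{prop:trap}.
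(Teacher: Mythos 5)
Your proposal is correct and coincides with the paper's own (implicit) argument: the paper derives Corollary~\ref{cor:trap} precisely by combining Lemma~\ref{lem:posexp} (positivity of $\lambda_{top}$ under the separation hypothesis) with Proposition~\ref{prop:trap} (positivity of $\lambda_{top}$ implies trapping on the $\Z^2$-cover). Your bookkeeping check that the cover in the corollary is the one appearing in Proposition~\ref{prop:trap} is the only step not spelled out in the text, and you handle it correctly.
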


Let $S$ be an infinite system of Eaton lenses on $\C$ and let $\theta\in\R/\pi\Z$. Then $\mathscr{P}_{S,\theta}$ is an invariant set for the geodesic flow consisting of four copies of each lens and two copies of the complement of the lenses with planar geometry. This gives a natural projection $\pi_{S,\theta}:\mathscr{P}_{S,\theta}\to\C$ associating
the footpoint (in  $\C$) to any unit tangent vector in $\mathscr{P}_{S,\theta}$.
We call the geodesic flow on $\mathscr{P}_{S,\theta}$ trapped if
\[\exists_{C>0}\ \exists_{{u}\in\C, |{u}|=1}\ \forall_{t\in\R}\ \forall_{x\in\mathscr{P}_{S,\theta}}\quad|\langle\pi_{S,\theta}(\mathcal{G}^\mathcal{S,\theta}_tx)-\pi_{S,\theta}(x),{u}\rangle|\leq C.\]

\begin{remark}\label{rem:meas}
Note that the geodesic flow on $\mathscr{P}_{S,\theta}$ is trapped, if and only if
\begin{align*}\exists_{0<C\in\Q}\ \forall_{N\in\N}\ \exists_{{u}_N\in\Q\times\Q, 1\leq|{u}_N|\leq 2}\ \forall_{t\in\Q\cap[-N,N]}\ \forall_{y\in \Q\times\Q}\\ (\pi_{S,\theta}(x)=y)\ \Longrightarrow\ |\langle\pi_{S,\theta}(\mathcal{G}^\mathcal{S,\theta}_tx)-y,{u}_N\rangle|\leq C.
\end{align*}
Moreover, the geodesic flow on $\mathscr{P}_{S,\theta}$ is trapped, if and only if the direction $\theta$ foliation on the corresponding slit-fold plane is trapped.
\end{remark}

Let $\Lambda$ be a lattice on $\C$ and let $\overline{c}=(c_1,c_2,\ldots,c_k)\in \C^k$ be a vector  such that
the points $c_j+w$ are pairwise distinct for $1\leq j\leq k$ and ${w}\in\Lambda$. Each such vector is called \emph{proper}.
A vector of radii $\overline{r}=(r_1,r_2,\ldots,r_k)\in \R_{>0}^k$
is called $(\Lambda,\overline{c})$-\emph{admissible} if $\text{dist} (c_i+\Lambda, c_j+\Lambda)> r_i+r_j$ for $i\neq j$.
Admissibility guarantees that Eaton lenses of radius $r_j$ centered at $c_j+\Lambda$ for $1 \leq j\leq k$
do not intersect. Recall, that such a $\Lambda$-periodic system of Eaton lenses is denoted by $L(\Lambda,\overline{c},\overline{r})$.
Of course, the set of $(\Lambda,\overline{c})$-admissible vectors is open in $\R^k$.

Let $\mathcal{A}=\{A_1,\ldots, A_m\}$ be a partition of $\{1,\ldots,k\}$. Then for every $\overline{r}\in\R^k$ and $\overline{x}\in\R^m$
denote by $\overline{r}(\overline{x})$ the vector in $\R^k$
defined by $\overline{r}(\overline{x})_j=x_lr_j$ whenever $j\in A_l$. In particular, taking  $\overline{x}=\overline{1}=(1,\ldots,1)\in \R^k$ gives $\overline{r}(\overline{1})=\overline{r}$.

Denote by $\text{Adm}_{\Lambda,\overline{c},\mathcal{A}}\subset \R^m_{>0}$ the set of all $\overline{x}\in \R^m_{>0}$ such that the vector $\overline{1}(\overline{x})$ is $(\Lambda,\overline{c})$-admissible. This is a non-empty open subset.

\begin{theorem}\label{thm:pertnonerg}
Suppose that a vector  $\overline{r}_0\in\R_{>0}^k$ is $(\Lambda,\overline{c})$-admissible. Then for every $\theta_0\in\R/\pi\Z$
there exists an open neighbourhood $U$ of $(\overline{1},\theta_0)$ in $\R_{>0}^m\times \R/\pi\Z$ such that for almost every $(\overline{x},\theta)\in U$
the vector $\overline{r}_0(\overline{x})$ is  $(\Lambda,\overline{c})$-admissible and
the geodesic flow on $\mathscr{P}_{L(\Lambda,\overline{c},\overline{r}_0(\overline{x})),\theta}$ is trapped,
and hence non-ergodic.
\end{theorem}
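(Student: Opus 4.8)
The plan is to reduce, via the Eaton--slit-fold dictionary, the geodesic flow on $\mathscr{P}_{L(\Lambda,\overline{c},\overline{r}_0(\overline{x})),\theta}$ to a directional foliation on the $\Z^2$-cover $\C$ of a torus quadratic differential with positive top Lyapunov exponent, so that Corollary~\ref{cor:trap} applies; the delicate point is the measure-theoretic bookkeeping when $m<k$.

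\textbf{Choice of a slit direction and a separating vector.} Fix $\theta_0$. Since lattice directions are dense, pick a non-zero $w\in\Lambda$ with $\arg w$ as close as needed to $\theta_0+\pi/2$, and then a unit vector $v_0=e^{i\alpha_0}$ with $\alpha_0\not\equiv\theta_0\pmod\pi$ and $\alpha_0$ so close to $\arg w$ (and otherwise generic) that every shadow in direction $w$ of a slit-fold parallel to $v_0$ of radius near $r_{0,j}$ is a proper cylinder and two such shadows are disjoint unless their centres share a $w$-loop; then $q_{v_0,\overline{c},\overline{r}}$ is separated by $w$ for every $\overline{r}$ near $\overline{r}_0$. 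Because $v_0$ is almost perpendicular to $\theta_0$, for $\theta$ near $\theta_0$ a slit-fold parallel to $v_0$ of half-length $r_{0,j}/|\sin(\alpha_0-\theta)|$ is barely longer than a diameter of the disc of radius $r_{0,j}$; as admissibility is an open condition, there is a neighbourhood $U$ of $(\overline{1},\theta_0)$ on which $\overline{r}_0(\overline{x})$ stays $(\Lambda,\overline{c})$-admissible, the slit-folds parallel to $v_0$ with radii $\overline{\rho}(\overline{x},\theta):=\overline{r}_0(\overline{x})/|\sin(\alpha_0-\theta)|$ stay pairwise disjoint (defining $q_{v_0,\overline{c},\overline{\rho}(\overline{x},\theta)}$), and, by the correspondence of Appendix~\ref{system_of_Eaton_lenses} together with a railed deformation pushing these slit-folds onto the ones perpendicular to $\theta$ without crossing the others, the geodesic flow on $\mathscr{P}_{L(\Lambda,\overline{c},\overline{r}_0(\overline{x})),\theta}$ is measure-theoretically isomorphic to $\widetilde{\mathcal{F}}_\theta$ on $(\C,\widetilde{q}_{v_0,\overline{c},\overline{\rho}(\overline{x},\theta)})$; by Remark~\ref{rem:meas} one is trapped iff the other is.

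\textbf{Positivity.} ``Separated by $w$'' is an open condition, so after shrinking $U$, Lemma~\ref{lem:posexp} gives $\lambda_{top}(q_{v_0,\overline{c},\overline{\rho}})>0$ for every radius vector $\overline{\rho}$ arising above; Corollary~\ref{cor:trap} then says that for each such $\overline{\rho}$ the foliation $\widetilde{\mathcal{F}}_\theta$ on $(\C,\widetilde{q}_{v_0,\overline{c},\overline{\rho}})$ is trapped for a.e.\ $\theta\in\R/\pi\Z$.

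\textbf{Bookkeeping, and the main obstacle.} Let $\mathcal{B}$ be the set of $(\overline{\rho},\theta)$ with $\overline{\rho}$ in the relevant neighbourhood and $\widetilde{\mathcal{F}}_\theta$ on $(\C,\widetilde{q}_{v_0,\overline{c},\overline{\rho}})$ not trapped; by the previous step every $\overline{\rho}$-slice of $\mathcal{B}$ is Lebesgue-null, so $\mathcal{B}$ is null. The bad set for the theorem is $\Phi^{-1}(\mathcal{B})$ for $\Phi(\overline{x},\theta)=(\overline{\rho}(\overline{x},\theta),\theta)$. When $m=k$ the map $\Phi$ is a local diffeomorphism, so $\Phi^{-1}(\mathcal{B})$ is null and the proof is complete. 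When $m<k$ the image of $\Phi$ has positive codimension and this fails; I would instead fix $\overline{x}$, prove that $\widetilde{\mathcal{F}}_\theta$ on $(\C,\widetilde{q}_{v_0,\overline{c},\overline{\rho}(\overline{x},\theta)})$ is trapped for a.e.\ $\theta$ near $\theta_0$, and then conclude by Fubini in $\overline{x}$. By the propositions of this section (and the positivity above) this fixed-$\overline{x}$ statement is equivalent to the assertion that for a.e.\ $\theta$ near $\theta_0$ the direction $\pi/2-\theta$ is Birkhoff--Oseledets--Masur generic for the orientation cover of $q_{v_0,\overline{c},\overline{\rho}(\overline{x},\theta)}$. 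Since that orientation cover varies real-analytically and non-trivially with $\theta$, this does not follow from the fixed-surface genericity results (Theorems~\ref{thm:masur},~\ref{thm:esch}), and proving it is the step I expect to be the main obstacle: I would establish it by a slicing argument in the style of \cite{Fr-Hu}, working in the total space of (translation surface, direction) pairs---where the non-generic set is null on every fibre---and showing that for a.e.\ $\overline{x}$ the analytic arc $\theta\mapsto(\text{orientation cover},\,\pi/2-\theta)$ is not contained in it. Granting this, the two Fubini steps finish the proof, and trapping of the geodesic flow gives non-ergodicity at once.
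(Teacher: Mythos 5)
Your overall reduction matches the paper's: pass from Eaton lenses to slit-folds, choose a lattice vector $w$ and a nearby slit direction so that the torus quadratic differential is separated by $w$, invoke Lemma~\ref{lem:posexp} and Corollary~\ref{cor:trap}, and finish by Fubini. You also correctly locate the delicate point: once the slit direction is frozen at some $v_0=ie^{i\theta_2}$, the slit half-lengths $\overline{\rho}(\overline{x},\theta)=\overline{r}_0(\overline{x})/|\sin(\alpha_0-\theta)|$ vary with $\theta$, so the compact surface underlying Corollary~\ref{cor:trap} is not fixed when $\overline{x}$ is fixed, and the naive pullback of a null set through $\Phi$ fails whenever $m<k$. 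But your proposed repair --- a Birkhoff--Oseledets--Masur genericity statement along the analytic arcs $\theta\mapsto(\text{orientation cover of }q_{v_0,\overline{c},\overline{\rho}(\overline{x},\theta)},\,\pi/2-\theta)$ --- is precisely the step you do not prove, and you yourself flag it as the main obstacle. As written, this is a substantive gap, not a bookkeeping detail.

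The obstacle is, however, illusory, and the paper resolves it without any new genericity input. The $\theta$-dependence of $\overline{\rho}(\overline{x},\theta)$ is through a single positive scalar factor (equal to $\sec(\theta-\theta_2)$ in the paper's normalization) that multiplies all $k$ radii uniformly. The key observation is that multiplying every entry of $\overline{r}_0(\overline{x})$ by the same scalar $\lambda>0$ is realized inside the family $\overline{r}_0(\cdot)$ by replacing $\overline{x}$ with $\lambda\overline{x}$ --- and this holds for every partition $\mathcal{A}$, hence for every $m$, not just $m=k$. The paper therefore puts the $\theta$-dependence on the $\overline{x}$-side rather than the $\overline{\rho}$-side: one considers the Eaton radii $\cos(\theta-\theta_2)\sec(\theta_0-\theta_2)\,\overline{r}_0(\overline{x})=\overline{r}_0\bigl(\cos(\theta-\theta_2)\sec(\theta_0-\theta_2)\overline{x}\bigr)$, so that the railed deformation along $\theta$ to slit direction $ie^{i\theta_2}$ lands on the $\theta$-\emph{independent} surface $\widetilde{q}_{ie^{i\theta_2},\overline{c},\sec(\theta_0-\theta_2)\overline{r}_0(\overline{x})}$. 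For that fixed surface Corollary~\ref{cor:trap} gives trapping for a.e.\ $\theta$, Fubini gives trapping for a.e.\ $(\overline{x},\theta)$, and finally the $C^\infty$ diffeomorphism $(\overline{x},\theta)\mapsto(\cos(\theta-\theta_2)\sec(\theta_0-\theta_2)\overline{x},\theta)$, which fixes $(\overline{1},\theta_0)$, transports the full-measure conclusion onto its image $U$. In short: do not freeze $\overline{x}$ and let the surface drift with $\theta$; instead absorb the scalar $\theta$-dependence into a smooth change of variables of $\overline{x}$, which is legitimate precisely because the correction factor is uniform across all radii.
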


\begin{proof}
First we pass to the flat version of any admissible system $L(\Lambda,\overline{c},\overline{r})$ and its geodesic flow in direction $\theta\in\R/\pi\Z$. The resulting object
is the quadratic differential $\widetilde{q}_{e^{i(\theta+\pi/2)},\overline{c},\overline{r}}$ on $\C$ and its foliation  $\widetilde{\mathcal{F}}_{\theta}$. The geodesic flow on $\mathscr{P}_{L(\Lambda,\overline{c},\overline{r}),\theta}$  and
the foliation  $\widetilde{\mathcal{F}}_{\theta}$ are orbit equivalent.

For every $c\in\C$, $r>0$, $\theta\in\R/\pi\Z$ and $\xi\in\R/\pi\Z\setminus\{\theta\pm\pi/2\}$ let
\[\Delta_{c,r}(\theta,\xi)=\big\{c+rte^{i\theta}(s\tan(\theta-\xi)+i):t\in[-1,1],s\in[0,1]\big\}.\]
\begin{figure}[!htb]
\centering
\includegraphics[scale=0.6]{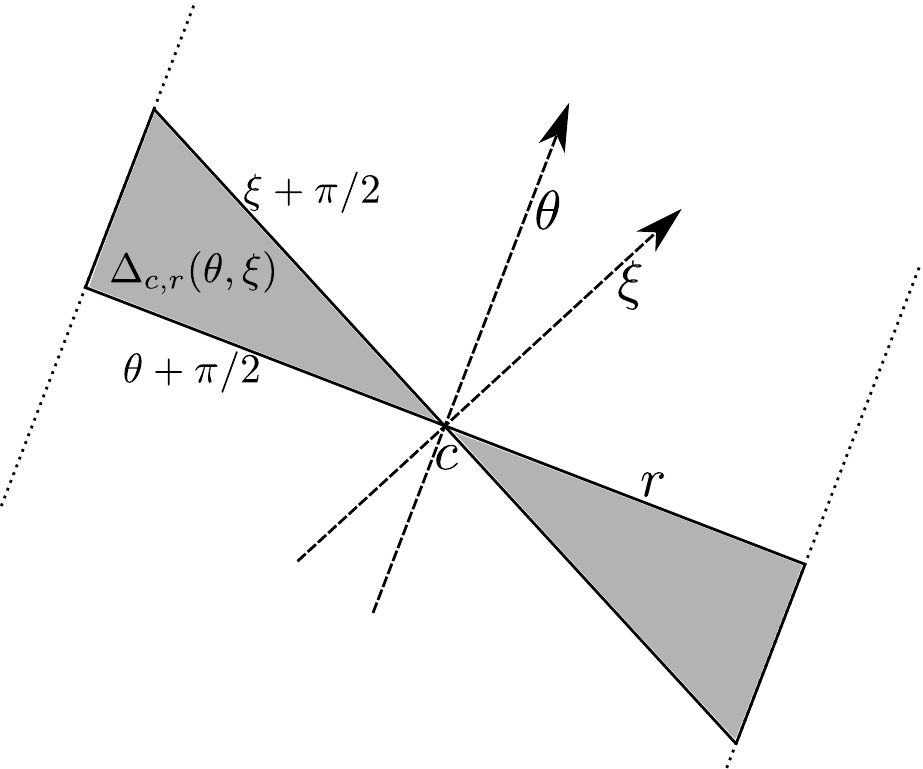}
\caption{The set $\Delta_{c,r}(\theta,\xi)$.}
\label{eaton6}
\end{figure}
Since $\overline{r}_0$ is $(\Lambda,\overline{c})$-admissible, the line segments $w+\Delta_{c_j,(\overline{r}_0)_j}(\theta_0,\theta_0)$ are pairwise disjoint for $1\leq j\leq k$ and ${w}\in\Lambda$.
Therefore we can choose  $\epsilon>0$  such that
for all $\overline{r}\in \overline{r}_0\big((0,(1+\epsilon)\sec\epsilon)^m\big)$  and $\theta,\xi\in(\theta_0-\epsilon,\theta_0+\epsilon)$
the sets $w+\Delta_{c_j,r_j}(\theta,\xi)$ are pairwise disjoint for $1\leq j\leq k$ and ${w}\in\Lambda$. Then $\widetilde{q}_{e^{i(\xi+\pi/2)},\overline{c},\sec(\theta-\xi)\overline{r}}$ is a railed deformation of $\widetilde{q}_{e^{i(\theta+\pi/2)},\overline{c},\overline{r}}$ along the direction $\theta$ and so their foliations in direction $\theta$
are Whitehead equivalent.

Since the set of directions arising from vectors in the lattice $\Lambda$ is dense, there is a vector ${w}\in\Lambda$
such that ${w}/|{w}|=ie^{i\theta_1}$ with $|\theta_1-\theta_0|<\epsilon$. Then all slit-folds
of $\widetilde{q}_{ie^{i\theta_1},\overline{c},\sec(\theta_0-\theta_1)(1+\epsilon)\overline{r}_0}$ are pairwise disjoint and parallel to the vector ${w}\in\Lambda$.
Next, choose a direction $\theta_2\neq \theta_1$  near enough to $\theta_1$ so that $|\theta_2-\theta_0|<\epsilon$ and $\widetilde{q}_{ie^{i\theta_2},\overline{c},\sec(\theta_0-\theta_2)(1+\epsilon)\overline{r}_0}$ is separated by $w\in\Lambda$.
It follows that $\widetilde{q}_{ie^{i\theta_2},\overline{c},\sec(\theta_0-\theta_2)\overline{r}_0(\overline{x})}$ is separated by $w\in\Lambda$ for every $\overline{x}\in(1-\epsilon,1+\epsilon)^m$.
Therefore, by Corollary~\ref{cor:trap}, for every $\overline{x}\in(1-\epsilon,1+\epsilon)^m$ and for a.e.\ $\theta\in (\theta_0-\epsilon,\theta_0+\epsilon)$ the foliation
$\widetilde{\mathcal{F}}_{\theta}$ on $\C$ derived from $\widetilde{q}_{ie^{i\theta_2},\overline{c},\sec(\theta_0-\theta_2)\overline{r}_0(\overline{x})}$
is trapped.

On the other hand  for every $\overline{x}\in(1-\epsilon,1+\epsilon)^m$ and $\theta\in (\theta_0-\epsilon,\theta_0+\epsilon)$  we have
\[\cos(\theta-\theta_2)\sec(\theta_0-\theta_2)\overline{x}\in (0,(1+\epsilon)\sec\epsilon)^m\ \text{ and }\ \theta,\theta_2\in (\theta_0-\epsilon,\theta_0+\epsilon).\]
Hence the quadratic differential
$\widetilde{q}_{ie^{i\theta_2},\overline{c},\sec(\theta_0-\theta_2)\overline{r}_0(\overline{x})}$ is a railed deformation of
$\widetilde{q}_{ie^{i\theta},\overline{c},\cos(\theta-\theta_2)\sec(\theta_0-\theta_2)\overline{r}_0(\overline{x})}$ along the direction $\theta$.
It follows that for every $\overline{x}\in(1-\epsilon,1+\epsilon)^m$ and for a.e.\ $\theta\in (\theta_0-\epsilon,\theta_0+\epsilon)$ the foliation
$\widetilde{\mathcal{F}}_{\theta}$ on $\C$ derived from $\widetilde{q}_{ie^{i\theta},\overline{c},\cos(\theta-\theta_2)\sec(\theta_0-\theta_2)\overline{r}_0(\overline{x})}$ is trapped, and hence
the geodesic flow restricted to  $\mathscr{P}_{L(\Lambda,\overline{c},\cos(\theta-\theta_2)\sec(\theta_0-\theta_2)\overline{r}_0(\overline{x})),\theta}$ is also trapped.

By Remark~\ref{rem:meas}, trapping is a measurable condition. Then a Fubini argument shows, that the geodesic flow on $\mathscr{P}_{L\left(\Lambda,\overline{c},\overline{r}_0\left(\cos(\theta-\theta_2)\sec(\theta_0-\theta_2)\overline{x}\right)\right),\theta}$ is trapped for a.e.\
$(\overline{x},\theta)\in (1-\epsilon,1+\epsilon)^m \times (\theta_0-\epsilon,\theta_0+\epsilon)$. Moreover, the map
\[(\overline{x}, \theta)\mapsto (\cos(\theta-\theta_2)\sec(\theta_0-\theta_2)\overline{x},\theta)\]
on $(1-\epsilon,1+\epsilon)^m \times (\theta_0-\epsilon,\theta_0+\epsilon)$ is a $C^\infty$ diffeomorphism. Denote by $U$ its image which is an open neighborhood of $(\overline{1},\theta_0)$. It follows that $\mathscr{P}_{L(\Lambda,\overline{c},\overline{r}_0(\overline{x})),\theta}$ is trapped for a.e.\ $(\overline{x},\theta)\in U$, which completes the proof.
\end{proof}

As a corollary we obtain the following more general version of Theorem~\ref{thm:typtrap}.

\begin{corollary}
For every lattice $\Lambda\subset\C$, every proper vector of centers $\overline{c}\in \C^k$ and every partition $\mathcal{A}$ of $\{1,\ldots,k\}$ the geodesic flow on $\mathscr{P}_{L(\Lambda,\overline{c},\overline{1}(\overline{r})),\theta}$ is trapped for a.e.\ $(\overline{r},\theta)\in \text{Adm}_{\Lambda,\overline{c},\mathcal{A}}\times\R/\pi\Z$.
\end{corollary}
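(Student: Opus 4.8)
The plan is to deduce this from Theorem~\ref{thm:pertnonerg} by a localization argument together with a linear change of the radii parameters, followed by a countable covering. Fix a partition $\mathcal{A}=\{A_1,\ldots,A_m\}$ of $\{1,\ldots,k\}$ and a proper vector $\overline{c}$, and recall that $\text{Adm}_{\Lambda,\overline{c},\mathcal{A}}$ is the open subset of $\R^m_{>0}$ consisting of those $\overline{r}$ for which $\overline{1}(\overline{r})$ is $(\Lambda,\overline{c})$-admissible. The first thing I would record is the elementary identity $\overline{r}_0(\overline{x})=\overline{1}(\overline{x}\cdot\overline{r}_*)$ whenever $\overline{r}_0=\overline{1}(\overline{r}_*)$ with $\overline{r}_*\in\R^m_{>0}$, where $\overline{x}\cdot\overline{r}_*$ denotes the coordinatewise product in $\R^m$. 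This is immediate from the definition of $\overline{r}\mapsto\overline{r}(\overline{x})$, since both sides equal $x_l(r_*)_l$ on the block $A_l$.

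Next, fix an arbitrary point $(\overline{r}_*,\theta_*)\in \text{Adm}_{\Lambda,\overline{c},\mathcal{A}}\times\R/\pi\Z$. Then $\overline{r}_0:=\overline{1}(\overline{r}_*)\in\R^k_{>0}$ is $(\Lambda,\overline{c})$-admissible, so Theorem~\ref{thm:pertnonerg}, applied with this $\overline{r}_0$ and with $\theta_0=\theta_*$, produces an open neighbourhood $U$ of $(\overline{1},\theta_*)$ in $\R^m_{>0}\times\R/\pi\Z$ and a Lebesgue-null set $N\subset U$ such that for every $(\overline{x},\theta)\in U\setminus N$ the vector $\overline{r}_0(\overline{x})$ is $(\Lambda,\overline{c})$-admissible and the geodesic flow on $\mathscr{P}_{L(\Lambda,\overline{c},\overline{r}_0(\overline{x})),\theta}$ is trapped. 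The map $\Phi\colon(\overline{x},\theta)\mapsto(\overline{x}\cdot\overline{r}_*,\theta)$ is a $C^\infty$ diffeomorphism of $\R^m_{>0}\times\R/\pi\Z$ (the $\overline{x}$-part being the diagonal linear map with positive entries $(r_*)_l$), so $V_{(\overline{r}_*,\theta_*)}:=\Phi(U)$ is an open neighbourhood of $(\overline{r}_*,\theta_*)$ and $\Phi(N)$ is null in it. By the identity above, for $(\overline{r},\theta)=\Phi(\overline{x},\theta)\in V_{(\overline{r}_*,\theta_*)}$ with $(\overline{x},\theta)\notin N$ we have $\overline{1}(\overline{r})=\overline{r}_0(\overline{x})$, which is admissible, and the geodesic flow on $\mathscr{P}_{L(\Lambda,\overline{c},\overline{1}(\overline{r})),\theta}$ is trapped; thus the conclusion of the corollary holds on $V_{(\overline{r}_*,\theta_*)}$ off the null set $\Phi(N)$.

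Finally, the family $\{V_{(\overline{r}_*,\theta_*)}\}$ is an open cover of $\text{Adm}_{\Lambda,\overline{c},\mathcal{A}}\times\R/\pi\Z$, which is a second-countable, hence Lindel\"of, space, so it admits a countable subcover $\{V_n\}_{n\geq 1}$, each carrying its own null exceptional set $N_n$. Since trapping is a measurable condition in the parameters by Remark~\ref{rem:meas}, the set of $(\overline{r},\theta)\in\text{Adm}_{\Lambda,\overline{c},\mathcal{A}}\times\R/\pi\Z$ for which the geodesic flow on $\mathscr{P}_{L(\Lambda,\overline{c},\overline{1}(\overline{r})),\theta}$ fails to be trapped is contained in $\bigcup_n N_n$, a null set, which completes the proof. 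I do not expect a genuine obstacle here: all the analytic content sits in Theorem~\ref{thm:pertnonerg}, and the only delicate points are that the change of variables $\Phi$ transports "almost every" correctly (harmless, as diffeomorphisms preserve null sets) and that the trapping property is measurable in $(\overline{r},\theta)$, which is precisely Remark~\ref{rem:meas}.
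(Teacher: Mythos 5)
Your argument is correct and is precisely the localization-plus-covering argument the authors intended when they presented this as an immediate corollary of Theorem~\ref{thm:pertnonerg} (the paper gives no explicit proof). The key identity $\overline{r}_0(\overline{x})=\overline{1}(\overline{x}\cdot\overline{r}_*)$ for $\overline{r}_0=\overline{1}(\overline{r}_*)$ correctly translates the theorem's parametrization centered at $\overline{1}$ into a neighbourhood of an arbitrary $\overline{r}_*\in\text{Adm}_{\Lambda,\overline{c},\mathcal{A}}$ via the diffeomorphism $\Phi$, and the Lindel\"of/countable-subcover step is the standard way to pass from local a.e.\ statements to a global one; invoking the measurability of trapping from Remark~\ref{rem:meas} is harmless though not strictly necessary, since being contained in a countable union of null sets already gives the a.e.\ conclusion.
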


\begin{example}
Let $\Lambda:=\Z(0,4)\oplus\Z(4,2)$. For every $\theta\in[0,\pi/4)$ let us consider the $\Lambda$-periodic pattern of lenses
\[\mathcal{L}_{\theta}=L\big(\Lambda,(0,\pm(1+i(1+\tan\theta)),(2\sin\theta,\cos\theta)\big).\]
This is the pattern of lenses drawn on Figure~\ref{german_animal_1}. By Theorem~\ref{thm:erglens}, for a.e.\ $\theta_0\in[0,\pi/4)$ the geodesic flow
on $\mathscr{P}_{\mathcal{L}_{\theta_0},\theta_0}$ is ergodic. On the other hand each pair $(\mathcal{L}_{\theta_0},\theta_0)$ satisfies the assumption of Theorem~\ref{thm:pertnonerg}.
Let us consider the partition $\mathcal{A}=\{\{1\},\{2,3\}\}$. Then, by Theorem~\ref{thm:pertnonerg}, after
almost every small perturbation of the direction $\theta_0$, the radius of the central lens and the radii
of the pair of symmetrically placed lenses, the ergodic properties of the geodesic flow change dramatically
to a highly non-ergodic trapped flow.

Let us now consider the partitions $\{\{1\},\{2\},\{3\}\}$ and $\{\{1,2,3\}\}$. By applying Theorem~\ref{thm:pertnonerg} to those, we obtain another type of results saying, that almost every small perturbation of
$(\mathcal{L}_{\theta_0},\theta_0)$ leads to a trapped geodesic flow. In the first case all radii are perturbed independently whereas in the second case all radii are perturbed simultaneously.

In summary, the curves of ergodic lens distributions described in the paper are very exceptional.
They are surrounded by highly non-ergodic systems. We have shown this phenomenon only for a particular "ergodic" curve, but for the other "ergodic" curves it can be shown along the same lines.

Moreover, we conjecture that the trapping property is measurably typical along many
curves transversal to the ergodic curves described in the paper. An interesting and highly
involved result of that type was proved in \cite{Fr-Shi-Ul}, where the authors consider
curves arising from fixed systems of lenses for which the direction $\theta$ varies.
\end{example}

\appendix

\section{Eaton lens dynamics} \label{system_of_Eaton_lenses}

To precisely describe the dynamics of light rays passing through an Eaton lens, we denote the lens of radius $R>0$ and centered at $(0,0)$ by $\overline{B}_R$. The refractive index (RI for short) in $\overline{B}_R$ depends only on the distance from the center $r:=\sqrt{x^2+y^2}\in(0,R]$
and is given by the formula $n(x,y)=n(r)=\sqrt{{2R}/{r}-1}$; at the center we put $n(0,0)=+\infty$.
Suppose, for simplicity, that the refractive index $n(x,y)$ is constant and equals $1$ outside $\overline{B}_R$.
Recall that the dynamics of light rays can be described as the geodesic flow on $\R^2\setminus(0,0)$ equipped with the Riemannian
metric $g=n\cdot(\operatorname{d}x\otimes \operatorname{d}x+\operatorname{d}y\otimes \operatorname{d}y)$.
Of course, the geodesics are straight lines or semi-lines outside $\overline{B}_R$. The dynamics of the geodesic flow inside $\overline{B}_R$ was described for example in \cite{Ha-Ha}.
After passing to polar coordinates $(r,\theta)$ we
use the Euler-Lagrange equation to see that any geodesic inside $\overline{B}_R$ satisfies
\begin{equation}\label{eq:EL}
 \frac{\operatorname{d}r}{\operatorname{d}\theta}=\pm\frac{r\sqrt{n(r)^2r^2-n(r_0)^2r_0^2}}{n(r_0)r_0}
 =\pm\frac{r\sqrt{r(2R-r)-r_0(2R-r_0)}}{\sqrt{r_0(2R-r_0)}},
\end{equation}
where $(r_0,\theta_0)$ is a point of the geodesic minimizing the distance to the center.
It follows that for any point $(r,\theta)$ of the geodesic in $\overline{B}_R$ we have
\begin{align*}
\pm(\theta-\theta_0)&=\int_{r_0}^r\frac{\sqrt{r_0(2R-r_0)}}{u\sqrt{u(2R-u)-r_0(2R-r_0)}}\,\operatorname{d}u\\&
=\Big[\arcsin\frac{Ru-r_0(2R-r_0)}{u(R-r_0)}\Big]^r_{r_0}=\arcsin\frac{Rr-r_0(2R-r_0)}{r(R-r_0)}+\frac{\pi}{2}.
\end{align*}
Consequently
\begin{equation}\label{eq:costheta}
-\cos(\theta-\theta_0)=\frac{Rr-r_0(2R-r_0)}{r(R-r_0)},
\end{equation}
and hence
\[\Big(\frac{r\cos(\theta-\theta_0)+(R-r_0)}{R}\Big)^2+\frac{(r\sin(\theta-\theta_0))^2}{R^2-(R-r_0)^2}=1.\]
In particular inside of $\overline{B}_R$ the geodesic is an arc of an ellipse.
Let $s:=\sqrt{R^2-(R-r_0)^2}$ and rotate the geodesic by $-\theta_0$. Then the equation of the ellipse becomes
\[\Big(\frac{x+\sqrt{R^2-s^2}}{R}\Big)^2+\Big(\frac{y}{s}\Big)^2=1.\]
\begin{figure}[!htb]
 \centering
\includegraphics[scale=0.40]{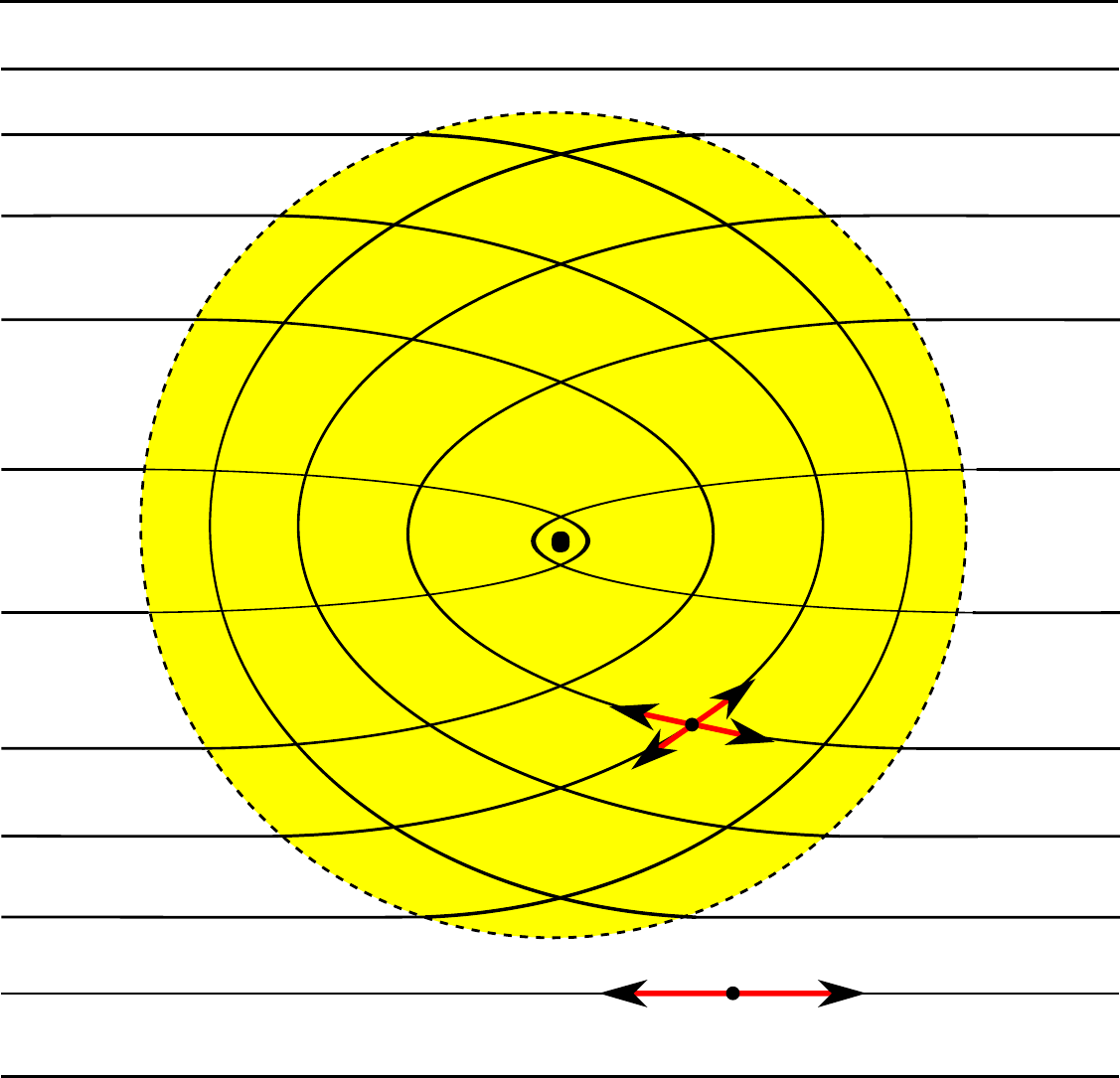}
\caption{Flow directions inside and outside of an Eaton lens}
\label{Eaton_hor}
\end{figure}
Since the ellipse is centered at $(-\sqrt{R^2-s^2},0)$ and $(-\sqrt{R^2-s^2},\pm s)$ are its intersection points
with the boundary of $\overline{B}_R$,
the geodesic has horizontal tangents at these intersecting points.
Rotating everything back to the original position we see, that the direction of any geodesic is reversed after passing through $\overline{B}_R$. The only exception is the trajectory that hits the center of the lens. For this trajectory we adopt
the convention, that at the center it turns and continues its motion backwards.
\begin{figure}[!htb]
\centering
\includegraphics[width=0.8\textwidth]{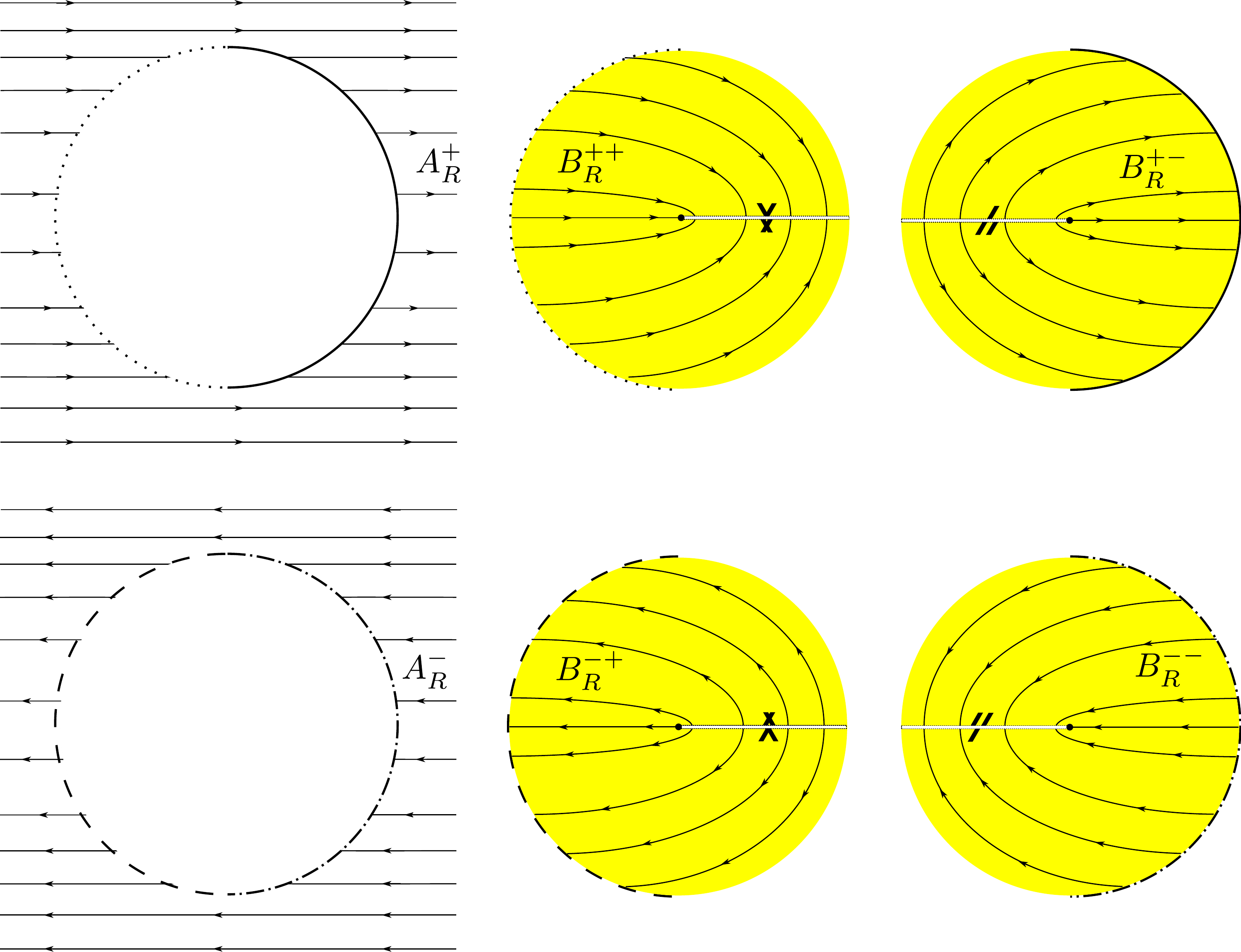}
\caption{Phase space of the horizontal flow in a neighborhood of an Eaton lens}
\label{Eaton_4}
\end{figure}

Now for every $\theta\in\R/\pi\Z$ consider the restriction of the geodesic flow $(\mathfrak{g}^\theta_t)_{t\in\R}$ to its invariant subset of the unit tangent bundle of $\R^2$ consisting of all trajectories assuming direction $\theta$ or $\pi+\theta$ outside $\overline{B}_R$. Denote by $\mathscr{P}_\theta$ the phase space of that  flow. Since all flows $(\mathfrak{g}^\theta_t)_{t\in\R}$ are isomorphic by rotations, we restrict our considerations to the horizontal flow $(\mathfrak{g}_t)_{t\in\R}=(\mathfrak{g}^0_t)_{t\in\R}$.

Denote by $B_R$ the interior of $\overline{B}_R$. Through every point of $B_R$ pass exactly four trajectories of $(\mathfrak{g}_t)_{t\in\R}$, while through every point of $A_R:=\R^2\setminus B_R$ pass exactly two,
in direction $0$ and $\pi$, see Figure \ref{Eaton_hor}. It follows, that $\mathscr{P}_\theta$ consists of four copies of $B_R$ ($B_R^{\pm\pm}$) and two copies of $A_R$ ($A_R^{\pm}$).

Let us take a closer look at the dynamics $(\mathfrak{g}_t)_{t\in\R}$ on the four copies of $B_R^{\pm\pm}$. Since they are related by a reflective symmetry or the reversal of time, we can restrict our considerations to one of them,
say $B_R^{++}$ in Figure \ref{Eaton_4}.
Consider the transversal curve for the flow  $(\mathfrak{g}_t)_{t\in\R}$ represented by the dotted semicircle $C_R$
parameterized by $(-\sqrt{R^2-s^2},s)$ for $s\in(-R,R)$. In view of \eqref{eq:EL} and \eqref{eq:costheta}
the trajectory of the point $(-\sqrt{R^2-s^2},s)\in C_R$ travels on an ellipse that is in polar coordinates given by
\begin{equation}\label{eq:sellipse}
-\cos\theta=\frac{Rr-s^2}{r\sqrt{R^2-s^2}}\quad\text{ and }\quad\frac{\operatorname{d}r}{\operatorname{d}\theta}=\frac{r\sqrt{r(2R-r)-s^2}}{s}
\end{equation}
before it escapes (the relevant copy of) $B_R^{++}$.
We write $(r(t,s),\theta(t,s))$ for the polar coordinates of $\mathfrak{g}_{t+\sqrt{R^2-s^2}}(-\sqrt{R^2-s^2},s)$. In particular,
$(r(-\sqrt{R^2-s^2},s),\theta(-\sqrt{R^2-s^2},s))$ are the polar coordinates of the point $(-\sqrt{R^2-s^2},s)$.
Since the velocity vectors of the geodesic flow have unit length with respect to the Riemannian metric $g$, we obtain
\begin{equation*}\label{eq:drthetadt}
\Big(\frac{\partial r}{\partial t}\Big)^2+r^2\Big(\frac{\partial \theta}{\partial t}\Big)^2=\frac{1}{n^2(r)}=\frac{r}{2R-r}.
\end{equation*}
Because of \eqref{eq:sellipse}, we have
\begin{equation}\label{eq:dtheta}
\frac{\partial \theta}{\partial t}=\frac{\partial r}{\partial t}\frac{s}{r\sqrt{r(2R-r)-s^2}}
\end{equation}
and hence
\[\frac{r}{2R-r}=\Big(\frac{\partial r}{\partial t}\Big)^2\Big(1+r^2\frac{s^2}{r^2(r(2R-r)-s^2)}\Big)=\Big(\frac{\partial r}{\partial t}\Big)^2\frac{r(2R-r)}{r(2R-r)-s^2}.\]
Therefore,
\begin{equation}\label{eq:drdt}
\frac{\partial r}{\partial t}=-\frac{\sqrt{r(2R-r)-s^2}}{2R-r}.
\end{equation}
Hence
\begin{align*}t+\sqrt{R^2-s^2}&=\int_{R}^{r(t,s)}\frac{u-2R}{\sqrt{u(2R-u)-s^2}}\operatorname{d}u\\
&=\Big[-\sqrt{u(2R-u)-s^2}+R\arcsin\frac{R-u}{\sqrt{R^2-s^2}}\Big]_{R}^{r(t,s)}\\
&=-\sqrt{r(2R-r)-s^2}+R\arcsin\frac{R-r}{\sqrt{R^2-s^2}}+\sqrt{R^2-s^2}
\end{align*}
and
\[t=-\sqrt{r(2R-r)-s^2}+R\arcsin\frac{R-r}{\sqrt{R^2-s^2}}.\]
Let $t_s$ be the exit time of $(-\sqrt{R^2-s^2},s)$ from $B_R^{++}$. Since $r(t_s,s)$ minimizes the distance to the origin, we have $s^2=R^2-(R-r(t_s,s))^2=r(t_s,s)(2R-r(t_s,s))$. It follows that
\[t_s=R\arcsin 1=\frac{1}{2}\pi R.\]

Introduce new coordinates on $B_R^{++}$ given by $(t,s)$. Then the set
 $E_R=B_R\cup([0,\pi R/2)\times (-R,R))$ is the domain of these coordinates
and they coincide with the cartesian coordinates on $C_R$.
Moreover, by definition, the geodesic flow $(\mathfrak{g}_t)_{t\in\R}$ in the new coordinates is the unit horizontal translation in positive direction.

One can define the same type of coordinates on the other copies $B_R^{+-}$, $B_R^{-+}$ and $B_R^{--}$. Let us consider a measure $\mu$ on $\mathscr{P}_0$ that coincides with the Lebesgue measure on $A_R^\pm$ and the Lebesgue measure  in the new coordinates on each $B_R^{\pm\pm}$.
This is a $(\mathfrak{g}_t)_{t\in\R}$-invariant measure and we will calculate its density in the next paragraph.

In view of \eqref{eq:drdt} and \eqref{eq:dtheta} we have
\begin{equation}\label{eq:dt}
\frac{\partial r}{\partial t}=-\frac{\sqrt{r(2R-r)-s^2}}{2R-r}\quad\text{ and }\quad\frac{\partial \theta}{\partial t}=-\frac{s}{r(2R-r)}.
\end{equation}
As
\[t=-\sqrt{r(2R-r)-s^2}+R\arcsin\frac{R-r}{\sqrt{R^2-s^2}},\]
differentiating it in the direction $s$ we obtain
\[0=-\frac{\frac{\partial r}{\partial s}(R-r)-s}{\sqrt{r(2R-r)-s^2}}+R\frac{-\frac{\partial r}{\partial s}+\frac{(R-r)s}{R^2-s^2}}{\sqrt{r(2R-r)-s^2}}.\]
Hence
\begin{equation}\label{eq:drds}
\frac{\partial r}{\partial s}=s\frac{R(2R-r)-s^2}{(R^2-s^2)(2R-r)}.
\end{equation}
Differentiating the first equality of \eqref{eq:sellipse}  in the direction $s$ we obtain
\[\frac{s\sqrt{r(2R-r)-s^2}}{r\sqrt{R^2-s^2}}\frac{\partial \theta}{\partial s}=\sin\theta\cdot\frac{\partial \theta}{\partial s}
=\frac{\partial r}{\partial s} \frac{s^2}{r^2\sqrt{R^2-s^2}}-\frac{s(R(2R-r)-s^2)}{r\sqrt{R^2-s^2}(R^2-s^2)}.\]
In view of \eqref{eq:drds}, it follows that
\begin{align}\label{eq:dthetads}
\begin{aligned}
\frac{\partial \theta}{\partial s}&=\frac{1}{\sqrt{r(2R-r)-s^2}}\Big(\frac{s^2(R(2R-r)-s^2)}{r(R^2-s^2)(2R-r)}-\frac{(R(2R-r)-s^2)}{R^2-s^2}\Big)\\
&=-\frac{R(2R-r)-s^2}{(R^2-s^2)}\frac{\sqrt{r(2R-r)-s^2}}{r(2R-r)}.
\end{aligned}
\end{align}
Putting \eqref{eq:dt}, \eqref{eq:drds} and \eqref{eq:dthetads} together, we have
\begin{align*}
\Big|\frac{\partial r}{\partial s}\cdot \frac{\partial \theta}{\partial t}-\frac{\partial r}{\partial t}\cdot \frac{\partial \theta}{\partial s}\Big|
&=\Big|\frac{s^2(R(2R-r)-s^2)}{r(R^2-s^2)(2R-r)^2}+\frac{(r(2R-r)-s^2)(R(2R-r)-s^2)}{r(R^2-s^2)(2R-r)^2}\Big|\\
&=\frac{(R(2R-r)-s^2)}{(R^2-s^2)(2R-r)}=\frac{1}{2R-r}\Big(1+\frac{R(R-r)}{R^2-s^2}\Big).
\end{align*}
By \eqref{eq:sellipse},
\[\sqrt{R^2-s^2}=\frac{\sqrt{r^2\cos^2\theta+4R(R-r)}-r\cos\theta}{2}=\frac{2R(R-r)}{\sqrt{r^2\cos^2\theta+4R(R-r)}+r\cos\theta}.\]
Hence,
\[\Big|\frac{\partial r}{\partial s}\cdot \frac{\partial \theta}{\partial t}-\frac{\partial r}{\partial t}\cdot \frac{\partial \theta}{\partial s}\Big|=
\frac{1}{2R-r}\Big(1+\frac{\big(\sqrt{r^2\cos^2\theta+4R(R-r)}+r\cos\theta\big)^2}{4R(R-r)}\Big)\]
Therefore, the density of the invariant measure $\mu$ restricted to $B_R^{++}$ in the cartesian coordinates is
\begin{equation*}\label{eq:density}
\xi_R(x,y)=\frac{2R-r}{r}\frac{4R(R-r)}{(\sqrt{x^2+4R(R-r)}+x)^2+4R(R-r)}.
\end{equation*}
On the other copies  $B_R^{\pm\pm}$ the measure $\mu$ is given by $\xi_R(\pm x,y)\operatorname{d}x\operatorname{d}y$.

For every $\theta \in\R/\pi\Z$ the flows $(\mathfrak{g}_t^\theta)_{t\in\R}$ phase space $\mathscr{P}_\theta$ is given by the rotation of  $\mathscr{P}_0$ by $\theta$ and the invariant measure $\mu_\theta$ is the rotation of $\mu$ by the same angle.

Generally instead of one Eaton lens on the plane we deal with a pattern $\mathcal{L}$ of infinitely many pairwise disjoint Eaton lenses on $\R^2$.
We are interested in the dynamics of the light rays provided by the geodesic flow $(\mathfrak{g}^\mathcal{L}_t)_{t\in\R}$ on $\R^2$ without the centers of lenses; the Riemann metric  is
given by $g_{(x,y)}=  n (x,y)\cdot (dx\otimes dx +dy\otimes dy)$.
The local behavior of the flow around  any lens was described in detail previously.
For every $\theta \in\R/\pi\Z$ there exists
an invariant set $\mathscr{P}_{\mathcal{L},\theta}$ in the unit tangent bundle, such that all trajectories on  $\mathscr{P}_{\mathcal{L},\theta}$ are tangent to $\pm e^{i\theta}$
outside the lenses. The restriction of $(\mathfrak{g}^\mathcal{L}_t)_{t\in\R}$ to  $\mathscr{P}_{\mathcal{L},\theta}$ is
denoted by $(\mathfrak{g}^{\mathcal{L},\theta}_t)_{t\in\R}$. Moreover,
$(\mathfrak{g}^{\mathcal{L},\theta}_t)_{t\in\R}$ possesses a natural invariant measure $\mu_{\mathcal{L},\theta}$ equivalent to the Lebesgue measure on $\mathscr{P}_{\mathcal{L},\theta}$. The density of $\mu_{\mathcal{L},\theta}$ is equal to one outside lenses and inside every lens of radius $R$ centered at $(c_1,c_2)$ is determined by $\xi_R(\pm(x-c_1),y-c_2)$ depending on its copy in the phase space. Moreover, the density is continuous on $\mathscr{P}_{\mathcal{L},\theta}$ and piecewise $C^\infty$.

\subsection{From the geodesic flow to translation surfaces and measured foliations}
For simplicity we return to a single lens and the horizontal flow $(\mathfrak{g}^\theta_t)_{t\in\R}$ on $\mathscr{P}_0$.
Representing $\mathscr{P}_0$ in $(t,s)$ coordinates, we can treat it as the union on $A^{\pm}_R$ and $D^{\pm\pm}_R$, see Figure~\ref{eaton_3}.
\begin{figure}[!htb]
\centering
\includegraphics[width=0.8\textwidth]{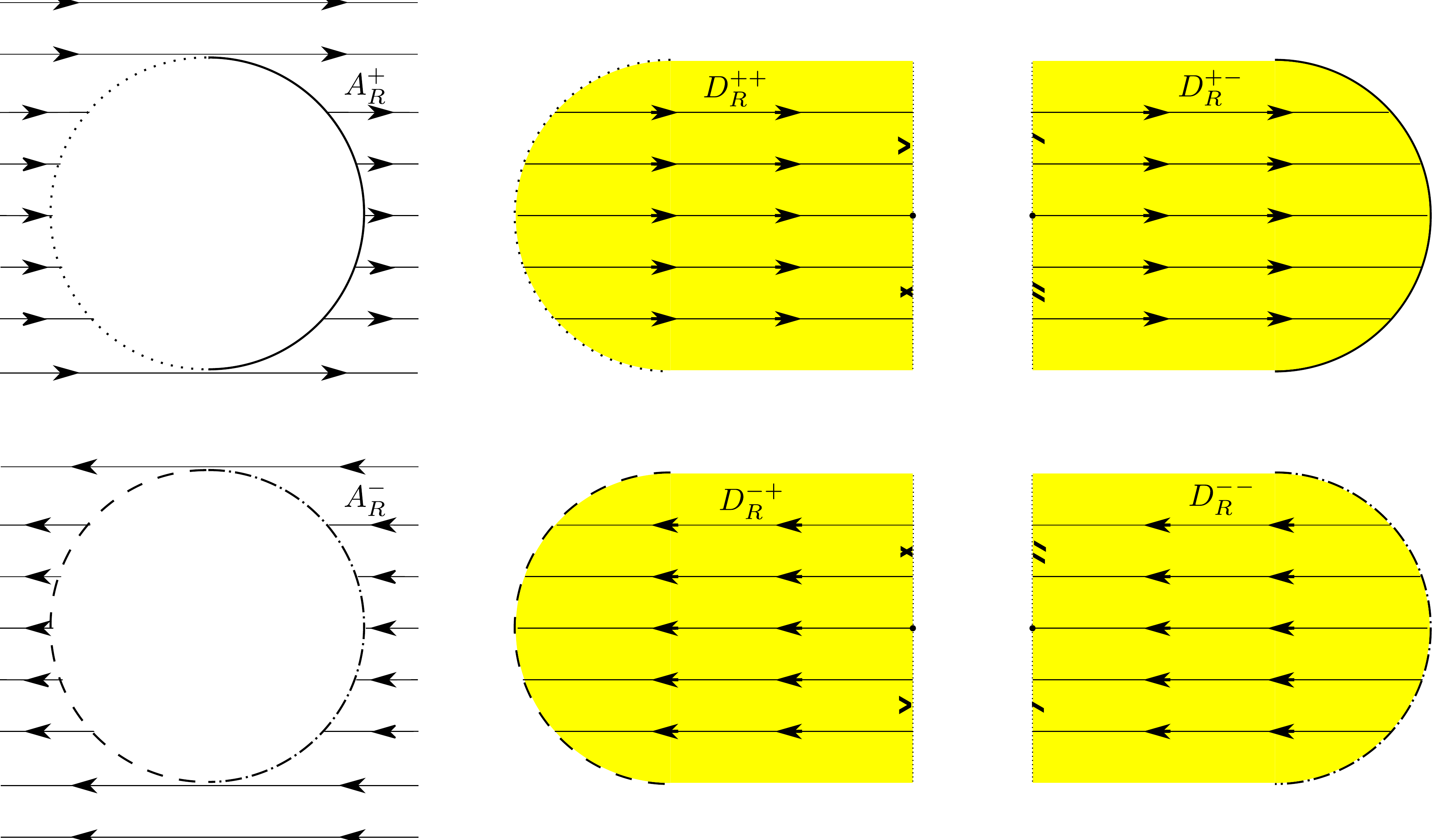}
\caption{Linearized Eaton lens flow in phase space}
\label{eaton_3}
\end{figure}
Moreover, the new coordinates give rise to a translation structure on the surface $\mathscr{P}_0$. Since the horizontal sides of $D_R^{\pm\pm}$ do not belong to $\mathscr{P}_0$, the surface is not closed.
However, we can complete the surface by adding the horizontal sides as in Figure~\ref{eaton_4}.
Let us denote the completed surface by $\overline{\mathscr{P}}_0$.
\begin{figure}[!htb]
\centering
\includegraphics[width=0.8\textwidth]{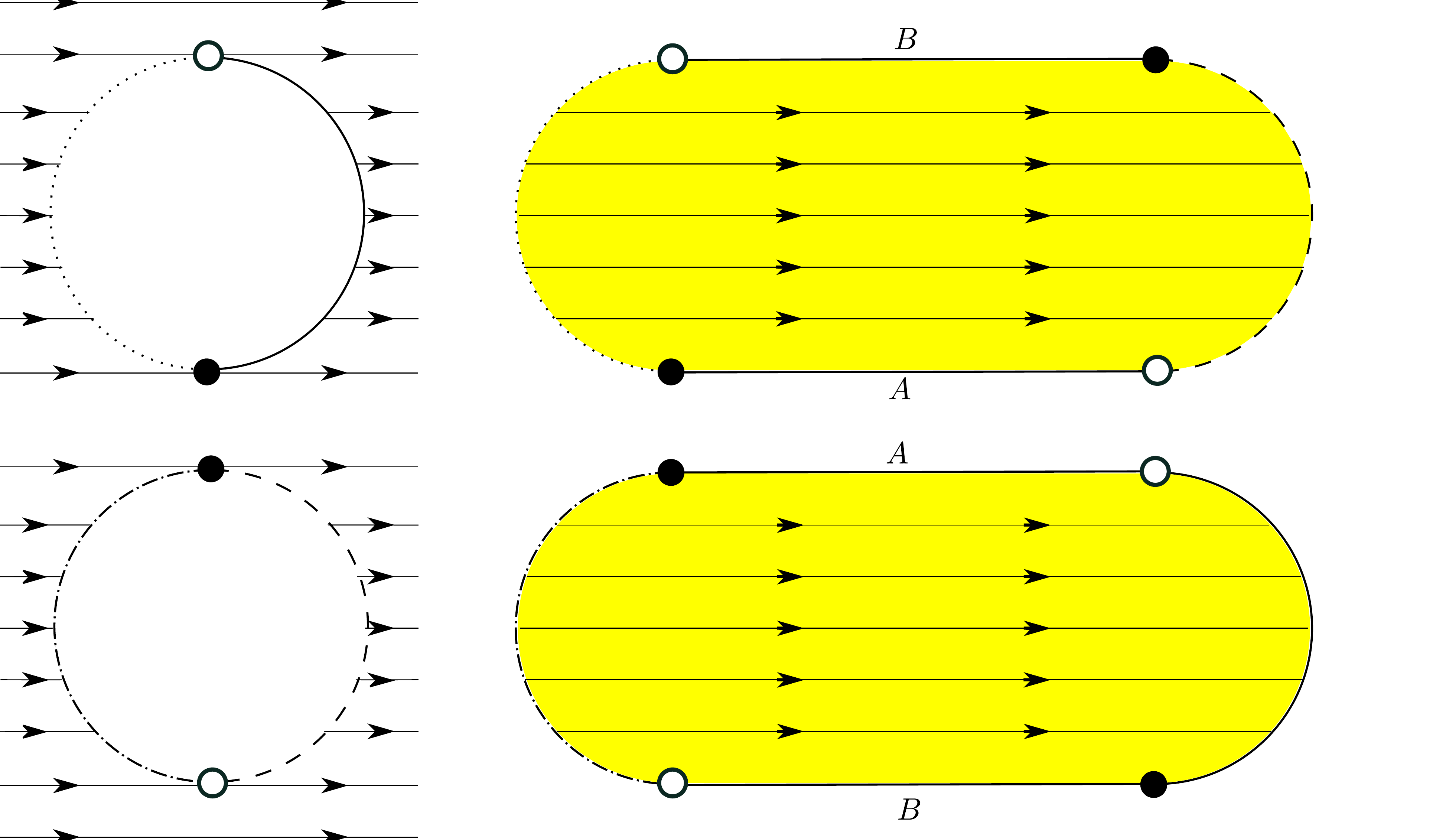}
\caption{The completed linearized phase space is a translation surface}
\label{eaton_4}
\end{figure}
It has two singular points with the cone angle $6\pi$ which are connected
by two horizontal saddle connections labeled by $A$ and $B$ in Figure~\ref{eaton_4}.
Moreover, the flow $(\mathfrak{g}_t)_{t\in\R}$ is measure-theoretically isomorphic to the horizontal translation flow on the translation surface $\overline{\mathscr{P}}_0$.

Let us consider an involution $\sigma:\overline{\mathscr{P}}_0\to \overline{\mathscr{P}}_0$ given by the translation between upper and lower parts of $\overline{\mathscr{P}}_0$ in  Figure~\ref{eaton_3}.
Then the quotient surface $\mathscr{Q}_0=\overline{\mathscr{P}}_0/<\sigma>$ is a half-translation surface.
It has two singular points (marked by circles) having cone angle $3\pi$
connected by a horizontal saddle  connection labeled by $A$ (and then continued as $A'$) and two poles (marked by squares), see Figure~\ref{eaton_quad}.
\begin{figure}[!htb]
\centering
\includegraphics[width=0.7\textwidth]{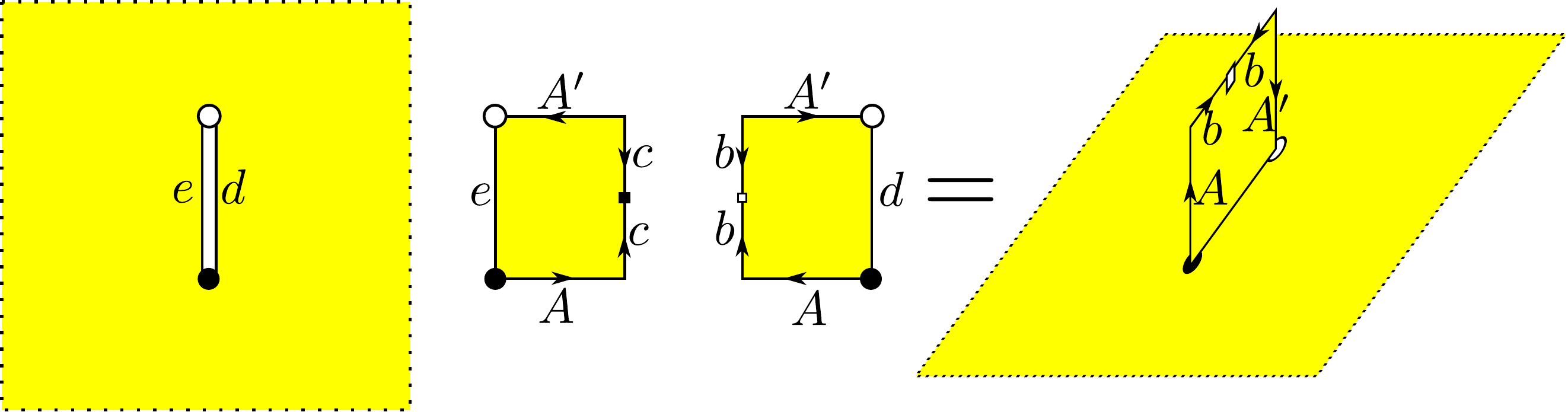}
\caption{The quadratic surface $\mathscr{Q}_0$}
\label{eaton_quad}
\end{figure}

If we consider an infinite pattern $\mathcal{L}$ of Eaton lenses on $\R^2$, then for every $\theta \in\R/\pi\Z$ we can similarly  represent the space $\mathscr{P}_{\mathcal{L},\theta}$ as a translation surface
which after a completion is a closed translation surface $\overline{\mathscr{P}}_{\mathcal{L},\theta}$. The translation  flow $(\varphi^{\mathcal{L},\theta}_t)_{t \in \R}$ on $\overline{\mathscr{P}}_{\mathcal{L},\theta}$ in the direction $\theta$ is measure-theoretically isomorphic to the flow
$(\mathfrak{g}^{\mathcal{L},\theta}_t)_{t\in\R}$. Moreover, the surface $\overline{\mathscr{P}}_{\mathcal{L},\theta}$ has an natural involution $\sigma$ which maps a unit vector to the vector at the same foot-point but oppositely directed. The quotient surface $\mathscr{Q}_{\mathcal{L},\theta}=\overline{\mathscr{P}}_{\mathcal{L},\theta}/<\sigma>$ is a half-translation surface that is the euclidian plane with a system of pockets each attached at the place of the corresponding lens. Each pocket is a rotated (by $\theta$) version of the pocket in Figure~\ref{eaton_quad}. Its length is equal to the diameter of the corresponding lens and is perpendicular to $\theta$. Most relevant for us, the ergodicity of measured foliation $\mathcal{F}^{\mathcal{L}}_{\theta}$ in the direction $\theta$ on $\mathscr{Q}_{\mathcal{L},\theta}$ is equivalent to the ergodicity of $(\varphi^{\mathcal{L},\theta}_t)_{t\in}$, and hence to the ergodicity of the flow $(\mathfrak{g}^{\mathcal{L},\theta}_t)_{t\in\R}$.

The measured foliation $\mathcal{F}^{\mathcal{L}}_{\theta}$ is Whitehead equivalent to the foliation $\mathcal{FL}^{\mathcal{L}}_{\theta}$ where each attached briefcase is replaced by the slit-fold stemming from the "flat lens" representation of the same Eaton lens in direction $\theta$, as in Figure~\ref{eaton_5}.
\begin{figure}[!htb]
\centering
\includegraphics[scale=0.3]{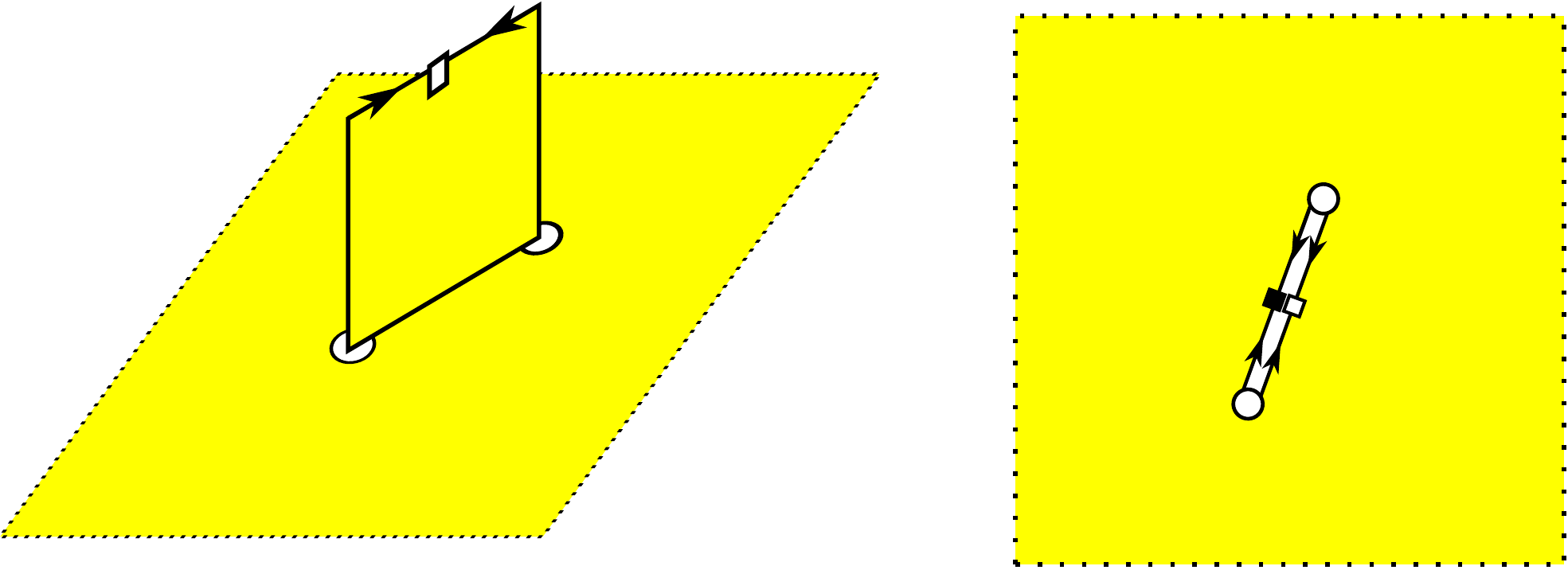}
\caption{The half-translation equivalent to an Eaton lens}
\label{eaton_5}
\end{figure}
In summary, instead of studying the ergodic properties of the geodesic flow  $(\mathfrak{g}^{\mathcal{L},\theta}_t)_{t\in\R}$ on the plane with a system of Eaton lenses
it suffices to pass to the measured foliation $\mathcal{FL}^{\mathcal{L}}_{\theta}$ where each Eaton lens is replaced by the corresponding flat lens of the same center and diameter as the lens attached perpendicular to $\theta$.

\section{Folds and Skeletons}\label{app:foldsskel}

In this section we describe examples of ergodic curves obtained from
other torus differentials. Starting with some of the quadratic differentials in our table
one obtains quadratic differentials on the plane that are not pre-Eaton differentials.
This section shows ways how to convert those into pre-Eaton differentials.
In particular, the quadratic differentials on the plane we deal with, have holes that need
to be removed. We model the holes by pillow-folds and then convert them to an
appropriate union of slit-folds.
\begin{figure}[!htb]
 \centering
 \resizebox{6cm}{!}{\input 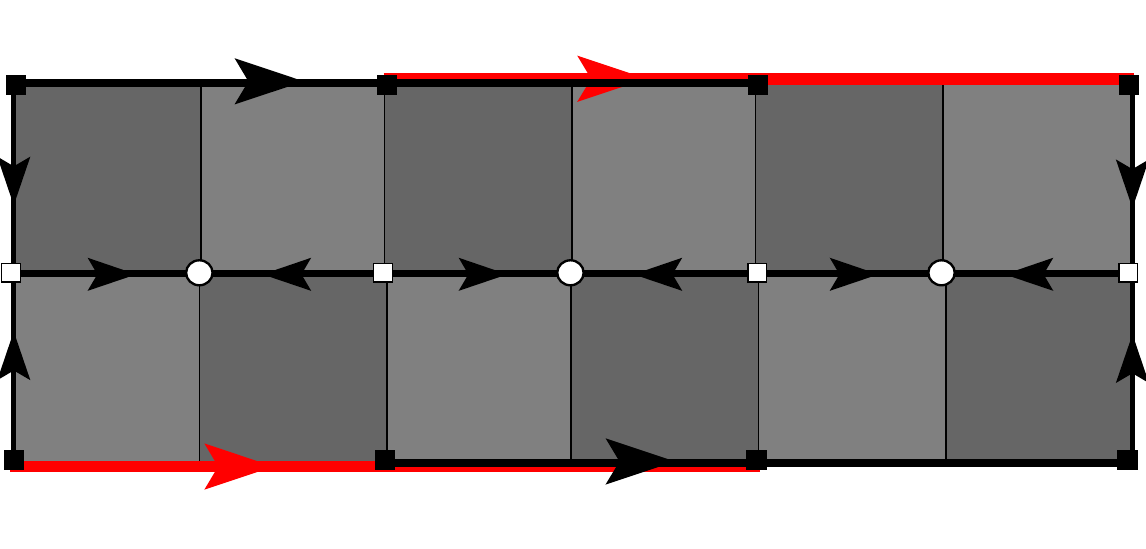_t}
 \caption{Homology generators and deck changes}
 \label{homology}
\end{figure}
\subsection{Skeleton representation for $\Z^2$-covers of $X_d(a,b)$ }
First we convert the standard polygonal representation of a pillowcase cover $X_d(a,b)$
into a pre-Eaton differential.
Recall from Section~\ref{sec:quad} that for $X_3(2,1)$, $X_4(2,1)$ and $X_6(3,1)$ this can
be done by a central cut followed by turning one half underneath the other.
\begin{figure}[!htb]
 \centering
 \resizebox{8cm}{!}{\input 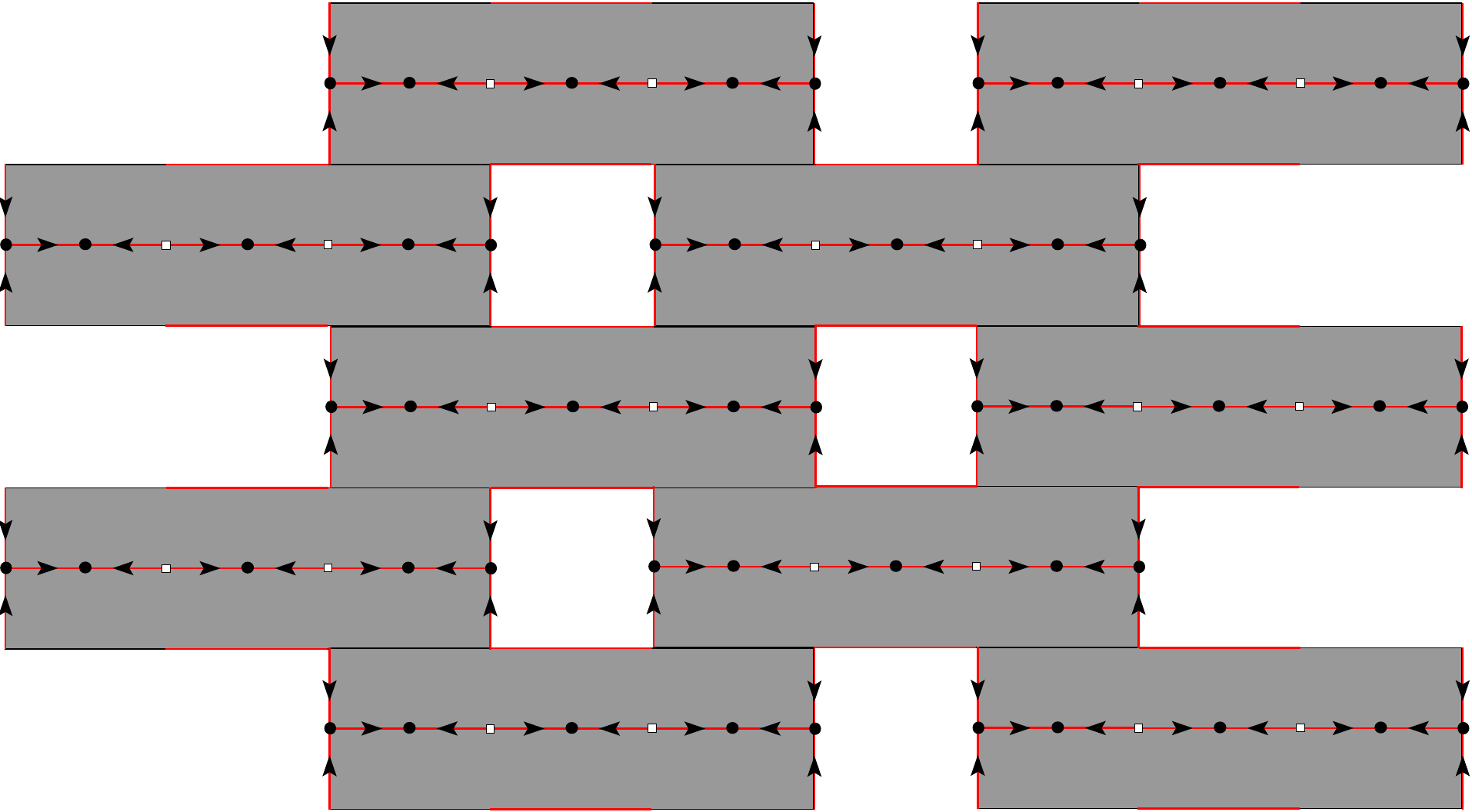_t}
 \caption{The universal homology cover $\widetilde{X}_6(3,1)$}
 \label{deg_6_decks}
\end{figure}
After the half-turn the absolute homology generators
are arranged as shown in Figure \ref{homology} for $X_6(3,1)$.
The arrangement for the absolute homology of  $X_3(2,1)$ looks similar after the half-turn:
The two homology generators overlap in the middle third of the rectangle representing the surface.
Then consider the universal cover $\widetilde{X}_6(3,1)\rightarrow X_6(3,1)$ determined by the pair
of homology generators.
Let us label the deck shifts as in Figure \ref{homology},
and the decks by $\Z^2$.
Then, starting at deck $(0,0)$ we reach deck $(1,0)$, once crossing the left third of the
rectangles upper edge and we reach deck $(0,1)$ when crossing the right third. We
enter deck $(1,1)$ when crossing the middle third of the upper edge and so forth.
The labeled tiles of Figure \ref{deg_6_decks}
show the cover. It has rectangular holes causing jumps of the directional dynamics
in the plane.  In particular the skeleton describing the quadratic differential contains
boundaries of the spared rectangles besides the slit-folds, see Figure \ref{deg_6_skeleton}.
\begin{figure}[!htb]
 \centering
 \resizebox{8cm}{!}{\input 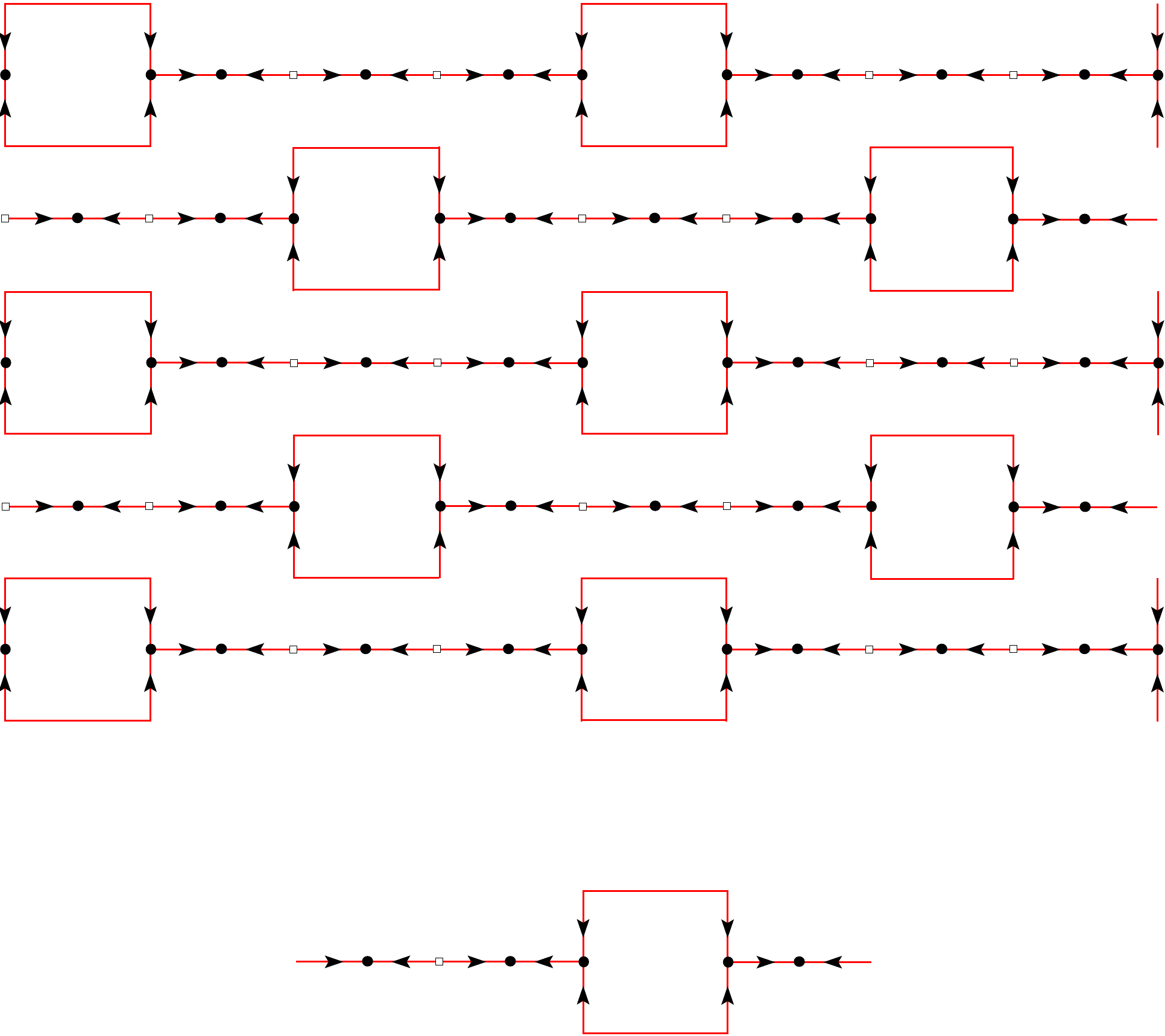_t}
 \caption{Skeleton representation of $\C_6(3,1)$}
 \label{deg_6_skeleton}
\end{figure}
Let us now forget the covering and just consider the skeleton on the plane.
While only the dynamics outside the spared rectangles was previously defined
we now extend the definition to the inside as follows: The folded parts of any rectangle
are genuine slit-folds and the translation identified edges are translation identified from the inside,
too. That way we obtain a quadratic differential on the whole plane that we denote by $\C_6(3,1)$.
The notation, a combination of the standard complex plane notation together with the weight notation
of the pillow case cover, will be used for other surfaces below.
The ``inside'' of  each rectangle is a pillow-case carrying invariant foliations.
The natural extension promotes an easy geometric definition of the foliation:
Given a direction $\theta \in \R/ \pi \Z$ consider the unoriented lines parallel to $\pm e^{i \theta}$
in $\C$. Then put a skeleton in the plane and identify the intersection points of the leaves with the skeleton
according to the respective rules, i.e.\ translation or central rotation.

The extension of the quadratic differential, i.e.\  $\widetilde{X}_6(3,1)$,  to the whole complex plane,
i.e.\ $\C_6(3,1)$, is a step towards realizing the skeleton by admissible Eaton lens configurations
on the plane. In fact this first step allows us to converts the ``outside'' quadratic differential
into a pre-Eaton differential, as shown in Figure \ref{mutation}.
Seen from the view point of Eaton lens dynamics the conversion removes
the jump of leaves over the rectangular gaps and replaces it by an
equivalent jumpfree dynamics.
The process shown in Figure \ref{mutation} performed backwards is a
railed deformation moving slit-folds through other slit-folds that changes their character:
A pair of slit-folds becomes a pair of translation identified lines.
At the end we merge singular points which is not
a railed deformation in the strict sense of the definition.

\begin{figure}[!htb]
 \centering
 \includegraphics[width=1\textwidth]{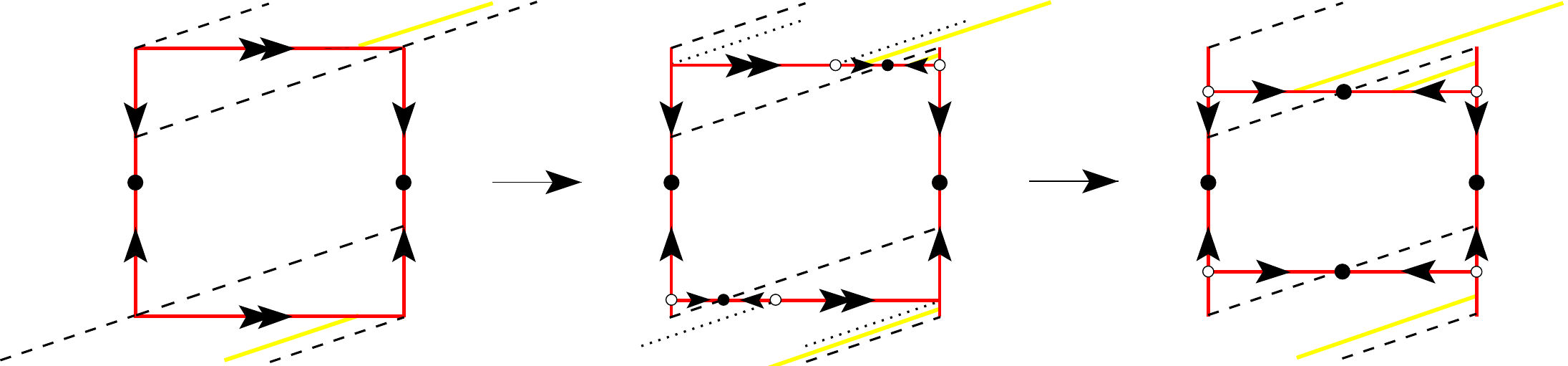}
 \caption{A railed deformation applied to both horizontal segments of a pillow-fold.
 The dotted lines indicate some leaves of the direction foliation.
 The endpoints of the line segments, marked by a black dot, move along a leaf.}
 \label{mutation}
\end{figure}

\subsection*{Pillow-folds and chip-folds}
All that can be done for rectangular folds can be done for folds built from a parallelogram.
So our definitions include parallelograms.

Let $[a,b]$ and $[c,d]$ are two non-parallel line segments in $\C$, so that $c \in [a,(b+a)/2]$.
By $[a,b] \boxtimes [c,d]$ we denote the union of segments $[a,b], [a+(d-c), b+(d-c)]$, $[c,d]$ and  $[b-(c-a), b-(c-a)+(d-c)]$. Replacing all line segments in $[a,b] \boxtimes [c,d]$ by slit-folds, we get a
what we call a {\em chip-fold} denoted by $\rangle a,b \langle \boxtimes \rangle c,d \langle$.
A {\em pillow fold} on the other hand is obtained by identifying two segments
parallel to, say $[c,d]$, with a translation parallel to $[a,b]$ and the two other segments
with slit-folds. Let us denote this fold by $\rangle a,b \langle \boxtimes | c, d |$.
Chip-folds are parts of the skeletons in Figures~\ref{Eaton_C_3(2,1)} and \ref{Eaton_C_6(3,1)}.
Chip-folds and their generalization are necessary to
replace the jumps, created by the translation identification in pillow-folds.

If $[a,b]$ and $[c,d]$ are line segments, so that $c \in [a,(b+a)/2]$,
then for $n\geq 2$ define  $\rangle a,b \langle \boxtimes \ n \cdot \rangle c,d \langle $
to be
\[ \left( \bigcup^{n-1}_{k=0}
\big(\rangle a,b \langle\ \cup \ \rangle c,d \langle\ \cup \ \rangle b+a-c,b+a+d-2c \langle\big)+k(d-c)\right)
\cup \ \big(\rangle a,b \langle + n(d-c)\big),
\]
this is the fold configuration with $n+1$ slit folds parallel to $[a,b]$.
 We call this object \emph{$n$-chip-fold}. In particular, a $1$-chip-fold is a chip-fold.
Analogously  $\rangle a,b \langle \boxtimes \ n\cdot | c,d |$ denotes the {\em $n$-pillow-fold} obtained
by replacing all slit-folds of an $n$-chip-fold that are parallel to $[c,d]$ by line segments. These line segments are identified by a translation in the direction of the vector $\overrightarrow{ab}$.
\begin{proposition}\label{transform}
Take a plane equipped with a single pillow-fold and consider
 a fixed direction foliation on the outside of a pillow-fold.
Then there is an $n$-chip-fold or a pair of parallel slit-folds which has an
outer measured foliation Whitehead equivalent (up to a finite number of leaves) to the given measured foliation.
\end{proposition}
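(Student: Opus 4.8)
The plan is to exhibit, for the given direction $\theta\in\R/\pi\Z$, an explicit finite sequence of railed deformations followed by a merging of coincident cone points which carries the pillow-fold skeleton to the asserted one; this is exactly the move of Figure~\ref{mutation} run backwards and iterated. Since railed deformations are Whitehead moves preserving the transverse measure, and the final merging of cone points alters only finitely many leaves, producing such a sequence proves the claimed Whitehead equivalence of the outer measured foliations up to a finite number of leaves.

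First I would normalize. After an affine change of coordinates of $\C$ we may assume $\theta$ is the vertical direction and write the pillow-fold as $\rangle a,b\langle\boxtimes|c,d|$ with $b-a$ horizontal; then the two slit-folds $S_0=\rangle a,b\langle$ and $S_1=\rangle a+w,b+w\langle$, where $w=d-c$, are horizontal and parallel, while the two segments $T_L=[c,d]$, $T_R=[b+a-c,b+a-c+w]$ that the pillow-fold identifies by translation are parallel, congruent and differ by the horizontal vector $\vec v=b+a-2c=(1-2s)(b-a)$, a non-negative multiple of $b-a$ since $c\in[a,(a+b)/2]$. The directions $\theta$ parallel to $b-a$, $\theta$ parallel to $w$, and the finitely many saddle-connection directions of the bounded flat cell complex underlying the pillow-fold are degenerate: in the first two cases the slit-folds, respectively the identified segments, lie along leaves of $\mathcal{F}_\theta$, so a railed deformation collapses the skeleton directly onto a (possibly degenerate) pair of parallel slit-folds, and the saddle-connection directions are absorbed into the ``up to a finite number of leaves'' clause. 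So assume $\theta$ is none of these.

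The engine of the argument is the local move of Figure~\ref{mutation} read in reverse: when a slit-fold is railed-deformed so that it sweeps through a translation-identified pair of parallel segments, the crossing replaces ``one slit-fold meeting a translation pair'' by ``a new pair of slit-folds parallel to $[c,d]$ together with a shorter translation pair'', i.e.\ it appends one new rung of a chip-fold and increments the chip count by one. I would then slide $S_0$ upward and $S_1$ downward along the vertical leaves of $\mathcal{F}_\theta$ — a railed deformation in which each of the four cone points $a,b,a+w,b+w$ travels along its own vertical leaf — carrying out the crossings of $S_0$ and $S_1$ with $T_L,T_R$ one at a time. The number $n=n(\theta)$ of crossings needed before $S_0$ and $S_1$ can be placed in clean parallel position with nothing identified between them is finite, because the composition of the two point-reflections at the centers of $S_0$ and $S_1$ is the translation by $2w$, whose horizontal component is non-zero precisely because $\theta$ is not parallel to $w$; hence the horizontal coordinate of a leaf bouncing between the two slit-folds drifts monotonically and must eventually leave the ranges obstructed by $S_0$ and $S_1$. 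If $n=0$ no crossing occurs, the railed deformation brings the pillow-fold to the pair of parallel slit-folds $S_0\cup S_1$ while a further railed deformation collapses the now-unobstructed translation pair, and the proof is complete. If $n\geq1$, accumulating the $n$ rungs produced by the crossings and finally merging the cone points that have come to coincide yields exactly an $n$-chip-fold $\rangle a',b'\langle\boxtimes\,n\cdot\rangle c',d'\langle$ with $a',b',c',d'$ determined by $a,b,c,d$ and $\theta$, which is what we wanted.

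The main obstacle is this last step: determining $n(\theta)$ explicitly from the geometry and the slope, and above all verifying that the railed deformation is actually feasible — that the $n$ crossings can be scheduled so that at no intermediate stage does a moving segment pass through a cone point or create an inadmissible overlap with another segment of the skeleton, and that every endpoint really does remain on a fixed leaf throughout the isotopy. This is a careful case analysis of the combinatorics of leaves through a thin polygonal pocket bounded by two reflecting segments and one translation identification — the same sort of elementary-geometry bookkeeping of tangencies, sides of lines and disjointness of shadows that is carried out for the curve $\gamma_W$ in the proof of Proposition~\ref{prop:adm} in Section~\ref{subsec:adm}, but now iterated $n$ times and organized around the drift computation above.
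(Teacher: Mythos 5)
Your proposal follows the paper only through the normalization and the low-slope case; for the general slope it takes a genuinely different route, and that route has a real gap which you yourself flag at the end. You propose to iterate a ``crossing'' move, sliding the two slit-folds of the pillow-fold through the translation-identified pair and reading off a new rung of the chip-fold at each crossing, with finiteness of the number of crossings coming from a drift argument for leaves bouncing between the two slit-folds. But you never establish that this iterated process is a legitimate sequence of railed deformations and Whitehead moves: it is not clear that the intermediate configurations avoid cone-point collisions and overlaps, that the endpoints really stay on leaves throughout, or even that ``slit-fold sweeps through translation pair'' is a well-defined single railed move rather than several. You acknowledge this as ``the main obstacle'' and leave it as a case analysis to be done. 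As written, the proof therefore does not close.

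The idea you are missing is the paper's subdivision trick, which sidesteps all of these feasibility concerns. After normalizing the pillow-fold to $\rangle 0, ib\langle \boxtimes |0,a|$, the paper first handles the range $|\tan\theta|\le b/a$ by a single railed deformation (the move of Figure~\ref{mutation}) which carries the translation pair into the interior and produces two slit-folds, or one when equality holds. For larger slopes it does \emph{not} iterate that move. Instead it chooses the minimal $n\ge 2$ with $|\tan\theta|\le nb/a$ and inserts $n-1$ equally spaced copies of the left slit-fold into the rectangle, turning the pillow-fold into an $n$-pillow-fold. This insertion is not a railed deformation and does change the foliation, but only on the finitely many leaves through the new cone points — which is exactly what the clause ``up to a finite number of leaves'' in the statement is there to absorb. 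Each of the $n$ thin sub-pillows now satisfies the slope bound, so the single, already-verified move from the base case applies to each one, and assembling the results gives the $n$-chip-fold directly. There is no iterated crossing, no drift estimate, and no scheduling problem; the role of $n$ is purely arithmetic, determined by comparing $|\tan\theta|$ with the aspect ratio $b/a$ of the rectangle. Your proposal, by contrast, tries to make the crossings themselves do the work, which is where the unverified combinatorics live. If you want to salvage your line of reasoning you would need to carry out exactly that combinatorial verification; but the subdivision argument replaces it with a one-line reduction to the case you have already done, and that is the proof the proposition actually rests on.
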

\begin{proof}
Since the problem is invariant under affine transformations we can consider a pillow-fold
$  \rangle 0,ib \langle  \ \boxtimes \ |0,a|$ in the complex plane where
$a, b \in \R_+$, so the segment $[0, a]$ is horizontal, and $[0, ib]$ is vertical.
For fixed $\theta \in \R/\pi \Z$ consider the outer foliation for $  \rangle 0,ib \langle  \ \boxtimes \ |0,a|$. If $\theta = \pi/2$, we translate the two horizontal sides together at the center of the rectangle.
The resulting skeleton consists of two vertical slit-folds.
That is a  Whitehead move and so the outer foliations are equivalent.

Suppose $|\tan \theta| \leq \frac{b}{a}$. That is, the slope of the foliation is bounded by
the slope of the diagonal $[0, a+ib]$ of the rectangle $[0,ib] \times [0,a]$.
In this case translate the two horizontal edges of the rectangle parallel to the foliation towards its inside through the vertical slit-folds, so that every point on the edges remains on the same line
(including the slit-fold identification) of slope $\tan \theta$, as shown in Figure \ref{mutation}.
Note, that the two horizontal edges form a loop.
Two slit-folds appear unless $|\tan \theta| = \frac{b}{a}$. In that case, both slit-folds fall together and we regard it as a single slit-fold centered at the center of the rectangle. By construction both outer measured foliations differ by a Whitehead move that breaks up the singular point at the vertex of the pillow-fold, so they are equivalent.

For larger angles we need to use an intermediate step. In fact, if $\theta$ is
not covered by the previous case(s), then there is a minimal $n\geq 2$ such that $|\tan \theta| \leq n \frac{b}{a}$. Then change the given pillow-fold  $ \rangle 0,ib \langle \  \boxtimes \ |0,a|$ into an $n$-pillow-fold $\rangle 0,ib\langle \ \boxtimes \ n\cdot |0,a/n|$ by putting $n-1$ successive $a/n$ translates of the left vertical slit-fold into the rectangle. Then for each of the $n$ (translation equivalent) pillow-folds the previous conversion into a union of slit-folds applies. Here we may change finitely many leaves, the ones hitting the endpoints of the $n-1$ new slit-folds that are put into the
pillow case. Again we find a measurably equivalent outer foliation. Note, that the
inner foliation is changed by this procedure, but this is irrelevant for our claim.
\end{proof}
Let us call a skeleton in the plane {\em standard skeleton}, if it is a countable union of pillow-folds and
slit-folds, so that no pillow-fold contains other folds. For those we can use Proposition \ref{transform}
inductively to obtain:
\begin{corollary}
For any quadratic differential defined by a standard skeleton and any direction
$\theta \in \R/\pi\Z$ the outer measured foliation tangential to $\theta$
in the plane is up to countably many leaves Whitehead equivalent to the
direction foliation of a pre-Eaton-differential in the plane.
\end{corollary}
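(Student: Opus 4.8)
The plan is to reduce everything to Proposition~\ref{transform}, applied one fold at a time, after checking that distinct folds do not interfere. Write the standard skeleton as a countable family $\{P_i\}_{i\in\N}$ of pillow-folds together with a (possibly empty) countable family of slit-folds, and let $\overline{R}_i$ denote the closed parallelogram bounded by the sides of $P_i$. By definition of a standard skeleton no pillow-fold contains any other fold, and (the folds of a skeleton being pairwise disjoint) the regions $\overline{R}_i$ are pairwise disjoint and each $\overline{R}_i$ is disjoint from every slit-fold of the skeleton.

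First I would record the localization statement underlying the whole argument: the deformation furnished by Proposition~\ref{transform} for a single pillow-fold $P_i$ --- the railed pushing of its two translation-identified sides into the interior of $R_i$, together, when $|\tan\theta|$ is large relative to the side ratio of $P_i$, with the preliminary subdivision of $P_i$ into an $n_i$-pillow-fold, and with the final merging of a cone point --- takes place entirely inside $\overline{R}_i$: the moving endpoints travel along leaves of $\mathcal{F}_\theta$ that never leave $\overline{R}_i$, and all the slit-folds it produces (a pair of parallel slit-folds, a single central slit-fold, or an $n_i$-chip-fold) lie in $\overline{R}_i$. Consequently replacing $P_i$ by its slit-fold avatar changes nothing outside $\overline{R}_i$, so the deformations associated with the different $P_i$ have pairwise disjoint supports and may be carried out simultaneously; the resulting modification of the skeleton is a (possibly countable) composition of Whitehead moves in the sense of \cite[page 116]{Papa}. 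By Proposition~\ref{transform} each individual replacement alters only finitely many leaves --- those passing through the endpoints of the auxiliary slit-folds inserted into $P_i$ --- so in total at most countably many leaves are affected.

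After performing all these replacements the skeleton is a countable union of slit-folds: the original slit-folds of the standard skeleton, plus for each $i$ the slit-folds of the $n_i$-chip-fold (or pair of parallel slit-folds, or single slit-fold) that replaced $P_i$. A quadratic differential on the plane whose skeleton is a union of slit-folds has no singular points besides those of its slit-folds, hence by definition is a pre-Eaton differential. Its direction foliation in the direction $\theta$ coincides with the outer foliation of the original standard skeleton outside $\bigcup_i R_i$ and, inside each $R_i$, with the foliation produced by Proposition~\ref{transform}; thus it is Whitehead equivalent --- up to the countably many exceptional leaves isolated above --- to the outer measured foliation tangential to $\theta$ of the original quadratic differential. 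This is the assertion of the corollary.

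The step I expect to be the main obstacle is making the word ``simultaneously'' rigorous: one must check that the supports of the elementary Whitehead moves coming from distinct pillow-folds are genuinely disjoint (this is precisely the reason the definition of a standard skeleton forbids a pillow-fold from containing other folds, and why one needs the regions $\overline{R}_i$ to be pairwise disjoint), and that an infinite composition of such locally supported Whitehead moves really produces a bona fide measured foliation Whitehead-equivalent off a countable leaf set to the original one, rather than merely a leafwise-defined object. Granting the local picture of Proposition~\ref{transform}, the rest is routine inductive (or simultaneous) bookkeeping.
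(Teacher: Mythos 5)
Your proof is correct and follows the same route the paper intends: the paper literally offers only the remark that one applies Proposition~\ref{transform} inductively, and your argument is precisely that induction made explicit, correctly using the ``no pillow-fold contains other folds'' condition (together with pairwise disjointness of the folds) to see that the parallelogram regions $\overline{R}_i$ are pairwise disjoint so the replacements have disjoint supports and can be carried out one by one, each changing only finitely many leaves. Your closing caveat about rigorously justifying the countable composition of locally supported Whitehead moves is a fair point, but the paper glosses over it identically, so there is no gap relative to the paper's own treatment.
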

The skeletons we consider are special, they have exactly
one unbounded component. With the boundary identifications given
by the skeleton the unbounded component is homeomorphic to a plane.

\begin{figure}[!htb]
 \centering
 \includegraphics[scale=0.40]{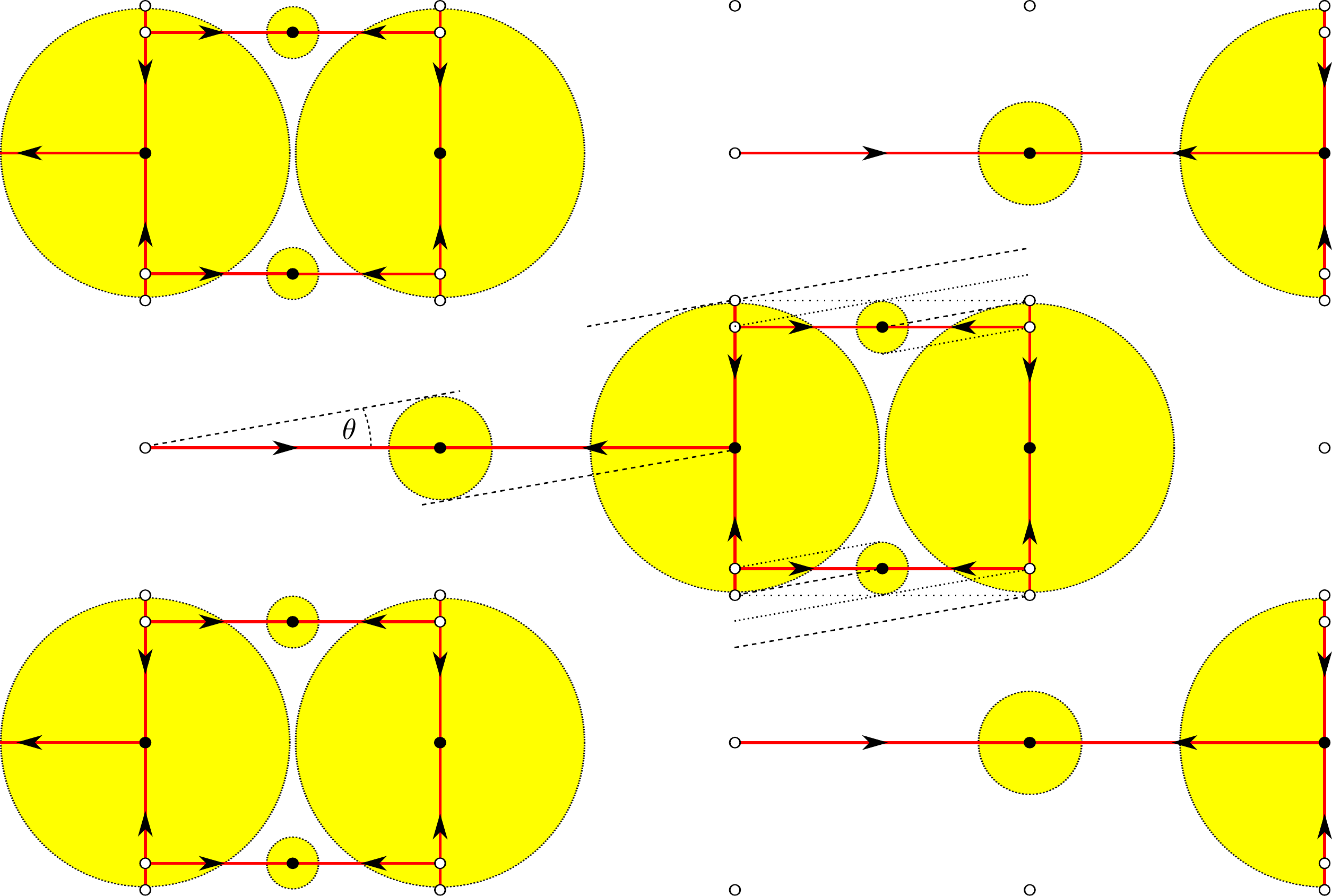}
 \caption{Eaton lens configuration for $\C_3(2,1)$}
 \label{Eaton_C_3(2,1)}
\end{figure}

\subsection{Other ergodic Eaton curves}
Using the $ X_3(2,1)$,  $ X_6(3,1)$ and $ X_6(3,2)$ torus differentials, we present more examples of admissible ergodic Eaton lens curves.
Skeletons of the torus differentials allow us to write down differentials on the plane
and represent them geometrically by arrow diagrams as in Figures \ref{Eaton_C_6(3,1)}, \ref{Eaton_C_6(3,2)}
and \ref{Eaton_C_3(2,1)}. In those particular cases all folds will be horizontal and vertical in cartesian coordinates.
Because the skeleton depends on the angle, see Proposition \ref{transform}, we only present the ergodic curve for small angles.
We do not give a formal proof of admissibility for those Eaton lens distributions,
it would go along the same lines as done in Proposition \ref{prop:adm} for the Wollmilchsau differential.
The figures give some clues how to work out the details, such as dividing tangent lines between some lenses.
\begin{figure}[!htb]
 \centering
 \includegraphics[scale=0.40]{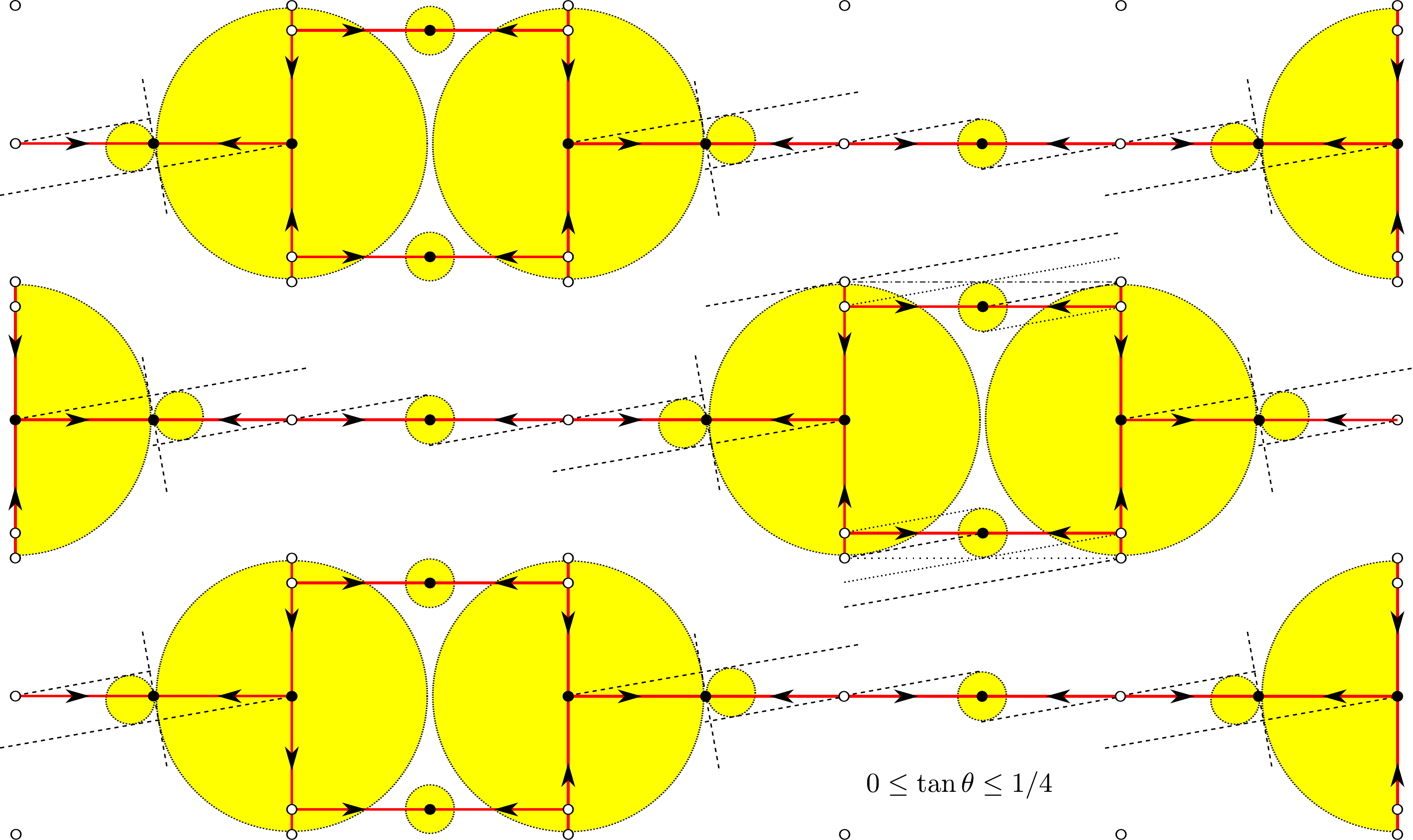}
 \caption{Eaton lens configuration for $\C_6(3,1)$}
\label{Eaton_C_6(3,1)}
\end{figure}
\begin{figure}[!htb]
 \centering
 \includegraphics[scale=0.22]{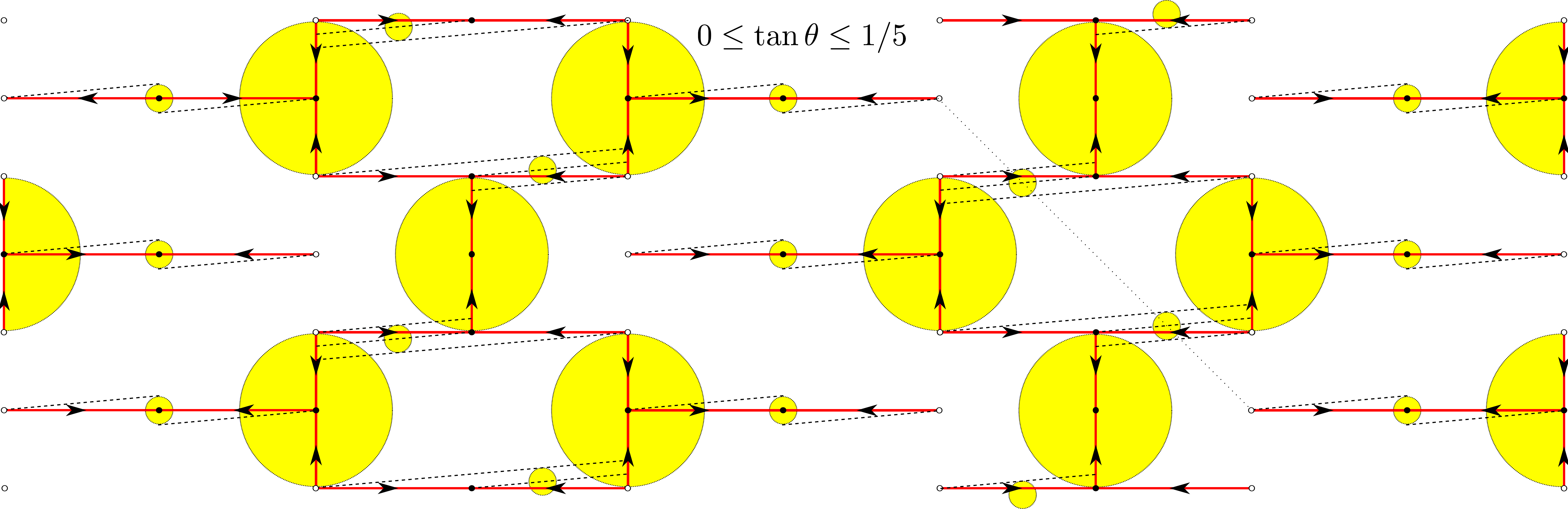}
 \caption{Eaton lens configuration for $\C_6(3,2)$}
 \label{Eaton_C_6(3,2)}
 \end{figure}


\begin{thebibliography}{99}

\bibitem{Aa}
 J.\ Aaronson,
 \emph{An introduction to infinite ergodic theory},
 Mathematical Surveys and Monographs, 50, AMS, Providence, RI, 1997.

\bibitem{Art}
 M.\ Artigiani,
 \emph{Exceptional ergodic directions in Eaton lenses},
 Isr.\ J.\ Math.\ 220 (2017), 29-56.


\bibitem{Es-Ch}
 J.\ Chaika, A. Eskin,
 \emph{On Birkhoff and Osceledets genericity for all flat surfaces in almost all directions},
 J.\ Mod.\ Dyn.\ 9 (2015), 1-23.

\bibitem{Co-Fr}
 J.-P.\ Conze, K.\ Fr\k{a}czek,
 \emph{Cocycles over interval exchange transformations and multivalued Hamiltonian flows},
 Adv.\ Math.\ 226 (2011), 4373-4428.






\bibitem{EKZ1}
 A.\ Eskin, M.\ Kontsevich, A.\ Zorich,
 \emph{Lyapunov spectrum of square-tiled cyclic covers},
 J.\ Mod.\ Dyn.\ 5 (2011), 319-353.


\bibitem{EM}
 A.\ Eskin, M.\ Mirzakhani,
 \emph{Invariant and stationary measures for the $SL(2,\R)$ action on moduli space},
 \url{arXiv:1302.3320}.

\bibitem{EMM}
 A.\ Eskin, M.\ Mirzakhani, A.\ Mohammadi,
 \emph{Isolation, equidistribution and orbit closures for the $SL(2,\R)$ action on moduli space},
 Ann.\ of Math.\  182 (2015), 673-721.

\bibitem{Fil}
 S.\ Filip,
 \emph{Semisimplicity and rigidity of the Kontsevich-Zorich cocycle},
 Invent.\ Math.\ 205 (2016), 617-670.



\bibitem{For-dev}
 G.\ Forni,
 \emph{Deviation of ergodic averages for area-preserving flows on surfaces of higher genus},
 Ann.\ of Math.\ (2) {155} (2002), 1-103.

\bibitem{Forni2011}
 G.\  Forni,
 \emph{A geometric criterion for the nonuniform hyperbolicity of the Kontsevich-Zorich cocycle. With an appendix by Carlos Matheus},
 J.\ Mod.\ Dyn.\ 5 (2011), 355-395.


\bibitem{Fo-Ma}
 G.\ Forni, C.\ Matheus,
 \emph{Introduction to Teichmüller theory and its applications to dynamics of interval exchange transformations, flows on surfaces and billiards},
 J.\ Mod.\ Dyn.\ 8 (2014), 271-436.

\bibitem{Fo-Ma-Zo11}
 G.\ Forni, C.\ Matheus, A.\ Zorich,
 \emph{Square-tiled cyclic cover},
 J.\ Mod.\ Dyn.\ 5 (2011), 285-318.

\bibitem{Fo-Ma-Zo}
 G.\ Forni, C.\ Matheus, A.\ Zorich,
 \emph{Lyapunov spectrum of invariant subbundles of the Hodge bundle},
 Ergodic Theory Dynam.\ Systems 34 (2014), 3530-408.

\bibitem{Fr-Hu}
 K.\ Fr\k{a}czek, P.\ Hubert,
 \emph{Recurrence and non-ergodicity in generalized wind-tree models},
 \url{arXiv:1506.05884}.


\bibitem{Fr-Ulc:nonerg}
 K.\ Fr\k{a}czek, C.\ Ulcigrai,
 \emph{Non-ergodic $\Z$-periodic billiards and infinite translation surfaces},
 Invent.\ Math.\ 197 (2014), 241-298.


\bibitem{Fr-Shi-Ul}
 K.\ Fr\k{a}czek, R.\ Shi, C.\ Ulcigrai,
 \emph{Genericity on curves and applications: pseudo-integrable billiards, Eaton lenses and gap distributions},
 \url{arXiv:1508.03946}.



\bibitem{Fr-S}
 K.\ Fr\k{a}czek, M. Schmoll,
 \emph{Directional localization of light rays in a periodic array of retro-reflector lenses},
 Nonlinearity 27 (2014), 1689-1707.




%
%

\bibitem{Gl-Kn}
 E.\ Glasmachers, G.\ Knieper,
 \emph{Characterization of geodesic flows on $\mathbb{T}^2$ with and without positive topological entropy},
 GAFA  20 (2010), 1259-1277.

\bibitem{Gl-Kn1}
 E.\ Glasmachers, G.\ Knieper,
 \emph{Minimal geodesic foliation on  $\mathbb{T}^2$ in case of vanishing topological entropy},
 Journal of Topology and Analysis 3 (2011), 511-520.


\bibitem{Gr-Hu}
 J.\ Grivaux, P.\ Hubert,
 \emph{Loci in strata of meromorphic differentials with fully degenerate Lyapunov spectrum},
 J.\ Mod.\ Dyn.\ 8 (2014), 61-73.

\bibitem{Ha-Ha}
 J.H.\ Hannaya, T.M.\ Haeusserab,
 \emph{Retroreflection by refraction},
 J.\ Mod.\ Opt.\ 40  (1993), 1437-1442.





\bibitem{Hoop}
 P.\ Hooper,
 \emph{The invariant measures of some infinite interval exchange maps},
 Geom.\ Topol.\ 19 (2015), 1895-2038.

%
%
%

\bibitem{Hu-We2}
 P.\ Hubert, B.\ Weiss,
 \emph{Ergodicity for infinite periodic translation surfaces},
 Compos.\ Math.\ 149 (2013) 1364-1380.


\bibitem{J-S}
 C.\ Johnson, M.\ Schmoll,
 \emph{Hyperelliptic translation surfaces and folded tori},
 Topology Appl.\ 161 (2014), 73-94.


\bibitem{Ma0}
 H.\ Masur,
 \emph{Hausdorff dimension of the set of nonergodic foliations of a quadratic differential},
 Duke Math.\ J.\ 66 (1992), 387-442.

\bibitem{Ma}
 H.\ Masur,
 \emph{Ergodic theory of translation surfaces},
 Handbook of dynamical systems, Vol.\ 1B,  Elsevier B.\ V., Amsterdam (2006), 527-547.

%
%

\bibitem{Papa}
 A.\ Papadopoulos, G.\ Théret, 
 \emph{On Teichmüller's metric and Thurston's asymmetric metric on Teichmüller space}, 
 Handbook of Teichmüller theory.\ Vol.\ I, 111-204, IRMA Lect.\ Math.\ Theor.\ Phys., 11, Eur.\ Math.\ Soc., Zürich, 2007. 


\bibitem{Sch}
 K.\ Schmidt,
 \emph{Cocycle of Ergodic Transformation Groups},
 Lect.\ Notes in Math.\ Vol.\ 1 Mac Milan Co.\ of India, 1977.


\bibitem{ViB}
 M.\ Viana,
 \emph{Dynamics of Interval Exchange Transformations and Teichm\"uller Flows},
 lecture notes available from \url{http://w3.impa.br/~viana/out/ietf.pdf}.


\bibitem{YoLN}
 J.-C.\ Yoccoz,
 \emph{Interval exchange maps and translation surfaces}.
 Homogeneous flows, moduli spaces and arithmetic, 1-69, Clay Math. Proc., 10, Amer. Math. Soc., Providence, RI, 2010.




\bibitem{ZoFlat}
 A.\ Zorich,
 \emph{Flat surfaces},
 Frontiers in number theory, physics, and geometry. I, Springer, Berlin  (2006), 437-583.
\end{thebibliography}
\end{document}